\documentclass[twoside, 14pt]{article}
\usepackage{amsmath,amssymb,amsthm,mathrsfs}
\usepackage[margin=2.4cm]{geometry} 
\usepackage{lipsum}
\usepackage{titlesec,hyperref}
\usepackage{fancyhdr}
\usepackage[numbers,sort&compress]{natbib}
\usepackage{color}
\usepackage[titletoc]{appendix}

\fancypagestyle{plain}
{
	\fancyhf{}
	
}

\titleformat{\subsection}{\it}{\thesubsection.\enspace}{1.5pt}{}
\titleformat{\subsubsection}{\it}{\thesubsubsection.\enspace}{1.5pt}{}

\newtheorem{theo}{Theorem}[section]
\newtheorem{lemm}[theo]{Lemma}

\newtheorem{prop}[theo]{Proposition}
\newtheorem{rema}{Remark}[section]
\numberwithin{equation}{section}

\allowdisplaybreaks

\def\beq{\begin{equation}}
	\def\bal{\begin{aligned}}
		\def\dal{\end{aligned}}
	\def\deq{\end{equation}}
\def\beqq{\begin{equation*}}
	\def\deqq{\end{equation*}}

\def\p{\partial}

\def\al{\alpha}
\def \f{\frac}
\def \var{\varepsilon}

\def \i {\int_{\mathbb{R}^3}}

\def \ah{\alpha_h}
\def\me{\mathcal{E}}
\def\md{\mathcal{D}}
\def \wu{\partial_3 u}
\def \wb{\partial_3 b}
\def \wr{\partial_3 \vro}
\def \du{{\rm{div}} u}
\def \hs{\Lambda_h^{-s}}
\def \vro {\varrho}
\def \dl{\delta}
\def \bu{\overline{u}}
\def \bb{\overline{b}}
\def \br{\overline{\vro}}

\begin{document}
	\begin{sloppypar}
		\title{Global uniform regularity for the 3D compressible
			MHD equations near a background magnetic field \hspace{-4mm}}
		\author{$\mbox{Jincheng Gao}^1$ \footnote{Email: gaojch5@mail.sysu.edu.cn}, \quad
        	$\mbox{Xianpeng Hu}^2$ \footnote{Email: xianphu@polyu.edu.hk}, \quad
			$\mbox{Lianyun Peng}^2$ \footnote{Corresponding author. Email: lianyun.peng@polyu.edu.hk}, \quad
			$\mbox{Jiahong Wu}^3$ \footnote{Email: jwu29@nd.edu}, \\
			\quad
			$^1\mbox{School}$ of Mathematics, Sun Yat-sen University,\\
			Guangzhou 510275, China\\
			$^2\mbox{Department}$ of Applied Mathematics, 
			The Hong Kong Polytechnic University,\\
			Hong Kong 999077, China\\
			$^3\mbox{Department}$ of Mathematics,
			University of Notre Dame, \\
			Notre Dame 46556, USA\\
		}
		
		\date{}
		
		\maketitle
		
		\begin{abstract}
			{This paper resolves the global regularity problem for the three-dimensional compressible magnetohydrodynamics (MHD) equations in the three-dimensional whole space, in the presence of a background magnetic field. Motivated by geophysical applications, we consider an anisotropic compressible MHD system with weak dissipation in the $x_2$ and $x_3$ directions and small vertical magnetic diffusion.
				By exploiting the stabilizing effect induced by the background magnetic field and constructing a hierarchy of four energy functionals, we establish global-in-time uniform bounds that are independent of the viscosity in the $x_2$ and $x_3$ directions and the vertical resistivity. A key innovation in our analysis is the development of a two-tier energy method, which couples the boundedness of vertical derivatives with the decay of horizontal derivatives. The analysis of time scale, 
				together with global regularity estimates and sharp decay rates, enable us to rigorously justify the vanishing dissipation limit and derive explicit long-time convergence rates to the compressible MHD system with vanishing dissipation in the $x_2$ and $x_3$ directions and no vertical magnetic diffusion. In the absence of magnetic field and background magnetic field, the global-in-time well-posedness and vanishing viscosity limit for the 3D compressible Navier-Stokes equations with only one direction dissipation remains a challenging open problem. This work reveals the mechanism by which the magnetic field enhances dissipation and stabilizes the fluid dynamics in the global well-posedness and vanishing viscosity limit.}
			
					\vspace*{5pt}
					\noindent{\it {\rm Keywords}}:
				Compressible MHD equations; Nonlinear stability; Partial dissipation;
			    Vanishing viscosity limit.
			           
		\end{abstract}

		\tableofcontents

\section{Introduction}

Magnetohydrodynamics (MHD) describes the dynamics of electrically conducting fluids such as plasmas, liquid metals and astrophysical fluids, where the fluid motion interacts with magnetic fields. 
The governing equations couple the compressible Navier--Stokes equations with Maxwell's equations and play a central role in plasma physics, astrophysics and geophysical fluid dynamics. 
We refer to the classical monographs  and the survey articles \cite{D1993, Braginskii1965, D2001, Kulsrud2005} for a general introduction to MHD theory. 
In many physical situations the dissipation is highly anisotropic due to external forces, stratification or strong background magnetic fields. 
Such anisotropic effects arise naturally in geophysical flows, plasma confinement devices and rotating fluids, and have motivated extensive mathematical studies on anisotropic fluid models.

\medskip
In this paper we consider the following anisotropic compressible magnetohydrodynamic system in $\mathbb{R}^3$,
\begin{equation}\label{01}
	\left\{
	\begin{aligned}
		&\partial_t \rho^\varepsilon + \mathrm{div}(\rho^\varepsilon u^\varepsilon)=0,\\
		&\rho^{\varepsilon} \partial_t u^\varepsilon+\rho^\varepsilon u^\varepsilon \cdot \nabla u^\varepsilon
		-\partial_{1}^2u^\varepsilon-\varepsilon\partial_{2}^2u^\varepsilon
		-\varepsilon\partial_{3}^2u^\varepsilon-\nabla \mathrm{div}u^\varepsilon
		+\nabla P(\rho^\varepsilon) \\
		&\qquad\qquad =B^\varepsilon\cdot \nabla B^\varepsilon-\frac{1}{2}\nabla |B^\varepsilon|^2,\\
		&\partial_t B^\varepsilon + u^\varepsilon \cdot \nabla B^\varepsilon
		-\partial_{1}^2 B^\varepsilon-\partial_{2}^2 B^\varepsilon
		-\varepsilon  \partial_3^2 B^\varepsilon
		= B^\varepsilon\cdot \nabla u^\varepsilon-B^\varepsilon \mathrm{div}u^\varepsilon,\\
		&\mathrm{div} B^\varepsilon= 0 .
	\end{aligned}
	\right.
\end{equation}
Here $\rho^\varepsilon$, $u^\varepsilon$ and $B^\varepsilon$ denote the density, velocity field and magnetic field, respectively.
For simplicity we assume the pressure obeys the law
\[
P(\rho^\varepsilon)=\frac{(\rho^\varepsilon)^3}{3}.
\]
The parameter $\varepsilon\ge 0$ is assumed to be small. 
System \eqref{01} admits the physically important steady state
\begin{equation}\label{steady}
	\rho^* \equiv 1,
	\qquad 
	u^* \equiv 0,
	\qquad 
	B^* \equiv e_2 := (0,1,0).
\end{equation}
The purpose of this paper is to investigate several fundamental questions concerning perturbations around the steady state \eqref{steady}. 
Physical experiments and numerical simulations of electrically conducting fluids have revealed a remarkable stabilizing phenomenon: the presence of a strong background magnetic field can significantly suppress turbulence and stabilize the flow. 
This effect has been observed in plasma physics, liquid-metal experiments and astrophysical flows, where magnetic fields tend to damp velocity fluctuations and generate additional dissipation mechanisms. See, for instance, the experimental and numerical studies in \cite{{Alexakis-2011},{A-1942},{Alemany-Moreau-Sulem-Frisch-1979},
	{D1995},{D1997},{D2001},{Gallet-Berhanu-Mordant-2009},{Gallet-Doering-2015}}).  One of the motivations of this work is to provide a rigorous mathematical verification of this stabilizing mechanism for the anisotropic compressible MHD system \eqref{01}. 
	
\medskip
To be more precise, we introduce the perturbation variables
\[
\varrho^\varepsilon=\rho^\varepsilon-1,
\qquad 
b^\varepsilon=B^\varepsilon-e_2 .
\]
Substituting
\[
\rho^\varepsilon=1+\varrho^\varepsilon,
\qquad 
B^\varepsilon=e_2+b^\varepsilon
\]
into \eqref{01}, we obtain the following system governing the perturbation $(\varrho^\varepsilon,u^\varepsilon,b^\varepsilon)$:
\begin{equation}\label{eqr}
	\left\{
	\begin{aligned}
		&\partial_t \varrho^\varepsilon+\mathrm{div}u^\varepsilon=F_1^\varepsilon,\\
		&\partial_t u^\varepsilon-\partial_{1}^2u^\varepsilon
		-\varepsilon \partial_{2}^2u^\varepsilon
		-\varepsilon \partial_{3}^2u^\varepsilon
		-\nabla \mathrm{div}u^\varepsilon
		+\nabla \varrho^\varepsilon+\nabla b_2^\varepsilon-\partial_2 b^\varepsilon=F_2^\varepsilon,\\
		&\partial_t b^\varepsilon-\partial_{1}^2b^\varepsilon
		-\partial_{2}^2b^\varepsilon
		-\varepsilon\partial_{3}^2b^\varepsilon
		+e_2\,\mathrm{div}u^\varepsilon-\partial_2 u^\varepsilon=F_3^\varepsilon,\\
		&\mathrm{div}b^\varepsilon=0,
	\end{aligned}
	\right.
\end{equation}
where $F_i^\varepsilon$ $(i=1,2,3)$ denote the nonlinear terms given by
\[
\left\{
\begin{aligned}
	F_1^\varepsilon&=-u^\varepsilon\cdot \nabla \varrho^\varepsilon-\varrho^\varepsilon \mathrm{div}u^\varepsilon,\\
	F_2^\varepsilon&=-u^\varepsilon \cdot \nabla u^\varepsilon-\varrho^\varepsilon\nabla \varrho^\varepsilon
	-\frac{\varrho^\varepsilon}{1+\varrho^\varepsilon}
	(\partial_{1}^2u^\varepsilon+\varepsilon\partial_{2}^2u^\varepsilon+\varepsilon\partial_{3}^2u^\varepsilon)\\
	&\quad
	-\frac{\varrho^\varepsilon}{1+\varrho^\varepsilon}
	(\nabla \mathrm{div}u^\varepsilon -\nabla b_2^\varepsilon+\partial_2 b^\varepsilon)
	+\frac{1}{1+\varrho^\varepsilon}b^\varepsilon\cdot \nabla b^\varepsilon
	-\frac{1}{2(1+\varrho^\varepsilon)}\nabla |b^\varepsilon|^2,\\
	F_3^\varepsilon&=-u^\varepsilon\cdot \nabla b^\varepsilon-b^\varepsilon\mathrm{div}u^\varepsilon+b^\varepsilon\cdot \nabla u^\varepsilon.
\end{aligned}
\right.
\]
System \eqref{eqr} is supplemented with the initial data
\begin{equation}\label{eqr-i}
	(\varrho^\varepsilon,u^\varepsilon,b^\varepsilon)|_{t=0}
	=
	(\varrho_0,u_0,b_0).
\end{equation}

The goal of this paper is to address three fundamental problems for system \eqref{eqr}:

\begin{itemize}
	\item[(1)] When $\varepsilon>0$, we establish global-in-time Sobolev estimates that are uniform with respect to $\varepsilon$, which lead to global regularity, nonlinear stability of the steady state, and precise large-time decay properties.
	
	\item[(2)] In the limiting case $\varepsilon=0$, we prove the global regularity, nonlinear stability and large-time behavior of the corresponding limiting system.
	
	\item[(3)] We rigorously justify the limit of \eqref{eqr} as $\varepsilon\to0$,
		\begin{equation}\label{eqr0}
		\left\{\begin{array}{*{4}{ll}}
			\p_t \varrho^0+{\rm div}u^0=F_1^0,\\
			\p_t u^0-\p_{1}^2u^0-\nabla {\rm div}u^0
			+\nabla \varrho^0+\nabla b_2^0-\p_2 b^0=F_2^0,\\
			\p_t b^0-\p_{1}^2 b^0-\p_{2}^2 b^0
			+e_2{\rm div}u^0-\p_2 u^0=F_3^0,\\
			{\rm {div}}b^0=0,
		\end{array}\right.
	\end{equation}	
	with the force terms $F_i^0(i=1,2,3)$ defined by
	\begin{equation*}
		\left\{\begin{array}{*{4}{ll}}
			F_1^0=-u^0\cdot \nabla \varrho^0-\varrho^0 {\rm div}u^0,\\
			F_2^0=-u^0 \cdot \nabla u^0-\varrho^0\nabla  \varrho^0-\frac{\varrho^0}{1+\varrho^0}(\p_{1}^2u^0+\nabla {\rm div}u^0 -\nabla b_2^0+\p_2 b^0)
			\\
			\quad \quad
			+\frac{1}{1+\varrho^0}b^0\cdot \nabla b^0
			-\frac{1}{2(1+\varrho^0)}\nabla |b^0|^2,\\
			F_3^0=-u^0\cdot \nabla b^0-b^0{\rm div}u^0+b^0\cdot \nabla u^0.
		\end{array}\right.
	\end{equation*}	
    System \eqref{eqr0} is supplemented with the initial data
\begin{equation}\label{eqr-i0}
	(\varrho^0,u^0,b^0)|_{t=0}=(\varrho_0,u_0,b_0).
\end{equation}
	More precisely, we derive an explicit convergence rate for the solutions as $\varepsilon\to0$, and the convergence rate is uniform in time $t$.
\end{itemize}

\medskip 
As far as the authors are aware, the results obtained in this paper represent some of the first advances on the compressible Navier–Stokes and compressible MHD equations with genuinely anisotropic dissipation. These systems are not only physically relevant but also mathematically challenging. Anisotropy arises naturally in a wide range of fluid models. For instance, in geophysical flows such as oceanic and atmospheric dynamics, the horizontal viscosity is typically several orders of magnitude larger than the vertical viscosity, leading to strongly anisotropic dissipative structures. The vertical viscosity coefficient ranges from $1$ to $10^3 ,{\rm cm}^2/{\rm sec}$ while the horizontal viscosity ranges from $10^5$ to $10^8 ,{\rm cm}^2/{\rm sec}$ (see, e.g., [Chapter 4, \cite{1987Geophysical}]). This type of anisotropy is also reflected at the mathematical level in the study of the primitive equations, for which a large body of work has been devoted to partial dissipation cases; see, for example, the contributions of Cao, Li, and Titi \cite{Cao-Li-Titi-2016, Cao-Li-Titi-2017, Cao-Li-Titi-2020}.
Similarly, in plasma physics and magnetically confined fusion devices, strong background magnetic fields induce directional preferences in both viscous and magnetic diffusion, effectively suppressing transport in certain directions while enhancing it in others (see, e.g., \cite{Braginskii1965,Kulsrud2005}). Another important source of anisotropy comes from scaling limits: when modeling fluids in thin domains, layered media, or rapidly rotating regimes, appropriate rescalings of spatial variables and physical parameters naturally lead to equations with partial or directional dissipation (see, e.g., \cite{Raugel-Sell1993,Temam-Ziane1997,Temam-Ziane-2004,Babin-Mahalov-Nicolaenko1997}).

\medskip 
From a mathematical viewpoint, stability and large-time behavior for compressible Navier--Stokes and MHD systems have been extensively studied since the seminal works of Matsumura and Nishida (\cite{Matsumura-Nishida1980, Matsumura-Nishida1983}), together with the structural theory developed by Shizuta, Kawashima and others \cite{Umeda-Kawashima-Shizuta1984,Shizuta-Kawashima1985,Kawashima1984}, which established global existence and decay for small perturbations of equilibrium states under full dissipation.
However, much less is known when the dissipative structure is weakened or becomes anisotropic. In particular, the loss of dissipation in certain directions creates significant analytical difficulties, as the standard energy method no longer directly yields sufficient control of all derivatives. 
 
\medskip 
This difficulty is already present even for the 3D incompressible Navier--Stokes equations with anisotropic dissipation, for example,
 \begin{equation}\label{02}
 	\partial_t u + u \cdot \nabla u - \partial_1^2 u - \varepsilon \partial_2^2 u - \varepsilon \partial_3^2 u + \nabla p = 0, 
 	\qquad {\rm div} u = 0.
 \end{equation}
 It is not clear whether one can obtain global Sobolev bounds for solutions that are uniform with respect to $\varepsilon$. Indeed, as $\varepsilon \to 0$, equation \eqref{02} formally converges to the partially dissipative system
 \begin{equation}\label{03}
 	\partial_t u + u \cdot \nabla u - \partial_1^2 u + \nabla p = 0, 
 	\qquad {\rm div} u = 0,
 \end{equation}
 for which the global well-posedness problem remains widely open. Consequently, it is difficult to justify whether solutions of \eqref{02} converge globally in time to those of \eqref{03}, or even to establish stability uniformly in $\varepsilon$.
This issue becomes even more pronounced in the compressible setting due to the strong coupling between density and the velocity.

\medskip 
The results presented in this paper constitute a significant advance in this direction. They demonstrate that global stability and precise large-time behavior can still be achieved even in the presence of highly degenerate dissipation, where only one directional dissipation is present in the velocity equation. More importantly, our analysis reveals that the background magnetic field plays a decisive role in compensating for the lack of dissipation.  This paper provides a rigorous mathematical confirmation of the stabilizing phenomenon observed in physical experiments and numerical simulations, and highlights a fundamentally new mechanism for stability in anisotropic compressible fluid models.

\medskip 
To give a precise account of our main results, we introduce the following notations. 
Let $\nabla_h := (\partial_1,\partial_2,0)$ and $\alpha_h := (\alpha_1,\alpha_2,0)$. 
For every $m \in \mathbb{N}$, we define
\begin{equation*}
	\begin{aligned}
		\|f\|_{H^m_{tan}}^2 
		:= \sum_{0 \le |\alpha_h| \le m} \|\partial^{\alpha_h} f\|_{L^2}^2,
		\qquad
		\|f\|_{H^m}^2 
		:= \sum_{0 \le |\alpha| \le m} \|\partial^\alpha f\|_{L^2}^2.
	\end{aligned}
\end{equation*}
We also define the horizontal fractional operator $\Lambda_h^s$, $s \in \mathbb{R}$, by
\begin{equation*}
	\Lambda_h^s f(x_h)
	:= \int_{\mathbb{R}^2} |\xi_h|^s \widehat{f}(\xi_h)\, e^{2\pi i x_h \cdot \xi_h} \, d\xi_h,
\end{equation*}
where $x_h := (x_1,x_2,0)$, $\xi_h := (\xi_1,\xi_2,0)$, and $\widehat{f}$ denotes the Fourier transform of $f$ with respect to the horizontal variables.

For $\zeta \in (0,\tfrac{1}{20})$, we define the energy functional
\begin{equation*}
	\begin{aligned}
		\mathcal{E}(\varrho^{\varepsilon}, u^{\varepsilon}, b^{\varepsilon})(t)
		:=& \|(\varrho^{\varepsilon}, u^{\varepsilon}, b^{\varepsilon})(t)\|_{H^m}^2 \\
		&+ \|\Lambda_h^{-(1-\zeta)}(\varrho^{\varepsilon}, u^{\varepsilon}, b^{\varepsilon})(t)\|_{L^2}^2
		+ \|\Lambda_h^{-(1-\zeta)} \partial_3 (\varrho^{\varepsilon}, u^{\varepsilon}, b^{\varepsilon})(t)\|_{L^2}^2,
	\end{aligned}
\end{equation*}
and the dissipation functional
\begin{equation*}
	\begin{aligned}
		\mathcal{D}(\varrho^{\varepsilon},u^{\varepsilon}, b^{\varepsilon}, \varepsilon)(t)
		:=& \|(\partial_1 u^{\varepsilon}, \mathrm{div}\, u^{\varepsilon}, \nabla_h b^{\varepsilon})(t)\|_{H^m}^2 \\
		&+ \|(\partial_2 u^{\varepsilon}, \nabla_h \varrho^{\varepsilon})(t)\|_{H^{m-1}}^2
		+ \varepsilon \|(\partial_2 u^{\varepsilon}, \partial_3 u^{\varepsilon}, \partial_3 b^{\varepsilon})(t)\|_{H^m}^2.
	\end{aligned}
\end{equation*}

We briefly explain the motivation behind the definition of the energy functional $\mathcal{E}$. 
The momentum equation contains dissipation only in the $x_1$-direction, while the background magnetic field is in the $x_2$-direction. 
Through the coupling between the velocity and magnetic field equations, this background magnetic field induces a smoothing and regularizing effect in the $x_2$-direction. 
However, this effect is weaker than standard dissipation. In particular, in analogy with incompressible MHD systems, the regularization generated by this coupling is typically one derivative lower than that provided by full dissipation.
This weaker regularization in the $x_2$-direction creates significant difficulties in controlling the nonlinear terms in the momentum equation. 
The situation is even more complicated in the compressible setting due to the additional coupling with the density perturbation.

\medskip
Our strategy is to combine stability and decay mechanisms and establish both simultaneously. 
Since there is no direct regularization in the $x_3$-direction, the decay can only be obtained through the horizontal directions. 
This explains the presence of the negative-order horizontal norms in the definition of $\mathcal{E}$, which are designed to capture the decay in the low-frequency regime.
The dissipation functional $\mathcal{D}$ reflects both the genuine dissipation and the effective regularization induced by the magnetic field. 
In particular, the term $\|\partial_1 u^\varepsilon\|_{H^m}$ corresponds to the full dissipation in the $x_1$-direction, while 
$\|\partial_2 u^\varepsilon\|_{H^{m-1}}$ captures the weaker, one-derivative-lower regularization in the $x_2$-direction generated through the coupling with the magnetic field. 
Moreover, although the density equation does not possess an intrinsic dissipative structure, the presence of the term $\|\nabla_h \varrho^\varepsilon\|_{H^{m-1}}$ in the dissipation reflects an effective smoothing mechanism. This regularization is induced through the coupling with the momentum equation, where the pressure term transfers part of the velocity dissipation to the density, yielding control of its horizontal derivatives.
The terms involving $\varepsilon$ represent the small dissipative effects in the remaining directions.

\medskip
Our strategy for controlling these functionals is to separate the estimates of horizontal and vertical derivatives within a bootstrap framework. To this end, we further introduce a horizontal dissipative energy functional:
\begin{equation*}
	\begin{aligned}
		\mathcal{D}_{tan}(\varrho^{\varepsilon}, u^{\varepsilon}, b^{\varepsilon}, \varepsilon)(t)
		:=& \|(\partial_1 u^{\varepsilon}, \mathrm{div}\, u^{\varepsilon}, \nabla_h b^{\varepsilon})(t)\|_{H^{m-1}_{tan}}^2 \\
		&+ \|(\partial_2 u^{\varepsilon}, \nabla_h \varrho^{\varepsilon})(t)\|_{H^{m-2}_{tan}}^2 \\
		&+ \|\partial_3(\partial_1 u^{\varepsilon}, \mathrm{div}\, u^{\varepsilon}, \nabla_h b^{\varepsilon})(t)\|_{H^{m-2}_{tan}}^2 \\
		&+ \|\partial_3(\partial_2 u^{\varepsilon}, \nabla_h \varrho^{\varepsilon})(t)\|_{H^{m-3}_{tan}}^2 \\
		&+ \varepsilon \|(\partial_2 u^{\varepsilon}, \partial_3 u^{\varepsilon}, \partial_3 b^{\varepsilon})(t)\|_{H^{m-1}_{tan}}^2 \\
		&+ \varepsilon \|\partial_3(\partial_2 u^{\varepsilon}, \partial_3 u^{\varepsilon}, \partial_3 b^{\varepsilon})(t)\|_{H^{m-2}_{tan}}^2.
	\end{aligned}
\end{equation*}

Due to the dissipative structure in the $x_1$ and $x_2$ directions in both \eqref{eqr} and \eqref{eqr0}, it is straightforward to verify that the solution $(\varrho^\varepsilon, u^\varepsilon, b^\varepsilon)$ of \eqref{eqr} converges to $(\varrho^0, u^0, b^0)$ locally in time as $\varepsilon \to 0$. 
Our main goal is to establish this convergence globally in time. To this end, uniform-in-$\varepsilon$ global estimates play a decisive role in the vanishing dissipation limit.

\vskip .1in 
We now state our first main result concerning global-in-time uniform regularity and time decay.

\begin{theo}\label{main_result_one}
	Let $m \ge 4$ be an integer and $\zeta \in (0,\tfrac{1}{20})$. 
	Assume the initial data $(\varrho_0, u_0, b_0)$ satisfies ${\rm div} b_0 = 0$ and
	\begin{equation}\label{condition}
		\mathcal{E}(\varrho^{\varepsilon}, u^{\varepsilon}, b^{\varepsilon})(0) \le \delta_0
	\end{equation}
	for some sufficiently small constant $\delta_0 > 0$. 
	Then the system \eqref{eqr}--\eqref{eqr-i} admits a unique global solution $(\varrho^{\varepsilon}, u^{\varepsilon}, b^{\varepsilon})$ satisfying
	\begin{equation}\label{uniform_estimate}
		\mathcal{E}(\varrho^{\varepsilon}, u^{\varepsilon}, b^{\varepsilon})(t)
		+ \int_0^t \mathcal{D}(\varrho^{\varepsilon}, u^{\varepsilon}, b^{\varepsilon}, \varepsilon)(\tau)\, d\tau
		\le C \delta_0,
	\end{equation}
	and the decay estimate
	\begin{equation}\label{decay_estimate}
		\begin{aligned}
			&(1+t)^{1-\zeta}
			\Big( \|(\varrho^{\varepsilon}, u^{\varepsilon}, b^{\varepsilon})(t)\|_{H^{m-1}_{tan}}^2
			+ \|\partial_3 (\varrho^{\varepsilon}, u^{\varepsilon}, b^{\varepsilon})(t)\|_{H^{m-2}_{tan}}^2 \Big) \\
			&\quad + \int_0^t (1+\tau)^{1-2\zeta}
			\mathcal{D}_{tan}(\varrho^{\varepsilon}, u^{\varepsilon}, b^{\varepsilon}, \varepsilon)(\tau)\, d\tau
			\le C \delta_0,
		\end{aligned}
	\end{equation}
	where $C$ is a positive constant independent of $t$ and $\varepsilon$.
\end{theo}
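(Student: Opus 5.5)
The plan is a bootstrap built on local well-posedness (standard for this hyperbolic–parabolic system with $\varepsilon>0$) and a continuity argument. Fix $T$ and make the a priori assumptions
\[
\sup_{0\le t\le T}\mathcal{E}(t)+\int_0^T\mathcal{D}\,d\tau\le 2C\delta_0,\qquad
\sup_{0\le t\le T}(1+t)^{1-\zeta}\mathcal{E}_{tan}(t)\le 2C\delta_0,
\]
where $\mathcal{E}_{tan}$ denotes the bracket in \eqref{decay_estimate}. I will then improve both constants to $C\delta_0$. The hierarchy has four functionals: the full $H^m$ energy, the negative horizontal Sobolev energy $\|\Lambda_h^{-(1-\zeta)}(\cdot,\partial_3\cdot)\|_{L^2}^2$, the tangential energy $\mathcal{E}_{tan}$, and a time-weighted version of the tangential energy. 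All constants must be independent of $\varepsilon$. This forces the $\varepsilon$-viscous terms to be kept only as nonnegative dissipation, or, in the commutators coming from $\frac{\varrho}{1+\varrho}\varepsilon\partial_j^2u$, to be absorbed by $\varepsilon\|\partial_ju\|_{H^m}^2$ itself.

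\textbf{Step 1 (uniform $H^m$ bound).} First I take the standard $\partial^\alpha$, $|\alpha|\le m$, energy estimate. The linear coupling terms $\nabla b_2-\partial_2 b$ and $e_2\,\mathrm{div}u-\partial_2u$ cancel in the $L^2$ pairing, and $\nabla\varrho$ cancels against $\mathrm{div}u$ in the density equation. This gives $\|\partial_1u,\mathrm{div}u,\nabla_hb\|_{H^m}^2$ and the $\varepsilon$-terms as dissipation. The missing pieces $\|\partial_2u\|_{H^{m-1}}$ and $\|\nabla_h\varrho\|_{H^{m-1}}$ come from two cross terms. The first is $-\langle\partial^\alpha u,\partial^\alpha\partial_2 b\rangle$ for $|\alpha|\le m-1$: using the $b$-equation, $\partial_2u=\partial_tb-\Delta_hb+\dots$, so this term yields $\|\partial_2u\|^2$ at the cost of $\|\nabla_hb\|_{H^m}^2$ and $\|\partial_1 u\|$. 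The second is $\langle\partial^\alpha u,\nabla_h\partial^\alpha\varrho\rangle$, which yields $\|\nabla_h\varrho\|^2$ through the pressure term. The density has no dissipation, so its top-order terms $u\cdot\nabla\partial^\alpha\varrho$ are handled by integrating by parts to $\mathrm{div}u$, and $\varrho\,\mathrm{div}\partial^\alpha u$ is paired with the momentum equation as in Matsumura–Nishida. The nonlinear terms are bounded by $\mathcal{E}^{1/2}\mathcal{D}$ plus terms like $\|\partial_3 u\|_{L^\infty}\|\partial^\alpha\varrho\|^2$, which are not dissipative. For those I use anisotropic inequalities of the form $\|f\|_{L^\infty}\lesssim\|f\|_{L^2}^{1/4}\|\partial_1f\|^{1/4}\|\partial_2 f\|^{1/4}\cdots$ to trade one factor for tangential quantities. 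Their time integrability is then supplied by Step 3: the decay rate $(1+t)^{-(1-\zeta)/2}$ combined with $\int(1+\tau)^{1-2\zeta}\mathcal{D}_{tan}$ gives integrability $\int(1+\tau)^{-1+\zeta/2+\dots}\cdot\dots<\infty$ through Cauchy–Schwarz. This is exactly where $\zeta<1/20$ enters.

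\textbf{Step 2 (negative horizontal norms).} Next I apply $\Lambda_h^{-(1-\zeta)}$ and $\Lambda_h^{-(1-\zeta)}\partial_3$ to \eqref{eqr}. The linear part is handled as in Step 1. For the nonlinear terms I use the Hardy–Littlewood–Sobolev bound in $\mathbb{R}^2_h$, $\|\Lambda_h^{-s}f\|_{L^2_h}\lesssim\|f\|_{L^{p}_h}$ with $1/p=1/2+s/2$, followed by anisotropic Hölder in $x_3$. This bounds the terms by $\|\Lambda_h^{-s}\cdot\|\,\times$ (tangential norms), and the result is closed by Gronwall using the integrable decay from Step 3.

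\textbf{Step 3 (tangential decay).} I repeat the energy estimate for $\partial^{\alpha_h}$ with $|\alpha_h|\le m-1$ and for $\partial_3\partial^{\alpha_h}$ with $|\alpha_h|\le m-2$. This yields $\frac{d}{dt}\mathcal{E}_{tan}+\mathcal{D}_{tan}\le$ nonlinear terms. The tangential energy keeps one derivative in reserve so that no top-order vertical derivative of $\varrho$ appears unpaired. The decay follows from horizontal interpolation between the negative norm and the dissipation, $\|f\|\le\|\Lambda_h^{-(1-\zeta)}f\|^{\theta}\|\nabla_hf\|^{1-\theta}$ with $\theta=1/(2-\zeta)$. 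This gives $\mathcal{D}_{tan}\gtrsim\mathcal{E}_{tan}^{1+1/(1-\zeta)}$ on the components that dissipate. For $\varrho$ and $\partial_2u$, whose dissipation is one derivative lower, the interpolation uses the $H^{m-2}_{tan}$ level together with the uniform $H^m$ bound. The differential inequality then yields the $(1+t)^{-(1-\zeta)}$ rate. Multiplying by $(1+t)^{1-2\zeta}$ and absorbing $(1+t)^{-2\zeta}\mathcal{E}_{tan}$, which is integrable, produces the weighted dissipation integral.

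\textbf{Main obstacle.} The hardest step is closing Step 3's nonlinear terms with the weak $x_2$ regularization and no $x_3$ smoothing. Terms such as $u_3\partial_3\partial^{\alpha_h}\varrho$ and $\frac{\varrho}{1+\varrho}\partial_1^2\partial^{\alpha_h}u$ contain one more derivative than $\mathcal{D}_{tan}$ controls. My plan is to put the extra vertical derivative on the non-decaying top-order energy from Step 1. The factor $u_3$ is then estimated through $\mathrm{div}u$ and $\partial_1u,\partial_2u$ (since $\partial_3u_3=\mathrm{div}u-\partial_1u_1-\partial_2u_2$), and $u_3$ is bounded via $\|u_3\|_{L^\infty}\lesssim\|u_3\|^{1/2}_{\dots}\|\partial_3u_3\|^{1/2}_{\dots}$, so that every such term carries a factor $\mathcal{D}_{tan}^{1/2}$ times a decaying quantity. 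If this fails at top order, I would lower the tangential level by one derivative.
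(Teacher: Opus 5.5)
Your Step 1 has a genuine gap at the top-order vertical derivative of the density. Pairing $\partial_3^m(u\cdot\nabla\varrho+\varrho\,\mathrm{div}u)$ with $\partial_3^m\varrho$ leaves, among other terms, $-\tfrac12\int|\partial_3^m\varrho|^2\,\mathrm{div}u\,dx-m\int|\partial_3^m\varrho|^2\,\partial_3u_3\,dx$. Neither copy of $\partial_3^m\varrho$ is dissipated; only $\nabla_h\varrho$ is, in $H^{m-1}$. Treating these terms as you propose therefore requires $\int_0^\infty\|(\mathrm{div}u,\partial_3u_3)\|_{L^\infty}\,d\tau<\infty$, and the decay from Step 3 cannot supply this. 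A first-order velocity factor is worth at most $\mathcal{D}_{tan}^{1/2}$, and $\int_0^t\mathcal{D}_{tan}^{1/2}\,d\tau\le\big(\int_0^t(1+\tau)^{1-2\zeta}\mathcal{D}_{tan}\,d\tau\big)^{1/2}\big(\int_0^t(1+\tau)^{-(1-2\zeta)}\,d\tau\big)^{1/2}\sim(1+t)^{\zeta}$. Replacing part of $\mathcal{D}_{tan}$ by $\mathcal{E}_{tan}$ only makes this worse, since $\mathcal{E}_{tan}$ decays like $(1+t)^{-(1-\zeta)}$, slower than $(1+t)^{-1}$. Your own exponent $-1+\zeta/2+\cdots$ already exhibits the divergence. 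Nor can you raise the rates inside your scheme: Hardy--Littlewood--Sobolev forces the negative index $1-\zeta<1$, and the weighted dissipation needs the weight exponent $1-2\zeta$ to stay below $1-\zeta$. So $\sup_t\mathcal{E}$ does not close; Gronwall gives at best growth like $\exp(C\sqrt{\delta_0}(1+t)^{\zeta})$.

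The missing idea is to remove these terms structurally rather than by decay. Write $\partial_3u_3=\mathrm{div}u-\nabla_h\cdot u_h$.
For the $\mathrm{div}u$ part, substitute $\mathrm{div}u=-(\partial_t\varrho+u\cdot\nabla\varrho+\varrho\,\mathrm{div}u)$. This yields $\tfrac{d}{dt}\int\varrho|\partial_3^m\varrho|^2\,dx$ plus terms carrying an extra factor $\varrho$, whose $L^\infty$ norm decays through $\nabla_h\varrho$. These are of size, e.g., $\mathcal{E}^{9/8}\mathcal{D}_{tan}^{7/8}$, which is integrable.
For the $\nabla_h\cdot u_h$ part, write one factor as $\partial_3^{m-1}\partial_3\varrho$ and replace $\partial_3\varrho$ via the third momentum equation, $\partial_3\varrho=-\partial_tu_3+\partial_1^2u_3+\partial_3\mathrm{div}u-\partial_3b_2+\partial_2b_3+\cdots$. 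This gives $-m\tfrac{d}{dt}\int\nabla_h\cdot u_h\,\partial_3^m\varrho\,\partial_3^{m-1}u_3\,dx$ plus terms with a dissipative top-order factor. Those are bounded by, e.g., $\mathcal{E}^{3/4}\mathcal{D}^{1/4}\mathcal{D}_{tan}^{1/2}$, integrable since $2(1-2\zeta)>1$. Both corrections must be built into the top-order functional.

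The same device is missing in your Step 2. The term $\int\Lambda_h^{-(1-\zeta)}\big(\tfrac{\varrho}{1+\varrho}\partial_3b_2\big)\cdot\Lambda_h^{-(1-\zeta)}u_3\,dx$ contains the undissipated factor $\partial_3b_2$, and HLS plus decay again gives a rate slightly slower than $(1+t)^{-1}$. The fix is to substitute $\partial_3b_2$ from the third momentum equation and move $\partial_t$ onto $\int\Lambda_h^{-(1-\zeta)}\big(\tfrac{\varrho}{1+\varrho}u_3\big)\cdot\Lambda_h^{-(1-\zeta)}u_3\,dx$. This is why $\|\Lambda_h^{-(1-\zeta)}u(0)\|_{L^2}\|(\varrho,u)(0)\|_{H^m}^2$ enters the smallness constant.

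By contrast, the tangential estimates you flag as the main obstacle close directly with $\sqrt{\mathcal{E}}\,\mathcal{D}_{tan}$. Finally, you must weight the magnetic energy by $1/(1+\varrho)$. This makes $\tfrac{1}{1+\varrho}b\cdot\nabla\partial^\alpha b\cdot\partial^\alpha u$ and $b\cdot\nabla\partial^\alpha u\cdot\partial^\alpha b$ cancel; otherwise $\partial_3^{m+1}b$ appears at top order.
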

\begin{rema}
	The background magnetic field $B^*=(0,1,0)$ provides enhanced dissipative  structure for the derivative of velocity in the $x_2$ direction in Theorem \ref{main_result_one}.
	This reveals the mechanism by which the magnetic field enhances dissipation and stabilizes the fluid dynamics.
	However, due to the absence of vertical dissipation of magnetic field, we can only obtain the  horizontal dissipation of density by using the
	pressure term in the momentum equation.
	Thus, the estimate of higher order vertical derivative of density
	is the most difficult task as one establishes the uniform estimates
	\eqref{uniform_estimate}, see the analysis in Section \ref{section-approach}. 
\end{rema}

\begin{rema}
	Due to the absence of vertical dissipation structure 
	in equation \eqref{eqr}, we need to establish 
	estimate for one-order vertical derivative of solution
	when constructing the time decay estimate.
	As the dissipation estimate for the derivative of velocity 
	in the $x_2$ direction is induced by the background magnetic field,
	we cannot obtain the decay estimate for the m-th order horizontal derivative of solution in \eqref{decay_estimate}. For the specific reasons, one can refer to Lemma \ref{lemma3.7}.
\end{rema}

\begin{rema}
	In order to establish the estimate for the negative derivative
	of solution, we need to use the  Hardy-Littlewood-Sobolev inequality in Lemma \ref{H-L} to control the nonlinear term.
	This requires the index $\zeta$
	satisfying $\zeta \in (0,1)$.
	Furthermore, we choose the index  satisfying
	$\zeta \in (0, \f{1}{20})$
	to make sure the term $\mathcal{E}_{tan}^{m-1}(\tau)^{\frac{7}{12}}\md_{tan}^{m-1}(\tau)^{\frac14}$ is integral in $L^1$ 
	with respect to time(see \eqref{3807}, here $\sigma:= 1-2\zeta, s:=1-\zeta$).
	Finally, in order to obtain the time integration
	of  $\md_{tan}^{m-1}(t)$, we require the decay rate of  $\me_{tan}^{m-1}(t)$ is faster than  that of $\md_{tan}^{m-1}(t)$(see \eqref{31001}).
	Therefore, we choose the the constant 
	$\zeta$ satisfying $\zeta \in (0, \f{1}{20})$
	in Theorem \ref{main_result_one}.
\end{rema}	

Since the uniform estimates and decay rates in Theorem \ref{main_result_one}
are independent of $\varepsilon$, the same arguments can be applied to the
limit system \eqref{eqr0}, yielding the following result.

\begin{theo}\label{main_result_two}
	Let $m \ge 4$ be an integer and $\zeta \in (0,\tfrac{1}{20})$. 
	Assume the initial data $(\varrho_0, u_0, b_0)$ satisfies ${\rm div} b_0 = 0$ and
	\begin{equation}\label{condition2}
		\mathcal{E}(\varrho^{0}, u^{0}, b^{0})(0) \le \delta_0,
	\end{equation}
	for some sufficiently small constant $\delta_0 > 0$. 
	Then the system \eqref{eqr0}--\eqref{eqr-i0} admits a unique global solution $(\varrho^{0}, u^{0}, b^{0})$ such that
	\begin{equation*}
		\mathcal{E}(\varrho^{0}, u^{0}, b^{0})(t)
		+ \int_0^t \mathcal{D}(\varrho^{0}, u^{0}, b^{0},0)(\tau)\, d\tau
		\le C \delta_0,
	\end{equation*}
	and moreover satisfies the decay estimate
	\begin{equation*}
		\begin{aligned}
			&(1+t)^{1-\zeta}\Big(
			\|(\varrho^{0}, u^{0}, b^{0})(t)\|_{H^{m-1}_{tan}}^2
			+ \|\partial_3 (\varrho^{0}, u^{0}, b^{0})(t)\|_{H^{m-2}_{tan}}^2 \Big) \\
			&\quad + \int_0^t (1+\tau)^{1-2\zeta}
			\mathcal{D}_{tan}(\varrho^{0}, u^{0}, b^{0},0)(\tau)\, d\tau
			\le C \delta_0,
		\end{aligned}
	\end{equation*}
	where $C$ is a positive constant independent of $t$ and $\varepsilon$.
\end{theo}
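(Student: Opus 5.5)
The idea is to treat the limit system \eqref{eqr0} as the case $\varepsilon=0$ of \eqref{eqr} and rerun the proof of Theorem \ref{main_result_one}. The constants there do not depend on $\varepsilon$. Every $\varepsilon$-dependent term is either a dissipative contribution with a good sign, or a quantity that appears only inside $\mathcal{D}$ or $\mathcal{D}_{tan}$ multiplied by $\varepsilon$. Setting $\varepsilon=0$ therefore just deletes these good terms and leaves every inequality valid.

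\textbf{Step 1 (local theory).} The limit system is still symmetric hyperbolic–parabolic: the density is transported, $u$ dissipates in $x_1$ together with $\nabla\mathrm{div}\,u$, and $b$ dissipates in $x_1,x_2$. A standard Friedrichs/Kato iteration in $H^m$, $m\ge4$, gives local existence and uniqueness. The iteration closes with hyperbolic-type $H^m$ estimates that use no dissipation: commutator estimates, the symmetrizer $P'(1+\varrho)/(1+\varrho)$ for the $\varrho$–$u$ coupling, and the cancellation of $\partial_2$-coupling terms between the $u$ and $b$ equations. The negative-order horizontal norms $\|\Lambda_h^{-(1-\zeta)}(\cdot)\|$ and $\|\Lambda_h^{-(1-\zeta)}\partial_3(\cdot)\|$ propagate locally by the Hardy–Littlewood–Sobolev inequality (Lemma \ref{H-L}).

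Alternatively, one can take the solutions of Theorem \ref{main_result_one} and pass to the limit $\varepsilon\to0$ by compactness. The uniform bound \eqref{uniform_estimate} gives weak-$*$ limits in $L^\infty(0,\infty;H^m)$, and the time derivatives are bounded in $H^{m-2}$. Aubin–Lions then yields strong local convergence, which suffices to pass to the limit in the nonlinear terms $F_i^\varepsilon\to F_i^0$. The uniform bounds and decay estimates pass to the limit by lower semicontinuity, because $\mathcal{D}(\cdot,\cdot,\cdot,0)\le\mathcal{D}(\cdot,\cdot,\cdot,\varepsilon)$ and similarly for $\mathcal{D}_{tan}$. I would use this route, since it inherits all the estimates directly. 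Uniqueness then comes from an $L^2$ energy estimate for the difference of two solutions. It closes with Gronwall using the $H^m$ bounds, where $m\ge4$ gives $\nabla(\varrho,u,b)\in L^\infty$; the hyperbolic density difference is handled by the symmetrizer.

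\textbf{Step 2 (a priori estimates).} If one prefers a direct argument, I would redo the four-functional hierarchy of the proof of Theorem \ref{main_result_one} with $\varepsilon=0$:
\begin{itemize}
\item[(a)] the $H^m$ energy estimate;
\item[(b)] the cross-term estimates producing $\|\partial_2u\|_{H^{m-1}}$ from the $b$ equation and $\|\nabla_h\varrho\|_{H^{m-1}}$ from the pressure term;
\item[(c)] the negative horizontal norm estimates;
\item[(d)] the tangential decay estimate with weight $(1+t)^{1-\zeta}$ and the time integral of $(1+\tau)^{1-2\zeta}\mathcal{D}_{tan}$.
\end{itemize}
The bootstrap continuity argument then closes for small $\delta_0$.

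The only thing to check, and the main obstacle, is that no estimate in Theorem \ref{main_result_one} relied on the $\varepsilon$-dissipation. The critical point is the top-order vertical derivative of the density, $\partial_3^m\varrho$. Those terms must be absorbed using the $\partial_1 u$, $\mathrm{div}\,u$ and $\nabla_h b$ dissipation plus the transport cancellation, with no $\varepsilon\|\partial_3 u\|_{H^m}$ available. Since the constant $C$ in Theorem \ref{main_result_one} is uniform as $\varepsilon\to0$, this is implicitly guaranteed. The compactness route avoids re-verifying it term by term.
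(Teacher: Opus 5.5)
Your proposal is correct, and your preferred route differs from the paper's. The paper gives no separate proof. It notes that every $\varepsilon$-dependent term in the proof of Theorem~\ref{main_result_one} is either a good-sign dissipation or is bounded by $\varepsilon$-dissipation. Hence Proposition~\ref{main_pro} holds verbatim at $\varepsilon=0$, and the result follows from local existence for \eqref{eqr0} plus the same continuity argument. This is your "direct argument" alternative.

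Your main route instead constructs the limit solution from the solutions of Theorem~\ref{main_result_one} with the same data. This is legitimate: \eqref{condition} and \eqref{condition2} are the same condition on the data, and $C,\delta_0$ do not depend on $\varepsilon$. You pass to the limit by Aubin--Lions on balls with a diagonal extraction. You then inherit every bound by weak lower semicontinuity, using $\mathcal{D}(\cdot,0)\le\mathcal{D}(\cdot,\varepsilon)$ and $\mathcal{D}_{tan}(\cdot,0)\le\mathcal{D}_{tan}(\cdot,\varepsilon)$. The negative norm $f\mapsto\|\Lambda_h^{-(1-\zeta)}f\|_{L^2}$ is also weakly lower semicontinuous, being a supremum of weakly continuous linear functionals.

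What this buys you: you need neither a term-by-term re-check nor a local theory for the degenerate limit system in the space defined by $\mathcal{E}$, which the paper takes for granted. What it costs: you need a separate uniqueness proof. Also, the limit is a priori only in $L^\infty H^m\cap C_w H^m$, so strong continuity in $H^m$ needs an extra word.

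Two points in your sketch should be sharpened.

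\textbf{Uniqueness.} Gronwall with $\nabla(\varrho,u,b)\in L^\infty$ does not close the $L^2$ difference estimate by itself. For two solutions, the difference of the stretching terms contains $b^{(1)}\cdot\nabla\overline{u}$, that is, $\partial_2\overline{u}$ and $\partial_3\overline{u}$. These are not dissipated at the $L^2$ level, and integrating by parts only moves the problem to $\partial_3\overline{b}$. You must use the cancellation $\int B^{(1)}\cdot\nabla(\overline{b}\cdot\overline{u})\,dx=0$ with $\mathrm{div}\,B^{(1)}=0$. One way is to work in conservative form with the energy $\int(\rho^{(1)}|\overline{u}|^2+\cdots)\,dx$. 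Another is to put the weight $\frac{1}{1+\varrho}$ on $|\overline{b}|^2$, as the paper does for $IX_{2,5}+IX_3$ in Lemma~\ref{lemma41}.

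\textbf{Local theory.} Your claim in Step 1 that it uses "no dissipation" is inaccurate. The terms $\frac{\varrho}{1+\varrho}(\partial_1^2u+\nabla\mathrm{div}\,u)$ generate commutators with $m+1$ derivatives of $u$. These must be absorbed by the $\partial_1 u$ and $\mathrm{div}\,u$ dissipation. This does not affect the compactness route you chose.
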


\begin{rema}
	The global-in-time well-posedness of the anisotropic compressible Navier--Stokes equation 
		\begin{equation*}
		\left\{\begin{array}{*{2}{ll}}
			\p_t \rho+{\rm div}(\rho u)=0,\\
			\rho \p_t u+\rho u \cdot \nabla u-\p_{1}^2u-\nabla {\rm div}u+\nabla P(\rho)=0.
		\end{array}\right.
	\end{equation*}
	 remains largely open. 
	In contrast, Theorem \ref{main_result_two} establishes the global-in-time well-posedness for the anisotropic compressible MHD system \eqref{eqr0}. 
	This highlights the stabilizing role of the magnetic field, which generates enhanced dissipation and ensures global stability.
\end{rema}

The local-in-time solutions $(\varrho^\varepsilon, u^\varepsilon, b^\varepsilon)$ of \eqref{eqr}
and $(\varrho^0, u^0, b^0)$ of \eqref{eqr0} have been extended to global solutions in 
Theorems \ref{main_result_one} and \ref{main_result_two}, respectively.
Our final objective is to justify the global-in-time convergence of 
$(\varrho^\varepsilon, u^\varepsilon, b^\varepsilon)$ to $(\varrho^0, u^0, b^0)$ as $\varepsilon \to 0$.
To this end, we establish an explicit convergence rate between \eqref{eqr} and \eqref{eqr0}, 
which is uniform in time.

\begin{theo}\label{main_result_three}
Let $m \ge 9$ be an integer and $\zeta \in (0,\tfrac{1}{20})$, assume the initial data $(\varrho_0,u_0,b_0)$ satisfies 
    ${\rm div} b_0 = 0$ and the small initial  conditions
    \eqref{condition} and \eqref{condition2}.
	Then the global solution $(\varrho^\varepsilon, u^\varepsilon, b^\varepsilon)$ of \eqref{eqr}
	converges to $(\varrho^0, u^0, b^0)$ of \eqref{eqr0} with the rate
	\begin{equation}\label{deo}
		\|(\varrho^\varepsilon-\varrho^0, u^\varepsilon-u^0, b^\varepsilon-b^0)(t)\|_{H^1}^2
		\le C \varepsilon^{\,1-\frac{1+(m-2)\zeta}{m(1-\zeta)}},
	\end{equation}
	where $C$ is a positive constant independent of $\varepsilon$ and time $t$.
\end{theo}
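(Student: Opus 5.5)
Set $(\bar\varrho,\bar u,\bar b):=(\varrho^\varepsilon-\varrho^0,u^\varepsilon-u^0,b^\varepsilon-b^0)$, which vanishes at $t=0$. Subtracting \eqref{eqr0} from \eqref{eqr}, the difference satisfies the linearized operator of \eqref{eqr0} with forcing
\[
\varepsilon(\partial_2^2+\partial_3^2)u^\varepsilon,\qquad \varepsilon\partial_3^2 b^\varepsilon ,
\]
plus the differences $F_i^\varepsilon-F_i^0$. These differences are bilinear in the difference and in $(\varrho^\varepsilon,u^\varepsilon,b^\varepsilon)$, $(\varrho^0,u^0,b^0)$. The plan is to close an $H^1$ energy estimate for the difference using the uniform bounds and decay rates of Theorems \ref{main_result_one} and \ref{main_result_two}. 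We then balance two competing bounds: a short-time bound growing in $t$ and proportional to $\varepsilon$, and a long-time bound coming from decay. This produces the fractional exponent in \eqref{deo}.

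\textbf{Step 1: short-time bound.} Perform the standard $L^2$ and $H^1$ energy estimates for the difference. The linear coupling terms cancel exactly as in the proof of Theorem \ref{main_result_one}, and the dissipation $\|\partial_1\bar u,{\rm div}\,\bar u,\nabla_h\bar b\|_{H^1}^2$ absorbs what it can. The $\varepsilon$-forcing is handled with one integration by parts, e.g.
\[
\varepsilon\bigl|\langle\partial_3^2u^\varepsilon,\bar u\rangle_{H^1}\bigr| \le \varepsilon\|\partial_3 u^\varepsilon\|_{H^2}\|\partial_3\bar u\|_{H^1}\le C\varepsilon ,
\]
using $\mathcal E\le C\delta_0$ for both solutions and $m\ge 9$.

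For the nonlinear differences, place the non-dissipated factors (the density, and $\partial_3$ of everything) on the known solutions in $L^\infty$. These are bounded by $\mathcal E^{1/2}$ from both theorems, with $H^m$ regularity to spare. Then use Gronwall with the integrable weight $\int_0^\infty\mathcal D\,d\tau\le C\delta_0$. The delicate terms are those where $\bar\varrho$ or $\partial_3$ of the difference appears without dissipation, such as $u\cdot\nabla\bar\varrho$ at the $H^1$ level. These are treated by symmetrization, i.e.\ integration by parts giving ${\rm div}\,u$ and hence integrable coefficients. A term like $\bar u\cdot\nabla\varrho^0$ requires $\|\bar u\|_{L^2}$, and that factor is absorbed via Gronwall with an $L^\infty$ coefficient bounded by $\|\nabla\varrho^0\|_{H^2}$. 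Its time integrability is the main obstacle. I would use the horizontal decay from \eqref{decay_estimate} together with anisotropic Sobolev/interpolation, for instance
\[
\|f\|_{L^\infty}\lesssim \|f\|_{H^{m-1}_{tan}}^{\theta}\,\|f\|_{H^m}^{1-\theta},
\]
or else accept linear-in-$t$ growth. The expected outcome is
\[
\|(\bar\varrho,\bar u,\bar b)(t)\|_{H^1}^2\le C\varepsilon(1+t)^{\beta}
\]
for some explicit $\beta>0$.

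\textbf{Step 2: long-time bound and interpolation.} By the triangle inequality and decay of each solution,
\[
\|(\bar\varrho,\bar u,\bar b)(t)\|_{H^1}\le \|(\varrho^\varepsilon,u^\varepsilon,b^\varepsilon)(t)\|_{H^1}+\|(\varrho^0,u^0,b^0)(t)\|_{H^1}.
\]
Theorem \ref{main_result_one} only provides decay of tangential and single-$\partial_3$ norms. Full $H^1$ decay is obtained by interpolating the decaying $\|\cdot\|_{H^{m-1}_{tan}}+\|\partial_3\cdot\|_{H^{m-2}_{tan}}$ (rate $(1+t)^{-(1-\zeta)}$) against the bounded $H^m$ norm. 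This yields
\[
\|\cdot\|_{H^1}^2\lesssim (1+t)^{-(1-\zeta)\gamma}
\]
with an explicit $\gamma$ depending on $m$; the $\tfrac1m$ structure of the target exponent suggests $\gamma\approx 1-\tfrac1m$ up to $\zeta$-corrections.

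Finally, take the minimum of the two bounds and optimize over $t$. Choose $T_\varepsilon=\varepsilon^{-\kappa}$ so that $\varepsilon T_\varepsilon^{\beta}=T_\varepsilon^{-(1-\zeta)\gamma}$. Tracking the exponents, including the $(m-2)\zeta$ loss from Step 1, should reproduce
\[
\varepsilon^{\,1-\frac{1+(m-2)\zeta}{m(1-\zeta)}} .
\]
The key difficulty remains Step 1: keeping the growth power $\beta$ small enough, which requires using the decay rates rather than mere boundedness in the Gronwall coefficients for the density/vertical-derivative terms.
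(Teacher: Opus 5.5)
Your two-regime architecture ($H^1$ energy estimate for the difference closed by Gronwall, then short-time $\varepsilon$-growth balanced against long-time decay) is the paper's. Your long-time step via the triangle inequality is actually simpler than the paper's second Gronwall on $[T^*,t]$. Note that $\|f\|_{H^1}^2\le\|f\|_{H^{m-1}_{tan}}^2+\|\partial_3 f\|_{H^{m-2}_{tan}}^2$, so Theorems \ref{main_result_one}--\ref{main_result_two} give $\|(\bar\varrho,\bar u,\bar b)(t)\|_{H^1}^2\lesssim(1+t)^{-(1-\zeta)}$ directly: $\gamma=1$, and no interpolation is needed.

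\textbf{First gap: where $\beta$ comes from.} In Step 1, $\beta$ is never identified, and your own estimate gives the wrong one. The growth exponent does not come from the Gronwall coefficients, which must be integrable on $[0,\infty)$ and then contribute only a constant. It comes from the $\varepsilon$-forcing. Your bound $\varepsilon|\langle\partial_3^2u^\varepsilon,\bar u\rangle_{H^1}|\le C\varepsilon$ gives $\beta=1$. Balancing $\varepsilon t$ against $t^{-(1-\zeta)}$ then yields only $\varepsilon^{(1-\zeta)/(2-\zeta)}\approx\varepsilon^{1/2}$, whereas the target exponent equals $\frac{(m-1)(1-2\zeta)}{m(1-\zeta)}>\tfrac45$.

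What is needed is to keep the forcing in the form $\varepsilon\|(\partial_2^2u^\varepsilon,\partial_3^2u^\varepsilon,\partial_3^2b^\varepsilon)\|_{H^1}\|(\bar\varrho,\bar u,\bar b)\|_{H^1}$, with no integration by parts onto $\bar u$, and to let both factors decay.
\begin{itemize}
\item The second factor decays like $(1+t)^{-(1-\zeta)/2}$.
\item For the first, interpolate $\|\partial_3^3 f\|_{L^2}$ and $\|\nabla_h\partial_3^2 f\|_{L^2}$ between the decaying $\mathcal{E}^{m-1}_{tan}$ and the bounded $H^m$ norm; it then decays like $(1+t)^{-\frac{(1-\zeta)(m-3)}{2(m-1)}}$.
\end{itemize}
Integrating gives $\beta=\frac{1+(m-2)\zeta}{m-1}$. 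With $T^*=\varepsilon^{-\frac{m-1}{m(1-\zeta)}}$, the short-time bound is exactly the rate in \eqref{deo}. For $t\ge T^*$, your decay bound gives $(T^*)^{-(1-\zeta)}=\varepsilon^{\frac{m-1}{m}}$, which is smaller.

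\textbf{Second gap: the Gronwall exponent.} Putting the background factors in $L^\infty$ produces coefficients linear in the background, such as $\|\nabla\varrho^0\|_{H^2}$ or $\|\mathrm{div}\,u^\varepsilon\|_{L^\infty}$. These are controlled by $\mathcal{E}^{1/2}$ or $\mathcal{D}^{1/2}$, not by $\mathcal{D}$, so $\int_0^\infty\mathcal{D}\,d\tau<\infty$ does not help. From the weighted bounds one only gets, for example, $\int_0^t\|\mathrm{div}\,u^\varepsilon\|_{L^\infty}d\tau\lesssim\delta_0^{1/2}(1+t)^{\zeta}$. That gives a Gronwall factor $\exp(C\delta_0^{1/2}(1+t)^{\zeta})$, or $e^{Ct}$ under your linear-growth fallback, which is unbounded at $t\sim T^*$ and destroys any power of $\varepsilon$.

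Symmetrization does not tame $\int|\partial_3\bar\varrho|^2\,\mathrm{div}\,u^\varepsilon$, and it leaves $\int|\partial_3\bar\varrho|^2\,\partial_3u_3^\varepsilon$ behind. Meanwhile $\partial_3\bar\varrho$ has no dissipation at all.

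The paper handles this in two ways.
\begin{itemize}
\item It uses anisotropic trilinear inequalities plus Young's inequality to absorb half-powers of the difference's dissipation. This includes the enhanced dissipation $\|(\partial_2\bar u,\nabla_h\bar\varrho)\|_{L^2}^2$, obtained from the cross term $\kappa_1\int(\bar u_h\cdot\nabla_h\bar\varrho+\partial_2\bar b\cdot\bar u)\,dx$. Every coefficient then becomes quadratic in the background (the quantities $\mathcal{A},\mathcal{B}$), and these are integrable on $[0,\infty)$ by the weighted decay estimates; this is where $m\ge 9$ is used.
\item It writes $\partial_3u_3^\varepsilon=\mathrm{div}\,u^\varepsilon-\nabla_h\cdot u_h^\varepsilon$, substitutes $\mathrm{div}\,u^\varepsilon=-\partial_t\varrho^\varepsilon+F_1^\varepsilon$, and takes $\partial_3\bar\varrho$ from the momentum equation. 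This adds the correctors $\int|\partial_3\bar\varrho|^2(\varrho^0+\varrho^\varepsilon)\,dx$ and $\int\bar u_3\,\partial_3\bar\varrho\,\nabla_h\cdot u_h^\varepsilon\,dx$ to the energy.
\end{itemize}
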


\begin{rema}
	The convergence rate in Theorem \ref{main_result_three} depends on the regularity index $m$. 
	In particular, for sufficiently large $m$, the rate in \eqref{deo} approaches the optimal one.
\end{rema}

\vskip .1in
The proof of Theorem 1.1 is based on a two-tier energy method that combines uniform energy estimates with time decay, tailored to the anisotropic and weakly dissipative structure of the system. Our strategy is to separate the analysis of horizontal and vertical derivatives within a bootstrap framework.
In the first step, we establish estimates for horizontal derivatives and derive enhanced dissipation effects. Although the momentum equation contains dissipation only in the $x_1$-direction, the coupling with the magnetic field yields additional regularization in the $x_2$-direction, allowing us to control $\partial_2 u^\varepsilon$ at one lower derivative level. In addition, the enhanced dissipation for the horizontal derivatives of the density perturbation is obtained through its coupling with the velocity equation, which provides control of $\nabla_h \varrho^\varepsilon$ at the same lower order. By incorporating negative-order horizontal Sobolev norms, we further capture the decay of low-frequency components and derive time decay estimates for the tangential energy.
The analysis is carried out through a sequence of refined estimates. We first obtain bounds for horizontal (tangential) derivatives, followed by estimates for purely vertical derivatives, and then treat mixed derivatives involving vertical and horizontal interactions. A key ingredient is the derivation of enhanced dissipation for the density in the horizontal directions, which relies on the special coupling structure between the velocity and density equations. We also establish dissipative estimates for vertical derivatives of both the density and the velocity, reflecting the indirect regularization mechanisms in the system.

\medskip 
In the second step, we focus on vertical derivative estimates. Due to the absence of direct dissipation in the $x_3$-direction and the weak dissipative structure of the density, this part requires a more delicate analysis. We exploit the structure of the equations to rewrite critical terms, perform integration by parts to reveal cancellations, and use the decay of horizontal derivatives obtained in the first step to control nonlinear interactions. This approach effectively couples the boundedness of vertical derivatives with the decay of horizontal ones. Finally, we close the argument through a bootstrap scheme. Assuming suitable a priori bounds on the energy and dissipation functionals, we combine the horizontal and vertical estimates to improve these bounds, which yields global-in-time uniform estimates together with time decay. This establishes the global regularity and decay properties stated in Theorem 
\ref{main_result_one}.  A more technical outline will be provided in the subsequent section.  

\medskip 
To further clarify the role of the magnetic field and to highlight the difficulty of the compressible setting, particularly due to the smallness of the parameter $\varepsilon$ in \eqref{01}, which makes the global stability problem highly challenging, we briefly recall several recent developments on the stability and decay of anisotropic incompressible MHD systems. In the incompressible case, the stabilizing effect of a strong background magnetic field has been rigorously confirmed in a broad range of works. For the ideal MHD equations, nonlinear stability results have been established in \cite{Longtime1988,HXY2018,PZZ2018,WZ2017}. Moreover, substantial progress has been made for MHD systems with partial dissipation or damping mechanisms, where well-posedness and stability have been extensively studied (cf.\cite{DZ2018,HW2010,PZZ2018,RWXZ2014,TW2018,WW2017,WWX2015,CWY2014,DLW2019,Fefferman2014,HL2013,
	Wu2021Advance,new-Abidi-Zhang2017,new-Boardman-Lin-Wu2020,
	new-Cao-Regmi-Wu2013,new-Cao-Wu2011,new-Chen-Zhang-Zhou2022,
	new-Du-Zhou2015,new-Fefferman-McCormick-Robinson-Rodrigo2017,
	new-Jiang-Jiang-Zhao2022,
	new-Lai-Wu-Zhang2021,new-Lai-Wu-Zhang2022,
	new-Li-Yang-2024,new-Lin-Ji-Wu2020,
	new-Lin-Zhang2014,new-Lin-Zhang2015,
	new-Sermange-Temam1983,new-Xie-Jiu-Liu2024,
	new-Xu-Zhang2015,new-Zhang2016,new-Hu-Lin-arxiv, Zhang-2009}).
In particular, it has been shown that small perturbations of a constant background magnetic field lead to global-in-time well-posedness and decay, even when the dissipative structure is highly degenerate. Notably, in \cite{Gao2025, Lai-Wu-Zhang-Zhao2026, Lin-Wu-Zhu-2025}, the global well-posedness of the three-dimensional anisotropic incompressible MHD equations has been established in the regime where only one directional dissipation is present in the momentum equation. These results reveal that the coupling between the velocity and magnetic field generates an effective dissipation mechanism, compensating for the lack of full viscosity.

\medskip 
We now turn to the anisotropic compressible MHD system \eqref{01}. The situation in the compressible framework is substantially more delicate than in the incompressible case. The presence of density introduces additional nonlinear couplings, and more importantly, the density equation itself lacks any intrinsic dissipative structure. As a result, the enhanced dissipation induced by the magnetic field must simultaneously control both the velocity and the density fluctuations, creating new analytical difficulties that are absent in the incompressible setting. In particular, the techniques developed for incompressible MHD systems cannot be directly adapted, and new ideas are required to capture the intricate interplay between compressibility and anisotropic dissipation.

\medskip 
To further illustrate the challenges arising from weak dissipation in \eqref{01}, we briefly recall several classical and recent results for compressible fluid models. For the compressible Navier–Stokes equations, Matsumura and Nishida \cite{Matsumura-Nishida1980, Matsumura-Nishida1983} established the global existence and uniqueness of classical solutions in three dimensions for small perturbations of a constant state in the $H^3$ framework. Later, Huang, Li, and Xin \cite{Huang-Li-Xin2012} proved global strong solutions with small initial data allowing large oscillations. More recently, Wen \cite{Wen-2025} obtained global well-posedness under scaling-invariant smallness conditions, even in the presence of far-field vacuum. 

\medskip 
We also mention recent progress on compressible Navier–Stokes equations with partial dissipation. For instance, the system
\begin{equation*}
	\left\{
		\begin{array}{ll}
			\partial_t \rho + \mathrm{div}(\rho u) = 0,\\
			\rho \partial_t u + \rho u \cdot \nabla u - \partial_1^2 u - \partial_2^2 u - \nabla \mathrm{div} u + \nabla P(\rho) = 0,
		\end{array}
		\right.
\end{equation*}
	which features dissipation in the horizontal two directions, has been recently studied in \cite{FHWW2025}, where global well-posedness and optimal decay rates were established for small perturbations of equilibrium states in the three-dimensional whole space.

\medskip 
For compressible MHD systems with full dissipation in the menentum equation, global small-data well-posedness and large-time decay have also been established (cf.\cite{{WW2017},{Wu-Zhu-2022},{Wu-Zhai-2023}}). However, these results rely heavily on the full dissipative structure in the momentum equation. In contrast, the anisotropic compressible MHD system \eqref{01} exhibits only partial dissipation in the $x_2$ and $x_3$ directions, together with weak magnetic diffusion in the vertical direction, making the problem significantly more challenging and still largely open.

\medskip 
Finally, we note that the vanishing viscosity limit for the Navier–Stokes equations remains one of the most fundamental problems in fluid mechanics, and its convergence and rates have been extensively studied (see, for instance, \cite{Constantin1986,Constantin1988,Kato1972,Masmoudi2007CMP, ConstantinWu1995,ConstantinWu1996,BeiraodaVeiga2010,BeiraodaVeiga2011,Xiao2007,
	Iftimie2011ARMA,Masmoudi2012ARMA,Xiao2013,Wang-2016,Wang-Xin-Yan2015}). Despite these substantial difficulties, our work shows that the background magnetic field still provides sufficient stabilization to yield global-in-time uniform estimates and decay, thereby extending key aspects of the incompressible theory to the compressible regime.

\medskip 
		Throughout this paper, we use symbol $A \lesssim B$ for $ A\le C B$ where $C>0$ is a constant which may change from line to line and independent of
		time $t$ and  $\varepsilon$ for $0<\varepsilon <1$.
		
\medskip 
		The rest of the paper is organized as follows.
		In Section \ref{section-approach}, we explain the difficulties and our approach to establish
		the global uniform estimate, time decay rate estimate
		for the system  \eqref{eqr} and \eqref{eqr0} respectively and the convergence rate between the two systems.
		In Section  \ref{global-estimate}, we apply the four layers of energy functionals
		to establish the global uniform estimate and decay rate estimate
		for system  \eqref{eqr} under the condition of small initial data. 
		In Section  \ref{asymptotic-behavior}, with the help of the decay rate, we obtain the specific convergence rate for the solutions
		between \eqref{eqr} and \eqref{eqr0} as the parameter $\var$ tends to zero.
		Finally, we introduce some useful inequalities in Appendix \ref{usefull-inequality}.

		\section{Difficulties and outline of our approach}\label{section-approach}
		In this section, we will explain the main difficulties of proving from  Theorem
		\ref{main_result_one} to  Theorem \ref{main_result_three} as well as our strategies for overcoming them.
		Due to the lower dissipative structure of velocity in the $x_2$ direction and density in the horizontal direction, 
		we need to build some suitable decay rate for the horizontal derivative of solution to control the weak dissipative
		structure as we deal with the nonlinear term.
		Thus, our proof will divide into the following three steps.
		
		\textbf{Step 1: Estimate of horizontal derivative and enhanced dissipation.}
		First of all, we establish the estimates for the horizontal derivative
		of density, velocity and magnetic field as well as control the nonlinear terms by energy norm that includes the vertical derivative of solution
		and the horizontal derivative of dissipation norm
		with the help of anisotropic Sobolev inequality.
		Secondly, we establish estimate for the vertical derivative of solution
		itself and its horizontal derivative.
		Thirdly, the enhanced dissipation estimate for the derivative of velocity in the $x_2$ direction and the density in the horizontal directions 
		can be obtained if one applies the momentum and  magnetic field equations respectively.
		Indeed, due to pressure term and the good effect of a background magnetic field,
		we employ the good terms  $\nabla \vro^{\var}$ in the equation $\eqref{eqr}_2$ and $\p_2 u^{\var}$ in the equation $\eqref{eqr}_3$ 
		to establish the dissipation estimate $\|(\nabla_h \vro^{\var},\p_2 u^{\var})\|_{H^{m-1}}^2$.
		Then, we can obtain the following differential inequality
		\beq\label{201}
		\frac{d}{dt}\widehat{\mathcal{E}}^{m-1}_{tan}(t)
		+\kappa \md_{tan}^{m-1}(t)\le 0.
		\deq
		Finally, in order to obtain the suitable decay estimate, we need to  establish the uniform estimate of solution
		and itself one-order derivative in the negative Sobolev space.  
		Due to the lack of vertical dissipation of the magnetic field and the lower dissipative structure of density in the horizontal directions,
		it is difficult for us to deal with the term
		$\i \hs ( \f{\vro^{\var}}{1+\vro^{\var}} \p_3 b^{\var}_2) \cdot \hs u^{\var}_3 \ dx$(see $VII_{4,5}$).
		Our method here is to substitute the quantity $\p_3 b^{\var}_2$ in $\eqref{eqr}_2$
		\beq\label{202}
		\p_3 b^{\var}_2 = -\p_t u_3^\var-
		\underset{\rm Some~good~dissipative~structure~terms}{\underbrace{\p_3 \varrho^\var
				+\p_{1}^2 u_3^\var + \var\p_{2}^2 u_3^\var + \var \p_{3}^2 u_3^\var 
				+\p_3 {\rm div}u^\var+\p_2 b_3^\var+(F_2^\var)_3}}
		\deq
		into this term.
		Then, we transfer the time derivative from $\p_t u_3^\var$
		into $\p_t \vro^{\var}$ and $\hs \p_t u^{\var}_3$ and using the 
		$\eqref{eqr}_1$ and $\eqref{eqr}_2$ again to
		deal with the first term on the right hand side of 
		\eqref{202}.
		Then, due to these four layers of energy functionals,
		we rewrite the differential inequality \eqref{201}
		as the following form
		\beqq
		\frac{d}{dt}\widehat{\mathcal{E}}^{m-1}_{tan}(t)
		+\kappa C_0^{-\frac{1}{1-\zeta}}
		\widehat{\mathcal{E}}^{m-1}_{tan}(t)^{1+\frac{1}{1-\zeta}}\le 0,
		\deqq
		which yields directly the decay estimate
		\beqq
		\widehat{\mathcal{E}}^{m-1}_{tan}(t)
		\lesssim C_0(1+t)^{-(1-\zeta)}.
		\deqq
		This estimate and the differential inequality \eqref{201}
		will help us establish the time weighted estimate
		\beq\label{203}
		(1+t)^{1-\zeta}{\mathcal{E}}^{m-1}_{tan}(t)
		+\kappa \int_0^t (1+\tau)^{1-2\zeta} \md_{tan}^{m-1}(\tau)d\tau
		\lesssim C_0.
		\deq
		This estimate helps us give the control of  nonlinear terms as one establishes  estimate for the vertical derivative.
		
		\textbf{Step 2: Estimate of vertical derivative.}
		In order to close the energy estimate, we need to establish the estimates for
		the vertical derivative of density, velocity and magnetic field.
		Due to the only horizontal dissipation of velocity and magnetic field, we can not apply the anisotropic
		Sobolev inequalities to estimate the terms 
		$$
		\i b^{\var} \cdot \nabla \p_3^m u^{\var} \, \p_3^m b^{\var} \ dx
		~{\rm and}~ \i \f{1}{1+\vro^{\var}} b^{\var}  \cdot \nabla \p_3^{m} b^{\var} \, \p_3^m u^{\var} \ dx.
		$$
		Then, we can overcome this difficulty by using 
        cancellation mechanism. To be precise, we have
		\beqq
		\i \f{1}{1+\vro^{\var}} b^{\var} \cdot \nabla \p_3^{m} u^{\var} \, \p_3^m b^{\var} \ dx +  \i \f{1}{1+\vro^{\var}} b^{\var} \cdot \nabla \p_3^{m} b^{\var} \, \p_3^m u \ dx \\ = - \i {\rm{div}}( \f{1}{1+\vro^{\var}} b^{\var}) \, \p_3^{m} u^{\var} \, \p_3^m b^{\var} \ dx.
		\deqq
		Thus, it is necessary to  add the weight $\f{1}{1+\vro^{\var}}$ to magnetic field as we establish the energy estimate, see Lemma \ref{lemma32} in detail.
		In this process, due to the lower dissipative structure of density in the horizontal directions,
		we will encounter the most challenging two terms $II_{1,1}$ and $II_{1,2}$ as follows:
		$$
		\i |\p_3^m \vro^{\var}|^2 \du^{\var} \ dx  ~{\rm{and }} ~  \i |\p_3^m \vro^{\var}|^2 \nabla_h \cdot u^{\var}_h \ dx.
		$$
		Our method is to apply the quantity $\du^{\var}$ in the density equation $\eqref{eqr}_1$
		\beqq
		\du^{\var} = -(\p_t \vro^{\var} +  u^{\var} \cdot \nabla \vro^{\var} + \vro^{\var} \, \du^{\var}),
		\deqq
		and  quantity $\p_3 \vro^{\var}$ in the velocity equation $\eqref{eqr}_2$
		\beqq \bal
		\p_3 \vro^{\var} =  - \p_t u^{\var}_3 + (F_2^{\var})_3 +  \p_1^2 u^{\var}_3 + \var (\p_2^2 u^{\var}_3 + \p_3^2 u^{\var}_3) + \p_3 \du^{\var} - \p_3 b^{\var}_2 + \p_2 b^{\var}_3,
		\dal \deqq
		into these two difficult terms respectively.  
		This procedure generates some difficult terms that only have
		weak dissipative structure, such as $ \i \vro^{\var}  \p_3^m \vro^{\var} \, \p_3 u^{\var}_3 \, \p_3^m  \vro^{\var}  \ dx$ (see \eqref{ieq-sec2-1}). Then, using the
		decay rate for the horizontal derivative \eqref{203} and the anisotropic Sobolev inequality, all of these terms can ultimately be controlled, despite the complexity and length of the process. To be precise, we handle with this term as follows
		\beqq \bal
		&\;  \int_{0}^{t} \i \vro^{\var}  \p_3^m \vro^{\var} \, \p_3 u^{\var}_3 \, \p_3^m  \vro^{\var}  \ dx d\tau\\
		\lesssim &\;\int_{0}^{t} \| \vro^{\var}\|_{L^{\infty}} \| \p_3^m \vro^{\var}\|_{L^2}   \| \p_3^m \vro^{\var}\|_{L^2} \| \p_3 u^{\var}_3\|_{L^{\infty}} d\tau\\
		\lesssim &\;  \int_{0}^{t}\|(\vro^{\var}, \p_3 \vro^{\var}, \p_3^{m} \vro^{\var})\|_{L^2}^{\f94} \|(\nabla_h \vro^{\var},\nabla_h \p_3 \vro^{\var}, \p_3 u^{\var}_3,\p_3^2 u^{\var}_3)\|_{H_{tan}^1}^{\f74} d\tau\\
		\lesssim &\; \left\{\underset{0\le \tau \le t}{\sup} \|(\vro^{\var}, \p_3 \vro^{\var}, \p_3^{m} \vro^{\var})\|_{L^2} \right\}^{\f94}
		\left\{\int_{0}^{t} \|(\nabla_h \vro^{\var},\nabla_h \p_3 \vro^{\var}, \p_3 u^{\var}_3,\p_3^2 u^{\var}_3)\|_{H_{tan}^1}^{2} (1+\tau)^{1-2\zeta} d \tau \right\}^{\f78} \\
		&\; \times \left\{\int_{0}^{t} (1+\tau)^{-7(1-2\zeta)} d \tau \right\}^{\f18}, 
		\dal \deqq
		where $\zeta \in (0, \f{1}{20})$.
		At this time, the decay in time estimate of horizontal derivative \eqref{203}
		will return to balance out the growth of derivative estimate.
		In other words, 
		our method here constructs a two-tier energy method that couples the boundedness
		of derivative to the decay of horizontal derivative, the latter of which is necessary to balance out the growth of derivative.
		
		\textbf{Step 3: Convergence rate}. Let us focus on the convergence rate of solutions between
		\eqref{eqr} and \eqref{eqr0}.
		Let $(\vro^{\var}, u^\var, b^\var)$ and $(\vro^0, u^0, b^0)$ be the global solutions of systems \eqref{eqr}
		and \eqref{eqr0} with the same initial data assumption. By energy method, one can establish the following estimate
		\beqq
		\begin{aligned}
			&\; \|(\vro^\var-\vro^0, u^\var-u^0, b^\var-b^0)(t)\|_{H^1}^2 \\
			\le
			&\; C \var \int_0^t \|(\p_2^2u^\var, \p_3^2 u^\var, \p_3^2b^\var)(\tau)\|_{H^1} \|(\vro^\var-\vro^0, u^\var-u^0, b^\var-b^0)(\tau)\|_{H^1} d\tau \exp\left\{\int_0^t (\mathcal{{A}}(\tau)+\mathcal{{B}}(\tau)
			) d\tau\right\}.
		\end{aligned}
		\deqq
		In order to obtain the time integrability of $\mathcal{{A}}(t)$ and $\mathcal{{B}}(t)$, we require the regularity index $m \ge 9$.
		Define $T^{*}:=\var^{-\f{m-1}{m(1-\zeta)}}$,
		we establish the energy estimate in two time scales $(0, T^*]$ and $[T^*, + \infty)$
		motivated by \cite{WZ2023, BGM2017}. 
		For $ t \in (0, T^*]$,   which means that $1+t \lesssim \var^{-\f{m-1}{m(1-\zeta)}}$, we transform the grow-in-time rate into the coefficient $\var$. To be more precise, due to the time decay estimate built before, we have 
		\beqq \bal
		\|(\vro^\var-\vro^0, u^\var-u^0, b^\var-b^0)(t)\|_{H^1}^2 
		\lesssim  \var (1+t)^{\f{1+(m-2)\zeta}{m-1}} \lesssim \var^{1-\f{1+(m-2)\zeta}{m (1-\zeta)} }.
		\dal \deqq
		For $ t \in [T^*, + \infty)$, applying the above estimate and the relation $(1+T^*)^{-1} \lesssim \var^{\f{m-1}{m(1-\zeta)}} $, we  have
		\beqq \bal 
		&\; \|(\vro^\var-\vro^0, u^\var-u^0, b^\var-b^0)(t)\|_{H^1}^2     \\  \lesssim 
		&\; \left( \mathcal{\overline{E}}_q(T^*) + \var \int_{T^*}^{t} \|(\p_2^2u^\var, \p_3^2 u^\var, \p_3^2 b^\var)(\tau)\|_{H^1} \|(\vro^\var-\vro^0, u^\var-u^0, b^\var-b^0)(\tau)\|_{H^1} d\tau \right) \exp\left\{\int_{T^*}^{t} (\mathcal{{A}}(\tau)+\mathcal{{B}}(\tau)
		) d\tau\right\} \\ 
		\lesssim &\;  \var^{1-\f{1+(m-2)\zeta}{m (1-\zeta)} } + \var^{1-\f{m-3}{2(m-2)}} (1+ T^*)^{ -\f{m-4 -(3m-8)\zeta}{2(m-2)}}  \lesssim \var^{1-\f{1+(m-2)\zeta}{m (1-\zeta)} },
		\dal \deqq 
		where we transform the decay-in-time rate into the coefficient $\var$.
		Therefore, combining the decay rate in these two time scales, we establish the specific convergence rate depending on the parameter $\var$.
		
		\section{Global in time uniform regularity}\label{global-estimate}
		
		In this section, we will establish global-in-time well-posedness for the equations \eqref{eqr}
		under the small initial data \eqref{condition}.
		First of all, 
		one can establish the uniform (with respect to $\var$)
		local-in-time existence and uniqueness for system \eqref{eqr}.
		Then, we will extend the local-in-time solution to be global one.
		Thus, our target in this section is to establish the global-in-time
		uniform regularity under the condition of small initial data  \eqref{condition}.
		For notational convenience, we drop the superscript $\var$ throughout this section.
		Let us define the energy norms
		\beqq
		\begin{aligned}
			\me_{tan}^{k}(t)
			:=&\|(\vro,u,b)(t)\|_{H^{k}_{tan}}^2+\|\p_3 (\vro,u,b)(t)\|_{H^{k-1}_{tan}}^2, \\
			\mathcal{E}^{k}(t)
			:= &\|(\vro, u,b)(t)\|_{H^{k}}^2 
            +\|\Lambda_h^{-(1-\zeta)}( \vro, u, b)(t)\|_{L^2}^2
            +\|\Lambda_h^{-(1-\zeta)}\p_3 ( \vro, u, b)(t)\|_{L^2}^2,
		\end{aligned}
		\deqq
		and the dissipation norms
		\beqq
		\begin{aligned}
			\mathcal{D}_{tan}^{k}(t)
			:=&\|(\p_1 u, {\rm{div}} u, \nabla_h b)(t)\|_{H^k_{tan}}^2 + \var \|(\p_2 u, \p_3 u, \p_3 b)(t)\|_{H^k_{tan}}^2
			+\|\p_3(\p_1 u, \du, \nabla_h b)(t)\|_{H^{k-1}_{tan}}^2\\
			&\;+\var \|\p_3(\p_2 u, \p_3 u,
			\p_3 b)(t)\|_{H^{k-1}_{tan}}^2 +\|(\p_2 u,\nabla_h \vro)(t)\|_{H^{k-1}_{tan}}^2+ \|\p_3(\p_2 u,\nabla_h \vro)(t)\|_{H^{k-2}_{tan}}^2,\\
			\md^{k}(t)
			:=&\|(\p_1 u, {\rm{div}} u, \nabla_h b)(t)\|_{H^k}^2 +  \var \|(\p_2 u, \p_3 u, \p_3 b)(t)\|_{H^k}^2 + \|(\p_2 u,\nabla_h \vro)(t)\|_{H^{k-1}}^2.
		\end{aligned}
		\deqq
		Now, let us state the global uniform estimate as follows. 
		\begin{prop}\label{main_pro}
			Let $m \ge 4$ be an integer and $0<\zeta<\frac{1}{20}$, assume the initial data $(\vro_0, u_0, b_0)$ satisfying
			${\rm div} b_0=0$.
			For the solution $(\vro,u, b)$ of equation \eqref{eqr}
			defined on $ [0,T] \times \mathbb{R}^3$, assume there exists
			a small positive constant $\delta$ such that for any $t \in (0, T]$
			\begin{equation}\label{assumption}
				\underset{0\le \tau \le t}{\sup}\mathcal{E}^m(\tau)
				+\underset{0\le \tau \le t}{\sup}[(1+\tau)^{s} \mathcal{E}_{tan}^{m-1}(\tau)]
				+\int_0^t (1+\tau)^{\sigma} \md_{tan}^{m-1}(\tau)d\tau
				+\int_0^t \md^{m}(\tau)d\tau \le \delta,
			\end{equation}
			then this solution will satisfy the estimate
			\begin{equation}\label{close_assumption}
				\underset{0\le \tau \le t}{\sup}\mathcal{E}^m(\tau)
				+\underset{0\le \tau \le t}{\sup}[(1+\tau)^{s} \mathcal{E}_{tan}^{m-1}(\tau)]
				+\int_0^t (1+\tau)^{\sigma} \md_{tan}^{m-1}(\tau)d\tau
				+\int_0^t \md^{m}(\tau)d\tau \le \frac{\delta}{2},
			\end{equation}
			where  the small positive constant 
            $$\delta:=8C(\|(\vro, u,  b)(0)\|_{H^m}^2
			+\|(\hs( \vro, u, b), \hs \p_3 ( \vro, u, b))(0)\|_{L^2}^2+ \|\hs u(0)\|_{L^2}\|(\vro, u)(0)\|_{H^m}^2),$$
            and $s:=1-\zeta$, $\sigma:=1-2\zeta$.
			Here $C$ is a positive constant independent of time $t$ and
			parameter $\var$.
		\end{prop}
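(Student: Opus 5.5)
The plan is a bootstrap argument. Under \eqref{assumption}, with $\delta$ small, we derive four layers of estimates. Each one bounds a piece of the left side of \eqref{close_assumption} by $C$ times the initial data, plus $C\delta^{3/2}$, plus a small multiple of the dissipation. Summing the four layers with suitable weights and taking $\delta$ small then yields \eqref{close_assumption}.

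\textbf{Layer 1: tangential energy and enhanced dissipation.} Apply $\partial^{\alpha_h}$ with $|\alpha_h|\le m-1$, and $\partial_3\partial^{\alpha_h}$ with $|\alpha_h|\le m-2$, to \eqref{eqr}. Take $L^2$ inner products and use the symmetric structure: the linear terms $\nabla\varrho$, $\mathrm{div}u$, $\nabla b_2-\partial_2 b$, $e_2\mathrm{div}u-\partial_2 u$ cancel. This gives the direct dissipation of $\partial_1 u$, $\mathrm{div}u$, $\nabla_h b$, together with the $\varepsilon$-terms.

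Next we add the cross terms.
\begin{itemize}
\item Pairing the $b$-equation with $-\partial_2 u$ produces $\|\partial_2 u\|^2$ at one derivative lower.
\item Pairing the $u$-equation with $\nabla_h\varrho$ produces $\|\nabla_h\varrho\|^2$, using the continuity equation to handle $\partial_t\varrho$.
\end{itemize}
Multiplying these by a small constant gives a functional $\widehat{\mathcal{E}}^{m-1}_{tan}\simeq\mathcal{E}^{m-1}_{tan}$ satisfying \eqref{201}. The nonlinear terms are controlled by the anisotropic Sobolev inequalities of the Appendix as $\mathcal{E}^m(t)^{1/2}\mathcal{D}^{m-1}_{tan}(t)$.

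\textbf{Layer 2: negative horizontal norms.} Apply $\Lambda_h^{-s}$ to the system and to its $\partial_3$ derivative, and estimate the nonlinear terms by the Hardy--Littlewood--Sobolev inequality (Lemma \ref{H-L}). The troublesome term is $\int\Lambda_h^{-s}(\frac{\varrho}{1+\varrho}\partial_3 b_2)\cdot\Lambda_h^{-s}u_3\,dx$, since $\partial_3 b$ has no dissipation. We handle it by substituting \eqref{202} and moving $\partial_t$ onto $\varrho$ and $\Lambda_h^{-s}u_3$. This creates a boundary term $\|\Lambda_h^{-s}u\|_{L^2}\|(\varrho,u)\|_{H^m}^2$, which explains the last term in the definition of $\delta$. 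The outcome is that $\sup_t\|\Lambda_h^{-s}(\cdot)\|^2$ stays bounded by the data plus $\delta^{3/2}$.

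Interpolating $\|f\|\le\|\Lambda_h^{-s}f\|^{1/(1+s)}\|\nabla_h f\|^{s/(1+s)}$ turns \eqref{201} into
\[
\frac{d}{dt}\widehat{\mathcal{E}}\lesssim-\widehat{\mathcal{E}}^{1+1/s}.
\]
This gives the decay $(1+t)^{-s}$. Multiplying \eqref{201} by $(1+t)^{\sigma}$ and absorbing $\sigma(1+t)^{\sigma-1}\widehat{\mathcal{E}}$ using this decay then gives \eqref{203}. This step uses $\sigma<s$.

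\textbf{Layer 3: full $H^m$ energy.} For the full $H^m$ estimate, and in particular the pure vertical derivatives $\partial_3^m$, we use a weighted energy with $\frac{1}{1+\varrho}$ attached to $b$. This makes the terms $b\cdot\nabla\partial_3^m u\,\partial_3^m b$ cancel up to a divergence, as displayed in Section \ref{section-approach}.

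\textbf{Layer 4: enhanced dissipation at full order.} At the full $H^{m-1}$ level we obtain $\int_0^t\|(\partial_2 u,\nabla_h\varrho)\|_{H^{m-1}}^2\,d\tau$ by the same cross terms as in Layer 1.

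The main obstacle is in Layer 3: the terms $\int|\partial_3^m\varrho|^2\mathrm{div}u\,dx$ and $\int|\partial_3^m\varrho|^2\nabla_h\cdot u_h\,dx$, where $\partial_3^m\varrho$ has no dissipation. For these I would
\begin{itemize}
\item replace $\mathrm{div}u$ using $\eqref{eqr}_1$ and $\partial_3\varrho$ using $\eqref{eqr}_2$, then integrate by parts in time;
\item bound the residual cubic terms, such as $\int\varrho\,\partial_3^m\varrho\,\partial_3 u_3\,\partial_3^m\varrho\,dx$, by $(\sup\mathcal{E}^m)^{9/8}$ times the time-weighted tangential dissipation from \eqref{203} raised to the power $7/8$, times $\big(\int(1+\tau)^{-7\sigma}d\tau\big)^{1/8}$.
\end{itemize}
The last factor is finite since $7\sigma>1$, which holds for $\zeta<1/20$. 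Similar Hölder-in-time arguments, which also need $\zeta<1/20$ (e.g. integrability of $\mathcal{E}_{tan}^{7/12}\mathcal{D}_{tan}^{1/4}$), close all remaining terms by $\delta^{3/2}$. Choosing $\delta$ small then gives the $\delta/2$ bound.
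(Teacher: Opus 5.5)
Your proposal is correct and follows essentially the same route as the paper. The shared steps are: tangential and one-$\partial_3$ energy estimates with the $\frac{1}{1+\varrho}$ weight on $b$; enhanced dissipation of $(\partial_2 u,\nabla_h\varrho)$ from the cross terms; negative horizontal norms, where substituting $\partial_3 b_2$ from the $u_3$-equation produces exactly the extra $\|\Lambda_h^{-s}u(0)\|_{L^2}\|(\varrho,u)(0)\|_{H^m}^2$ term in $\delta$; decay via interpolation and the ODE inequality; and time-derivative substitutions for $\int|\partial_3^m\varrho|^2\,\mathrm{div}\,u\,dx$ and $\int|\partial_3^m\varrho|^2\,\nabla_h\cdot u_h\,dx$, closed by H\"older in time against the weighted tangential dissipation and a $\delta^{3/2}$ bootstrap.
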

		
		\subsection{Horizontal and vertical derivative estimate}\label{sec-tan-normal}
		It is well-known that the norm $ \|f\|_{H^m}$ is equivalent to the norms $\|f\|_{H_{tan}^m}$ and $\|  \p_3^m f\|_{L^2}$.
		In this subsection, we will establish the estimate for the horizontal derivative of density, velocity and magnetic field.
		\begin{lemm}\label{lemma32}
			Under the assumption \eqref{assumption}, for any smooth solution $(\vro ,u, b)$ of equation \eqref{eqr},
			it holds for the positive integer $k = m $ or $ k = m-1$
			\beq\label{3101} \bal
			&\; \f12 \frac{d}{dt}(\| (\vro, u, \f{1}{\sqrt{1+\vro}} b)\|_{L^2}^2 + \sum\limits_{|\alpha_h| = k}\| (\p^{\ah}\vro, \p^{\ah} u, \f{1}{\sqrt{1+\vro}}\p^{\al_h} b)\|_{L^2}^2 ) 
			\\
			&\; +  \|(\p_1 u, \du,\nabla_h b)\|_{H_{tan}^k}^2 + \var \|(\p_2 u, \p_3 u,\p_3 b)\|_{H_{tan}^k}^2
			\lesssim  \sqrt{\me^m(t)}\md_{tan}^{k}(t).
			\dal \deq
		\end{lemm}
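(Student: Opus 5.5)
We apply $\p^{\ah}$ with $|\ah|=k$ (and also $\ah=0$) to \eqref{eqr}. We then test the density equation with $\p^{\ah}\vro$, the momentum equation with $\p^{\ah}u$, and the magnetic equation with $\frac{1}{1+\vro}\p^{\ah}b$, and sum. Since only horizontal derivatives are applied, the linear structure is preserved exactly.

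The linear terms organize as follows. The terms $\int \p^{\ah}{\rm div}u\,\p^{\ah}\vro$ and $\int\nabla\p^{\ah}\vro\cdot\p^{\ah}u$ cancel after integration by parts. The viscous terms give $\|\p_1\p^{\ah}u\|_{L^2}^2+\var\|(\p_2,\p_3)\p^{\ah}u\|_{L^2}^2+\|{\rm div}\,\p^{\ah}u\|_{L^2}^2$. The magnetic coupling $\int(\nabla\p^{\ah}b_2-\p_2\p^{\ah}b)\cdot\p^{\ah}u$ cancels against $\int(e_2{\rm div}\,\p^{\ah}u-\p_2\p^{\ah}u)\cdot\p^{\ah}b$ by integration by parts. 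The magnetic diffusion gives $\|\nabla_h\p^{\ah}b\|_{L^2}^2+\var\|\p_3\p^{\ah}b\|_{L^2}^2$. The weight $\frac{1}{1+\vro}$ disturbs these exact identities only by commutator-type terms. Examples are $\int\p_t(\frac{1}{1+\vro})|\p^{\ah}b|^2$, which is handled via $\p_t\vro=-{\rm div}u+F_1$, and the terms where derivatives from the diffusion or coupling fall on $\frac{1}{1+\vro}$. Each such term carries a factor $\nabla\vro$ or $\p_t\vro$, and we put it in the nonlinear bin. Summing over $|\ah|=k$ together with the $L^2$ level yields the left side of \eqref{3101}.

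We bound each nonlinear term by $\sqrt{\me^m}\md^k_{tan}$. The tool is the anisotropic Sobolev inequality of the type $\int|fgh|\lesssim\|f\|_{L^2}^{1/2}\|\p_1f\|_{L^2}^{1/2}\|g\|_{L^2}^{1/2}\|\p_2 g\|_{L^2}^{1/2}\|h\|_{L^2}^{1/2}\|\p_3h\|_{L^2}^{1/2}$ from the appendix. We expand with Leibniz: one factor goes into $\sqrt{\me^m}$, where full $H^m$ control including vertical derivatives is available, and the remaining two go into $\md^k_{tan}$. For the momentum nonlinearities $\frac{\vro}{1+\vro}\p_1^2u$ and $\frac{\vro}{1+\vro}\nabla{\rm div}u$, we integrate by parts once in $x_1$ or move the divergence, so that the top-order factors become $\p_1\p^{\ah}u$ and ${\rm div}\,\p^{\ah}u$. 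The $\var$-terms are handled the same way using $\var$-dissipation. For $-\frac{\vro}{1+\vro}(-\nabla b_2+\p_2b)$ we use the $\nabla_h b$ dissipation, integrating by parts in $x_3$ when necessary to avoid $\p_3b$ at top order.

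The main obstacle is the top-order transport and stretching terms, where only horizontal or weak dissipation is available. The first is $\int u\cdot\nabla\p^{\ah}\vro\,\p^{\ah}\vro$, where $\vro$ has no dissipation at order $k$ (only $\nabla_h\vro$ in $H^{k-1}_{tan}$). We integrate by parts to get $\frac12\int{\rm div}u|\p^{\ah}\vro|^2$. We also pair $-\vro\,{\rm div}\,\p^{\ah}u$ with $\p^{\ah}\vro$, which is controlled by ${\rm div}u$ dissipation times $\p^{\ah}\vro$ written as $\nabla_h\p^{\ah'}\vro$ with $|\ah'|=k-1$. Commutators with $\nabla\p^{\ah}\vro$ contain $\p^{\beta_h}u\cdot\nabla\p^{\ah-\beta_h}\vro$; the vertical component $u_3\p_3$ is the dangerous part. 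We place $\p_3\p^{\ah-\beta}\vro$ in $\me^m$ and $\p^{\beta}u_3$ in dissipation via $\p_3u_3={\rm div}u-\nabla_h\cdot u_h$ together with the $\p_1u,\p_2u$ dissipation. The second is the magnetic stretching pair $\int\frac{1}{1+\vro}b\cdot\nabla\p^{\ah}b\,\p^{\ah}u+\int\frac{1}{1+\vro}b\cdot\nabla\p^{\ah}u\,\p^{\ah}b$. We combine these into $-\int{\rm div}(\frac{b}{1+\vro})\p^{\ah}u\,\p^{\ah}b$ as explained in Section \ref{section-approach}; this is exactly why the weight was inserted.

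When $k=m$, the term $\p_2\p^{\ah}u$ is not in $\md_{tan}^m$, and we must check that every top-order $\p_2u$ can be traded. Either an integration by parts moves $\p_2$ onto a $b$ factor, or $\p_2u_2$ is rewritten using ${\rm div}u$ and $\p_1u_1$ together with $\p_3u_3$ placed in energy. We expect this bookkeeping, rather than any single estimate, to be the lengthiest part.
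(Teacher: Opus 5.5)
Your architecture matches the paper's. That covers the weighted magnetic energy, the exact cancellation of the linear couplings, merging the two $b\cdot\nabla$ terms into one carrying $\nabla(\frac{1}{1+\varrho})$, integration by parts on $\frac{\varrho}{1+\varrho}(\partial_1^2u+\nabla{\rm div}\,u)$, and the anisotropic trilinear bounds. The top-order transport terms you single out are handled routinely.

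\textbf{The gap is at the $L^2$ level $\alpha_h=0$.} The lemma includes this level, and there your rule ``one factor in $\sqrt{\mathcal{E}^m}$, the other two in $\mathcal{D}^k_{tan}$'' cannot work. The reason is that $\mathcal{D}^k_{tan}$ controls only horizontal derivatives of $(\varrho,u,b)$, plus $\partial_3u_3$ through ${\rm div}\,u$, and never the undifferentiated functions. Most cubic terms survive because they contain ${\rm div}\,u$ or a horizontal derivative (e.g.\ $-\frac12\int\varrho^2\,{\rm div}\,u\,dx$). The following term does not:
\[
\int_{\mathbb{R}^3}\frac{\varrho}{1+\varrho}\,u_3\,\partial_3 b_2\,dx .
\]
It comes from $\frac{\varrho}{1+\varrho}\nabla b_2$ in $F_2$ tested against $u$, and none of $\varrho$, $u_3$, $\partial_3 b_2$ is controlled by $\mathcal{D}^k_{tan}$. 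The anisotropic inequalities give at best
\[
\|(\varrho,u_3,\partial_3b_2)\|_{L^2}^{3/2}\,\|(\partial_2\varrho,\partial_3u_3,\partial_{13}b_2)\|_{L^2}^{3/2}\sim(\mathcal{E}^m)^{3/4}(\mathcal{D}^{m-1}_{tan})^{3/4}.
\]
This exceeds $\sqrt{\mathcal{E}^m}\,\mathcal{D}^k_{tan}$ by the factor $(\mathcal{E}^m/\mathcal{D}^{m-1}_{tan})^{1/4}$, which is unbounded. Your suggested $x_3$-integration by parts does not help. It leaves $\int\partial_3(\frac{\varrho}{1+\varrho})\,u_3\,b_2\,dx$, which has the same deficiency.

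\textbf{The missing idea is the negative horizontal Sobolev part of $\mathcal{E}^m$, which you never invoke.} The paper applies the interpolation
\[
\|f\|_{L^2}\lesssim\|\Lambda_h^{-s}f\|_{L^2}^{\frac{1}{1+s}}\|\nabla_hf\|_{L^2}^{\frac{s}{1+s}}
\]
to $f=\varrho,\,u_3,\,\partial_3 b_2$. This is why $\|\Lambda_h^{-s}\partial_3(\varrho,u,b)\|_{L^2}$ is built into $\mathcal{E}^m$. The dissipation exponent rises to $\frac34+\frac{3s}{4(1+s)}>1$, since $s=1-\zeta>\frac12$. The surplus power of dissipation is then turned into energy via $\mathcal{D}^{m-1}_{tan}\lesssim\mathcal{E}^m$, which yields exactly $\sqrt{\mathcal{E}^m}\,\mathcal{D}^{m-1}_{tan}$.

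\textbf{A smaller omission occurs for $k=m$.} The piece $\int\varrho\,\nabla\partial^{\alpha_h}\varrho\cdot\partial^{\alpha_h}u\,dx$ of $\varrho\nabla\varrho$ carries $m+1$ derivatives of $\varrho$. It therefore lies in neither $\mathcal{E}^m$ nor $\mathcal{D}^m_{tan}$. Before any Leibniz splitting you must symmetrize it as $\frac12\int\partial^{\alpha_h}(\varrho^2)\,{\rm div}\,\partial^{\alpha_h}u\,dx$.
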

		\begin{proof}
			For any $|\al_h| \le m$ , applying the $\p^{\al_h}$-operator to the equation $\eqref{eqr}_3$ and multiplying by  $\f{1}{1+ \vro}$,  we have
			\beqq \bal
			&\; \f{1}{1+ \vro}  \p_t \p^{\al_h} b +  e_2 \p^{\al_h} \du-\Delta_h \p^{\al_h} b -\var \p_3^2 \p^{\al_h} b
			- \p_2 \p^{\al_h} u  \\
			= &\; \f{1}{1+ \vro} \p^{\al_h}( b \cdot \nabla u-b \, \du - u \cdot \nabla b) + \f{\vro}{1+\vro} (e_2 \p^{\al_h} \du-\Delta_h \p^{\al_h} b 
			- \var \p_3^2 \p^{\al_h} b + \p_2 \p^{\al_h} u ).
			\dal \deqq	
			Using the density equation $\eqref{eqr}_1$, it is easy to check that 
			\beqq \bal
			\i  \f{1}{1+ \vro} \p_t \p^{\al_h} b \cdot \p^{\al_h} b\ dx  = &\; \f{d}{dt} \f12 \i  \f{1}{1+ \vro} |\p^{\al_h} b|^2 \ dx -\i  \p_t(\f{1}{1+ \vro}) |\p^{\al_h} b|^2 \ dx \\
			= &\; \f{d}{dt} \f12 \i  \f{1}{1+ \vro} |\p^{\al_h} b|^2 \ dx - \i \f{1}{2(1+ \vro)^2}|\p^{\al_h} b|^2  (\du +u \cdot \nabla \vro + \vro \, \du)\ dx. 
			\dal \deqq
			Thus we have
			\beqq
			\begin{aligned}
				&\f{d}{dt} \f12 \i  \f{1}{1+ \vro} |\p^{\al_h} b|^2 \ dx  +\| \nabla_h \p^{\al_h} b \|_{L^2}^2 + \var \| \p_3 \p^{\al_h} b \|_{L^2}^2
				+\i( e_2 \p^{\al_h} \du- \p_2 \p^{\al_h} u  )  \p^{\al_h} b \ dx\\
				=& \i \f{1}{2(1+ \vro)^2} |\p^{\al_h} b|^2  (\du +u \cdot \nabla \vro + \vro \, \du)\ dx+\i \f{1}{1+ \vro} \p^{\ah}(b \cdot \nabla u - u \cdot \nabla b -b \, \du )\cdot \p^{\ah} b \ dx\\
				&
				+\i \f{\vro}{1+\vro} (e_2 \p^{\al_h} \du-\Delta_h \p^{\al_h} b 
				- \var \p_3^2 \p^{\al_h} b - \p_2 \p^{\al_h} u )\cdot \p^{\ah} b\ dx,
			\end{aligned}
			\deqq
			which, together with the density equation $\eqref{eqr}_1$ and velocity equation   $\eqref{eqr}_2$, yields that
			\begin{equation}\label{3102}
				\begin{aligned}
					&\frac{d}{dt}\frac{1}{2}\i(|\p^{\ah}u|^2+|\p^{\ah} \vro|^2+\f{1}{1+ \vro} |\p^{\al_h} b|^2 )dx
					+\| \p_1 \p^{\ah}u \|_{L^2}^2 +\|\p^{\ah} \du\|_{L^2}^2+\|\nabla_h \p^{\ah}b\|_{L^2}^2\\
					&\; + \var ( \| \p_2 \p^{\al_h} u \|_{L^2}^2 + \| \p_3 \p^{\al_h} u \|_{L^2}^2 + \| \p_3 \p^{\al_h} b \|_{L^2}^2)\\
					=&-\i \p^{\ah} (u\cdot \nabla \vro +\vro \, \du )\cdot \p^{\ah}\vro \ dx
					-\i \p^{\ah}( u \cdot \nabla u+ \vro \nabla \vro) \cdot \p^{\ah}  u\ dx\\
					& -\i \p^{\ah}( \f{\vro}{1+\vro}(\p_1^2 u+\nabla \du-\nabla b_2 +\p_2 b)) \cdot \p^{\ah}  u\ dx  +\i \p^{\ah} ( \f{1}{1+\vro} b \cdot \nabla b) \cdot \p^{\ah}  u\ dx  \\
					&-\f12 \i \p^{\ah} ( \f{1}{1+\vro} \nabla (|b|^2)) \cdot \p^{\ah}  u\ dx  
					+\i \f{1}{2(1+ \vro)^2} |\p^{\ah}  b|^2  (\du +u \cdot \nabla \vro + \vro \, \du)\ dx \\
					&-\i \f{1}{1+ \vro} \p^{\ah} (b \, \du + u \cdot \nabla b)\cdot \p^{\ah}  b \ dx+ \i \f{1}{1+ \vro} \p^{\ah} (b \cdot \nabla u ) \cdot \p^{\ah}  b \ dx
					\\
					&+\i \f{\vro}{1+\vro} \p^{\ah}(e_2   \du-\Delta_h   b - \var \p_3^2  b - \p_2  u )\cdot \p^{\ah}  b\ dx  - \var \i \p^{\ah}( \f{\vro}{1+\vro}(\p_2^2 u +  \p_3^2 u)) \cdot \p^{\ah}  u\ dx \\
					:= & \sum_{i=1}^{10} I_i,
				\end{aligned}
			\end{equation}
			where we have used the basic fact
			\beqq \bal
			& \i \p^{\ah} \nabla \vro \cdot \p^{\ah} u \ dx
			+\i \p^{\ah} \du \cdot \p^{\ah} \vro \ dx=0,\\
			& \i \p^{\ah} (\nabla b_2 - \p_2 b )\cdot \p^{\ah} u \ dx
			+\i \p^{\ah} (e_2 \du - \p_2 u )\cdot \p^{\ah} b \ dx=0.
			\dal \deqq
			\textbf{First of all, we deal with the case $\alpha_h = 0$. }  
			Integrating by parts and using the anisotropic type inequality $\eqref{ie:Sobolev}$, we have
			\begin{align*}
				I_1 = &\;  - \f12 \i \vro^2 \du \ dx \lesssim \| \du\|_{L^2}^{\f12}\| \p_3 \du\|_{L^2}^{\f12} \| \vro \|_{L^2}  \| \nabla_h \vro \|_{L^2},\\
				I_2 = &\; \f12 \i (|u|^2 + \vro^2 ) \du \ dx  \lesssim \| \du\|_{L^2}^{\f12}\| \p_3 \du\|_{L^2}^{\f12} \| (u,\vro )\|_{L^2}  \| \nabla_h(u, \vro) \|_{L^2},\\
				I_4 + I_8 = &\; \i (u \cdot b) \, b_h \cdot \nabla_h (\f{1}{1+ \vro})  \ dx + \i (u \cdot b) \, b_3 \cdot \p_3 (\f{1}{1+ \vro})  \ dx \\
				\lesssim &\; \| b\|_{L^{\infty}} \|u\|_{L^{2}}^{\f12} \|\p_1 u\|_{L^{2}}^{\f12} \|b_h\|_{L^{2}}^{\f12} \|\p_2 b_h \|_{L^{2}}^{\f12}  \|\nabla_h  (\f{1}{1+ \vro})\|_{L^{2}}^{\f12} \|\nabla_h \p_{3} (\f{1}{1+ \vro})\|_{L^{2}}^{\f12}  \\
				&\; + \| b\|_{L^{\infty}} \|u\|_{L^{2}}^{\f12} \|\p_1 u\|_{L^{2}}^{\f12} \|b_3\|_{L^{2}}^{\f12} \|\p_3 b_3\|_{L^{2}}^{\f12}  \|\p_3  (\f{1}{1+ \vro})\|_{L^{2}}^{\f12} \| \p_{23} (\f{1}{1+ \vro})\|_{L^{2}}^{\f12} \\
				\lesssim &\; \sqrt{\me^m(t)} \md_{tan}^{m-1} +\|b\|_{L^{\infty}} \|(u, b_3, \p_3 \vro)\|_{L^2}^{\f32} \|(\p_1 u, \p_3  b_3, \p_{23}  (\f{1}{1+ \vro}))\|_{L^2}^{\f32},\\
				\lesssim &\; \sqrt{\me^m(t)} \md_{tan}^{m-1}(t),
			\end{align*}
			where we have used the
			the anisotropic type inequality $\eqref{ie:Sobolev}_1$ to obtain that
			\beqq
			\| b\|_{L^{\infty}} \lesssim \| (b, \p_3 b)\|_{L^{2}}^{\f14} \|\nabla_h (b, \p_3 b)\|_{H_{tan}^1}^{\f34}.
			\deqq
			Now we deal with the term $I_3$.
			\beqq \bal
			I_3 =&\; \i  \Big(\p_1 u \, \p_1( \f{\vro}{1+\vro} u) + \du \, {\rm div} ( \f{\vro}{1+\vro} u) + \f{\vro}{1+\vro} (u_h \cdot \p_h b_2 + u \cdot \p_2 b ) \Big)\ dx \\
			&\; + \i  \f{\vro}{1+\vro}  u_3 \, \p_3 b_2 \ dx \\
			\lesssim &\; \|\vro\|_{L^{\infty}} \| (\p_1 u, \du)\|_{L^2}^2 
			+ \|u\|_{L^{2}}^{\f12} \|\p_1 u\|_{L^{2}}^{\f12} \|\p_1 (\f{\vro}{1+ \vro})\|_{L^{2}}^{\f12} \|\p_{12} (\f{\vro}{1+ \vro})\|_{L^{2}}^{\f12} \|\p_1 u\|_{L^{2}}^{\f12} \|\p_{13} u\|_{L^{2}}^{\f12} \\
			&\; + \|u\|_{L^{2}}^{\f12} \|\p_1 u\|_{L^{2}}^{\f12} \|\nabla  (\f{\vro}{1+ \vro})\|_{L^{2}}^{\f12} \|\nabla \p_{2} (\f{\vro}{1+ \vro})\|_{L^{2}}^{\f12} \|\du\|_{L^{2}}^{\f12} \|\p_3 \du\|_{L^{2}}^{\f12} 
			\\&\;+ \|\vro\|_{L^{2}}^{\f12} \|\p_2 \vro\|_{L^{2}}^{\f12} \|u\|_{L^{2}}^{\f12} \|\p_1 u\|_{L^{2}}^{\f12} \|\nabla_h b\|_{L^{2}}^{\f12} \|\p_3 \nabla_h b\|_{L^{2}}^{\f12} \\
			&\; + \|\vro\|_{L^{2}}^{\f12} \|\p_2 \vro\|_{L^{2}}^{\f12} \|u_3\|_{L^{2}}^{\f12} \|\p_3 u_3\|_{L^{2}}^{\f12} \|\p_3 b_2\|_{L^{2}}^{\f12} \|\p_{13} b_2\|_{L^{2}}^{\f12} \\
			\lesssim &\; \sqrt{\me^m(t)} \md_{tan}^{m-1} +\|(\vro, u_3, \p_3 b_2)\|_{L^2}^{\f32} \|(\p_2 \vro, \p_3  u_3, \p_{13} b_2)\|_{L^2}^{\f32}.
			\dal \deqq
			Due to the interpolation inequality 
			\beqq
			\|(\vro, u_3, \p_3 b_2)\|_{L^2}
			\lesssim \|\hs(\vro, u_3, \p_3 b_2)\|_{L^2}^{\frac{1}{1+s}}
			\|\nabla_h(\vro, u_3, \p_3 b_2)\|_{L^2}^{\frac{s}{1+s}},
			\deqq
			and $s > \f{9}{10}$, it is easy to check that
			\beqq \bal
			&\;  \|(\vro, u_3, \p_3 b_2)\|_{L^2}^{\f32} \|(\p_2 \vro, \p_3  u_3, \p_{13} b_2)\|_{L^2}^{\f32} \\
			\lesssim &\; \|\hs(\vro, u_3, \p_3 b_2)\|_{L^2}^{\frac{3}{2(1+s)}}
			\|\nabla_h(\vro, u_3, \p_3 b_2)\|_{L^2}^{\frac{3s}{2(1+s)}} \|(\p_2 \vro, \p_3  u_3, \p_{13} b_2)\|_{L^2}^{\f32} \lesssim \sqrt{\me^m(t)} \md_{tan}^{m-1}(t),\\
			\dal \deqq
			which yields that
			\beqq
			I_3  \lesssim \sqrt{\me^m(t)} \md_{tan}^{m-1}(t).
			\deqq
			Similarly, under the assumption \eqref{assumption}, we can check that
			\beqq \bal
			I_5 + I_6 +I_7 +I_9 +I_{10} \lesssim \sqrt{\me^m(t)}  \md_{tan}^{m-1}(t).
			\dal  \deqq
			Under the assumption \eqref{assumption},  then we have
			\begin{equation}\label{3103} \bal
				&\; \frac{d}{dt}\frac{1}{2}\i(|u|^2+\f{|b|^2}{1+\vro} + |\vro|^2)dx
				+\| (\p_1 u, \du, \nabla_h b)\|_{L^2}^2 + \var \| (\p_2  u ,\p_3 u, \p_3 b) \|_{L^2}^2
				\lesssim   \sqrt{\me^m(t)} \md_{tan}^{m-1}(t).  
				\dal \end{equation}
			\textbf{Now let us deal with the case $|\alpha_h| = k$($k=m$ or $k =m-1$).} 
			Obviously, it holds
			\beqq
			\i \p^{\ah}(-u\cdot \nabla \vro)\cdot \p^{\ah} \vro\ dx
			=-\sum_{0\le \beta_h \le \alpha_h}C^{\beta_h}_{\alpha_h}
			\i (\p^{\beta_h} u \cdot \p^{\al_h-\beta_h}\nabla  \vro)\cdot \p^{\al_h} \vro \ dx.
			\deqq
			If $\beta_h=0$, integrating by parts,
			we conclude
			\beqq
			-\i(u \cdot \p^{\al_h}\nabla \vro)\cdot \p^{\al_h} \vro\ dx
			=\frac{1}{2}\i |\p^{\al_h} \vro|^2  \du \ dx \lesssim \|\p^{\al_h} \vro \|_{L^2}^2 \| \du\|_{L^{\infty}}.
			\deqq
			If $1<|\beta_h|\le [\f{|\alpha_h|}{2}]$, the anisotropic type inequality \eqref{ie:Sobolev} yields directly
			\beqq
			\begin{aligned}
				\i (\p^{\beta_h} u \cdot \p^{\al_h-\beta_h}\nabla \vro)\cdot \p^{\al_h} \vro \ dx
				\lesssim
				\|\p^{\beta_h} u\|_{L^{\infty}}
				\|\p^{\al_h-\beta_h}\nabla \vro\|_{L^2}
				\|\p^{\al_h} \vro\|_{L^2}
				\lesssim
				\sqrt{\me^m(t)}\md_{tan}^{k}(t).
			\end{aligned}
			\deqq
			If $[\f{|\alpha_h|}{2}]<|\beta_h|\le|\alpha_h|$, we apply  the anisotropic
			type inequality \eqref{ie:Sobolev} to obtain
			\beqq
			\begin{aligned}
				&\i (\p^{\beta_h} u \cdot \p^{\al_h-\beta_h}\nabla \vro)\cdot \p^{\al_h} \vro \ dx\\
				\lesssim
				&\|\p^{\al_h} \vro\|_{L^2} \|\p^{\beta_h} u\|_{L^2}^{\frac12}\|\p_1 \p^{\beta_h} u\|_{L^2}^{\frac12}
				\|\p^{\al_h-\beta_h}\nabla \vro\|_{L^2}^{\frac14}
				\|\p_3 \p^{\al_h-\beta_h}\nabla \vro\|_{L^2}^{\frac14}
				\|\p_2 \p^{\al_h-\beta_h}\nabla \vro\|_{L^2}^{\frac14}
				\|\p_{23} \p^{\al_h-\beta_h}\nabla \vro\|_{L^2}^{\frac14}
				\\
				\lesssim
				&\sqrt{\me^m(t)}\md_{tan}^{k}(t).
			\end{aligned}
			\deqq
			Thus, the combination of above estimates yields directly
			\beqq
			\i \p^{\ah}(-u\cdot \nabla \vro)\cdot \p^{\ah} \vro\ dx 
			\lesssim \sqrt{\me^m(t)}\md_{tan}^{k}(t).
			\deqq
			It is easy to check that
			\beqq \bal
			&\;\i \p^{\ah}(- \vro \, \du) \cdot \p^{\ah} \vro\ dx \\
			\lesssim &\; \|\p^{\al_h} \vro\|_{L^2} ( \sum_{0 \le \beta_h \le [\f{\alpha_h}{2}] }\|\p^{\beta_h} \vro\|_{L^{\infty}}
			\|\p^{\al_h-\beta_h} \du\|_{L^2}
			+ \sum_{ [\f{\alpha_h}{2}] < \beta_h \le  \ah}  \|\p^{\beta_h} \vro \|_{L^2}
			\|\p^{\al_h-\beta_h} \du\|_{L^{\infty}})\\
			\lesssim &\; \sqrt{\me^m(t)}\md_{tan}^{k}(t),
			\dal \deqq
			which yields that 
			\beq \label{3104}
			I_1 \lesssim \sqrt{\me^m(t)}\md_{tan}^{k}(t).
			\deq
			Similarly, 
			integrating by parts and using the anisotropic type inequality \eqref{ie:Sobolev}, we have
			\beq \label{3105}  \bal
			I_2  = &\; \f12 \i |\p^{\al_h} u|^2  \du \ dx  - \sum_{0 < \beta_h \le \alpha_h}C^{\beta_h}_{\alpha_h}
			\i (\p^{\beta_h} u \cdot \p^{\al_h-\beta_h}\nabla  u)\cdot \p^{\al_h} u \ dx \\
			&\; + \f12 \i \p^{\ah}(\vro^2) \cdot \p^{\ah} \du \ dx \\ 
			\lesssim &\; \|\p^{\al_h} u\|_{L^2}^2 \| \du\|_{L^{\infty}} + (\sum_{0 < \beta_h \le [\f{\alpha_h}{2}] } \|\p^{\beta_h} u\|_{L^2}^{\f14}
			\|\p_3 \p^{\beta_h} u\|_{L^2}^{\f14}
			\|\p_2 \p^{\beta_h} u\|_{L^2}^{\f14}
			\|\p_{23} \p^{\beta_h} u\|_{L^2}^{\f14}
			\|\p^{\al_h-\beta_h}\nabla u\|_{L^2}
			\\ &\; + \sum_{ [\f{\alpha_h}{2}] < \beta_h \le  \ah}   \|\p^{\beta_h} u\|_{L^2}
			\|\p^{\al_h-\beta_h}\nabla u\|_{L^2}^{\f14}
			\|\p_3 \p^{\al_h-\beta_h} \nabla u\|_{L^2}^{\f14}
			\|\p_2 \p^{\al_h-\beta_h}\nabla  u\|_{L^2}^{\f14}
			\|\p_{23} \p^{\al_h-\beta_h} \nabla  u\|_{L^2}^{\f14})
			\\
			&\; \times 
			\|\p^{\ah} u\|_{L^2}^{\frac12}\|\p_1 \p^{\ah} u\|_{L^2}^{\frac12}   + \|\p^{\al_h} \du\|_{L^2} ( \sum_{0 \le \beta_h \le [\f{\alpha_h}{2}] }\|\p^{\beta_h} \vro\|_{L^{\infty}}
			\|\p^{\al_h-\beta_h} \vro\|_{L^2}\\
			&\;
			+ \sum_{ [\f{\alpha_h}{2}] < \beta_h \le  \ah}  \|\p^{\beta_h} \vro \|_{L^2}
			\|\p^{\al_h-\beta_h} \vro\|_{L^{\infty}}) \\
			\lesssim &\; \sqrt{\me^m(t)}\md_{tan}^{k}(t).
			\dal \deq
			Now we estimate the term $I_3$. 
			First, we split the term $I_3$  into three terms:
			\beqq \bal
			I_{3}=  &\;- \i \p^{\ah}( \f{\vro}{1+\vro}(\p_1^2 u+\nabla \du)) \cdot \p^{\ah}  u\ dx +\i \p^{\ah}( \f{\vro}{1+\vro}(\nabla b_2 - \p_2 b)) \cdot \p^{\ah}  u\ dx
			\\
			=  &\; -\i \f{\vro}{1+\vro}\p^{\ah} (\p_1^2 u+\nabla \du) \cdot \p^{\ah}  u\ dx - \sum_{0< \beta_h \le \alpha_h}C^{\beta_h}_{\alpha_h}
			\i \p^{\beta_h}(\f{\vro}{1+\vro}) \p^{\ah-\beta_h} (\p_1^2 u+\nabla \du) \cdot \p^{\ah}  u\ dx \\
			&\; +\i \p^{\ah}( \f{\vro}{1+\vro}(\nabla b_2 - \p_2 b)) \cdot \p^{\ah}  u\ dx\\
			:= &\;  I_{3,1} +I_{3,2} +I_{3,3}.
			\dal \deqq
			Integrating by parts, we have
			\beqq \bal
			I_{3,1}
			= &\;\i \p^{\ah} \p_1 u \cdot \p_1 (\f{\vro}{1+\vro} \p^{\ah} u) \ dx  +\i \p^{\ah} \du \cdot{\rm div} (\f{\vro}{1+\vro} \p^{\ah} u) \ dx\\
			\lesssim &\; \| \p^{\ah} \p_1 u \|_{L^2}( \| \p^{\ah} \p_1 u \|_{L^2} \| \vro\|_{L^{\infty}} + \| \p^{\ah} u \|_{L^2} \|  \p_1 \vro\|_{L^{\infty}} )\\
			&\; +  \| \p^{\ah} \du \|_{L^2}( \| \p^{\ah} \du \|_{L^2} \| \vro\|_{L^{\infty}} + \| \p^{\ah} u \|_{L^2} \|  \nabla(\f{\vro}{1+\vro}) \|_{L^{\infty}} )\\
			\lesssim &\; \sqrt{\me^m(t)}\md_{tan}^{k}(t).
			\dal \deqq 
			Under the assumption \eqref{assumption}, using the anisotropic type inequality \eqref{ie:Sobolev}, we can obtain
			\beqq \bal
			I_{3,2}
			\lesssim &\; \| \p^{\ah} u \|_{L^2}^{\f12} \| \p_1 \p^{\ah} u \|_{L^2}^{\f12} \Big( \sum_{0 < \beta_h \le [\f{\alpha_h}{2}] } \|\p^{\beta_h}(\f{\vro}{1+\vro})\|_{H^2}
			(\|\p^{\al_h-\beta_h} \nabla \du \|_{L^2} +\|\p^{\al_h-\beta_h} \p_1^2 u\|_{L^2}) \\
			&\; + \sum_{ [\f{\alpha_h}{2}] < \beta_h <   \ah}  \|\p^{\beta_h} (\f{\vro}{1+\vro}) \|_{L^2}^{\f12} \|\p_2 \p^{\beta_h} (\f{\vro}{1+\vro}) \|_{L^2}^{\f12}
			\|\p^{\al_h-\beta_h} \nabla \du\|_{L^2}^{\f12}
			\|\p_3 \p^{\al_h-\beta_h} \nabla \du\|_{L^2}^{\f12}  \\
			&\; + \sum_{ [\f{\alpha_h}{2}] < \beta_h <   \ah}  \|\p^{\beta_h} (\f{\vro}{1+\vro}) \|_{L^2}^{\f12} \|\p_2 \p^{\beta_h} (\f{\vro}{1+\vro}) \|_{L^2}^{\f12}
			\|\p^{\al_h-\beta_h} \p_1^2 u\|_{L^2}^{\f12}
			\|\p_3 \p^{\al_h-\beta_h} \p_1^2 u\|_{L^2}^{\f12}  \\
			&\;  + \|\p^{\ah} (\f{\vro}{1+\vro}) \|_{L^2} (\| \p_1^2 u\|_{L^{\infty}} + \| \nabla  \du\|_{L^{\infty}}) \Big)\\
			\lesssim &\; \sqrt{\me^m(t)}\md_{tan}^{k}(t).
			\dal \deqq
			Similarly, we have
			\beqq
			I_{3,3} = \i \p^{\ah}( \f{\vro}{1+\vro}(\nabla b_2 - \p_2 b)) \cdot \p^{\ah}  u\ dx \lesssim \sqrt{\me^m(t)}\md_{tan}^{k}(t).
			\deqq
			Thus, we have
			\beq \label{3106}
			I_{3} \lesssim \sqrt{\me^m(t)}\md_{tan}^{k}(t).
			\deq
			Similar to the estimate $I_{3,1}$ and $I_{3,2}$, integrating by parts, we have
			\beq\label{3107} \bal
			I_{10} =  &\;- \var \i \f{\vro}{1+\vro}\p^{\ah} (\p_2^2 u+ \p_3^2 u) \cdot \p^{\ah}  u\ dx -  \var \sum_{0< \beta_h \le \alpha_h}C^{\beta_h}_{\alpha_h}
			\i \p^{\beta_h}(\f{\vro}{1+\vro}) \p^{\ah-\beta_h} (\p_2^2 u+ \p_3^2 u) \cdot \p^{\ah}  u\ dx \\
			=  &\; \var \i \p^{\ah} \p_2 u \cdot \p_2 (\f{\vro}{1+\vro} \p^{\ah} u) \ dx +\var \i \p^{\ah} \p_3 u \cdot \p_3 (\f{\vro}{1+\vro} \p^{\ah} u) \ dx  \\
			&\;-  \var \sum_{0< \beta_h \le \alpha_h}C^{\beta_h}_{\alpha_h}
			\i \p^{\beta_h}(\f{\vro}{1+\vro}) \p^{\ah-\beta_h} (\p_2^2 u+ \p_3^2 u) \cdot \p^{\ah}  u\ dx \\
			\lesssim &\; \sqrt{\me^m(t)}\md_{tan}^{k}(t).
			\dal \deq
			Next, we estimate the terms $I_4$ and $I_8$ together. Integrating by parts and using the condition ${\rm div} b=0$ yield directly
			\beqq
			\i \f{1}{1+\vro}(b\cdot \nabla \p^{\ah}b) \cdot \p^{\ah}u\ dx
			+\i\f{1}{1+\vro} (b \cdot \nabla \p^{\ah}u) \cdot \p^{\ah}b\ dx= -\i b \cdot \nabla (\f{1}{1+\vro}) \p^{\ah}u \cdot \p^{\ah}b \ dx,
			\deqq
			then we achieve
			\begin{align}
				I_4 +I_8
				=&\;-\i b \cdot \nabla (\f{1}{1+\vro}) \p^{\ah}u \cdot \p^{\ah}b \ dx + 
				\sum_{0< \beta_h \le \alpha_h}C^{\beta_h}_{\alpha_h}
				\i \p^{\beta_h}(\f{1}{1+\vro}) \p^{\ah-\beta_h} (b\cdot \nabla b)
				\cdot \p^{\ah}u\ dx \notag\\
				&\;+\sum_{0< \beta_h \le \alpha_h}C^{\beta_h}_{\alpha_h}
				\i \f{1}{1+\vro} (\p^{\beta_h}b \cdot \nabla \p^{\ah-\beta_h}u
				\cdot \p^{\ah}b  + \p^{\beta_h}b \cdot \nabla \p^{\ah-\beta_h}b
				\cdot \p^{\ah}u)\ dx. \notag
			\end{align}
			Under the assumption \eqref{assumption} 
			and using the anisotropic type inequality \eqref{ie:Sobolev}, it holds
			\begin{align}\label{3108}
				I_{4}+I_{8}
				\lesssim
				&\; \|\p^{\ah}b\|_{L^2}^{\frac12}
				\|\p_3 \p^{\ah}b\|_{L^2}^{\frac12} \|\p^{\ah} u\|_{L^2}^{\frac12}
				\|\p_1 \p^{\ah} u\|_{L^2}^{\frac12} \|\nabla (\f{1}{1+\vro})\|_{L^2}^{\frac12}
				\|\p_2 \nabla (\f{1}{1+\vro})\|_{L^2}^{\frac12} \| b\|_{L^{\infty}} \notag\\
				&\; + \|\p^{\ah} u\|_{L^2}^{\frac12}
				\|\p_1 \p^{\ah} u\|_{L^2}^{\frac12}
				\Big( \sum_{0 < \beta_h \le [\f{\alpha_h}{2}] } \|\p^{\beta_h}(\f{1}{1+\vro})\|_{H_{tan}^1}^{\f12} \|\p_3 \p^{\beta_h}  (\f{1}{1+\vro})\|_{H_{tan}^1}^{\f12}
				\|\p^{\al_h-\beta_h} ( b\cdot \nabla b) \|_{L^2} \notag\\
				&\; + \sum_{ [\f{\alpha_h}{2}] < \beta_h \le  \ah}  \|\p^{\beta_h} (\f{1}{1+\vro}) \|_{L^2}
				\|\p^{\al_h-\beta_h}   ( b\cdot \nabla b) \|_{H_{tan}^1}^{\f12}
				\|\p_3 \p^{\al_h-\beta_h}   ( b\cdot \nabla b) \|_{H_{tan}^1}^{\f12} \\
				&\; +\sum_{0< \beta_h \le \alpha_h}
				\|\p^{\beta_h}b\|_{L^2}^{\frac12}
				\|\p_3 \p^{\beta_h}b\|_{L^2}^{\frac12}
				\|\nabla \p^{\ah-\beta_h}b\|_{L^2}^{\frac12}
				\|\p_2\nabla \p^{\ah-\beta_h}b\|_{L^2}^{\frac12} \Big) \notag\\
				&\;+\sum_{0< \beta_h \le \alpha_h}
				\|\p^{\beta_h}b\|_{L^2}^{\frac12}
				\|\p_2 \p^{\beta_h}b\|_{L^2}^{\frac12}
				\|\nabla \p^{\ah-\beta_h}u\|_{L^2}^{\frac12}
				\|\p_1\nabla \p^{\ah-\beta_h}u\|_{L^2}^{\frac12}
				\|\p^{\ah}b\|_{L^2}^{\frac12}
				\|\p_3 \p^{\ah}b\|_{L^2}^{\frac12} \notag\\
				\lesssim
				&\sqrt{\me^m(t)}\md_{tan}^{k}(t),\notag
			\end{align}
			and
			\beq\label{3109}
			\begin{aligned}
				I_7
				\lesssim
				&\;\|\p^{\ah}b\|_{L^2}^{\frac12}
				\|\p_3 \p^{\ah}b\|_{L^2}^{\frac12} \Big(\sum_{0 \le \beta_h \le [\f{\alpha_h}{2}] } (
				\|\p^{\beta_h}u\|_{H_{tan}^2}
				\|\nabla \p^{\ah-\beta_h}b\|_{L^2} + \|\p^{\beta_h}b\|_{H_{tan}^2}
				\|\p^{\ah-\beta_h} \du\|_{L^2}) 
				\\ 
				&\; + \sum_{[\f{\alpha_h}{2}]< \beta_h \le  \ah}
				(
				\|\p^{\beta_h}u\|_{L^2}
				\|\nabla \p^{\ah-\beta_h}b\|_{H_{tan}^2} + \|\p^{\beta_h}b\|_{L^2}
				\|\p^{\ah-\beta_h} \du\|_{H_{tan}^2}
				)\Big)\\
				\lesssim
				&\;\sqrt{\me^m(t)}\md_{tan}^{k}(t).
			\end{aligned}
			\deq
			Similarly, it is easy to check that
			\beq \label{3110}
			I_5 + I_6 + I_9 \lesssim
			\sqrt{\me^m(t)}\md_{tan}^{k}(t).
			\deq
			Thus, substituting the estimates \eqref{3104}-\eqref{3110} into \eqref{3102}, together with the estimate \eqref{3103}, 
			we conclude that
			\beqq \bal
			&\; \f12 \frac{d}{dt}(\| (\vro, u, \f{1}{\sqrt{1+\vro}} b)\|_{L^2}^2 + \sum\limits_{|\alpha_h| = k}\| (\p^{\ah}\vro ,\p^{\ah} u, \f{1}{\sqrt{1+\vro}}\p^{\al_h} b)\|_{L^2}^2 )  +  \|(\p_1 u, \du,\nabla_h b)\|_{L^2}^2 \\
			&\;
			+ \var \|(\p_2 u, \p_3 u,\p_3 b)\|_{L^2}^2 
			+  \sum\limits_{|\alpha_h| = k}(\|\p^{\ah} (\p_1 u, \du,\nabla_h b)\|_{L^2}^2+ \var \|\p^{\ah} (\p_2 u, \p_3 u,\p_3 b)\|_{L^2}^2 )\\
			\lesssim &\; \sqrt{\me^m(t)}\md_{tan}^{k}(t).
			\dal \deqq
			Therefore, we complete the proof of this lemma.
		\end{proof}
		Next,  we establish the estimate for the vertical derivative
		of density,  velocity and magnetic field.
		\begin{lemm}
			Under the assumption \eqref{assumption},
			the smooth solution $(\vro ,u, b)$ of equation \eqref{eqr}, it holds
			\beq\label{3201}
			\begin{aligned}
				&\; \i (|\p_3^m u|^2+|\p_3^m \vro|^2+\f{1}{1+ \vro} |\p_3^m  b|^2 + m\nabla_h \cdot u_h \,  \p_3^m \vro \cdot \p_3^{m-1}   u_3) (t) \ dx \\
				&\; + 
				\int_0^t \|\p_3^{m} (\p_1 u, \du, \nabla_h b)\|_{L^2}^2 d\tau + \var \int_0^t \|\p_3^{m} (\p_2 u, \p_3 u, \p_3 b)\|_{L^2}^2 d\tau\\
				\lesssim &\; \i (|\p_3^m u|^2+|\p_3^m \vro|^2+\f{1}{1+ \vro} |\p_3^m  b|^2 + m\nabla_h \cdot u_h \, \p_3^m \vro \cdot \p_3^{m-1} u_3) (0) \ dx \\
				&\; + \int_0^t \sqrt{\me^m (\tau)}\md^m (\tau) d\tau + \dl^{\f32}.
			\end{aligned}
			\deq
		\end{lemm}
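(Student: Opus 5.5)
Apply $\p_3^m$ to \eqref{eqr}, mirroring the proof of Lemma \ref{lemma32}. We test the density equation with $\p_3^m\vro$ and the velocity equation with $\p_3^m u$. For the magnetic equation we first multiply by $\f{1}{1+\vro}$ and then test with $\p_3^m b$. As in \eqref{3102}, the linear terms cancel exactly: $\i \p_3^m\nabla\vro\cdot\p_3^m u+\i\p_3^m\du\,\p_3^m\vro=0$, and the magnetic coupling cancels as well. This leaves the dissipation $\|\p_3^m(\p_1 u,\du,\nabla_h b)\|_{L^2}^2+\var\|\p_3^m(\p_2u,\p_3u,\p_3b)\|_{L^2}^2$ on the left. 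The time derivative of the weight, $\p_t\f{1}{1+\vro}$, is rewritten via $\eqref{eqr}_1$.

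The nonlinear terms split into commutator terms with at least one vertical derivative on the low-order factor, and top-order terms. The commutator terms are bounded by $\sqrt{\me^m}\md^m$ using the anisotropic inequality \eqref{ie:Sobolev}, in the same way as $I_{3,2}$ and \eqref{3108}.

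The top-order magnetic terms $\i\f{1}{1+\vro}b\cdot\nabla\p_3^m u\cdot\p_3^m b$ and $\i\f{1}{1+\vro}b\cdot\nabla\p_3^m b\cdot\p_3^m u$ cannot be split this way, since there is no vertical dissipation. They are handled by the weighted cancellation from Section~\ref{section-approach}: their sum equals $-\i{\rm div}(\f{b}{1+\vro})\p_3^m u\cdot\p_3^m b$, which is bounded by $\sqrt{\me^m}\md^m$.

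The top-order density term $\i\p_3^m(u\cdot\nabla\vro)\p_3^m\vro$ reduces after integration by parts to $\f12\i|\p_3^m\vro|^2\du$. Together with the remaining part of $\i\p_3^m(\vro\du)\p_3^m\vro$, it produces
\[
\i|\p_3^m\vro|^2\du \quad\text{and}\quad m\i\p_3\vro\,\p_3^{m-1}\du\,\p_3^m\vro .
\]
The second term is also critical, because $\p_3^m u_3$ appears through $\du$. Similarly, $\p_3 u_3$ multiplies $|\p_3^m\vro|^2$ through the commutator of $\p_3^m$ with $u_3\p_3\vro$. These are the terms $II_{1,1}$, $II_{1,2}$ and constitute the main obstacle, since $\p_3^m\vro$ carries no dissipation.

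For the term $\i|\p_3^m\vro|^2\du$, I would substitute $\du=-(\p_t\vro+u\cdot\nabla\vro+\vro\du)$ from the density equation. The time derivative then produces terms of cubic order (bounded by $\dl^{3/2}$ after time integration) plus manageable remainders.

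For the terms in which $\p_3^{m-1}\du$ contains $\p_3^m u_3$, I would split $\du=\nabla_h\cdot u_h+\p_3u_3$. The piece $\nabla_h\cdot u_h\,\p_3^m\vro\,\p_3^m u_3$ is paired with the identity $\p_3\vro=-\p_t u_3+\ldots$ from $\eqref{eqr}_2$, applied at level $\p_3^{m-1}$. Integrating the $\p_t$ by parts in time generates exactly the corrector $m\nabla_h\cdot u_h\,\p_3^m\vro\,\p_3^{m-1}u_3$ in \eqref{3201}, plus boundary terms at $0$ and $t$. The resulting $\p_t\p_3^m\vro$ and $\p_t\nabla_h u_h$ are again replaced via $\eqref{eqr}_1$–$\eqref{eqr}_2$.

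This leaves cubic terms such as $\i\vro\,\p_3^m\vro\,\p_3u_3\,\p_3^m\vro$, which have only weak dissipation. These are estimated exactly as in Section~\ref{section-approach}. Using $\|\p_3 u_3\|_{L^\infty}$ bounded by tangential norms, Hölder in time, and the decay bound $\int_0^t(1+\tau)^\sigma\md_{tan}^{m-1}d\tau\le\dl$ from \eqref{assumption}, with $\zeta<\f1{20}$ so that $(1+\tau)^{-7(1-2\zeta)}$ is integrable, we obtain a bound of $\dl^{3/2}$.

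Integrating the resulting differential identity over $[0,t]$ and collecting all contributions gives \eqref{3201}.
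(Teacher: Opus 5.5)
Your overall scheme is the paper's: the weight $\f{1}{1+\vro}$ on $b$, the divergence cancellation for the top-order magnetic terms, $\eqref{eqr}_1$ for $\i|\p_3^m\vro|^2\du$, the momentum equation for $\p_3\vro$ producing the corrector, and tangential decay for what is left. However, the bookkeeping of the density terms, which is the whole difficulty of this lemma, is wrong as written.

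First, $m\i\p_3\vro\,\p_3^{m-1}\du\,\p_3^m\vro$ is not critical. The factor $\p_3^{m-1}\du$ is part of $\md^m$, and so is $\p_3^m u_3=\p_3^{m-1}(\du-\nabla_h\cdot u_h)$ (via $\|\p_1 u\|_{H^m}$ and $\|\p_2 u\|_{H^{m-1}}$). So ``$\p_3^m u_3$ appears through $\du$'' is no obstruction. The paper estimates this term directly, as it does $II_{1,3}$ and $II_{1,5}$: $\p_3\vro$ is placed in $L^\infty$ via \eqref{ie:Sobolev} so that horizontal derivatives appear, and hence $\md^{m-1}_{tan}$ with its time weight. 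Splitting $\du$ there accomplishes nothing.

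Second, you list the genuinely critical term $-m\i\p_3 u_3|\p_3^m\vro|^2$, but your text never handles it. It is critical because a single low-order factor multiplies two undissipated top-order factors. The missing step is to write $\p_3u_3=\du-\nabla_h\cdot u_h$ \emph{in this term}:
\begin{itemize}
\item The $\du$-part joins $-\f12\i|\p_3^m\vro|^2\du$ to give $-(\f12+m)\i|\p_3^m\vro|^2\du$, which is handled through $\eqref{eqr}_1$ as you describe.
\item The remainder $m\i\nabla_h\cdot u_h\,|\p_3^m\vro|^2$ is handled by writing one factor as $\p_3^{m-1}(\p_3\vro)$ and inserting \eqref{ieq-nablavro}.
\end{itemize}
Only this second substitution produces the corrector $m\nabla_h\cdot u_h\,\p_3^m\vro\,\p_3^{m-1}u_3$. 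Moreover, with $\nabla_h\cdot u_h$ as the coefficient, the resulting $\p_t\nabla_h\cdot u_h$ rewritten by $\eqref{eqr}_2$ consists only of horizontally differentiated quantities, so it is bounded by $\sqrt{\me^m}\md^m$. Your ``piece $\nabla_h\cdot u_h\,\p_3^m\vro\,\p_3^m u_3$'' arises from no term. Substituting for $\p_3\vro$ in it would not give the stated corrector, and it would be harmless anyway since $\p_3^m u_3$ is dissipated. You presumably meant $|\p_3^m\vro|^2$, but as written the argument for the central term does not go through.

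Two smaller points. Not every remaining term is $\lesssim\sqrt{\me^m}\md^m$. Terms in which a top-order factor carries no or only partial dissipation, such as $-m\i\p_3u_h\cdot\nabla_h\p_3^{m-1}\vro\,\p_3^m\vro$, $-\i\vro\,\p_3^m\du\,\p_3^m\vro$ and $-\i\p_3 u_3|\p_3^m u|^2$, are only bounded by quantities like $\me^{5/8}(\md^m)^{1/2}(\md^{m-1}_{tan})^{3/8}$ or $\me^{3/4}(\md^m)^{1/4}(\md^{m-1}_{tan})^{1/2}$. These are time-integrable only thanks to $\int_0^t(1+\tau)^{\sigma}\md^{m-1}_{tan}\,d\tau\le\dl$, and this, not just the cubic $\vro\,\p_3u_3|\p_3^m\vro|^2$ terms, is where the $\dl^{3/2}$ comes from. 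Also, when you replace $\p_t\p_3^m\vro$ by $\eqref{eqr}_1$ in the corrector terms, $\p_3^m(u\cdot\nabla\vro)$ carries $m+1$ derivatives of $\vro$. You must first move one $\p_3$ onto $\nabla_h\cdot u_h\,\p_3^{m-1}u_3$ by integration by parts.
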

		\begin{proof}
			Similar to the equation \eqref{3102}, we can obtain 
			\begin{align*}
				&\frac{d}{dt}\frac{1}{2}\i(|\p_3^m u|^2+|\p_3^m \vro|^2+\f{1}{1+ \vro} |\p_3^m  b|^2 )dx
				+\| \p_1 \p_3^mu \|_{L^2}^2 +\|\p_3^m \du\|_{L^2}^2+\|\nabla_h \p_3^mb\|_{L^2}^2\\
				&\; + \var ( \| \p_2 \p_3^m u \|_{L^2}^2 + \| \p_3^{m+1} u \|_{L^2}^2 + \| \p_3^{m+1}b \|_{L^2}^2)\\
				=&-\i \p_3^m (u\cdot \nabla \vro +\vro \, \du )\cdot \p_3^m \vro \ dx
				-\i \p_3^m ( u \cdot \nabla u+ \vro \nabla \vro) \cdot \p_3^m  u\ dx\\
				& -\i \p_3^m ( \f{\vro}{1+\vro}(\p_1^2 u+\nabla \du-\nabla b_2 +\p_2 b)) \cdot \p_3^m  u\ dx  +\i \p_3^m ( \f{1}{1+\vro} b \cdot \nabla b) \cdot \p_3^m  u\ dx  \\
				&-\f12 \i \p_3^m ( \f{1}{1+\vro} \nabla (|b|^2)) \cdot \p_3^m  u\ dx  
				+\i \f{1}{2(1+ \vro)^2} |\p_3^m  b|^2  (\du +u \cdot \nabla \vro + \vro \, \du)\ dx \\
				&-\i \f{1}{1+ \vro} \p_3^m (b \, \du + u \cdot \nabla b)\cdot \p_3^m  b \ dx+ \i \f{1}{1+ \vro} \p_3^m (b \cdot \nabla u ) \cdot \p_3^m  b \ dx
				\\
				&+\i \f{\vro}{1+\vro} (e_2 \p_3^m  \du-\Delta_h \p_3^m  b - \var \p_3^{m+2} b - \p_2 \p_3^m  u )\cdot \p_3^m  b\ dx 
				- \var \i \p_3^m ( \f{\vro}{1+\vro}(\p_2^2 u+\p_3^2 u)) \cdot \p_3^m  u\ dx\\
				:= & \sum_{i=1}^{10} II_i.
			\end{align*}
			{\bf{Estimate of the term $II_1$.}}
			Due to the lower dissipative structure of density in the horizontal directions, the estimate of term $II_1$ is somewhat complicated. Thus, we decompose this term into five components as follows.
			Integrating by parts, we can write
			\begin{align*}
				II_1 = &\; -\f12 \i |\p_3^m \vro|^2 \du \ dx - \sum_{1 \le l\le m} \i C_{m}^{l} \p_3^l u  \cdot \nabla \p_3^{m-l} \vro \cdot \p_3^m \vro \ dx\\
				&\; -   \sum_{0 \le l\le m-1}\i C_{m}^{l} \p_3^l \vro \cdot \p_3^{m-l} \du \cdot \p_3^m \vro  \ dx\\
				=&\;  -\f12 \i |\p_3^m \vro|^2 \du \ dx -m \i | \p_3^{m} \vro |^2 \p_3 u_3 \ dx  -m \i \p_3 u_h  \cdot \nabla_h \p_3^{m-1} \vro \cdot \p_3^m \vro  \ dx  \\
				&\; - \sum_{2 \le l\le m} \i C_{m}^{l} \p_3^l u  \cdot \nabla \p_3^{m-l} \vro \cdot \p_3^m \vro \ dx -   \sum_{0 \le l\le m-1} \i C_{m}^{l} \p_3^l \vro \cdot \p_3^{m-l} \du \cdot \p_3^m \vro  \ dx\\
				=&\; - (\f12+m) \i |\p_3^m \vro|^2 \du \ dx + m \i  | \p_3^{m} \vro |^2 \nabla_h \cdot u_h\ dx  -m \i \p_3 u_h  \cdot \nabla_h \p_3^{m-1} \vro \cdot \p_3^m \vro  \ dx  \\
				&\; - \sum_{2 \le l\le m} \i C_{m}^{l} \p_3^l u  \cdot \nabla \p_3^{m-l} \vro \cdot \p_3^m \vro \ dx -   \sum_{0 \le l\le m-1} \i C_{m}^{l} \p_3^l \vro \cdot \p_3^{m-l} \du \cdot \p_3^m \vro  \ dx\\
				:= &\; \sum_{i=1}^{5} II_{1,i}.
			\end{align*}
			Estimate of the term $II_{1,1}$. Using the density equation $\eqref{eqr}_1$ and integrating by parts, we can obtain
			\begin{align*}
				&\;  - \i  | \p_3^m \vro|^2 \du \ dx 
				=\i (\p_t \vro +  u \cdot \nabla \vro + \vro \, \du )| \p_3^m \vro|^2 \ dx\\
				=&\;  \f{d}{dt}  \i \vro | \p_3^m \vro|^2 \ dx - 2 \i \vro \, \p_3^m \vro \, \p_3^m \p_t \vro \ dx +
				\i(  u \cdot \nabla \vro + \vro \, \du )| \p_3^m \vro|^2 \ dx\\
				= &\;    \f{d}{dt} \i \vro  | \p_3^m \vro|^2 \ dx + 2 \i \vro\,  \p_3^m \vro \, \p_3^m ( u \cdot \nabla \vro) \ dx  \\
				&\; + 2 \i \vro\,  \p_3^m \vro \, \p_3^m (\du + \vro \, \du) \ dx +  \i (  u \cdot \nabla \vro + \vro \, \du )| \p_3^m \vro|^2 \ dx \\
				=  &\;  \f{d}{dt} \i \vro  | \p_3^m \vro|^2 \ dx  + 2 \sum_{ 1\le l\le m} C_{m}^l \i \vro \cdot \p_3^m \vro (\p_3^l u_h \cdot \nabla_h \p_3^{m-l} \vro + \p_3^l u_3 \, \p_3^{m-l+1} \vro ) \ dx \\
				&\; + 2 \i \vro\,  \p_3^m \vro \, \p_3^m (\du + \vro \, \du) \ dx.
			\end{align*}
			We now estimate the second term on the right-hand side.
			When $l=1$, we can check that
			\beq\label{ieq-sec2-1} \bal
			&\;  \i \vro \cdot \p_3^m \vro (\p_3 u_h \cdot \nabla_h \p_3^{m-1} \vro + \p_3 u_3 \, \p_3^m  \vro ) \ dx \\
			\lesssim &\; \| \vro\|_{L^{\infty}} \| \p_3^m \vro\|_{L^2} ( \|\p_3^{m-1} \nabla_h \vro\|_{L^2} \| \p_3 u_h\|_{L^{\infty}}+   \| \p_3^m \vro\|_{L^2} \| \p_3 u_3\|_{L^{\infty}} )\\
			\lesssim &\;  \me^m(t) \md^{m}(t) +  \|(\vro, \p_3 \vro, \p_3^{m} \vro)\|_{L^2}^{\f94} \|(\nabla_h \vro,\nabla_h \p_3 \vro, \p_3 u_3,\p_3^2 u_3)\|_{H_{tan}^1}^{\f74},
			\dal \deq
			where we have used the estimate
			\beqq \bal
			\|(\vro, \p_3 u_h)\|_{L^{\infty}} \lesssim &\; \|(\vro, \p_3 \vro, \p_3 u_h, \p_3^2 u_h)\|_{L^2}^{\f14} \|\nabla_h (\vro, \p_3 u_h)\|_{H_{tan}^1}^{\f38} \| \nabla_h \p_3 (\vro, \p_3 u_h)\|_{H_{tan}^{1}}^{\f38}, \\
			\|\p_3 u_3\|_{L^{\infty}} \lesssim &\; \|\p_3 u_3\|_{H_{tan}^{2}}^{\f12} \|\p_3^2 u_3\|_{H_{tan}^{2}}^{\f12},
			\dal \deqq
			by the inequality \eqref{ie:Sobolev}. 
			Using the anisotropic type inequality \eqref{ie:Sobolev}, we have
			\beqq \bal
			&\; \sum_{ 2\le l\le m} C_{m}^l \i \vro \cdot \p_3^m \vro (\p_3^l u_h \cdot \nabla_h \p_3^{m-l} \vro + \p_3^l u_3 \, \p_3^{m-l+1} \vro ) \ dx \\
			\lesssim &\; \| \vro\|_{L^{\infty}} \| \p_3^m \vro\|_{L^2} (  \|\p_3^2 u_h \|_{L^2}^{\frac14}
			\|\p_1 \p_3^2 u_h \|_{L^2}^{\frac14}
			\|\p_2 \p_3^2 u_h \|_{L^2}^{\frac14}
			\|\p_{12} \p_3^2 u_h \|_{L^2}^{\frac14}
			\|\nabla_h  \p_3^{m-2} \vro\|_{L^2}^{\f12}  \|\nabla_h  \p_3^{m-1} \vro\|_{L^2}^{\f12} \\
			&\; + \|\p_3^2 u_3 \|_{L^2}^{\frac14}
			\|\p_1 \p_3^2 u_3 \|_{L^2}^{\frac14}
			\|\p_3^3 u_3 \|_{L^2}^{\frac14}
			\|\p_{1} \p_3^3 u_3 \|_{L^2}^{\frac14}
			\| \p_3^{m-1} \vro\|_{L^2}^{\f12}  \|\p_2  \p_3^{m-1} \vro\|_{L^2}^{\f12}\\
			&\; + \sum_{3\le l \le m} \|\p_3^l u_h \|_{L^2}^{\frac12}
			\|\p_1 \p_3^l u_h \|_{L^2}^{\frac12}
			\|\nabla_h  \p_3^{m-l} \vro\|_{L^2}^{\frac14}
			\|\nabla_h  \p_3^{m-l+1} \vro\|_{L^2}^{\frac14}
			\|\nabla_h \p_2  \p_3^{m-l} \vro\|_{L^2}^{\f14}  \|\nabla_h \p_2  \p_3^{m-l+1} \vro\|_{L^2}^{\f14} \\
			&\;  + \sum_{3\le l \le m} \|\p_3^l u_3 \|_{L^2}^{\frac12}
			\| \p_3^{l+1} u_3 \|_{L^2}^{\frac12}
			\| \p_3^{m-l+1} \vro\|_{L^2}^{\frac14}
			\|\p_1  \p_3^{m-l+1} \vro\|_{L^2}^{\frac14}
			\|\p_2  \p_3^{m-l+1} \vro\|_{L^2}^{\f14}  \|\p_{12} \p_3^{m-l+1} \vro\|_{L^2}^{\f14} )\\
			\lesssim &\; \me^m(t) \md^{m}(t).
			\dal \deqq
			Similarly, we can obtain that
			\beqq\bal
			&\;  \i \vro\,  \p_3^m \vro \, \p_3^m (\du + \vro \, \du) \ dx \\
			\lesssim &\;\|\vro\|_{L^{\infty}} \|\p_3^m \vro\|_{L^2} \|\p_3^m \du\|_{L^2} + \| \p_3^m \vro\|_{L^2}^2 \|\du \|_{L^{\infty}} \|\vro\|_{L^{\infty} } \\ 
			\lesssim &\; \|(\vro, \p_3\vro, \p_3^m \vro)\|_{L^2}^{\f54} \|\p_3^m \du\|_{L^2} \|(\nabla_h \vro, \nabla_h \p_3 \vro)\|_{H_{tan}^1}^{\f34}+  \|(\vro, \p_3 \vro, \p_3^{m} \vro)\|_{L^2}^{\f94} \|(\nabla_h \vro, \nabla_h \p_3 \vro, \du,\p_3 \du)\|_{H_{tan}^1}^{\f74}.
			\dal \deqq
			Thus, we have
			\beqq
			II_{1,1} \le (\f12 +m) \f{d}{dt} \i \vro  | \p_3^m \vro|^2 \ dx +C \me^m(t) \md^{m}(t) + C \me^m(t)^{\f58}   \md^{m} (t)^{\f12}  \md_{tan}^{m-1} (t)^{\f38}+ C \me^m(t)^{\f98}  \md_{tan}^{m-1} (t)^{\f78}.
			\deqq
			Estimate of the term $II_{1,2}$. Using the velocity equation $\eqref{eqr}_2$, we have
			\beq\label{ieq-nablavro} \bal
			\nabla \vro 
			=  - \p_t u + F_2 +  \p_1^2 u + \var (\p_2^2 u + \p_3^2 u) + \nabla \du - \nabla b_2 + \p_2 b.
			\dal \deq
			Substituting above equation of $\p_3 \vro$ into the term $II_{1,2}$, we have
			\begin{align*}
				II_{1,2} = &\; - m \i \nabla_h \cdot u_h \p_3^m \vro \cdot \p_3^{m-1} \p_t u_3\ dx + m\i \nabla_h \cdot u_h \p_3^m \vro \cdot \p_3^{m-1} (F_2)_3 \ dx \\
				&\;+ m\i \nabla_h \cdot u_h \p_3^m \vro \cdot \p_3^{m-1} (\p_1^2 u_3 + \var (\p_2^2 u_3 + \p_3^2 u_3) + \p_3 \du - \p_3 b_2 + \p_2 b_3)\ dx \\
				= &\; - m \f{d}{dt} \i \nabla_h \cdot u_h \p_3^m \vro \cdot \p_3^{m-1} u_3 \ dx + m\i \nabla_h \cdot \p_t u_h \p_3^m \vro \cdot \p_3^{m-1} u_3 \ dx \\
				&\; + m \i \nabla_h \cdot u_h \p_3^m \vro_t \cdot \p_3^{m-1}u_3  \ dx  + m\i \nabla_h \cdot u_h \p_3^m \vro \cdot \p_3^{m-1} (F_2)_3 \ dx \\
				&\;+ m\i \nabla_h \cdot u_h \p_3^m \vro \cdot \p_3^{m-1} (\p_1^2 u_3 + \var (\p_2^2 u_3 + \p_3^2 u_3) + \p_3 \du - \p_3 b_2 + \p_2 b_3)\ dx \\
				:=&\; -m\f{d}{dt} \i \nabla_h \cdot u_h \p_3^m \vro \cdot \p_3^{m-1}  u_3\ dx + m \sum_{i=1}^{4} II_{1,2,i}.
			\end{align*}
			Estimate of the term $II_{1,2,1}$. Under the assumption \eqref{assumption}, using the equation $\eqref{ieq-nablavro}$ and the anisotropic type inequality \eqref{ie:Sobolev}, we can obtain that for $m \ge 4$,
			\beqq \bal
			II_{1,2,1} = &\;\i \nabla_h \cdot \left(  (F_2)_h +  \p_1^2 u_h + \var (\p_2^2 u_h + \p_3^2 u_h) + \nabla_h \du - \nabla_h b_2 + \p_2 b_h -\nabla_h \vro \right) \p_3^m \vro \cdot \p_3^{m-1} u_3\ dx\\
			\lesssim &\; \| \p_3^m \vro \|_{L^2} \|\p_3^{m-1} u_3\|_{L^2}^{\f12}  \|\p_3^{m} u_3 \|_{L^2}^{\f12} \big(\|(F_2)_h \|_{ H_{tan}^3} + \|(\p_1^2 u, \nabla_h \du, \nabla_h b, \nabla_h \vro )\|_{ H_{tan}^3} + \var\|(\p_2^2 u, \p_3^2 u )\|_{ H_{tan}^3}  \big)\\
			\lesssim &\;  \sqrt{\me^m (t)}\md^m (t).
			\dal \deqq
			Using the density equation $\eqref{eqr}_1$, integrating by parts and under the assumption \eqref{assumption}, it is easy to check that
			\beqq \bal
			II_{1,2,2} =&\; -\i \nabla_h \cdot u_h \p_3^m (\du+ u \cdot \nabla\vro + \vro \, \du) \cdot \p_3^{m-1} u_3\ dx  \\
			= &\; -\i \nabla_h \cdot u_h \p_3^m (\du + \vro \, \du) \cdot \p_3^{m-1} u_3  \ dx   +  \i \p_3^{m-1} ( u \cdot \nabla\vro ) \, \p_3 (\nabla_h \cdot u_h \, \p_3^{m-1} u_3 ) \ dx \\
			\lesssim &\; \sqrt{\me^m (t)}\md^m (t).
			\dal \deqq
			Similarly, we can estimate the terms $II_{1,2,3}$ to $II_{1,2,4}$ .
			\beqq \bal
			II_{1,2,3} = &\; m\i \nabla_h \cdot u_h \, \p_3^m \vro \cdot \p_3^{m-1} \left(- u \cdot \nabla u_3+ \f{1}{1+\vro} b \cdot \nabla b_3   \right) \ dx\\
			&\; - m\i \nabla_h \cdot u_h \, \p_3^m \vro \cdot \p_3^{m-1} \left( \f{\vro}{1+\vro}( \p_1^2 u_3 + \var \p_2^2 u_3 + \var \p_3^2 u_3 + \p_3  \du +\p_2 b_3)\right) \ dx
			\\
			&\; - m \i \nabla_h \cdot u_h \, \p_3^m \vro \cdot \left(\f12  \sum_{1\le l\le m-1} C_{m}^l \p_3^{l}\vro \, \p_{3}^{m-l} \vro + \vro \, \p_3^{m} \vro \right)\ dx \\
			&\; - m \i \nabla_h \cdot u_h \, \p_3^m \vro \cdot \left( \f12 \sum_{1\le l\le m-1} C_{m-1}^l \p_3^{l}(\f{1}{1+\vro}) \p_{3}^{m-l} (|b|^2)  +\f{1}{2(1+\vro)}\p_3^{m} (|b|^2) \right)\ dx \\
			&\; + m \i \nabla_h \cdot u_h \, \p_3^m \vro \cdot \left(  \sum_{1\le l\le m-1} C_{m-1}^l \p_3^{l}(\f{\vro}{1+\vro}) \p_{3}^{m-l} b_2  +\f{\vro}{1+\vro} \p_3^{m} b_2 \right)\ dx \\
			\lesssim &\;  \sqrt{\me^m (t)}\md^m (t) +  \me^m (t)^{\f34}  \md^m (t)^{\f14} \md_{tan}^{m-1}(t)^{\f34} + \me^m (t)^{\f98}  \md^m (t)^{\f12} \md_{tan}^{m-1}(t)^{\f38},
			\dal \deqq
			and
			\beqq \bal
			II_{1,2,4} \lesssim &\;\|\p_3^m \vro\|_{L^2}  \| \nabla_h \cdot u_h\|_{L^{\infty}} (\|( \p_3^{m-1} \p_1^2 u_3 ,\p_3^{m} \du, \p_3^{m-1}\p_2 b_3 )\|_{L^2}+ \var \|( \p_3^{m-1} \p_2^2 u_3 ,\p_3^{m+1} u_3)\|_{L^2})\\
			&\; +\|\p_3^m \vro\|_{L^2}  \|\p_3^m b\|_{L^2}^{\f12} \|\nabla_h \p_3^m b\|_{L^2}^{\f12}  \| \nabla_h \cdot u_h\|_{H_{tan}^1}^{\f12} \| \nabla_h \cdot \p_3 u_h\|_{H_{tan}^1}^{\f12}\\
			\lesssim &\;  \sqrt{\me^m (t)}\md^m (t) +  \me^m (t)^{\f34}  \md^m (t)^{\f14} \md_{tan}^{m-1}(t)^{\f12}.
			\dal \deqq
			Combining the estimates of $II_{1,2,1}$ to $II_{1,2,4}$, we have
			\beqq \bal
			II_{1,2} \le &\;- m \f{d}{dt} \i \nabla_h \cdot u_h \p_3^m \vro \cdot \p_3^{m-1} u_3\ dx + C \sqrt{\me^m (t)}\md^m (t)  +C  \me^m (t)^{\f34}  \md^m (t)^{\f14} \md_{tan}^{m-1}(t)^{\f12} \\
			&\; + C \me^m (t)^{\f98}  \md^m (t)^{\f12} \md_{tan}^{m-1}(t)^{\f38}.
			\dal \deqq
			Next, we deal with the term $II_{1,4}$. Integrating by parts, we have
			\begin{align*}
				II_{1,4} = &\;  - C_{m}^2 \i \p_3^2 u_h \cdot  \nabla_h \p_3^{m-2} \vro \cdot \p_3^{m} \vro \ dx -  \f12  C_{m}^2  \i \p_3(|\p_3^{m-1} \vro|^2)\, \p_3^2 u_3 \ dx \\
				&\;- \sum_{3 \le l\le m} \i C_{m}^{l} \p_3^l u_3  \,  \p_3^{m-l+1} \vro \cdot \p_3^m \vro \ dx - \sum_{3 \le l\le m-1} \i C_{m}^{l} \p_3^l u_h  \cdot \nabla_h \p_3^{m-l} \vro \cdot \p_3^{m} \vro \ dx  \\
				&\; -  \i \p_3^m u_h  \cdot \nabla_h \vro \cdot \p_3^{m} \vro \ dx\\
				= &\;  C_{m}^2 \i \p_{3} (\p_3^2 u_h\cdot \nabla_h \p_3^{m-2} \vro) \, \p_3^{m-1} \vro \ dx+  \f12  C_{m}^2  \i |\p_3^{m-1} \vro|^2 \, \p_3^3 u_3 \ dx \\
				&\;+ \sum_{3 \le l\le m} \i C_{m}^{l} \p_3^{m-1} \vro \, (\p_3^{l+1} u_3  \, \p_3^{m-l+1} \vro + \p_3^l u_3  \, \p_3^{m-l+2} \vro) \ dx \\
				&\; + \sum_{3 \le l\le m-1} \i C_{m}^{l} \p_3^{m-1} \vro\, (\p_3^{l+1} u_h  \cdot \nabla_h \p_3^{m-l} \vro + \p_3^l u_h  \cdot \nabla_{h} \p_3^{m-l+1} \vro) \ dx\\
				&\; -  \i \p_3^m u_h  \cdot \nabla_h \vro \cdot \p_3^{m} \vro \ dx.
			\end{align*}
			Using the  anisotropic type inequality \eqref{ie:Sobolev}, we can obtain that
			\beqq \bal
			II_{1,4} 
			\lesssim &\;  \| \p_3^{m-1} \vro\|_{L^2}^{\f12}  \| \p_2 \p_3^{m-1} \vro\|_{L^2}^{\f12}\Big(\|\p_3^2 u_h \|_{L^2}^{\frac14}
			\|\p_1 \p_3^2 u_h \|_{L^2}^{\frac14}
			\|\p_3^3 u_h \|_{L^2}^{\frac14}
			\|\p_{1} \p_3^3 u_h \|_{L^2}^{\frac14}
			\|\nabla_h  \p_3^{m-1} \vro\|_{L^2}\\
			&\; + \|\p_3^3 u_h \|_{L^2}^{\frac12}
			\|\p_{1} \p_3^3 u_h \|_{L^2}^{\frac12}\|\nabla_h  \p_3^{m-2} \vro\|_{L^2}^{\f12} \|\nabla_h  \p_3^{m-1} \vro\|_{L^2}^{\f12} \\
			&\;+ \|\p_3^3 u_3 \|_{L^2}^{\frac12}
			\|\p_3^4 u_3 \|_{L^2}^{\frac12}\| \p_3^{m-1} \vro\|_{L^2}^{\f12} \|\p_1  \p_3^{m-1} \vro\|_{L^2}^{\f12} \\
			&\; + \sum_{3 \le l\le m} (
			\|\p_3^{l+1} u_3 \|_{L^2}
			\|\p_3^{m-l+1} \vro\|_{L^2}^{\frac14}
			\| \p_3^{m-l+2} \vro \|_{L^2}^{\frac14}
			\|\nabla_h \p_3^{m-l+1} \vro  \|_{L^2}^{\frac14}
			\|\nabla_h \p_3^{m-l+2} \vro \|_{L^2}^{\frac14} \\
			&\; + \|\p_3^{l} u_3 \|_{L^2}^{\frac12}
			\|\p_1 \p_3^{l} u_3 \|_{L^2}^{\frac12}
			\|\p_3^{m-l +2} \vro \|_{L^2}^{\frac12}
			\|\nabla_h \p_3^{m-l+2} \vro  \|_{L^2}^{\frac12}) \\
			&\; + \sum_{3 \le l\le m-1} (
			\|\p_3^{l+1} u_h \|_{L^2}^{\frac12}
			\|\p_1 \p_3^{l+1} u_h \|_{L^2}^{\frac12}
			\|\nabla_h \p_3^{m-l} \vro \|_{L^2}^{\frac12}
			\|\nabla_h \p_3^{m-l+1} \vro  \|_{L^2}^{\frac12} \\
			&\; + \|\p_3^{l} u_h \|_{L^2}^{\frac12}
			\|\p_1 \p_3^{l} u_h \|_{L^2}^{\frac12}
			\|\nabla_h \p_3^{m-l +1} \vro \|_{L^2}^{\frac12}
			\|\nabla_h \p_3^{m-l+2} \vro  \|_{L^2}^{\frac12})
			\Big) \\
			&\; + \| \p_3^{m} \vro\|_{L^2} \|\p_3^{m} u\|_{L^2}^{\frac12}
			\| \p_1 \p_3^{m} u \|_{L^2}^{\frac12}   \|\nabla_h \vro\|_{L^2}^{\frac14}
			\| \p_3 \nabla_h  \vro \|_{L^2}^{\frac14}
			\|\nabla_h^2 \vro  \|_{L^2}^{\frac14}
			\|\p_3 \nabla_h^2 \vro \|_{L^2}^{\frac14} \\
			\lesssim &\;  \sqrt{\me^m(t)} \md^{m}(t) + \me^m(t)^{\f34}  \md^m(t)^{\f14}  \md_{tan}^{m-1} (t)^{\f12}.
			\dal \deqq
			Similarly, we have
			\beqq \bal
			II_{1,3} \lesssim &\; \| \p_3^m \vro\|_{L^2}  \| \nabla_h \p_3^{m-1} \vro\|_{L^2}  \| \p_3 u_h\|_{L^{\infty}} \\ \lesssim &\; \| \p_3^m \vro\|_{L^2}  \| \nabla_h \p_3^{m-1} \vro\|_{L^2}  \| (\p_3 u_h, \p_3^2 u_h)\|_{L^{2}}^{\f14}   \| \nabla_h \p_3 u_h\|_{L^{2}}^{\f38}    \|\nabla_h \p_3^2 u_h\|_{L^{2}}^{\f38}  \\
			\lesssim &\;  \me^m(t)^{\f58}  \md^m(t)^{\f{11}{16}}  \md_{tan}^{m-1} (t)^{\f{3}{16}},\\
			II_{1,5} 
			\lesssim &\; \sqrt{\me^m(t)} \md^{m}(t) + \|\p_3^{m} \vro\|_{L^2} \| \p_3^{m}\du\|_{L^2}  \| \vro\|_{L^{\infty}} \\
			\lesssim &\;  \sqrt{\me^m(t)} \md^{m}(t) +  \me^m(t)^{\f58}  \md^m(t)^{\f12}  \md_{tan}^{m-1} (t)^{\f38}. 
			\dal \deqq
			The combination of estimates in terms  $II_{1,i}(i=1,\cdots,5)$  yields that
			\beq\label{3202} \bal
			II_1 \le &\; (\f12+m)  \f{d}{dt} \i \vro  | \p_3^m \vro|^2 \ dx   - m\f{d}{dt} \i \nabla_h \cdot u_h \p_3^m \vro \cdot \p_3^{m-1}   u_3\ dx \\
			&\; + C\sqrt{\me^m (t)}\md^m (t) + C\me^m (t)^{\f34}  \md^m (t)^{\f14} \md_{tan}^{m-1}(t)^{\f12} \\
			&\; + C \me^m(t)^{\f58}  \md^m(t)^{\f{11}{16}}  \md_{tan}^{m-1} (t)^{\f{3}{16}}
			+C\me^m(t)^{\f58}  \md^m(t)^{\f12}  \md_{tan}^{m-1} (t)^{\f38}. 
			\dal \deq
			{\bf{Estimate of the term $II_2$.}} 
			Integrating by part, we have
			\begin{align*}
				II_2 = &\;  \f12 \i |\p_3^m u|^2 \du \ dx - \sum_{ 1\le l \le m } C_{m}^{l} \i \p_3^l u \cdot \nabla \p_3^{m-l} u \cdot \p_3^m u \ dx+  \f12 \i \p_3^m (|\vro|^2) \cdot \p_3^m \du\ dx \\
				= &\;  \f12 \i |\p_3^m u|^2 \du \ dx +\i \vro \, \p_3^m \vro \cdot \p_3^m \du\ dx  - \sum_{ 1\le l \le m } C_{m}^{l} \i \p_3^l u \cdot \nabla \p_3^{m-l} u \cdot \p_3^m u \ dx \\
				&\; +\f12 \sum_{ 1\le l \le m-1 } C_{m}^{l} \i  \p_3^l \vro \cdot  \p_3^{m-l} \vro \cdot \p_3^m \du \ dx
				:= \sum_{i=1}^{4} II_{2,i}.
			\end{align*}
			Using the  anisotropic type inequality \eqref{ie:Sobolev}, we can obtain that
			\beqq \bal
			&\; II_{2,1} + II_{2,2} \\
			\lesssim &\; \| \du\|_{L^2}^{\f14}  \| \p_2 \du\|_{L^2}^{\f14}  \| \p_3 \du\|_{L^2}^{\f14}  \|\p_{23} \du\|_{L^2}^{\f14}  \|\p_3^m u\|_{L^2}^{\f32} \|\p_1 \p_3^m u\|_{L^2}^{\f12} +  \|\p_3^m \du\|_{L^2} \|\p_3^m \vro\|_{L^2} \| \vro\|_{L^{\infty}} \\
			\lesssim &\; \me^m(t)^{\f34}  \md^m(t)^{\f14}  \md_{tan}^{m-1} (t)^{\f12} + \me^m(t)^{\f58}  \md^m(t)^{\f12}  \md_{tan}^{m-1} (t)^{\f38}.
			\dal \deqq
			Estimate of the term $II_{2,3}$. It is easy to check that
			\beqq \bal
			II_{2,3}= &\; -   \i (\p_3^m u_h  \cdot \nabla_h u +  \p_3^m u_3  \cdot \p_3 u) \cdot \p_3^m u \ dx -\i   \p_3 u_3  \cdot |\p_3^m u|^2 \ dx \\
			&\; - \sum_{ 1\le l \le m-1 } C_{m}^{l} \i \p_3^l u_h \cdot \nabla_h \p_3^{m-l} u \cdot \p_3^m u \ dx - \sum_{ 2\le l \le m-1 } C_{m}^{l} \i \p_3^l u_3 \cdot \p_3^{m-l+1} u \cdot \p_3^m u \ dx \\
			\lesssim  &\; \|\p_3^m u\|_{L^2}^{\f32} \|\p_1 \p_3^m u\|_{L^2}^{\f12} (\|\nabla_h u\|_{H_{tan}^1}^{\f12}  \| \p_3 \nabla_h u\|_{H_{tan}^1}^{\f12}  + \|\p_3 u_3\|_{H_{tan}^1}^{\f12}   \| \p_3^2 u_3\|_{H_{tan}^1}^{\f12} ) \\ &\; +
			\|\p_3^m u_3\|_{L^2} \|\p_3^m u\|_{L^2} \|\p_1 \p_3^m u\|_{L^2}^{\f12} \|\p_3 u\|_{H_{tan}^1}^{\f12}  \| \p_3^2 u\|_{H_{tan}^1}^{\f12}  + \|\p_3^m u\|_{L^2}^{\f12} \|\p_1 \p_3^m u\|_{L^2}^{\f12} \\
			&\;\times ( \sum_{1 \le l \le [\f{m-1}{2}]} \| \p_3^{l} u_h \|_{L^2}^{\f14}  \| \p_2 \p_3^{l} u_h \|_{L^2}^{\f14}  \| \p_3^{l+1} u_h  \|_{L^2}^{\f14}  \|\p_{2} \p_3^{l+1} u_h \|_{L^2}^{\f14} \|\nabla_h \p_3^{m-l} u\|_{L^2} \\
			&\; +\sum_{[\f{m-1}{2}] < l \le m-1} \| \p_3^{l} u_h \|_{L^2}^{\f12}  \| \p_2 \p_3^{l} u_h \|_{L^2}^{\f12}  \|\nabla_h \p_3^{m-l} u\|_{L^2} ^{\f12} \|\p_2 \nabla_h \p_3^{m-l} u\|_{L^2} ^{\f12} \\
			&\; +\sum_{2 \le l \le m-1} \| \p_3^{l} u_3 \|_{L^2}^{\f12}  \| \p_3^{l+1} u_3 \|_{L^2}^{\f12}  \| \p_3^{m-l+1} u\|_{L^2} ^{\f12} \|\p_2 \p_3^{m-l+1} u\|_{L^2} ^{\f12}) \\
			\lesssim &\; \sqrt{\me^m(t)} \md^{m}(t) + \me^m(t)^{\f34}  \md^m(t)^{\f14}  \md_{tan}^{m-1} (t)^{\f12}.
			\dal \deqq
			Similarly, we have
			\beqq \bal 
			II_{2,4} \lesssim &\; \sqrt{\me^m(t)} \md^{m}(t).
			\dal \deqq
			Thus we have
			\beq \label{3203} \bal
			II_{2} \lesssim  \sqrt{\me^m(t)} \md^{m}(t) + \me^m(t)^{\f34}  \md^m(t)^{\f14}  \md_{tan}^{m-1} (t)^{\f12} + \me^m(t)^{\f58}  \md^m(t)^{\f12}  \md_{tan}^{m-1} (t)^{\f38}.
			\dal \deq
			Similarly, we can obtain that
			\beq \label{3204} \bal
			II_3 \lesssim &\;  \sqrt{\me^m(t)} \md^{m}(t) + \me^m(t)^{\f34}  \md^m(t)^{\f14}  \md_{tan}^{m-1} (t)^{\f12},\\
			II_5+  II_6+ II_7 + II_9+II_{10}  \lesssim &\; \sqrt{\me^m(t)}  \md^{m}(t).
			\dal \deq
			Finally, we estimate the terms $II_4$ and $II_8$. Integrating by parts, we have
			\begin{align*}
				II_4 + II_8 
				= &\; \i \f{1}{1+\vro} (  b \cdot \nabla \p_3^m u \cdot \p_3^m  b + b \cdot \nabla \p_3^m b \cdot \p_3^m  u)\ dx  \\
				&\; + \sum_{1 \le l \le m} C_{m}^l \i\p_3^l (\f{1}{1+\vro} )  \cdot   \p_3^{m-l} (b \cdot \nabla b) \cdot \p_3^m  u \ dx \\
				&\; +\sum_{1 \le l \le m} C_{m}^l \i \f{1}{1+\vro}  (\p_3^l b  \cdot  \nabla \p_3^{m-l} u \cdot \p_3^m  b +  \p_3^l b  \cdot  \nabla \p_3^{m-l} b \cdot \p_3^m  u )\ dx \\
				= &\; - \i \p_3^m u \cdot \p_3^m   b \cdot  b \cdot \nabla( \f{1}{1+\vro}) \ dx + \sum_{1 \le l \le m} C_{m}^l \i\p_3^l (\f{1}{1+\vro} )  \cdot   \p_3^{m-l} (b \cdot \nabla b) \cdot \p_3^m  u \ dx\\
				&\;+\sum_{1 \le l \le m} C_{m}^l \i \f{1}{1+\vro}  (\p_3^l b  \cdot  \nabla \p_3^{m-l} u \cdot \p_3^m  b +  \p_3^l b  \cdot  \nabla \p_3^{m-l} b \cdot \p_3^m  u )\ dx. 
			\end{align*}
			Using the  anisotropic type inequality \eqref{ie:Sobolev}, we can obtain that
			\beqq \bal
			&\; \i \p_3^m u \cdot \p_3^m   b \cdot  b \cdot \nabla( \f{1}{1+\vro}) \ dx   \\
			\lesssim &\; \| \p_3^m b\|_{L^2}^{\f12} \| \p_2 \p_3^m b\|_{L^2}^{\f12} \| \p_3^m u\|_{L^2}^{\f12}   \| \p_1 \p_3^m u\|_{L^2}^{\f12} (\| b_h\|_{L^{\infty}} \| \nabla_h ( \f{1}{1+\vro}) \|_{L^2}^{\f12} \| \p_3 \nabla_h ( \f{1}{1+\vro}) \|_{L^2}^{\f12} \\
			&\;+ \| \p_3( \f{1}{1+\vro}) \|_{L^{\infty}} \| b_3 \|_{L^2}^{\f12} \| \p_3 b_3 \|_{L^2}^{\f12}) \\
			\lesssim &\; \me^m(t) \md^{m}(t),
			\dal \deqq
			and
			\beqq \bal
			&\; \sum_{1 \le l \le m} C_{m}^l \i \f{1}{1+\vro}  (\p_3^l b  \cdot  \nabla \p_3^{m-l} u \cdot \p_3^m  b +  \p_3^l b  \cdot  \nabla \p_3^{m-l} b \cdot \p_3^m  u )\ dx\\
			\lesssim &\; \|\p_3^m b\|_{L^2}^{\f12} \|\p_2 \p_3^m b\|_{L^2}^{\f12} ( \sum_{1 \le l \le [\f{m}{2}]} \| \p_3^{l} b \|_{L^2}^{\f14}  \| \p_2 \p_3^{l} b \|_{L^2}^{\f14}  \| \p_3^{l+1} b \|_{L^2}^{\f14}  \|\p_{2} \p_3^{l+1} b\|_{L^2}^{\f14} \|\nabla_h \p_3^{m-l} u\|_{L^2} \\
			&\; +\sum_{[\f{m}{2}] < l \le m-1} \| \p_3^{l} b \|_{L^2}^{\f12}  \| \p_1 \p_3^{l}b \|_{L^2}^{\f12}  \|\nabla_h \p_3^{m-l} u\|_{L^2} ^{\f12} \| \nabla_h \p_3^{m-l+1} u\|_{L^2} ^{\f12} \\
			&\; +\sum_{1 \le l \le m-1} \| \p_3^{l} b_3 \|_{L^2}^{\f12}  \| \p_3^{l+1} b_3 \|_{L^2}^{\f12}  \| \p_3^{m-l+1} u\|_{L^2} ^{\f12} \|\p_1 \p_3^{m-l+1} u\|_{L^2} ^{\f12}) \\
			&\; + \|\p_3^m u\|_{L^2}^{\f12} \|\p_1 \p_3^m u\|_{L^2}^{\f12} \sum_{1 \le l \le m-1} ( \| \p_3^{l} b_h \|_{L^2}^{\f12}  \| \p_3^{l} \nabla_h b_h \|_{L^2}^{\f12}  \| \nabla_h \p_3^{m-l} b\|_{L^2} ^{\f12} \|\nabla_h \p_3^{m-l+1} b\|_{L^2} ^{\f12} \\
			&\; +\| \p_3^{l} b_3 \|_{L^2}^{\f12}  \| \p_3^{l+1} b_3 \|_{L^2}^{\f12}  \| \p_3^{m-l+1} b\|_{L^2} ^{\f12} \|\p_2 \p_3^{m-l+1} b\|_{L^2} ^{\f12})\\
			\lesssim &\; \sqrt{\me^m(t)} \md^{m}(t).
			\dal \deqq
			Similarly,  under the assumption \eqref{assumption}, we have
			\beqq \bal
			\sum_{1 \le l \le m} C_{m}^l \i\p_3^l (\f{1}{1+\vro} )  \cdot   \p_3^{m-l} (b \cdot \nabla b) \cdot \p_3^m  u \ dx 
			\lesssim  \sqrt{\me^m(t)} \md^{m}(t).
			\dal \deqq 
			Thus we have the following estimates of $II_4$ and $II_8$:
			\beqq
			II_4 + II_8  \lesssim  \sqrt{\me^m(t)} \md^{m}(t),
			\deqq
			which, together with the estimates \eqref{3202}-\eqref{3204}, yields that
			\beqq \bal
			&\; \frac{d}{dt}\frac{1}{2}\i(|\p_3^m u|^2+|\p_3^m \vro|^2+\f{1}{1+ \vro} |\p_3^m  b|^2 -(\f12+m) \vro  | \p_3^m \vro|^2 + m\nabla_h \cdot u_h \, \p_3^m \vro \cdot \p_3^{m-1} u_3   )dx \\
			&\;+ 
			\|\p_3^{m} (\p_1 u, \du, \nabla_h b)\|_{L^2}^2 + \var \|\p_3^{m} (\p_2 u, \p_3 u, \p_3 b)\|_{L^2}^2\\
			\lesssim 
			&\;  \sqrt{\me^m (t)}\md^m (t) + \me^m (t)^{\f34}  \md^m (t)^{\f14} \md_{tan}^{m-1}(t)^{\f12} + \me^m(t)^{\f58}  \md^m(t)^{\f{11}{16}}  \md_{tan}^{m-1} (t)^{\f{3}{16}}.
			\dal \deqq
			Integrating over $[0,t]$, under the assumption \eqref{assumption}, we can obtain
			\beqq \bal 
			&\; \int_{0}^{t}  \me^m (\tau)^{\f34}  \md^m (\tau)^{\f14} \md_{tan}^{m-1}(\tau)^{\f12} \ d\tau\\ 
			\lesssim &\; \left\{\underset{0\le \tau \le t}{\sup} \me^m(\tau)\right\}^{\f34}
			\left\{\int_{0}^{t} \md_{tan}^{m-1} (\tau) (1+\tau)^{\sigma} d \tau \right\}^{\f12} \left\{\int_{0}^{t} \md^{m} (\tau)  d \tau\right\}^{\f14}  \left\{\int_{0}^{t} (1+\tau)^{-2\sigma} d \tau \right\}^{\f14}  \\
			\lesssim &\;  \dl^2,
			\dal \deqq
			and similarly,
			\beqq \bal
			\int_{0}^{t} \me^m(\tau)^{\f58}  \md^m(\tau)^{\f{11}{16}}  \md_{tan}^{m-1} (\tau)^{\f{3}{16}}\ d\tau \lesssim \dl^{\f32},
			\dal\deqq 
			where we have used the condition $\f{9}{10} < \sigma < s < 1$. Thus, we complete the proof of this lemma.
		\end{proof}

		\subsection{One-order vertical derivative estimate}
		In order to obtain the decay estimate of solution, we will establish the estimate for the vertical derivative
		of density, velocity and magnetic field.
		\begin{lemm}
			Under the assumption \eqref{assumption}, for any smooth solution $(\vro ,u, b)$ of equation \eqref{eqr},
			it holds 
			\beq\label{3301} \bal
			&\; \frac{d}{dt}\Big(\|(\p_3 \vro, \p_3 u,\f{1}{\sqrt{1+\vro}} \p_3 b)\|_{L^2}^2 
			+ \sum\limits_{ |\alpha_h|=m-2}\|(\p_3 \p^{\al_h} \vro, \p_3 \p^{\al_h} u,  \f{1}{\sqrt{1+\vro}}\p^{\al_h} \p_3 b)\|_{L^2}^2 \Big)\\
			&\; 
			+\|\p_3 (\p_{1} u,  \du , \nabla_h b)\|_{H_{tan}^{m-2}}^2  
			+\var \|\p_3 (\p_{2} u, \p_3 u , \p_3 b )\|_{H_{tan}^{m-2}}^2\\
			\lesssim &\; \sqrt{\me^m(t)}\md_{tan}^{m-1}(t).
			\dal \deq
		\end{lemm}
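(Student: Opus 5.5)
The plan mirrors the proof of Lemma \ref{lemma32}, with the operator $\partial^{\alpha_h}$ replaced by $\partial_3\partial^{\alpha_h}$ for $|\alpha_h|=0$ and $|\alpha_h|=m-2$. Apply $\partial_3\partial^{\alpha_h}$ to \eqref{eqr}. Multiply the density equation by $\partial_3\partial^{\alpha_h}\varrho$ and the velocity equation by $\partial_3\partial^{\alpha_h}u$. For the magnetic equation, first multiply by $\frac{1}{1+\varrho}$ and then take the inner product with $\partial_3\partial^{\alpha_h}b$. Exactly as in \eqref{3102}, the time derivative produces $\frac{d}{dt}\frac12\int\frac{1}{1+\varrho}|\partial_3\partial^{\alpha_h}b|^2dx$ plus a commutator involving $\partial_t(\frac{1}{1+\varrho})$, which we rewrite through $\eqref{eqr}_1$. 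The linear couplings cancel by the two identities used in Lemma \ref{lemma32}; they still hold after applying $\partial_3\partial^{\alpha_h}$, since that operator commutes with the constant-coefficient linear part. This leaves the dissipation $\|\partial_3\partial^{\alpha_h}(\partial_1u,\mathrm{div}u,\nabla_hb)\|_{L^2}^2+\varepsilon\|\partial_3\partial^{\alpha_h}(\partial_2u,\partial_3u,\partial_3b)\|_{L^2}^2$. The nonlinear contributions organize into terms $III_1,\dots,III_{10}$ analogous to $I_1,\dots,I_{10}$.

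Each $III_i$ should be bounded by $\sqrt{\mathcal{E}^m}\mathcal{D}^{m-1}_{tan}$ in four steps.
\begin{itemize}
\item Distribute derivatives by Leibniz.
\item Put the lower-order factor in $L^\infty$, or use the anisotropic inequality \eqref{ie:Sobolev} with half-powers in the directions where dissipation is available: $\partial_1$ for $u$, $\nabla_h$ for $b$, and $\nabla_h$ at one order lower for $\varrho$ and $\partial_2u$.
\item Place the highest factor in $L^2$.
\item Note that the total number of derivatives is at most $m-1$ (one vertical, $m-2$ horizontal, plus one from the nonlinearity), so every factor is controlled by $\mathcal{E}^m$ together with $\mathcal{D}^{m-1}_{tan}$.
\end{itemize}
The transport terms $\partial_3\partial^{\alpha_h}(u\cdot\nabla\varrho)$ and $\partial_3\partial^{\alpha_h}(u\cdot\nabla u)$ need separate care. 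When every derivative falls on $\varrho$, integrate by parts to get $\frac12\int|\partial_3\partial^{\alpha_h}\varrho|^2\mathrm{div}u\,dx$. When one $\partial_3$ falls on $u$, we obtain $\int\partial_3u_3|\partial_3\partial^{\alpha_h}\varrho|^2$ and $\int\partial_3u_h\cdot\nabla_h\partial^{\alpha_h}\varrho\,\partial_3\partial^{\alpha_h}\varrho$. The second is fine because $\nabla_h\partial^{\alpha_h}\varrho$ lies in $\mathcal{D}^{m-1}_{tan}$. The first is handled with $\|\partial_3u_3\|_{L^\infty}\lesssim\|\partial_3u_3\|_{H^2_{tan}}^{1/2}\|\partial_3^2u_3\|_{H^2_{tan}}^{1/2}$ combined with $\partial_3u_3=\mathrm{div}u-\nabla_h\cdot u_h$: the $\mathrm{div}u$ piece is dissipated, and the $\nabla_h u_h$ piece is controlled by $(\partial_1u,\partial_2u)$.

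For the magnetic terms $III_4+III_8$ we use the weighted cancellation
\[
\int\tfrac{1}{1+\varrho}\big(b\cdot\nabla\partial_3\partial^{\alpha_h}b\cdot\partial_3\partial^{\alpha_h}u+b\cdot\nabla\partial_3\partial^{\alpha_h}u\cdot\partial_3\partial^{\alpha_h}b\big)dx=-\int b\cdot\nabla(\tfrac{1}{1+\varrho})\,\partial_3\partial^{\alpha_h}u\cdot\partial_3\partial^{\alpha_h}b\,dx,
\]
which relies on $\mathrm{div}b=0$. This identity is exactly why the weight $\frac{1}{1+\varrho}$ appears in the energy.

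The $\alpha_h=0$ level parallels \eqref{3103}. The one delicate term there is $\int\frac{\varrho}{1+\varrho}\partial_3u_3\,\partial_3^2b_2$-type, which has no horizontal derivative. We handle it as in the $I_3$ estimate, using the interpolation $\|f\|_{L^2}\lesssim\|\Lambda_h^{-s}f\|_{L^2}^{\frac{1}{1+s}}\|\nabla_hf\|_{L^2}^{\frac{s}{1+s}}$ with $s>\frac{9}{10}$. This is where the negative norm $\|\Lambda_h^{-s}\partial_3(\varrho,u,b)\|_{L^2}$ in $\mathcal{E}^m$ enters.

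\textbf{Main obstacle.} The hardest step is the density terms $III_1$ and $III_3$ at top order. The density is dissipated only horizontally and one order lower, while $\frac{\varrho}{1+\varrho}\partial_3\partial^{\alpha_h}\nabla\mathrm{div}u$ carries $m$ derivatives in total. The plan is to integrate by parts as in $I_{3,1}$, so that the extra derivative moves onto $\frac{\varrho}{1+\varrho}\partial_3\partial^{\alpha_h}u$, and then to check for each Leibniz split that every vertically differentiated density factor sits in $L^2$ inside $\mathcal{E}^m$. Because we work at level $m-1$ rather than $m$, no substitution from the momentum equation (like the one for $II_{1,2}$) should be needed. If it turns out to be needed, we fall back on that substitution.
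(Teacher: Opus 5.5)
Your overall architecture matches the paper's proof. It uses the same energy identity \eqref{3302} with the weight $\frac{1}{1+\varrho}$ on $b$, the same linear cancellations, the same $\mathrm{div}\,b=0$ cancellation for $III_4+III_8$, and negative-norm interpolation at $\alpha_h=0$. You are also right that no momentum-equation substitution is needed. However, the one estimate you spell out for the transport term fails at the lowest level.

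You bound $\int\partial_3u_3|\partial_3\partial^{\alpha_h}\varrho|^2dx$ by $\|\partial_3\partial^{\alpha_h}\varrho\|_{L^2}^2\|\partial_3u_3\|_{L^\infty}$, treating $\|\partial_3u_3\|_{L^\infty}$ as dissipative.
\begin{itemize}
\item For $|\alpha_h|=m-2$ this is fine. There $\partial^{\alpha_h}$ contains a horizontal derivative, so $\|\partial_3\partial^{\alpha_h}\varrho\|_{L^2}^2\le\mathcal{D}^{m-1}_{tan}$, and even $\|\partial_3u_3\|_{L^\infty}\lesssim\sqrt{\mathcal{E}^m}$ suffices.
\item For $\alpha_h=0$ it fails. $\partial_3\varrho$ itself is not in $\mathcal{D}^{m-1}_{tan}$; only $\nabla_h\partial_3\varrho$ is. So you get $\mathcal{E}^m(\mathcal{D}^{m-1}_{tan})^{1/2}$, half a power of dissipation short.
\item Interpolating $\|\partial_3\varrho\|_{L^2}$ against $\|\Lambda_h^{-s}\partial_3\varrho\|_{L^2}$ does not rescue it. The dissipation exponent becomes $\frac12+\frac{s}{1+s}<1$ because $s<1$.
\item For $m=4$ your $L^\infty$ bound is not even dissipative. $\|\partial_3^2u_3\|_{H^2_{tan}}$ needs $\partial_{23}u\in H^2_{tan}$, while $\mathcal{D}^{3}_{tan}$ only contains it in $H^1_{tan}$.
\end{itemize}

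The fix, which is what the paper does, is to put horizontal derivatives on both undissipated density factors via the third inequality in \eqref{ie:Sobolev}:
\[
\int|\partial_3u_3||\partial_3\varrho|^2dx\lesssim\|\partial_3u_3\|_{L^2}^{\frac12}\|\partial_3^2u_3\|_{L^2}^{\frac12}\|\partial_3\varrho\|_{L^2}\|\partial_{13}\varrho\|_{L^2}^{\frac12}\|\partial_{23}\varrho\|_{L^2}^{\frac12}\lesssim\sqrt{\mathcal{E}^m}\,\mathcal{D}^{m-1}_{tan}.
\]
Here $\partial_3u_3=\mathrm{div}\,u-\nabla_h\cdot u_h$ makes the first two factors dissipative.

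The same care is needed for your other $\alpha_h=0$ transport terms.
\begin{itemize}
\item $\frac12\int|\partial_3\varrho|^2\mathrm{div}\,u\,dx$ must be bounded by $\|\partial_3\varrho\|_{L^2}\|\nabla_h\partial_3\varrho\|_{L^2}\|\mathrm{div}\,u\|_{L^2}^{1/2}\|\partial_3\mathrm{div}\,u\|_{L^2}^{1/2}$, not with $\|\mathrm{div}\,u\|_{L^\infty}$.
\item $\int\partial_3u_h\cdot\nabla_h\varrho\,\partial_3\varrho\,dx$ is not handled merely because $\nabla_h\varrho\in\mathcal{D}^{m-1}_{tan}$. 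You also need the half-derivatives $\partial_{13}u_h$ and $\partial_{23}\varrho$ on the other two factors.
\end{itemize}

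At $\alpha_h=0$ the real constraint is not the derivative count (which, incidentally, is $m$, not $m-1$). It is that each cubic term must carry a full power of $\mathcal{D}^{m-1}_{tan}$, even though $\partial_3\varrho$, $\partial_3u$, $\partial_3b$ themselves carry none. With these repairs your plan coincides with the paper's proof.
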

		
		\begin{proof}
			For any $|\ah| \le m-2$, the equations \eqref{eqr} yields that
			\beq\label{3302}
			\begin{aligned}
				&\;\frac{d}{dt}\frac{1}{2}\i(|\p^{\ah}\p_3 u|^2+|\p^{\ah} \p_3 \vro|^2+\f{|\p^{\al_h} \p_3 b|^2}{1+ \vro}  )dx +\i(| \p^{\ah} \p_{13} u|^2+|\p^{\ah} \p_3 \du|^2+| \p^{\ah} \p_3 \nabla_h b|^2)dx\\
				&\; + \var \i(| \p^{\ah} \p_{23} u|^2+|\p^{\ah} \p_3^2 u|^2+| \p^{\ah}\p_3^2 b|^2)dx
				\\
				=&\;-\i \p^{\ah} \p_3 (u\cdot \nabla \vro +\vro \, \du )\cdot \p^{\ah} \p_3 \vro \ dx
				-\i \p^{\ah} \p_3 ( u \cdot \nabla u+ \vro \nabla \vro)\cdot \p^{\ah} \p_3 u\ dx\\
				& -\i \p^{\ah} \p_3  ( \f{\vro}{1+\vro}(\p_1^2 u+\nabla \du-\nabla b_2 +\p_2 b)) \cdot \p^{\ah} \p_3   u\ dx  +\i \p^{\ah} \p_3 ( \f{1}{1+\vro} b \cdot \nabla b) \cdot \p^{\ah} \p_3  u\ dx  \\
				&\;-\f12 \i \p^{\ah} \p_3( \f{1}{1+\vro} \nabla (|b|^2)) \cdot \p^{\ah} \p_3 u\ dx  +\i \f{1}{2(1+ \vro)^2} |\p^{\al_h} \p_3 b|^2  (\du +u \cdot \nabla \vro + \vro \, \du)\ dx\\
				&\;-\i \f{1}{1+ \vro} \p^{\ah} \p_3 (b \, \du + u \cdot \nabla b)\cdot \p^{\ah} \p_3 b \ dx +\i \f{1}{1+ \vro} \p^{\ah} \p_3 (b \cdot \nabla u)\cdot \p^{\ah} \p_3 b \ dx\\
				&\;
				+\i \f{\vro}{1+\vro} \p^{\al_h} \p_3 (e_2  \du-\Delta_h  b -\var \p_3^2 b
				- \p_2 u )\cdot \p^{\ah} \p_3 b\ dx \\
				&\; -\var \i \p^{\ah} \p_3  ( \f{\vro}{1+\vro}(\p_2^2 u+\p_3^2 u)) \cdot \p^{\ah} \p_3   u\ dx
				:=\sum_{i=1}^{10} III_i.
			\end{aligned}
			\deq
			\textbf{First of all, we deal with the case $|\alpha_h| = 0$. } 
			Integrating by part and using the  inequality \eqref{ie:Sobolev}, we have
			\begin{align*}
				III_1  = &\;  - \i (\p_3 u \cdot \nabla \vro +  u \cdot \p_3  \nabla \vro + \p_3 \vro\, \du + \vro  \, \p_3 \du ) \cdot \p_3 \vro \ dx\\
				= &\; - \i (\p_3 u \cdot \nabla \vro + \vro  \, \p_3 \du ) \cdot \p_3 \vro \ dx - \f12  \i   |\p_3 \vro|^2 \, \du \ dx\\
				\lesssim &\; \| \p_3 \vro \|_{L^2}^{\f12}\| \p_2 \p_3 \vro \|_{L^2}^{\f12}  (\| \p_3 u_h\|_{L^2}^{\f12} \| \p_1 \p_3 u_h\|_{L^2}^{\f12} \| \nabla_h \vro\|_{L^2}^{\f12} \| \p_3 \nabla_h \vro\|_{L^2}^{\f12}+  \| \p_3 u_3\|_{L^2}^{\f12} \| \p_3^2 u_3\|_{L^2}^{\f12} \| \p_3 \vro\|_{L^2}^{\f12} \| \p_{13} \vro\|_{L^2}^{\f12}) \\
				&\;  + \| \p_3 \du\|_{L^2} \| \vro \|_{L^2}^{\f14} \| \p_2 \vro \|_{L^2}^{\f14}\| \p_3 \vro \|_{L^2}^{\f14}\| \p_{23} \vro \|_{L^2}^{\f14} 
				\| \p_3 \vro\|_{L^2}^{\f12} \| \p_{13} \vro\|_{L^2}^{\f12}\\
				&\; + \| \p_3 \vro\|_{L^2} \|\nabla_h \p_{3} \vro\|_{L^2} \| \du \|_{L^2}^{\f12} \|\p_3 \du\|_{L^2}^{\f12},
			\end{align*}
			and 
			\beqq \bal
			III_2 = &\; - \i (\p_3 u \cdot \nabla u + u \cdot \nabla \p_3 u + \f12 \p_3 \nabla( \vro^2)  ) \cdot \p_3 u \ dx \\
			= &\; - \i \p_3 u \cdot \nabla u \cdot \p_3 u \ dx  + \f12 \i |\p_3 u|^2 \du \ dx + \f12 \i \p_3 (\vro^2) \, \du \ dx \\
			\lesssim &\;  \| \p_3 u \|_{L^2}^{\f12}\| \p_2 \p_3 u\|_{L^2}^{\f12}  ( \| \p_3 u_h\|_{L^2}^{\f12} \| \p_1 \p_3 u_h\|_{L^2}^{\f12} \| \nabla_h u\|_{L^2}^{\f12} \| \p_3 \nabla_h u\|_{L^2}^{\f12} + \| \p_3 u_3\|_{L^2}^{\f12} \| \p_3^2 u_3\|_{L^2}^{\f12} \| \p_3 u\|_{L^2}^{\f12} \| \p_{13} u\|_{L^2}^{\f12})\\
			&\; + \|\p_3 u\|_{L^2} \|\nabla_h \p_3 u\|_{L^2} \|\du\|_{L^2}^{\f12}\|\p_3 \du\|_{L^2}^{\f12}  + \| \du\|_{L^2}^{\f12}\| \p_3 \du\|_{L^2}^{\f12} \| \vro \|_{L^2}^{\f12} \| \p_2 \vro \|_{L^2}^{\f12}
			\| \p_3 \vro\|_{L^2}^{\f12} \| \p_{13} \vro\|_{L^2}^{\f12}.\\
			\dal \deqq
			Next, we estimate the term $III_3$.  
			\beqq \bal
			III_3 =  &\; \i \p_3 (\f{\vro}{1+\vro}) \p_1^2 u  \cdot \p_3 u \ dx + \i \p_3 (\f{\vro}{1+\vro}) ( \nabla \du +\nabla b_2 -\p_2 b ) \cdot \p_3 u \ dx  \\
			&\; +  \i \f{\vro}{1+\vro}(\p_1^2   \p_3 u + \nabla \p_3 \du +\p_3 \nabla b_2 ) \cdot \p_3 u \ dx  -  \i \f{\vro}{1+\vro} \p_{23} b \cdot \p_3 u \ dx \\
			=  &\;- \i \p_1 u   \cdot \p_1 (\p_3 (\f{\vro}{1+\vro}) \p_3 u) \ dx + \i \p_3 (\f{\vro}{1+\vro}) ( \nabla \du +\p_3 \nabla b_2 -\p_2 b ) \cdot \p_3 u \ dx  \\
			&\; - \i  \p_{13}  u \cdot \p_1  (\f{\vro}{1+\vro} \p_3 u) \ dx
			- \i ( \p_3 \du + \p_3 b_2) \, {\rm{div}} (\f{\vro}{1+\vro} \p_3 u) \ dx  
			-  \i \f{\vro}{1+\vro} \p_{23} b \cdot \p_3 u \ dx.
			\dal  \deqq
			We only estimate the fourth term on the right side and the other terms can be estimated in a similar way.  Using the  anisotropic type inequality \eqref{ie:Sobolev}, we have
			\beqq \bal
			&\; - \i ( \p_3 \du + \p_3 b_2) \, {\rm{div}} (\f{\vro}{1+\vro} \p_3 u) \ dx  \\  
			=&\; -\i ( \p_3 \du + \p_3 b_2) \,  (\f{\vro}{1+\vro} \p_3 \du + u_h \cdot \nabla_h(\f{\vro}{1+\vro}) +  u_3 \, \p_3(\f{\vro}{1+\vro}) ) \ dx  \\
			\lesssim &\; \| \p_3 \du \|_{L^2}^2 \| \vro\|_{L^{\infty}} + \| \p_3 \du \|_{L^2} \| \p_3 b_2 \|_{L^2}^{\f12} \| \p_{23} b_2 \|_{L^2}^{\f12} \|(\vro, \p_3 \vro)\|_{L^2}^{\f12}  \|\p_1 (\vro, \p_3 \vro)\|_{L^2}^{\f12} \\
			&\; +  \|\p_3 b_2\|_{L^2}^{\f12}\|\p_{23} b_2\|_{L^2}^{\f12}( \|u_h\|_{L^2}^{\f12} \| \p_1 u_h\|_{L^2}^{\f12} \| \nabla_h (\f{\vro}{1+\vro})\|_{L^2}^{\f12} \| \p_3 \nabla_h (\f{\vro}{1+\vro})\|_{L^2}^{\f12} \\
			&\; + \| u_3\|_{L^2}^{\f12} \| \p_3 u_3\|_{L^2}^{\f12} \| \p_3 (\f{\vro}{1+\vro})\|_{L^2}^{\f12} \| \p_{13} (\f{\vro}{1+\vro})\|_{L^2}^{\f12})\\
			&\; +\|\p_3 \du\|_{L^2}( \|u_h\|_{L^2}^{\f14} \| \p_1 u_h\|_{L^2}^{\f14} \p_2 u_h\|_{L^2}^{\f14} \|\p_{12} u_h\|_{L^2}^{\f14} \| \nabla_h (\f{\vro}{1+\vro})\|_{L^2}^{\f12} \| \p_3 \nabla_h (\f{\vro}{1+\vro})\|_{L^2}^{\f12} \\
			&\; + \| u_3\|_{L^2}^{\f14} \| \p_3 u_3\|_{L^2}^{\f14}  \| \p_{1} u_3\|_{L^2}^{\f14}  \| \p_{13} u_3\|_{L^2}^{\f14} \| \p_3 (\f{\vro}{1+\vro})\|_{L^2}^{\f12} \| \p_{23} (\f{\vro}{1+\vro})\|_{L^2}^{\f12}),
			\dal \deqq
			therefore under the assumption \eqref{assumption}, we can obtain that
			\beqq
			III_3 \lesssim  \sqrt{\me^m(t)}\md_{tan}^{m-1}(t).
			\deqq
			Similarly, integrating by parts, we have
			\beqq \bal
			III_{10} = &\;-\var \i \p_3 (\f{\vro}{1+\vro}) ( \p_2^2 u+\p_3^2 u ) \cdot \p_3 u \ dx  -\var \i \f{\vro}{1+\vro}(\p_2^2   \p_3 u + \p_3^3 u) \cdot \p_3 u \ dx \\
			= &\;-\var \i \p_3 (\f{\vro}{1+\vro}) ( \p_2^2 u+\p_3^2 u ) \cdot \p_3 u \ dx  + \var \i \p_2 (\f{\vro}{1+\vro} \p_{3} u)
			\cdot \p_{23}u \ dx \\
			&\; + \var \i \p_3 (\f{\vro}{1+\vro} \p_{3} u)
			\cdot \p_{3}^2 u \ dx \\
			\lesssim &\; \sqrt{\me^m(t)}\md_{tan}^{m-1}(t).
			\dal \deqq
			Next, we deal with the terms $III_4$ and $III_8$ together.
			\begin{align*}
				&\; III_4 +III_8 \\
				= &\;  \i \f{1}{1+\vro} ( b \cdot \nabla \p_3 u \cdot \p_3 b +  b \cdot \nabla \p_3 b \cdot \p_3 u )\, dx  + \i \f{1}{1+\vro} (\p_3 b \cdot \nabla u \cdot \p_3 b + \p_3 b \cdot \nabla b \cdot \p_3 u )\, dx  \\
				&\; +  \i  \p_3 ( \f{1}{1+\vro}) b \cdot \nabla b \cdot \p_3  u\ dx \\
				= &\; - \i b \cdot \nabla (\f{1}{1+\vro}) \p_3 u \cdot \p_3 b  \ dx + \i \f{1}{1+\vro} (\p_3 b \cdot \nabla u \cdot \p_3 b + \p_3 b \cdot \nabla b \cdot \p_3 u )\, dx  \\
				&\; +  \i  \p_3 ( \f{1}{1+\vro}) b \cdot \nabla b \cdot \p_3  u\ dx.
			\end{align*}
			Using the  anisotropic type inequality \eqref{ie:Sobolev}, it is easy to check that
			\beqq
			III_4 + III_8 \lesssim  \sqrt{\me^m(t)}\md_{tan}^{m-1}(t),
			\deqq
			and in a similar way, we have
			\beqq
			III_5 +III_6 +III_7 +III_9 \lesssim  \sqrt{\me^m(t)}\md_{tan}^{m-1}(t).
			\deqq
			Thus, substituting the estimates from $III_1$  to $III_{10}$ into \eqref{3302}, we have
			\beqq \bal
			&\; \frac{d}{dt}\frac{1}{2}\i(| \p_3 \vro|^2+|\p_3 u|^2+\f{1}{1+ \vro} |\p_3 b|^2 )dx +\i(|\p_{13} u|^2+| \p_3 \du|^2+|\nabla_h \p_3 b|^2)dx \\
			&\; + \var \i (|\p_{23} u|^2 + |\p_{3}^2 u|^2 +|\p_{3}^2 b|^2 )\ dx \\
			\lesssim &\;  \sqrt{\me^m(t)}\md_{tan}^{m-1}(t).
			\dal \deqq
			\textbf{Now let us deal with the case $|\alpha_h| = m-2$}.  Integrating by parts, we have
			\beqq \bal
			III_1 = &\;  -\i \p^{\ah}( \p_3  u\cdot \nabla \vro +  u\cdot \p_3  \nabla \vro  +  \p_3  \vro \,   \du  + \vro \,  \p_3  \du )\cdot \p^{\ah} \p_3 \vro \ dx \\
			= &\;  -\i \p^{\ah}( \p_3  u \cdot \nabla \vro ) \cdot \p^{\ah} \p_3 \vro \ dx + \f12 \i  |\p_3 \p^{\ah} \vro|^2 \, \du  \ dx\\
			&\; - \sum_{0 < \beta_h \le \ah} C_{\ah}^{\beta_h} \i  (\p^{\beta_h} u_h  \cdot  \p^{\ah-\beta_h} \nabla_h \p_3 \vro +\p^{\beta_h} u_3  \cdot  \p^{\ah-\beta_h} \p_3^2 \vro ) \cdot   \p^{\ah} \p_3 \vro \ dx \\
			&\; -\i \p^{\ah}(  \p_3  \vro \,   \du  + \vro \,  \p_3  \du )\cdot \p^{\ah} \p_3 \vro \ dx\\
			:= &\; \sum_{i=1}^{4} III_{1,i}.
			\dal \deqq
			First, we deal with the terms $III_{1,1}$ and $III_{1,2}$.  Using the  anisotropic type inequality \eqref{ie:Sobolev}, we can obtain
			\beqq \bal
			III_{1,1} + III_{1,2} 
			\lesssim  &\; \|  \p^{\ah} \p_3 \vro\|_{L^2}   \sum_{ [\f{\alpha_h}{2}] < \beta_h \le  \ah}  \|\p^{\beta_h} \p_3 u \|_{L^2}^{\f12} \|\p_1 \p^{\beta_h} \p_3 u \|_{L^2}^{\f12}
			\|\p^{\al_h-\beta_h}  \nabla \vro \|_{H^2} \\
			&\; +  \|  \p^{\ah} \p_3 \vro\|_{L^2} 
			\sum_{0 \le \beta_h \le [\f{\alpha_h}{2}] } \|\p^{\beta_h} \p_3 u\|_{L^{\infty}}
			\|\p^{\al_h-\beta_h}  \nabla \vro \|_{L^2}^{\f12}
			+ \|  \p^{\ah} \p_3 \vro\|_{L^2}^2 \|\du\|_{L^{\infty}}.
			\dal \deqq
			Next, we deal with the term $III_{1,3}$.
			\begin{align*}
				III_{1,3} \lesssim &\;  \|  \p^{\ah} \p_3 \vro\|_{L^2} \Big( \sum_{0 < \beta_h \le [\f{\alpha_h}{2}] } (\|  \p^{\beta_h} u_h \|_{L^{\infty}}
				\|\p^{\al_h-\beta_h}  \nabla_h \p_3 \vro \|_{L^2}  +\|  \p^{\beta_h} u_3 \|_{L^{\infty}}
				\|\p^{\al_h-\beta_h}  \p_3^2 \vro \|_{L^2} )\\
				&\; +
				\sum_{ [\f{\alpha_h}{2}] < \beta_h \le  \ah}  (\|\p^{\beta_h} u_h\|_{L^2}^{\f12}
				\|\p^{\beta_h}  \p_3 u_h\|_{L^2}^{\f12}
				\|\p^{\al_h-\beta_h}  \nabla_h \p_3 \vro \|_{H_{tan}^2} \\
				&\;+\|\p^{\beta_h} u_3\|_{L^2}^{\f12}
				\|\p^{\beta_h}  \p_3 u_3\|_{L^2}^{\f12}
				\|\p^{\al_h-\beta_h}  \p_3^2 \vro \|_{H_{tan}^2} )\Big)\\
				\lesssim  &\;
				\sqrt{\me^m(t)}\md_{tan}^{m-1}(t),
			\end{align*}
			where, for $0 < |\beta_h| \le [\f{\alpha_h}{2}]$, we have used the following estimate
			\beqq
			\| \p^{\beta_h} u_3 \|_{L^{\infty}} \lesssim \|(\p^{\beta_h} u_3,\p_3 \p^{\beta_h} u_3 )\|_{L^2}^{\f14}  \|\p_h(\p^{\beta_h} u_3, \p_3 \p^{\beta_h} u_3)\|_{H_{tan}^1}^{\f34}.
			\deqq
			Similarly, we have
			\beqq
			III_{1,4}  =-\i \p^{\ah}(  \p_3  \vro \,   \du  + \vro \,  \p_3  \du )\cdot \p^{\ah} \p_3 \vro \ dx \lesssim 
			\sqrt{\me^m(t)}\md_{tan}^{m-1}(t),
			\deqq
			which yields that
			\beqq 
			III_{1}  \lesssim 
			\sqrt{\me^m(t)}\md_{tan}^{m-1}(t).
			\deqq
			Estimate of the term $III_2$.  Integrating by parts, we have
			\beqq \bal
			III_2 = &\; - \i \p^{\ah} (\p_3 u \cdot \nabla u + u \cdot \nabla \p_3 u + \f12 \p_3 \nabla( \vro^2)  ) \cdot \p^{\ah} \p_3 u \ dx \\
			= &\; - \i \p^{\ah} (\p_3 u \cdot \nabla u) \cdot \p^{\ah} \p_3 u \ dx  - \sum_{0 < \beta_h \le \al_h} C_{\ah}^{\beta_h} \i \p^{\beta_h} u \cdot \nabla \p^{\ah-\beta_h} \p_3 u \cdot \p^{\ah} \p_3 u \ dx\\
			&\;+ \f12 \i |\p^{\ah} \p_3 u|^2 \du \ dx + \f12 \i \p^{\ah} \p_3 (\vro^2) \cdot \p^{\ah} \p_3 \du \ dx.
			\dal \deqq
			Using the  anisotropic type inequality \eqref{ie:Sobolev},  we can check that
			\beqq 
			\bal
			III_2
			\lesssim &\; \| \p^{\ah} \p_3 u\|_{L^2}^{\f12}  \| \p^{\ah} \p_{13} u\|_{L^2}^{\f12} \Big( \sum_{0 \le \beta_h \le [\f{\alpha_h}{2}] } \|  \p^{\beta_h} \p_3 u \|_{H^2}
			\|\p^{\al_h-\beta_h}  \nabla u \|_{L^2}  \\
			&\; +
			\sum_{0 < \beta_h \le [\f{\alpha_h}{2}] } \|  \p^{\beta_h} u\|_{H_{tan}^1}^{\f12}
			\| \p_3 \p^{\beta_h} u\|_{H_{tan}^1}^{\f12}
			\|\nabla \p^{\al_h-\beta_h}  \p_3 u \|_{L^2} \\
			&\; +
			\sum_{ [\f{\alpha_h}{2}] < \beta_h \le  \ah}  (\|\p^{\beta_h} \p_3 u\|_{L^2}
			\|\p^{\al_h-\beta_h}  \nabla u \|_{H^2} +\|\p^{\beta_h} u \|_{L^2}^{\f12}
			\|\p^{\beta_h}  \p_3 u \|_{L^2}^{\f12}
			\|\nabla \p^{\al_h-\beta_h}  \p_3 u\|_{H_{tan}^1})\Big)\\
			&\; +  \| \p^{\ah} \p_3 \du\|_{L^2} 
			(
			\sum_{0 \le \beta_h \le [\f{\alpha_h}{2}] } \| \p^{\beta_h} \vro\|_{H^2} \| \p^{\ah-\beta_h} \p_3 \vro\|_{L^2}  + \sum_{ [\f{\alpha_h}{2}] < \beta_h \le  \ah}  
			\| \p^{\beta_h} \vro\|_{L^2} \| \p^{\ah-\beta_h} \p_3 \vro\|_{H^2} )\\
			&\; + \| \du\|_{L^{\infty}}   \| \p^{\ah} \p_3 u\|_{L^2}^2 \\
			\lesssim  &\;  \sqrt{\me^m(t)}\md_{tan}^{m-1}(t).
			\dal \deqq
			Estimate of the term $III_3$.  We first split the term as follows.
			\beqq \bal
			III_3 = &\; -\i \p^{\ah} \p_3  ( \f{\vro}{1+\vro} \p_1^2 u) \cdot \p^{\ah} \p_3   u\ dx 
			-\i \p^{\ah} \p_3  ( \f{\vro}{1+\vro} \nabla \du) \cdot \p^{\ah} \p_3   u\ dx  \\
			&\; +\i \p^{\ah} \p_3  ( \f{\vro}{1+\vro}(\nabla b_2 -\p_2 b)) \cdot \p^{\ah} \p_3   u\ dx
			:= \sum_{i=1}^3 III_{3,i}.
			\dal \deqq
			Integrating by parts, we have
			\begin{align*}
				III_{3,1} = &\;  -\i 
				\p^{\ah}  ( \p_3 (\f{\vro}{1+\vro}) \p_1^2 u) \cdot \p^{\ah} \p_3   u\ dx - \i  \f{\vro}{1+\vro} \p^{\ah}   \p_1^2 \p_3 u \cdot \p^{\ah} \p_3   u\ dx  \\
				&\;  - \sum_{0 < \beta_h \le \al_h} C_{\ah}^{\beta_h} \i \p^{\beta_h} (\f{\vro}{1+\vro}) \cdot \p^{\ah-\beta_h} \p_1^2  \p_3 u
				\cdot \p^{\ah} \p_3   u\ dx \\
				= &\;  - \sum_{0 \le \beta_h \le \al_h} C_{\ah}^{\beta_h} 
				\i \p^{\beta_h}   \p_3 (\f{\vro}{1+\vro}) \p^{\ah-\beta_h} \p_1^2 u \cdot \p^{\ah} \p_3   u\ dx \\
				&\;  - \sum_{0 < \beta_h \le \al_h} C_{\ah}^{\beta_h} \i \p^{\beta_h} (\f{\vro}{1+\vro}) \cdot \p^{\ah-\beta_h} \p_1^2  \p_3 u 
				\cdot \p^{\ah} \p_3  u \ dx \\
				&\; + \i \p^{\ah}   \p_{13} u \cdot \p_1 (\f{\vro}{1+\vro}   \p^{\ah} \p_3   u) \ dx. 
			\end{align*} 
			Under the assumption \eqref{assumption} and using the  anisotropic type inequality \eqref{ie:Sobolev}, we have
			\beqq \bal
			III_{3,1} \lesssim &\; \| \p^{\ah} \p_3 u\|_{L^2}^{\f12}  \| \p^{\ah} \p_{13} u\|_{L^2}^{\f12} \Big( \sum_{0 \le \beta_h \le [\f{\alpha_h}{2}] }\|  \p^{\beta_h} \p_3 (\f{\vro}{1+\vro}) \|_{H_{tan}^1}^{\f12}
			\|  \p^{\beta_h} \p_3^2 (\f{\vro}{1+\vro}) \|_{H_{tan}^1}^{\f12}
			\|\p^{\al_h-\beta_h}  \p_1^2 u \|_{L^2}  \\
			&\; +
			\sum_{0 < \beta_h \le [\f{\alpha_h}{2}] } \|  \p^{\beta_h} (\f{\vro}{1+\vro}) \|_{H_{tan}^1}^{\f12}
			\| \p_3 \p^{\beta_h} (\f{\vro}{1+\vro}) \|_{H_{tan}^1}^{\f12}
			\|\p^{\al_h-\beta_h}  \p_1^2 \p_3 u \|_{L^2} \\
			&\; +
			\sum_{ [\f{\alpha_h}{2}] < \beta_h \le  \ah}  (\|  \p^{\beta_h} \p_3 (\f{\vro}{1+\vro}) \|_{L^2}
			\|\p^{\al_h-\beta_h}  \p_1^2 u \|_{H^2} +\|\p^{\beta_h} (\f{\vro}{1+\vro})  \|_{H_{tan}^1}
			\|\p^{\al_h-\beta_h} \p_1^2 \p_3 u \|_{H^1})\Big)\\
			&\; + 
			\|\p^{\ah}   \p_{13} u\|_{L^2} ( \|\p^{\ah}   \p_{13} u\|_{L^2} \| \vro\|_{L^{\infty}} +  \|\p^{\ah} \p_{3} u\|_{L^2} \| \p_1( \f{\vro}{1+\vro} )\|_{L^{\infty}} )\\
			\lesssim  &\;  \sqrt{\me^m(t)}\md_{tan}^{m-1}(t).
			\dal \deqq
			Similarly,  integrating by parts, we have
			\beqq \bal
			III_{3,2} = &\;  -\i 
			\p^{\ah}  \p_3\left( \nabla (\f{\vro}{1+\vro} \du) - \nabla (\f{\vro}{1+\vro})   \du \right) \cdot \p^{\ah} \p_3   u\ dx  \\
			= &\;   \i 
			\p^{\ah} \p_3 (  \f{\vro}{1+\vro} \du ) \cdot \p^{\ah} \p_3  \du\ dx   + \i \p^{\ah}  ( \nabla ( \f{\vro}{1+\vro})  \p_3 \du) \cdot \p^{\ah} \p_3   u\ dx  \\
			&\; +\i \du \, \p^{\ah} \p_3   u \cdot \nabla  \p^{\ah} \p_3 (\f{\vro}{1+\vro})  \ dx + \sum_{0 \le \beta_h < \al_h} C_{\ah}^{\beta_h} 
			\i \p^{\ah-\beta_h} \du \, \p^{\ah} \p_3   u \cdot \nabla \p^{\beta_h}   \p_3 (\f{\vro}{1+\vro})   \ dx \\
			= &\; \i 
			\p^{\ah} ( \p_3  (\f{\vro}{1+\vro}) \du + \f{\vro}{1+\vro} \p_3   \du ) \cdot \p^{\ah} \p_3  \du\ dx   + \i \p^{\ah}  ( \nabla (\f{\vro}{1+\vro})  \p_3 \du) \cdot \p^{\ah} \p_3   u\ dx  \\
			&\; -\i \p^{\ah} \p_3 (\f{\vro}{1+\vro}) {\rm{div}}( \du \, \p^{\ah} \p_3   u )\ dx + \sum_{0 \le \beta_h < \al_h} C_{\ah}^{\beta_h} 
			\i \p^{\ah-\beta_h} \du \, \p^{\ah} \p_3   u \cdot \nabla \p^{\beta_h}   \p_3 (\f{\vro}{1+\vro})   \ dx  \\
			\lesssim  &\;  \sqrt{\me^m(t)}\md_{tan}^{m-1}(t),
			\dal \deqq 
			and 
			\beqq \bal
			III_{3,3} =&\; \i \p^{\ah}   \left( \p_3(\f{\vro}{1+\vro})(\nabla b_2  -\p_2 b) \right) \cdot \p^{\ah} \p_3   u\ dx - \i \p^{\ah}   \left( \f{\vro}{1+\vro} \p_{23} b \right) \cdot \p^{\ah} \p_3   u\ dx \\
			&\;  + \i  \f{\vro}{1+\vro}\p^{\ah} \p_3   u  \cdot \nabla  \p^{\ah} \p_3 b_2  \ dx + \sum_{0 < \beta_h \le \al_h} C_{\ah}^{\beta_h}  \i  \p^{\beta_h} (\f{\vro}{1+\vro} ) \p^{\ah} \p_3   u \cdot \nabla \p^{\ah-\beta_h}  \p_3 b_2 \ dx \\
			=&\; \i \p^{\ah}   \left( \p_3(\f{\vro}{1+\vro})(\nabla b_2  -\p_2 b) \right) \cdot \p^{\ah} \p_3   u\ dx - \i \p^{\ah}   \left( \f{\vro}{1+\vro} \p_{23} b \right) \cdot \p^{\ah} \p_3   u\ dx \\
			&\;  - \i  \p^{\ah}  \p_3 b_2 \, {\rm{div}}( \f{\vro}{1+\vro}\p^{\ah} \p_3 u) \ dx + \sum_{0 < \beta_h \le \al_h} C_{\ah}^{\beta_h}  \i  \p^{\beta_h} (\f{\vro}{1+\vro} ) \p^{\ah} \p_3   u \cdot \nabla \p^{\ah-\beta_h}  \p_3 b_2 \ dx \\
			\lesssim &\; \sqrt{\me^m(t)}\md_{tan}^{m-1}(t).
			\dal \deqq
			Thus, combining the estimates from $III_{3,1}$ to  $III_{3,3}$, we have
			\beqq 
			III_{3} \lesssim  \sqrt{\me^m(t)}\md_{tan}^{m-1}(t).
			\deqq
			Next, we deal with the terms $III_4$ and $III_8$ together. We split them as follows:
			\begin{align*}
				III_4 = &\; \i \p^{\ah} (  \p_3(\f{1}{1+\vro}) b \cdot \nabla b) \cdot \p^{\ah} \p_3  u\ dx  + \i \p^{\ah} (  \f{1}{1+\vro}  \p_3 b \cdot \nabla b) \cdot \p^{\ah} \p_3  u\ dx \\
				&\; + \i   \f{1}{1+\vro}  b \cdot \nabla \p^{\ah}  \p_3  b \cdot \p^{\ah} \p_3  u\ dx  \\
				&\; + \sum_{0 < \beta_h \le \al_h} C_{\ah}^{\beta_h} \i \f{1}{1+ \vro} \p^{\beta_h} b \cdot \nabla \p^{\ah-\beta_h}  \p_3 b
				\cdot \p^{\ah} \p_3  u \ dx \\
				&\; + \sum_{0 < \beta_h \le \al_h} C_{\ah}^{\beta_h} \i\p^{\beta_h} ( \f{1}{1+ \vro} )\cdot \p^{\ah-\beta_h} (b \cdot \nabla \p_3 b )
				\cdot \p^{\ah} \p_3  u \ dx \\
				:=  &\; \sum_{i=1}^{5} III_{4,i},       
			\end{align*}
			and
			\beqq \bal
			III_8=
			&\;  \i \f{1}{1+ \vro} \p^{\ah}  (\p_3 b \cdot \nabla u)\cdot \p^{\ah} \p_3 b \ dx + \i \f{1}{1+ \vro}   b \cdot \nabla \p^{\ah}  \p_3  u\cdot \p^{\ah} \p_3 b \ dx \\
			&\; + \sum_{0 < \beta_h \le \al_h} C_{\ah}^{\beta_h} \i \f{1}{1+ \vro} \p^{\beta_h} b \cdot  \nabla \p^{\ah-\beta_h}  \p_3 u 
			\cdot \p^{\ah} \p_3  b \ dx\\
			:=  &\; \sum_{i=1}^{3} III_{8,i}.
			\dal \deqq
			Integrating by parts and using the condition ${\rm div} b=0$, we have
			\beqq \bal
			III_{4,3} + III_{8,2}
			= &\; \i \f{1}{1+ \vro}  b \cdot \nabla (\p^{\ah} \p_3  u \cdot \p^{\ah} \p_3 b) \ dx = -\i b \cdot  \nabla (\f{1}{1+ \vro})  \, \p^{\ah} \p_3  u \cdot \p^{\ah} \p_3 b \ dx \\
			\lesssim &\; \| \p^{\ah} \p_3  u\|_{L^2}^{\f12}\| \p_1 \p^{\ah} \p_3  u\|_{L^2}^{\f12} \| \p^{\ah} \p_3  b\|_{L^2}^{\f12}\| \p_2 \p^{\ah} \p_3  b\|_{L^2}^{\f12} \| b\|_{L^{\infty}} \| \nabla (\f{1}{1+\vro}) \|_{L^2}^{\f12} \| \p_3 \nabla (\f{1}{1+\vro}) \|_{L^2} \\
			\lesssim &\; \sqrt{\me^m(t)}\md_{tan}^{m-1}(t),
			\dal \deqq
			here we have used the assumption \eqref{assumption}.
			Under the assumption \eqref{assumption} and using the  anisotropic type inequality \eqref{ie:Sobolev}, we have
			\beqq \bal
			&\; III_{4,1} + III_{4,2} \\
			= &\; \sum_{0 \le \beta_h \le \al_h} C_{\ah}^{\beta_h} \i (\p^{\beta_h} \p_3 ( \f{1}{1+ \vro} ) \cdot \p^{\ah-\beta_h} (b \cdot \nabla b) + \p^{\beta_h} ( \f{1}{1+ \vro} ) \cdot \p^{\ah-\beta_h} (\p_3 b \cdot \nabla b) )
			\cdot \p^{\ah} \p_3  u \ dx \\
			\lesssim &\; \| \p^{\ah} \p_3  u \|_{L^2}^{\f12} \| \p^{\ah} \p_{13} u \|_{L^2}^{\f12} \Big( \sum_{0 \le \beta_h \le [\f{\alpha_h}{2}] }\|  \p^{\beta_h} \p_3 (\f{\vro}{1+\vro}) \|_{H_{tan}^1}^{\f12}
			\|  \p^{\beta_h} \p_3^2 (\f{\vro}{1+\vro}) \|_{H_{tan}^1}^{\f12}
			\|\p^{\al_h-\beta_h}  (b \cdot \nabla b) \|_{L^2}  \\
			&\; +
			\sum_{0 \le  \beta_h \le [\f{\alpha_h}{2}] } \|  \p^{\beta_h} (\f{\vro}{1+\vro}) \|_{H_{tan}^1}^{\f12}
			\| \p_3 \p^{\beta_h} (\f{\vro}{1+\vro}) \|_{H_{tan}^1}^{\f12}
			\|\p^{\al_h-\beta_h}  (\p_3 b \cdot \nabla b) \|_{L^2} \\
			&\; +
			\sum_{ [\f{\alpha_h}{2}] < \beta_h \le  \ah}  (\|  \p^{\beta_h} \p_3 (\f{\vro}{1+\vro}) \|_{L^2}
			\|\p^{\al_h-\beta_h}  (b \cdot \nabla b) \|_{H^2} +\|\p^{\beta_h} (\f{\vro}{1+\vro})  \|_{H^1}
			\|\p^{\al_h-\beta_h} (\p_3 b \cdot \nabla b) \|_{H_{tan}^1})\Big)\\
			\lesssim &\; \sqrt{\me^m(t)}\md_{tan}^{m-1}(t),
			\dal \deqq
			and similarly, we have
			\beqq \bal
			III_{4,4} + III_{4,5}  +
			III_{8,1} + III_{8,3}  \lesssim &\; \sqrt{\me^m(t)}\md_{tan}^{m-1}(t),
			\dal \deqq
			which, together with the estimate of $III_{4,3}$ and $III_{8,2}$, yields that
			\beqq
			III_{4} + III_{8}  \lesssim  \sqrt{\me^m(t)}\md_{tan}^{m-1}(t).
			\deqq
			In a similar way, we have
			\beqq
			III_5 +III_6 +III_7 +III_9  + III_{10} \lesssim  \sqrt{\me^m(t)}\md_{tan}^{m-1}(t).
			\deqq
			Thus, for $|\al_h|=m-2$,   substituting the estimates from $III_1$  to $III_{10}$ into \eqref{3302}, we have 
			\beqq \bal
			&\; \frac{d}{dt}\frac{1}{2}\i(|\p^{\ah} \nabla_h \p_3 u|^2+| \p^{\ah} \nabla_h \p_3 \vro|^2+\f{1}{1+ \vro} |\p^{\ah} \nabla_h \p_3 b|^2 )dx +\i(|\p_{13} \p^{\ah}u|^2+|\p^{\ah} \p_3 \du|^2)dx \\
			&\; +\i |\nabla_h \p^{\ah}\p_3 b|^2 dx+ \var \i(|\p_{23} \p^{\ah}u|^2+|\p^{\ah} \p_3^2 u|^2+| \p^{\ah}\p_3^2 b|^2)dx \\
			\lesssim  &\;  \sqrt{\me^m(t)}\md_{tan}^{m-1}(t).
			\dal \deqq
			Combining the above estimate and the estimate of $|\ah|=0$, we finish the proof of this lemma.
		\end{proof}
		
		\subsection{Enhanced dissipation estimates of density and velocity field}
		In this subsection, we will establish the dissipative estimates of  the density in the horizontal directions and velocity field in the $x_2$ direction. On one hand, the enhanced
		dissipative estimate for the density
		comes from the pressure in the momentum equation $\eqref{eqr}_2$.
		On the other hand, the dissipative estimate for the velocity field arises from the good stability effect of magnetic field near a background magnetic field.
		
		\begin{lemm}
			Under the assumption \eqref{assumption}, for any smooth solution $(\vro ,u, b)$ of equation \eqref{eqr}, it holds for $|\ah| = k $($k=m-1 $ or $k=m-2$), 
			\beq \label{3401}
			\begin{aligned}
				&\; \f{d}{dt} \i \Big(u_h \cdot \nabla_h  \vro +  \p_{2} b \cdot u +  \sum_{|\ah|=k}( \p^{\ah}  u_h \cdot \nabla_h \p^{\ah} \vro +  \p_{2} \p^{\ah} b \cdot  \p^{\ah} u ) \Big)\ dx  + \|(\nabla_h  \vro, \p_{2} u)\|_{H_{tan}^k}^2  \\
				\lesssim &\;\var \| (\p_2^2 u, \p_3^2 u)\|_{H_{tan}^k}^2 + 
				\|(\p_1 u, \du, \nabla_h b)\|_{H^{k+1}_{tan}}^2
				+\sqrt{\me^m(t)}\md_{tan}^{k+1}(t).
			\end{aligned}
			\deq
		\end{lemm}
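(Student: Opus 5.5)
Apply $\p^{\ah}$ with $|\ah|\le k$ to the system and test the horizontal part of the momentum equation against $\nabla_h\p^{\ah}\vro$, and the momentum equation against $\p_2\p^{\ah} b$. These two cross terms produce, respectively, the horizontal density dissipation $\|\nabla_h\p^{\ah}\vro\|_{L^2}^2$ (from the pressure term $\nabla\vro$) and the $x_2$-velocity dissipation $\|\p_2\p^{\ah}u\|_{L^2}^2$ (from the term $-\p_2 u$ in the magnetic equation). All other linear terms either belong to the dissipation $\|(\p_1 u,\du,\nabla_h b)\|_{H^{k+1}_{tan}}^2$ or are $\var$-terms, which we absorb by Young's inequality.

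\emph{Density part.} Write $\i \p_t\p^{\ah}u_h\cdot\nabla_h\p^{\ah}\vro\,dx=\f{d}{dt}\i \p^{\ah}u_h\cdot\nabla_h\p^{\ah}\vro\,dx-\i \p^{\ah}u_h\cdot\nabla_h\p^{\ah}\p_t\vro\,dx$. For the last term, integrate by parts and substitute $\p_t\vro=-\du+F_1$. This gives $-\i \nabla_h\cdot\p^{\ah}u_h\,\p^{\ah}\du\,dx$ plus nonlinear terms. Since $\nabla_h\cdot u_h=\du-\p_3u_3$, this is bounded by $\|\p^{\ah}\du\|^2+\|\p^{\ah}\du\|\,\|\p^{\ah}\nabla_h u_h\|$, which is controlled by $\|(\p_1u,\du)\|_{H^{k+1}_{tan}}^2+\eta\|\p_2 u\|_{H^k_{tan}}^2$, because $\|\nabla_h u_h\|\le\|\p_1 u\|+\|\p_2 u\|$. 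The remaining linear terms in $\eqref{eqr}_2$ are tested against $\nabla_h\p^{\ah}\vro$:
\begin{itemize}
\item $\p_1^2u_h$, $\nabla_h\du$, $\nabla_h b_2$ and $\p_2 b_h$ are bounded by Cauchy--Schwarz by $\|(\p_1 u,\du,\nabla_h b)\|_{H^{k+1}_{tan}}\|\nabla_h\p^{\ah}\vro\|$;
\item the $\var$-terms give $\var\|(\p_2^2u,\p_3^2u)\|_{H^k_{tan}}\|\nabla_h\p^{\ah}\vro\|$.
\end{itemize}
Young's inequality absorbs the factor $\tfrac12\|\nabla_h\p^{\ah}\vro\|^2$ in all of these. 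The pressure term yields exactly $\|\nabla_h\p^{\ah}\vro\|^2$.

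\emph{Velocity part.} Write $\i \p_2\p^{\ah}b\cdot\p_t\p^{\ah}u\,dx=\f{d}{dt}\i\p_2\p^{\ah}b\cdot\p^{\ah}u\,dx-\i \p_2\p^{\ah}\p_t b\cdot\p^{\ah}u\,dx$. Substituting the $b$-equation, the term $-\p_2 u$ contributes $-\i\p_2^2\p^{\ah}u\cdot\p^{\ah}u\,dx=\|\p_2\p^{\ah}u\|^2$, with the correct sign once the pieces are arranged. The other terms are handled as follows:
\begin{itemize}
\item $e_2\du$, $\Delta_h b$ and $\var\p_3^2 b$ are paired after moving $\p_2$ onto $u$, giving $\|(\du,\nabla_h b)\|_{H^{k+1}_{tan}}\|\p_2\p^{\ah}u\|$ and $\var\|\p_3^2 b\|\,\|\p_2 u\|$;
\item in the momentum equation tested with $\p_2\p^{\ah}b$, the terms $-\p_2 b$ give $\|\p_2\p^{\ah}b\|^2$, which lies in the dissipation;
\item $\nabla b_2$, $\p_1^2u$, $\nabla\du$ are bounded by the $H^{k+1}_{tan}$ dissipation;
\item the pressure term $\i \nabla\p^{\ah}\vro\cdot\p_2\p^{\ah}b\,dx$ vanishes after integrating by parts, since ${\rm div}\,b=0$.
\end{itemize}
The $\var$-viscous terms are bounded by $\var\|(\p_2^2u,\p_3^2u)\|_{H^k_{tan}}\|\nabla_h b\|_{H^k_{tan}}$. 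Summing the two parts and taking $\eta$ small gives the left side of \eqref{3401}.

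\emph{Main obstacle: the nonlinear terms.} The nonlinear contributions are $\i\p^{\ah}F_2\cdot(\nabla_h\p^{\ah}\vro,\p_2\p^{\ah}b)\,dx$ and $\i \p^{\ah}(F_1,F_3)$ paired with $\p^{\ah}(\nabla_h\cdot u_h, \p_2 u)$. They must be bounded by $\sqrt{\me^m}\md^{k+1}_{tan}$ without losing vertical derivatives. The most delicate terms are the top-order ones $\p^{\ah}(u\cdot\nabla\vro)$ and $\f{\vro}{1+\vro}\p^{\ah}(\p_1^2u+\nabla\du)$. For $\p^{\ah}(u\cdot\nabla\vro)$, we move one horizontal derivative onto the test function $\nabla_h\cdot\p^{\ah}u_h$; this stays at order $k+1$ in the dissipation, which is why the dissipation on the right is $\md^{k+1}_{tan}$. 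The vertical part $u_3\p_3\p^{\ah}\vro$ is controlled through $\|\p_3\vro\|_{H^{k}_{tan}}\subset\me^m$ together with the anisotropic inequality \eqref{ie:Sobolev}, with the $L^\infty$ factor taken on the low-order piece. All remaining commutator terms are split by $|\beta_h|\le[\f{|\ah|}{2}]$ versus $>[\f{|\ah|}{2}]$, exactly as in Lemma \ref{lemma32}, and each is bounded by $\sqrt{\me^m}\md^{k+1}_{tan}$.
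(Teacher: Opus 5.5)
Your architecture is the paper's: the same two cross functionals, the time derivative moved through the density and magnetic equations, and the pressure and $-\p_2 u$ terms producing $\|\nabla_h\p^{\ah}\vro\|^2$ and $\|\p_2\p^{\ah}u\|^2$. Two steps, however, fail as written.

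The first is in the linear terms. In the momentum equation tested against $\p_2\p^{\ah}b$ you bound $\nabla b_2$ and $\nabla\du$ by the $H^{k+1}_{tan}$ dissipation. Their third components $\p_3\p^{\ah}b_2$ and $\p_3\p^{\ah}\du$ carry an extra vertical derivative. Apart from the $\var$-weighted terms and the small term $\sqrt{\me^m}\md^{k+1}_{tan}$, the right-hand side of \eqref{3401} contains only horizontal derivatives of $(\p_1u,\du,\nabla_h b)$, so these components are not controlled. The remedy is the cancellation you already use for the pressure, applied to the whole gradient: $\i \p^{\ah}\nabla(\du-b_2-\vro)\cdot\p_2\p^{\ah}b\,dx=0$ by ${\rm div}\,b=0$. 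This is how the paper handles $IV_4$. Likewise, pairing $\var\p^{\ah}\p_3^2 b$ with $\p_2\p^{\ah}u$ leaves $\var^2\|\p_3^2 b\|_{H^k_{tan}}^2$, which is not on the right-hand side. Integrate by parts to $\var\i \p^{\ah}\p_2 b\cdot\p^{\ah}\p_3^2 u\,dx$ (up to sign) instead.

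The second problem is the nonlinear paragraph, which misplaces the difficulty. Moving a horizontal derivative onto $\nabla_h\cdot\p^{\ah}u_h$ does not stay inside $\md^{k+1}_{tan}$. For $\p^{\ah}=\p_2^k$ it produces $\p_2^{k+2}u_2$, i.e.\ $\|\p_2 u\|_{H^{k+1}_{tan}}$, while the magnetically induced dissipation only gives $\|\p_2u\|_{H^{k}_{tan}}$; for $k=m-1$ this is not even bounded by $\md^m$.

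Keeping $\p_3\p^{\ah}\vro$ in the energy, as you propose, also does not close with the cited tool. It forces a full $(\md^{k+1}_{tan})^{1/2}$ out of $\|u_3\|_{L^\infty}$. The first inequality in \eqref{ie:Sobolev} leaves an undifferentiated factor $\|u_3\|_{L^2}^{1/8}$, so you only get $(\me^m)^{9/16}(\md^{k+1}_{tan})^{15/16}$.

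What you are missing is that no derivative is lost at all. $\md^{k+1}_{tan}$ contains $\|\nabla_h\vro\|_{H^k_{tan}}^2$ and the mixed terms $\|\p_3\nabla_h\vro\|_{H^{k-1}_{tan}}^2$ and $\|\p_3(\p_1 u,\du,\nabla_h b)\|_{H^{k}_{tan}}^2$. Hence for $|\ah|\ge1$ all of $\nabla\p^{\ah}\vro$ is a dissipative factor, and the top-order piece is simply $\|\p^{\ah}\nabla_h u\|\,\|u\|_{L^\infty}\,\|\nabla\p^{\ah}\vro\|\lesssim\sqrt{\me^m}\,\md^{k+1}_{tan}$. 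This is in essence how the paper bounds $IV_1$. Only for $\ah=0$, where $\p_3\vro$ itself is not dissipative, is a finer split needed. There the paper uses the third inequality in \eqref{ie:Sobolev}, taking half a power of dissipation from $\|\p_3u_3\|_{L^2}=\|\du-\nabla_h\cdot u_h\|_{L^2}$ and half from $\|\p_{23}\vro\|_{L^2}$.
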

		\begin{proof}
			For any $|\ah|\le m-1$, the equations $\eqref{eqr}_2$  and $\eqref{eqr}_3$ yield that
			\beqq \bal
			\|\p^{\ah} \nabla_h \vro\|_{L^2}^2  = &\; -\i \p_t \p^{\ah} u_h \cdot \nabla_h \p^{\ah} \vro \ dx + \i \p^{\ah} ((F_2)_h + \p_1^2 u_h + \nabla_h \du + \p_2 b_h - \nabla_h b_2) \cdot \nabla_h \p^{\ah} \vro \ dx \\
			&\; + \var \i \p^{\ah} (\p_2^2 u_h + \p_3^2 u_h) \cdot \nabla_h \p^{\ah} \vro \ dx,  \\
			\|\p^{\ah} \p_2 u\|_{L^2}^2  = &\;  \i \p_t \p^{\ah} b \cdot \p^{\ah} \p_2 u\ dx + \i \p^{\ah} (e_2  \du - \Delta_h b + u \cdot \nabla b + b\,\du- b \cdot \nabla u) \cdot \p^{\ah} \p_2 u \ dx\\
			&\; -\var \i \p^{\ah} \p_3^2  b \cdot \p^{\ah} \p_2 u \ dx.
			\dal \deqq
			On one hand, using the density equation $\eqref{eqr}_1$ and integrating by parts, we conclude
			\beqq \bal
			&\; -\i \p_t \p^{\ah} u_h \cdot \nabla_h \p^{\ah} \vro \ dx \\ 
			= &\; - \p_t \i \p^{\ah} u_h \cdot \nabla_h \p^{\ah} \vro \ dx  +\i  \p^{\ah} u_h \cdot \nabla_h \p^{\ah} \p_t \vro \ dx \\
			= &\;  -  \p_t \i \p^{\ah} u_h \cdot \nabla_h \p^{\ah} \vro \ dx   -\i  \p^{\ah} u_h \cdot \nabla_h \p^{\ah} (\du + u \cdot \nabla \vro + \vro \, \du)\ dx \\
			= &\;  -  \p_t \i \p^{\ah} u_h \cdot \nabla_h \p^{\ah} \vro \ dx   +\i  \p^{\ah} \nabla_h \cdot  u_h \cdot \p^{\ah} (\du + u \cdot \nabla \vro + \vro \, \du)\ dx. 
			\dal \deqq
			On the other hand, we can obtain
			\begin{align*}
				&\; \i \p_t \p^{\ah} b \cdot \p^{\ah} \p_2 u\ dx \\ 
				= &\;  \p_t \i \p^{\ah} b \cdot \p^{\ah} \p_2 u\ dx  -\i  \p^{\ah} b \cdot \p^{\ah} \p_t \p_2 u \ dx \\
				= &\; -   \p_t \i \p^{\ah} \p_2 b \cdot \p^{\ah} u \ dx   -\i  \p^{\ah} b \cdot \p^{\ah}  \p_2 (F_2 + \p_1^2 u + \nabla \du + \p_2 b - \nabla b_2 -\nabla \vro) \ dx \\
				&\;  - \var \i  \p^{\ah} b \cdot \p^{\ah}  \p_2 (\p_2^2 u + \p_3^2 u) \ dx\\
				= &\;  -  \p_t \i \p^{\ah} \p_2 b \cdot \p^{\ah} u \ dx   +\i  \p^{\ah} \p_2 b \cdot \p^{\ah}   (F_2 + \p_1^2 u + \nabla \du + \p_2 b - \nabla b_2 -\nabla \vro) \ dx\\
				&\;  + \var \i  \p^{\ah} \p_2 b \cdot \p^{\ah} (\p_2^2 u + \p_3^2 u) \ dx.
			\end{align*}
			Thus, we have
			\begin{align}\label{equ-phu-vro}
				&\; \f{d}{dt} \i( \p^{\ah}  u_h \cdot \nabla_h \p^{\ah} \vro +  \p_{2} \p^{\ah} b \cdot  \p^{\ah} u )\ dx  + \|(\p^{\ah} \nabla_h  \vro, \p^{\ah} \p_{2} u)\|_{L^2}^2 \notag\\
				= &\;  \i  \p^{\ah}  \nabla_h \cdot   u_h \cdot \p^{\ah} (\du + u \cdot \nabla \vro + \vro \, \du)\ dx \notag \\
				&\; + \i \p^{\ah} (\p_1^2 u_h +\nabla_h \du + \p_2 b_h - \nabla_h b_2) \cdot \nabla_h  \p^{\ah} \vro \ dx \notag\\
				&\; + \i    \p^{\ah} (e_2  \du - \Delta_h b + u \cdot \nabla b + b\,\du- b \cdot \nabla u) \cdot \p^{\ah} \p_{2} u\ dx \\
				&\; +\i  \p^{\ah} ( \p_1^2 u + \nabla \du + \p_2 b - \nabla b_2 -\nabla \vro)  \cdot  \p^{\ah} \p_{2} b\ dx + \i \p^{\ah}  (F_2)_h  \cdot \nabla_h  \p^{\ah}\vro \ dx\notag\\
				&\; + \i \p^{\ah} F_2 \cdot \p^{\ah} \p_{2} b \ dx 
				+ \var \i \p^{\ah} (\p_2^2 u_h + \p_3^2 u_h) \cdot \nabla_h \p^{\ah} \vro \ dx\notag\\
				&\; -\var \i \p^{\ah} \p_3^2  b \cdot \p^{\ah} \p_2 u \ dx  + \var \i   \p^{\ah} (\p_2^2 u + \p_3^2 u) \cdot\p^{\ah} \p_2 b \ dx\notag\\
				:= &\;  \sum_{i=1}^{9} IV_{i}.\notag
			\end{align}
			Using the H\"older inequality and the condition ${\rm div} b=0$, we can directly obtain that
			\beqq \bal
			&\; IV_2 \le   \nu  \| \p^{\ah} \nabla_h \vro\|_{L^2}^2 + C_{\nu} \|\p^{\ah} (\p_1^2 u, \nabla_h \du, \nabla_h b)\|_{L^2}^2,\\
			&\; IV_4 =  \i  \p^{\ah} \nabla ( \du  -  b_2 - \vro)  \cdot \p^{\ah} \p_{2} b\ dx + \i  \p^{\ah} ( \p_1^2 u +\p_2 b)  \cdot  \p^{\ah} \p_{2} b\ dx \lesssim \| \p^{\ah} (\p_1^2  u,\p_2 b)\|_{L^2}^2,\\
			&\; IV_7 +IV_8 +IV_9 \le  \nu  \| \p^{\ah} \nabla_h \vro\|_{L^2}^2 + C_{\nu}( \var \|\p^{\ah} (\p_2^2 u, \p_3^2 u)\|_{L^2}^2 + \|\p^{\ah} \p_2 b\|_{L^2}^2).
			\dal \deqq
			As for the other terms, we first deal with the case $|\ah|=0$.  Using the  anisotropic type inequality \eqref{ie:Sobolev}, we can obtain
			\beqq \bal
			IV_1 \lesssim &\;\| \nabla_h u\|_{L^2}  (\| \du \|_{L^2} + \| \vro \|_{L^{\infty}}\| \du \|_{L^2}) + \| \nabla_h u\|_{L^2}^{\f12} \| \p_1 \nabla_h u\|_{L^2}^{\f12} \|u_h \|_{L^2}^{\f12} \|\p_1 u_h \|_{L^2}^{\f12}  \|\nabla_h \vro\|_{L^2}^{\f12}\|\p_3 \nabla_h \vro\|_{L^2}^{\f12} \\&\; +\| \nabla_h u\|_{L^2}^{\f12} \| \p_1 \nabla_h u\|_{L^2}^{\f12} \|u_3 \|_{L^2}^{\f12} \|\p_3 u_3 \|_{L^2}^{\f12}  \|\p_3  \vro\|_{L^2}^{\f12}\|\p_{23} \vro\|_{L^2}^{\f12},\\
			IV_3 \le &\;  \nu \| \p_2 u\|_{L^2}^2 +C_{\nu} \| (\du,\Delta_h b)\|_{L^2}^2 +C\| \p_2 u\|_{L^2} \| \du\|_{L^2} \| b\|_{L^{\infty}} \\
			&\; +C \| \p_2 u\|_{L^2}^{\f12} \| \p_{12} u\|_{L^2}^{\f12} (\|u_h\|_{L^2}^{\f12} \|\p_2 u_h\|_{L^2}^{\f12}  \| \nabla_h b\|_{L^2}^{\f12}\| \p_3 \nabla_h b\|_{L^2}^{\f12}  +\|u_3\|_{L^2}^{\f12} \|\p_3 u_3 \|_{L^2}^{\f12}  \| \p_3 b\|_{L^2}^{\f12}\| \p_{23} b\|_{L^2}^{\f12} \\
			&\; + \|b_h\|_{L^2}^{\f12} \|\p_2 b_h\|_{L^2}^{\f12}  \| \nabla_h u\|_{L^2}^{\f12}\| \p_3 \nabla_h u\|_{L^2}^{\f12}  +\|b_3\|_{L^2}^{\f12} \|\p_3 b_3 \|_{L^2}^{\f12}  \| \p_3 u\|_{L^2}^{\f12}\| \p_{23} u\|_{L^2}^{\f12}).
			\dal \deqq 
			Similarly, for the nonlinear term $IV_5$, we have
			\begin{align*}
				IV_5 = &\; - \i  (u \cdot \nabla u_h +\vro \nabla_h \vro ) \cdot \nabla_h  \vro\ dx - \var \i  \f{\vro}{1+\vro}(\p_2^2 u_h+ \p_3^2 u_h) \cdot \nabla_h   \vro \ dx \\
				&\;  - \i  \f{\vro}{1+\vro}(\p_1^2 u_h+\nabla_h \du-\nabla_h b_2 +\p_2 b_h)  \cdot  \nabla_h  \vro \ dx\\
				&\; + \i   \f{1}{1+\vro}(b\cdot \nabla b_h - \f12 \nabla_h |b|^2)  \cdot \nabla_h   \vro  \ dx  \\
				\lesssim &\; \sqrt{\me^m(t)} \md_{tan}^{k+1}(t).
			\end{align*}
			Similarly, we have
			\beqq
			IV_6  = \i   (F_2)_h \cdot  \p_2 b_h \ dx + \i   (F_2)_3 \cdot  \p_2 b_3 \ dx
			\lesssim \sqrt{\me^m(t)} \md_{tan}^{k+1}(t).
			\deqq
			Thus, we have 
			\beqq \bal
			&\f{d}{dt} \i(  u_h \cdot \nabla_h \vro +  \p_{2} b \cdot u )\ dx  + \|(\nabla_h  \vro, \p_{2} u)\|_{L^2}^2\\
			\lesssim &\var \|(\p_2^2 u, \p_3^2 u)\|_{L^2}^2 + 
			\|(\p_1 u, \du, \nabla_h b)\|_{H^{1}_{tan}}^2
			+\sqrt{\me^m(t)}\md_{tan}^{k+1}(t).
			\dal \deqq
			\textbf{Now let us deal with the case $|\alpha_h| = k$($k=m-1$ or $k=m-2$)}. 
			Using the  anisotropic type inequality \eqref{ie:Sobolev}, we have
			\begin{align*}
				IV_1 \lesssim &\; \|\p^{\ah}  \nabla_h  u \|_{L^2}^{\f12} \|\p_1 \p^{\ah}  \nabla_h  u \|_{L^2}^{\f12}  \sum_{0 \le \beta_h \le \ah } \|\p^{\beta_h} \vro \|^{\f12}  \|\p_3 \p^{\beta_h}  \vro\|_{L^2}^{\f12}
				\|\p^{\al_h-\beta_h} \du \|_{L^2}^{\f12} \|\p_2 \p^{\al_h-\beta_h} \du \|_{L^2}^{\f12} \\
				&\; + \|\p^{\ah}  \nabla_h  u \|_{L^2}^{\f12} \|\p_1 \p^{\ah}  \nabla_h  u \|_{L^2}^{\f12} (
				\sum_{0 \le \beta_h \le [\f{\alpha_h}{2}] } \|\p^{\beta_h} u\|_{H_{tan}^1}^{\f12} \|\p_3  \p^{\beta_h} u\|_{H_{tan}^1}^{\f12}
				\|\p^{\al_h-\beta_h}  \nabla \vro \|_{L^2} \\
				&\; + \sum_{ [\f{\alpha_h}{2}] < \beta_h \le  \ah}  \|\p^{\beta_h} u \|_{L^2}^{\f12} \|\p_3 \p^{\beta_h} u \|_{L^2}^{\f12}
				\|\p^{\al_h-\beta_h}  \nabla \vro \|_{L^2}^{\f12}
				\|\p_2 \p^{\al_h-\beta_h}  \nabla \vro \|_{L^2}^{\f12}) + \|\p^{\ah}  \nabla_h  u \|_{L^2} \|\p^{\ah} \du  \|_{L^2}  \\
				\le &\; \nu \|\p^{\ah} \p_2 u\|_{L^2}^2 
                + C_{\nu}  \|\p^{\ah} (\p_1 u, \du)\|_{L^2}^2 +C \sqrt{\me^m(t)} \md_{tan}^{k+1}(t).
			\end{align*}
			Similarly, we can obtain that
			\beqq
			IV_3 \le  \nu \|\p^{\ah} \p_2 u\|_{L^2}^2  + C_{\nu} \|\p^{\ah} (\Delta_h b,  \du)\|_{L^2}^2 +C \sqrt{\me^m(t)} \md_{tan}^{k+1}(t).
			\deqq
			It remains to estimate the last term $IV_5$.
			\beqq \bal
			IV_5 = &\; - \i \p^{\ah} (u \cdot \nabla u_h +\vro \nabla_h \vro ) \cdot \nabla_h \p^{\ah} \vro\ dx  - \var \i \p^{\ah}\left( \f{\vro}{1+\vro}(\p_2^2 u_h+ \p_3^2 u_h)\right) \cdot \nabla_h \p^{\ah} \vro \ dx\\ 
			&\;  - \i \p^{\ah} \left(\f{\vro}{1+\vro}(\p_1^2 u_h+\nabla_h \du-\nabla_h b_2 +\p_2 b_h)\right)  \cdot \nabla_h \p^{\ah} \vro\ dx\\
			&\; + \i \p^{\ah}\left( \f{1}{1+\vro}(b\cdot \nabla b_h - \f12 \nabla_h (|b|^2)\right)  \cdot \nabla_h \p^{\ah} \vro\ dx  \\
			:= &\; \sum_{i=1}^{4} IV_{5,i}.
			\dal \deqq
			Using the  anisotropic type inequality \eqref{ie:Sobolev}, we have 
			\beqq \bal
			&\;  -\i \p^{\ah} (u \cdot \nabla u_h ) \cdot \nabla_h \p^{\ah} \vro \ dx  \\
			= &\; - \sum_{0 \le \beta_h \le \ah} C_{\ah}^{\beta_h} \i  \p^{\beta_h} u_h  \cdot  \p^{\ah-\beta_h} \nabla u \cdot \nabla_h \p^{\ah} \vro \ dx \\
			\lesssim  &\; \| \nabla_h \p^{\ah} \vro\|_{L^2} (
			\sum_{0 \le \beta_h \le [\f{\alpha_h}{2}] } \|\p^{\beta_h} u\|_{H_{tan}^1}^{\f12} \|\p_3  \p^{\beta_h} u\|_{H_{tan}^1}^{\f12}
			\|\p^{\al_h-\beta_h}  \nabla u \|_{L^2}^{\f12} 
			\|\p^{\al_h-\beta_h} \p_1 \nabla u \|_{L^2}^{\f12} \\
			&\; + \sum_{ [\f{\alpha_h}{2}] < \beta_h \le  \ah}  \|\p^{\beta_h} u \|_{L^2}^{\f12} \|\p_3 \p^{\beta_h} u \|_{L^2}^{\f12}
			\|\p^{\al_h-\beta_h}  \nabla u \|_{H_{tan}^2}),
			\dal \deqq
			and 
			\begin{align*}
				&\; - \i \p^{\ah} (\vro \nabla_h \vro ) \cdot \nabla_h \p^{\ah} \vro \ dx \\
				= &\; - \sum_{0 < \beta_h \le \ah} C_{\ah}^{\beta_h} \i  \p^{\beta_h} \vro \cdot \p^{\ah-\beta_h} \nabla_h \vro  \cdot \nabla_h \p^{\ah} \vro\ dx  -  \i \vro |\p^{\ah} \nabla_h \vro|^2 \ dx  \\
				\lesssim  &\; \| \nabla_h \p^{\ah} \vro\|_{L^2}(
				\sum_{0 < \beta_h \le [\f{\alpha_h}{2}] } \|\p^{\beta_h} \vro\|_{H_{tan}^1}^{\f12} \|\p_3  \p^{\beta_h} \vro\|_{H_{tan}^1}^{\f12}
				\|\p^{\al_h-\beta_h}  \nabla_h \vro \|_{L^2}^{\f12} 
				\|\p^{\al_h-\beta_h} \p_1 \nabla_h \vro \|_{L^2}^{\f12} \\
				&\; + \sum_{ [\f{\alpha_h}{2}] < \beta_h \le  \ah}  \|\p^{\beta_h} \vro \|_{L^2}^{\f12} \|\p_3 \p^{\beta_h} \vro \|_{L^2}^{\f12}
				\|\p^{\al_h-\beta_h}  \nabla_h \vro \|_{H_{tan}^2}) + \| \vro\|_{L^{\infty}} \| \nabla_h  \p^{\ah} \vro\|_{L^2}^2,
			\end{align*}
			which yields that
			\beqq 
			IV_{5,1} \lesssim \sqrt{\me^m(t)} \md_{tan}^{k+1}(t).
			\deqq
			Similarly, under the assumption \eqref{assumption}, we can estimate the other terms as follows: 
			\beqq
			IV_{5,2}+IV_{5,3}+IV_{5,4} \lesssim \sqrt{\me^m(t)} \md_{tan}^{k+1}(t),
			\deqq
			which, together with the estimate of $IV_{5,1}$, yields that
			\beqq
			IV_{5} \lesssim \sqrt{\me^m(t)} \md_{tan}^{k+1}(t).
			\deqq
			Similarly, we can obtain the estimate of $IV_6$.
			\beqq
			IV_{6} = \i \p^{\ah}  (F_2)_h \cdot \p^{\ah}  \p_2 b_h \ dx + \i \p^{\ah}  (F_2)_3 \cdot \p^{\ah}  \p_2 b_3 \ dx  \lesssim \sqrt{\me^m(t)} \md_{tan}^{k+1}(t).
			\deqq
			Substituting the estimates from $IV_1$  to $IV_9$ into \eqref{equ-phu-vro}, and using the smallness of $\nu$, we have
			\beqq \bal
			&\; \f{d}{dt} \i( \p^{\ah}  u_h \cdot \nabla_h \p^{\ah} \vro +  \p_{2} \p^{\ah} b \cdot  \p^{\ah} u )\ dx  + \|(\p^{\ah} \nabla_h  \vro, \p^{\ah} \p_{2} u)\|_{L^2}^2 \\
			\lesssim &\; \var \|\p^{\ah} (\p_2^2 u, \p_3^2 u)\|_{L^2}^2 + \|\p^{\ah} (\p_1 u, \p_1^2 u) \|_{L^2}^2 + \|\p^{\ah} (\du, \nabla_h \du) \|_{L^2}^2 \\
			&\; +  \|\p^{\ah} (\nabla_h b, \Delta_h b) \|_{L^2}^2 + \sqrt{\me^m(t)} \md_{tan}^{k+1}(t).
			\dal \deqq
			Combining the above estimate and the estimate of  $|\ah|=0$, we finish the proof of this lemma. 
		\end{proof}
		Next, we will establish the dissipative estimates for vertical derivative of density in horizontal directions and velocity in the $x_2$ direction as follows.
		\begin{lemm}
			Under the assumption \eqref{assumption}, for any smooth solution $(\vro ,u, b)$ of equation \eqref{eqr}, it holds
			\beq \label{3501}
			\begin{aligned}
				&\;\f{d}{dt} \i (\p_3^{m-1} u_h \cdot \nabla_h \p_3^{m-1} \vro +    \p_{2} \p_3^{m-1} b \cdot  \p_3^{m-1} u )\ dx  + \|(\p_3^{m-1} \nabla_h  \vro, \p_3^{m-1} \p_{2} u)\|_{L^2}^2\\
				\lesssim & \; \var \| \p_3^{m-1} (\p_2^2 u, \p_3^2 u)\|_{L^2}^2 
				+ \| (\p_3^{m-1}  \p_1 u, \du, \nabla_h b)\|_{H_{tan}^1}^2  + \me^m(t)^{\f12}  \md^m(t) + \me^m(t)^{\f58}  \md^m(t)^{\f{1}{2}}  \md_{tan}^{m-1} (t)^{\f{3}{8}}.
			\end{aligned} \deq
		\end{lemm}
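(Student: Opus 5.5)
The plan is to mimic the proof of \eqref{3401}, with $\p^{\ah}$ replaced by $\p_3^{m-1}$. Apply $\p_3^{m-1}$ to the horizontal components of $\eqref{eqr}_2$ and pair the result with $\nabla_h\p_3^{m-1}\vro$. Then apply $\p_3^{m-1}$ to $\eqref{eqr}_3$ and pair the result with $\p_2\p_3^{m-1}u$.

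In the first identity, the time-derivative term $-\i \p_t\p_3^{m-1}u_h\cdot\nabla_h\p_3^{m-1}\vro\,dx$ is handled as before. We pull out $-\frac{d}{dt}$ of the cross term, integrate by parts in $x_h$, and use $\eqref{eqr}_1$ to substitute $\p_t\vro=-(\du+u\cdot\nabla\vro+\vro\,\du)$. In the second identity, we move $\p_t$ from $b$ onto $\p_2 u$ and use $\eqref{eqr}_2$. After an integration by parts in $x_2$, this gives $-\frac{d}{dt}\i\p_2\p_3^{m-1}b\cdot\p_3^{m-1}u\,dx$.

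Adding the two identities produces the left-hand side of \eqref{3501}, since the structural terms cancel exactly as in \eqref{equ-phu-vro}. This yields an identity with terms $V_1,\dots,V_9$ that are the exact analogues of $IV_1,\dots,IV_9$ with $\p^{\ah}$ replaced by $\p_3^{m-1}$.

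The linear terms are treated by H\"older and Young with a small $\nu$, using ${\rm div}\,b=0$ for the analogue of $IV_4$. They are bounded by
\[
\nu\|\p_3^{m-1}(\nabla_h\vro,\p_2u)\|_{L^2}^2+C_\nu\|\p_3^{m-1}(\p_1u,\p_1^2u,\du,\nabla_h\du,\nabla_hb,\Delta_hb)\|_{L^2}^2+C_\nu\var\|\p_3^{m-1}(\p_2^2u,\p_3^2u)\|_{L^2}^2 .
\]
The middle group is exactly $\|(\p_3^{m-1}\p_1u,\du,\nabla_hb)\|_{H^1_{tan}}^2$ as it appears in \eqref{3501}, understood at vertical order $m-1$. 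The $\nu$-terms are absorbed into the left-hand side.

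The nonlinear terms $V_1,V_3,V_5,V_6$ are expanded by Leibniz in $\p_3$. When all $m-1$ vertical derivatives fall on one factor, the resulting top-order quantities $\p_3^{m-1}\nabla\vro$, $\p_3^{m-1}\nabla u$ and $\p_3^{m-1}\nabla b$ are controlled as follows:
\begin{itemize}
\item horizontal derivatives by $\md^m$ (note $\nabla_h\p_3^{m-1}\vro\in\md^m$);
\item the terms $\p_3^m\vro$ and $\p_3^m u$ by $\sqrt{\me^m}$;
\item the companion factor by the anisotropic inequality \eqref{ie:Sobolev}, which puts one half-derivative in each direction.
\end{itemize}
This gives the bound $\me^m(t)^{1/2}\md^m(t)$ whenever the companion factor contains a horizontal derivative or $\du$.

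The main obstacle is the terms in which the companion factor carries no dissipation. One such term is $\i\p_3^{m-1}(\vro\nabla_h\vro)\cdot\nabla_h\p_3^{m-1}\vro\,dx$ with the derivatives falling as $\vro\,|\nabla_h\p_3^{m-1}\vro|^2$ or $\p_3^{m-1}\vro\,\nabla_h\vro$. Others are the analogues of $IV_1$, where the pair is $\p_3^{m-1}(\vro\,\du)$ against $\nabla_h\cdot\p_3^{m-1}u_h$, or where $\p_3^{m-1}\vro$ is paired with $\p_3^{m}\du$ through $\frac{\vro}{1+\vro}\nabla_h\du$.

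For these terms I follow the treatment of $II_{1,5}$ and $II_{2,2}$. We bound $\|\vro\|_{L^\infty}\lesssim\|(\vro,\p_3\vro)\|_{L^2}^{1/4}\|(\nabla_h\vro,\nabla_h\p_3\vro)\|_{H^1_{tan}}^{3/4}$. The first factor is controlled by $\me^m$ and the second by $\md_{tan}^{m-1}$. Since $\|\p_3^{m-1}\nabla_h\vro\|_{L^2}$ and $\|\p_3^{m}\du\|_{L^2}$ lie in $\md^m$, this produces the extra term $\me^m(t)^{5/8}\md^m(t)^{1/2}\md_{tan}^{m-1}(t)^{3/8}$ of \eqref{3501}. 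I expect the care to go into checking that no term forces $\p_3^{m}\vro$ to be paired with a factor lacking both a horizontal derivative and the decay in $\md_{tan}^{m-1}$. Where this threatens to happen, I will integrate by parts in $x_h$ to move the horizontal derivative off $\p_3^{m-1}\nabla_h\vro$.
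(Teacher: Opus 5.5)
Your plan is essentially the paper's proof. It uses the same identity with $V_1,\dots,V_9$ (the analogue of \eqref{equ-phu-vro} with $\p^{\ah}$ replaced by $\p_3^{m-1}$), Young's inequality for the linear terms with ${\rm div}\, b=0$ in $V_4$, the anisotropic inequality \eqref{ie:Sobolev} for the nonlinear terms, and a decaying $L^\infty$ bound on an undifferentiated factor to produce $\me^m(t)^{\f58}\md^m(t)^{\f12}\md_{tan}^{m-1}(t)^{\f38}$. One correction: in the paper that extra term is forced by $\i \p_3^{m-1}\nabla_h\cdot u_h\, u_3\,\p_3^{m}\vro\,dx$ from $V_1$ (bound $\|u_3\|_{L^\infty}$ exactly as you bound $\|\vro\|_{L^\infty}$, since $\p_3^m\vro$ lies only in $\me^m$), while the $\vro$-terms you single out are already $\lesssim \me^m(t)^{\f12}\md^m(t)$, and the $\vro\,\p_3^m\vro$ piece of $V_6$ is removed there by using ${\rm div}\, b=0$ to rewrite $V_6$ in terms of $\p_3^{m-2}\p_2 (F_2)_3$ (your $L^\infty$ bound would also handle it).
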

		\begin{proof}
			Similar to the equality \eqref{equ-phu-vro}, we can obtain that 
			\begin{align*}
				&\; \f{d}{dt} \i (\p_3^{m-1} u_h \cdot \nabla_h \p_3^{m-1} \vro +    \p_{2} \p_3^{m-1} b \cdot  \p_3^{m-1} u )\ dx  + \|(\p_3^{m-1} \nabla_h  \vro, \p_3^{m-1} \p_{2} u)\|_{L^2}^2 \\
				= &\;  \i \p_3^{m-1} \nabla_h \cdot   u_h \cdot\p_3^{m-1} (\du + u \cdot \nabla \vro + \vro \, \du)\ dx  \\
				&\; + \i \p_3^{m-1} (\p_1^2 u_h +\nabla_h \du + \p_2 b_h - \nabla_h b_2) \cdot \p_3^{m-1} \nabla_h \vro \ dx\\
				&\; + \i \p_3^{m-1} (e_2  \du - \Delta_h b + u \cdot \nabla b + b\,\du- b \cdot \nabla u)   \cdot  \p_3^{m-1} \p_{2} u \ dx \\
				&\; +\i  \p_3^{m-1} ( \p_1^2 u + \nabla \du + \p_2 b - \nabla b_2 -\nabla \vro)  \cdot  \p_3^{m-1} \p_{2} b\ dx + \i \p_3^{m-1} (F_2)_h  \cdot \nabla_h  \p_3^{m-1} \vro \ dx \\
				&\;+ \i
				\p_3^{m-1} F_2  \cdot \p_3^{m-1} \p_2 b \ dx + \var \i \p_3^{m-1} (\p_2^2 u_h + \p_3^2 u_h) \cdot \nabla_h \p_3^{m-1} \vro \ dx\\
				&\; -\var \i \p_3^{m+1}  b \cdot \p_3^{m-1} \p_2 u \ dx  + \var \i   \p_3^{m-1} (\p_2^2 u + \p_3^2 u)\cdot \p_3^{m-1} \p_2 b  \ dx\\
				:= &\;  \sum_{i=1}^{9} V_{i}.
			\end{align*}
			
			Estimate of the term $V_1$. It is easy to check that 
			\begin{align*}
				V_1 = &\;  -\i  \p_3^{m-1} \nabla_h \cdot   u_h \, \p_3^{m-1} \du \ dx - \sum_{0 \le l \le m-1} C_{m-1}^{l} \i  \p_3^{m-1} \nabla_h \cdot   u_h \, \p_3^{l} u_3 \cdot  \p_3^{m-l} \vro \ dx\\
				&\;-\sum_{0 \le l \le m-1} C_{m-1}^{l} \i  \p_3^{m-1}  \p_3 \nabla_h \cdot   u_h \, \p_3^{l} u_h \cdot  \p_3^{m-1-l} \nabla_h \vro \ dx\\
				&\;  - \sum_{0 \le l \le m-1} C_{m-1}^{l} \i  \p_3^{m-1}  \nabla_h \cdot   u_h \, \p_3^{l} \vro \cdot \p_3^{m-1-l} \du\ dx \\
				:= &\; \sum_{i=1}^{4}V_{1,i}.
			\end{align*}
			First, we deal with the term $V_{1,2}$. Using the  anisotropic type inequality \eqref{ie:Sobolev}, when $l=0$, we have
			\beqq \bal
			&\; \i  \p_3^{m-1} \nabla_h \cdot   u_h \,  u_3 \cdot  \p_3^{m}\vro \ dx \\
			\lesssim &\; \| \p_3^{m-1}  \nabla_h u\|_{L^2}^{\f12} \| \p_1 \p_3^{m-1}\nabla_h u\|_{L^2}^{\f12} \| \p_3^{m}\vro  \|_{L^2}
			\| u_3 \|_{L^2}^{\f14} \| \p_3 u_3 \|_{L^2}^{\f14} \|  \p_2 u_3\|_{L^2}^{\f14}
			\| \p_{23} u_3 \|_{L^2}^{\f14} \\
			\lesssim &\;\me^m(t)^{\f58}  \md^m(t)^{\f{1}{2}}  \md_{tan}^{m-1} (t)^{\f{3}{8}}.
			\dal \deqq
			For $1\le l \le m-1$, we have
			\beqq \bal
			&\; \i  \p_3^{m-1}   \nabla_h \cdot   u_h \, \p_3^{l} u_3 \cdot  \p_3^{m-l} \vro \ dx \\
			\lesssim &\; \| \p_3^{m-1}  \nabla_h u\|_{L^2}^{\f12} \|\p_1 \p_3^{m-1} \nabla_h u\|_{L^2}^{\f12} \| \p_3^{m-l}\vro  \|_{L^2}^{\f12}
			\| \p_2 \p_3^{m-l}\vro  \|_{L^2}^{\f12}
			\| \p_3^l u_3 \|_{L^2}^{\f12} \| \p_3^{l+1} u_3 \|_{L^2}^{\f12}  \\
			\lesssim &\;\me^m(t)^{\f12}  \md^m(t).
			\dal \deqq
			Similarly we have
			\beqq
			V_{1,2} + V_{1,3} +V_{1,4}  \lesssim \me^m(t)^{\f12}  \md^m(t).
			\deqq
			Thus, we have
			\beq \label{3502} \bal
			V_1 \le 
            &\; \nu\| \p_3^{m-1}  \p_2 u\|_{L^2}^2 
            + C_{\nu} \| (\p_3^{m-1}  \p_1 u, \p_3^{m-1}  \du)\|_{L^2}^2 \\
            &+ C\me^m(t)^{\f12}  \md^m(t) 
            + C\me^m(t)^{\f58}  \md^m(t)^{\f{1}{2}}  \md_{tan}^{m-1} (t)^{\f{3}{8}}.
			\dal \deq
			Using the condition ${\rm div} b=0$, it is easy to check that
			\beq\label{3503} \bal
			V_4 = &\;  \i  \p_3^{m-1}  \nabla ( \du  -  b_2 - \vro)  \cdot  \p_3^{m-1}  \p_{2} b\ dx +\i \p_3^{m-1}  ( \p_1^2 u + \p_2 b )  \cdot  \p_3^{m-1}  \p_{2} b\ dx \\
			\lesssim &\;   \| (\p_3^{m-1} \p_2 b , \p_3^{m-1}  \p_1^2 u)\|_{L^2}^2 ,
			\dal \deq
			and using the  anisotropic type inequality \eqref{ie:Sobolev} , we have
			\beq\label{3504} \bal
			V_2 \le &\;  \nu  \| \p_3^{m-1} \nabla_h \vro\|_{L^2}^2 +  C_{\nu} \| (\p_3^{m-1} \p_1^2 u, \p_3^{m-1}  \nabla_h \du, \p_3^{m-1}  \nabla_h b)\|_{L^2}^2,\\
			V_3 \le &\; \nu  \| \p_3^{m-1} \p_2 u\|_{L^2}^2 +  C_{\nu} \| (\p_3^{m-1} \du , \p_3^{m-1}  \Delta_h b)\|_{L^2}^2 + C \me^m(t)^{\f12}  \md^m(t).\\
			\dal \deq 
			Next, we estimate the term $V_5$.
			\begin{align*}
				V_5 = &\; - \i \p_3^{m-1} (u \cdot \nabla u_h +\vro \nabla_h \vro ) \cdot \nabla_h  \p_3^{m-1} \vro \ dx \\
				&\; - \var \i \p_3^{m-1} \left( \f{\vro}{1+\vro}(\p_2^2 u_h+ \p_3^2 u_h)\right) \cdot \nabla_h  \p_3^{m-1} \vro  \ dx\\ 
				&\;  - \i \p_3^{m-1} \left(\f{\vro}{1+\vro}(\p_1^2 u_h+\nabla_h \du-\nabla_h b_2 +\p_2 b_h)\right)  \cdot \nabla_h  \p_3^{m-1} \vro \ dx\\
				&\; + \i \p_3^{m-1} \left( \f{1}{1+\vro}(b\cdot \nabla b_h - \f12 \nabla_h |b|^2) \right)  \cdot \nabla_h  \p_3^{m-1} \vro \ dx  \\
				:= &\; \sum_{i=1}^{4} V_{5,i}.
			\end{align*}
			Using the  anisotropic type inequality \eqref{ie:Sobolev} , we have
			\begin{align*}
				V_{5,1} = &\; - \sum_{0 \le l \le m-1} C_{m-1}^{l} \i  \p_3^{l} u  \cdot  \p_3^{m-l-1} \nabla u \cdot \nabla_h  \p_3^{m-1} \vro \ dx \\
				&\; - \sum_{1 \le l \le m-1} C_{m-1}^{l} \i  \p_3^{l} \vro \cdot \p_3^{m-l-1} \nabla_h \vro  \cdot  \nabla_h  \p_3^{m-1} \vro\ dx   -  \i \vro | \nabla_h  \p_3^{m-1} \vro|^2 \ dx  \\
				\lesssim &\;  \Big( \sum_{0 \le l \le [\f{m-1}{2}]} (\| \p_3^{l} u\|_{L^2}^{\f14}  \| \p_3^{l+1} u \|_{L^2}^{\f14} 
				\| \p_3^{l} \p_2 u\|_{L^2}^{\f14}\| \p_3^{l+1} \p_2 u\|_{L^2}^{\f14}
				\| \p_3^{m-1-l} \nabla u \|_{L^2}^{\f12} 
				\| \p_1 \p_3^{m-1-l} \nabla u \|_{L^2}^{\f12} \\
				&\; + \sum_{1 \le l \le [\f{m-1}{2}]} \| \p_3^{l} \vro \|_{L^2}^{\f14}  \| \p_1 \p_3^{l}  \vro  \|_{L^2}^{\f14} 
				\| \p_3^{l} \p_2  \vro \|_{L^2}^{\f14}\| \p_3^{l} \p_{12}  \vro \|_{L^2}^{\f14}
				\| \p_3^{m-1-l} \nabla_h \vro \|_{L^2}^{\f12} 
				\|\p_3^{m-l} \nabla_h \vro \|_{L^2}^{\f12}\\
				&\; +\sum_{[\f{m-1}{2}] < l \le m-1 } (\| \p_3^{l} u\|_{L^2}^{\f14}  \| \p_3^{l+1} u \|_{L^2}^{\f14} 
				\| \p_3^{l} \p_1 u\|_{L^2}^{\f14}\| \p_3^{l+1} \p_1 u\|_{L^2}^{\f14}
				\| \p_3^{m-1-l} \nabla u \|_{L^2}^{\f12} 
				\| \p_2 \p_3^{m-1-l} \nabla u \|_{L^2}^{\f12} \\
				&\; +  \| \p_3^{l} \vro \|_{L^2}^{\f12}  \| \p_1 \p_3^{l}  \vro  \|_{L^2}^{\f12} 
				\| \p_3^{m-1-l} \nabla_h \vro \|_{L^2}^{\f14} 
				\|\p_3^{m-l} \nabla_h \vro \|_{L^2}^{\f14}
				\|\p_2 \p_3^{m-l} \nabla_h \vro \|_{L^2}^{\f14}
				\|\p_2 \p_3^{m-l} \nabla_h \vro \|_{L^2}^{\f14}
				) \Big)\\
				&\; \times \|\nabla_h  \p_3^{m-1} \vro\|_{L^2} + \| \nabla_h  \p_3^{m-1} \vro\|_{L^2}^2  \| \vro\|_{L^{\infty}} \\
				\lesssim &\; \me^m(t)^{\f12}  \md^m(t).  
			\end{align*}
			Similarly, under the assumption \eqref{assumption}, we can obtain that
			\beqq \bal
			V_{5,2} + V_{5,3} +V_{5,4}\ \lesssim  &\; \me^m(t)^{\f12}  \md^m(t),
			\dal \deqq
			which, together with the estimate of $V_{5,1}$, yields that
			\beq \label{3505} 
			V_5 \lesssim \me^m(t)^{\f12}\md^m(t).
			\deq
			Similarly, integrating by parts, we have
			\beq \label{3506}  \bal
			V_6 = &\; \i
			\p_3^{m-1} (F_2)_h  \cdot \p_3^{m-1} \p_2 b_h \ dx +  \i
			\p_3^{m-1} (F_2)_3  \cdot \p_3^{m-1} \p_2 b_3 \ dx \\
			= &\; \i
			\p_3^{m-1} (F_2)_h  \cdot \p_3^{m-1} \p_2 b_h \ dx -
			\i
			\p_3^{m-2} \p_2 (F_2)_3  \cdot \p_3^{m-1} (\p_1 b_1 + \p_2 b_2) \ dx\\
			\lesssim &\; \me^m(t)^{\f12}\md^m(t),
			\dal \deq
			and using the H\"older inequality, we can directly obtain that
			\beq\label{3507}  \bal
			&\; V_7 +V_8 +V_9 \le  \nu  \| ( \p_3^{m-1} \nabla_h \vro,  \p_3^{m-1} \p_2 b)\|_{L^2}^2 + C_{\nu} \var \| \p_3^{m-1} (\p_2^2 u, \p_3^2 u)\|_{L^2}^2.
			\dal \deq
			Combining of the estimates \eqref{3502}-\eqref{3507} and using the smallness of $\nu$, we can obtain that
			\begin{align*}
				&\;\f{d}{dt} \i (\p_3^{m-1} u_h \cdot \nabla_h \p_3^{m-1} \vro +    \p_{2} \p_3^{m-1} b \cdot  \p_3^{m-1} u )\ dx  + \|(\p_3^{m-1} \nabla_h  \vro, \p_3^{m-1} \p_{2} u)\|_{L^2}^2\\
				\lesssim & \; \var \| \p_3^{m-1} (\p_2^2 u, \p_3^2 u)\|_{L^2}^2 + \| \p_3^{m-1}  (\p_1 u, \p_1^2 u) \|_{L^2}^2 + \|\p_3^{m-1}  (\du , \nabla_h \du) \|_{L^2}^2  + \| \p_3^{m-1}  (\nabla_h b, \Delta_h b)\|_{L^2}^2  \\
				&\; + \me^m(t)^{\f12}  \md^m(t) + \me^m(t)^{\f58}  \md^m(t)^{\f{1}{2}}  \md_{tan}^{m-1} (t)^{\f{3}{8}}.
			\end{align*}
			Therefore, we finish the proof of this lemma.
		\end{proof}
		Finally, we will establish the dissipative estimates for one-order vertical derivative of density in the horizontal directions and velocity in the $x_2$ direction as follows.
		\begin{lemm}\label{lemma3.7}
			For any smooth solution $(\vro ,u, b)$ of equation \eqref{eqr}, it holds 
			\beq \label{3601}
			\begin{aligned}
				&\frac{d}{dt}
				\i  \Big( \nabla_h u_3 \cdot \nabla_h  \p_3 \vro +\p_{23} b \cdot  \p_3 u + \sum_{|\ah|=m-3} (\p^{\ah} \nabla_h u_3 \cdot \nabla_h \p^{\ah} \p_3 \vro + \p^{\ah}\p_{23} b \cdot  \p^{\ah}\p_3 u) \Big) \ dx \\
				&\; +\|(\p_3  \nabla_h \vro, \p_{23}u)\|_{H_{tan}^{m-3}}^2 \\
				\lesssim &\;  \var\| (\p_2^2 u, \p_3^2 u, \p_{23} u)\|_{H^{m-2}_{tan}}^2 +  \var\| \p_3^2 b\|_{H^{m-3}_{tan}}^2  +\|  (\nabla_h u, \p_3 \du, \p_1 \nabla u,\nabla_h \nabla b)\|_{H^{m-2}_{tan}}^2 
				+\sqrt{\me^m(t)}\md_{tan}^{m-1}(t).
			\end{aligned}
			\deq
		\end{lemm}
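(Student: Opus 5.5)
The approach follows the proof of \eqref{3401}, now applied to one vertical derivative. For $|\ah|\le m-3$ I take $\p^{\ah}\p_3$ of the third component of $\eqref{eqr}_2$ and test it against $-\nabla_h\cdot\nabla_h\p^{\ah}\p_3\vro$, i.e.\ pair $\p_3\nabla_h\p^{\ah}\vro$ with $\nabla_h\p^{\ah}$ of the equation for $u_3$. This term yields $\|\p^{\ah}\p_3\nabla_h\vro\|_{L^2}^2$ from $\p_3\vro$ in the $u_3$ equation. In parallel I take $\p^{\ah}\p_3$ of $\eqref{eqr}_3$ and test it against $\p^{\ah}\p_{23}u$. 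The term $-\p_2\p^{\ah}\p_3 u$ in the $b$ equation then gives $\|\p^{\ah}\p_{23}u\|_{L^2}^2$. The time derivatives are handled as in \eqref{equ-phu-vro}:
\[
-\i \p_t\nabla_h\p^{\ah}u_3\cdot\nabla_h\p^{\ah}\p_3\vro\,dx
\]
becomes $-\frac{d}{dt}(\cdots)$ plus a term with $\p_t\p_3\vro$. I substitute $\eqref{eqr}_1$ into that term and integrate by parts in $x_3$ and $x_h$, which produces $\nabla_h\p^{\ah}\p_3 u_3\cdot\nabla_h\p^{\ah}(\du+u\cdot\nabla\vro+\vro\du)$. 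The linear part of this is bounded by $\|\nabla_h\p_3 u_3\|$ and $\|\nabla_h\du\|$, i.e.\ by $\|(\nabla_h u,\p_3\du)\|_{H^{m-2}_{tan}}$. For the magnetic pairing, $\i\p_t\p^{\ah}\p_3 b\cdot\p^{\ah}\p_{23}u\,dx$ becomes $-\frac{d}{dt}\i\p^{\ah}\p_{23}b\cdot\p^{\ah}\p_3u\,dx$ plus a term in which $\p_t\p_3 u$ is replaced by $\eqref{eqr}_2$ and tested against $\p^{\ah}\p_{23}b$. These steps give exactly the functional on the left of \eqref{3601}.

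The linear remainders are then estimated with Young's inequality with a small $\nu$. From the $u_3$ equation these are $\p_1^2u_3$, $\p_3\du$, $\p_2b_3$ and $\p_3 b_2$, each paired with $\nabla_h\p_3\vro$ after the horizontal gradient. Note that $\nabla_h\p_3 b_2$ lies in $\nabla_h\nabla b$, and $\varepsilon(\p_2^2+\p_3^2)u_3$ sits one horizontal derivative higher, which explains the $\var\|(\p_2^2u,\p_3^2u,\p_{23}u)\|_{H^{m-2}_{tan}}$ on the right. From the $b$ equation the remainders are $e_2\p_3\du$, $\Delta_h\p_3 b$ and $\var\p_3^3 b$; the last is moved onto $\p_{23}u$ by one integration by parts in $x_3$, giving $\var\|\p_3^2 b\|_{H^{m-3}_{tan}}$ against $\var\|\p_{23}u\|$. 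The remaining linear terms reduce to the structure already used in \eqref{3503}. Here the divergence-free condition and $\p_2 b-\nabla b_2$ give a pure gradient, and after integration by parts only $\|\p^{\ah}\p_{23}b\|^2$, $\|\p_1^2\p_3u\|^2$ and $\p_2$-derivatives of $b$ survive; these are all in $\|(\p_1\nabla u,\nabla_h\nabla b)\|_{H^{m-2}_{tan}}^2$. There is one pressure-type term $-\nabla\p_3\vro\cdot\p_{23}b$. Integrating by parts and using ${\rm div}\,b=0$, I expect it to vanish, or at worst to reduce to $\nabla_h\p_3\vro\cdot\nabla_h b$, which is absorbed by $\nu\|\p_3\nabla_h\vro\|^2$.

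The nonlinear terms are treated as in the proofs of \eqref{3301} and \eqref{3401}. These are $F_1$, $F_2$ and $F_3$ at level $|\ah|+2\le m-1$ horizontal/vertical derivatives, paired with $\nabla_h\p^{\ah}\p_3\vro$ or $\p^{\ah}\p_{23}u$. I split by Leibniz into low and high parts and apply the anisotropic inequality \eqref{ie:Sobolev}, always placing the undifferentiated-in-$x_3$ factor in $\mathcal E^m$ and the dissipated factor in $\md_{tan}^{m-1}$, which gives $\sqrt{\me^m}\md_{tan}^{m-1}$.

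The main obstacle is the top case $|\ah|=m-3$. In the transport term $\p_3\nabla_h\p^{\ah}(u\cdot\nabla\vro)$ paired with $\nabla_h\p^{\ah}\p_3 u_3$, and in $\frac{\vro}{1+\vro}\p_1^2$ terms inside $F_2$, $m$ derivatives can land on $\vro$, and $\vro$ carries no dissipation beyond $\nabla_h\vro$ at lower order. To handle this I integrate by parts to move one horizontal derivative onto $u_3$, using $\nabla_h\p_3u_3\in\md_{tan}^{m-1}$ at level $m-2$. Then $\vro$ appears with at most $m-1$ total derivatives, including $\nabla_h$, so it can be controlled by $\|\p_3\nabla_h\vro\|_{H^{m-3}_{tan}}$ or $\|\nabla_h\vro\|_{H^{m-2}_{tan}}$. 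If some term still carries too many derivatives, I will fall back on substituting $\eqref{eqr}_1$ for $\du$ as in $II_{1,1}$.
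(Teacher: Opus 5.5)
Your overall architecture matches the paper's. You use the same two multipliers, the same transfer of time derivatives, and the same ${\rm div}\,b=0$ cancellations in the linear terms.

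\textbf{Where you differ, to your advantage.} In the term coming from $\p_t\vro$ you integrate by parts in $x_3$, obtaining $\i\nabla_h\p^{\ah}\p_3u_3\cdot\nabla_h\p^{\ah}(u\cdot\nabla\vro+\vro\,\du)\,dx$. The paper instead moves $\nabla_h$ onto $u_3$. It must then control $u_3\,\p^{\ah}\p_3^2\vro$, by vertical interpolation and, at $\ah=0$, through the negative horizontal norm. In your form the density carries at most one $\p_3$. Apart from an undifferentiated $\p_3\vro$, which can sit in the $\me^m$ slot, it also carries at least one horizontal derivative, so it lies in $\md_{tan}^{m-1}$.

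For the linear part, keep the form $-\i\nabla_h\p^{\ah}u_3\cdot\nabla_h\p^{\ah}\p_3\du\,dx$. Bounding the $x_3$-integrated version by $\|\nabla_h\p^{\ah}\p_3u_3\|_{L^2}\|\nabla_h\p^{\ah}\du\|_{L^2}$, as you propose, produces $\|\p^{\ah}\p_{23}u_3\|_{L^2}^2$ with an $O(1)$ coefficient, which cannot be absorbed.

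\textbf{Your ``main obstacle'' is misdiagnosed.} That paragraph counts $\p_3$ twice. After your own integration by parts no term puts $m$ derivatives on $\vro$, and the $F_2$ terms are differentiated only $m-2$ times. The remedy you propose there, moving a horizontal derivative onto $u_3$, is exactly the paper's route. After it, your claim that $\vro$ is controlled by $\|\p_3\nabla_h\vro\|_{H^{m-3}_{tan}}$ or $\|\nabla_h\vro\|_{H^{m-2}_{tan}}$ is false.

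\textbf{The genuine gap is in the nonlinear terms}, which you treat as in \eqref{3301} and \eqref{3401}. In \eqref{3301} the top-order transport terms cancel because each equation is tested against its own unknown; in \eqref{3401} no vertical derivative appears. Here the cross pairings $\i\p^{\ah}\p_3F_2\cdot\p^{\ah}\p_{23}b\,dx$ (missing from your list of test functions) and $\i\p^{\ah}\p_3F_3\cdot\p^{\ah}\p_{23}u\,dx$ carry $\p_3$ on both factors. So transport terms do not cancel, and products with two vertical derivatives survive, for example $u_3\,\p^{\ah}\p_3^2u\cdot\p^{\ah}\p_{23}b$, $b_3\,\p^{\ah}\p_3^2b\cdot\p^{\ah}\p_{23}b$ and $u_3\,\p^{\ah}\p_3^2b\cdot\p^{\ah}\p_{23}u$.

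Since $\md_{tan}^{m-1}$ contains at most one $\p_3$ and not $\|u_3\|_{L^2}$, your recipe gives only about $(\me^m)^{9/16}(\md_{tan}^{m-1})^{15/16}$. That is weaker than the $\sqrt{\me^m}\md_{tan}^{m-1}$ the decay argument needs. You need the interpolation $\|\p^{\ah}\p_3^2u\|_{L^2}\lesssim\|\p^{\ah}\p_3u\|_{L^2}^{1/2}\|\p^{\ah}\p_3^3u\|_{L^2}^{1/2}$ used in \eqref{ieq-help}.

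Worse, the third component of $\p^{\ah}\p_3\big(\frac{\vro}{1+\vro}\nabla\du\big)$ contains $\frac{\vro}{1+\vro}\p^{\ah}\p_3^2\du$. This is an order-$m$ derivative of $u$ with two $\p_3$. It is neither in $\md_{tan}^{m-1}$ nor interpolable within $H^m$. Using $\|\vro\|_{L^\infty}\lesssim(\me^m)^{1/8}(\md_{tan}^{m-1})^{3/8}$ yields only $(\md_{tan}^{m-1})^{7/8}$.

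The paper removes this term by writing $\frac{\vro}{1+\vro}\nabla(\du-b_2)=\nabla\big(\frac{\vro}{1+\vro}(\du-b_2)\big)-\nabla\big(\frac{\vro}{1+\vro}\big)(\du-b_2)$. The gradient part then drops against $\p^{\ah}\p_{23}b$ by ${\rm div}\,b=0$. It does the same for $\vro\nabla\vro$ and $\frac{1}{1+\vro}\nabla|b|^2$, and treats the remainders with the interpolation above. Alternatively, you can integrate by parts in $x_3$ and use $\p_3b_3=-\nabla_h\cdot b_h$. Without such structural steps \eqref{3601} does not close.
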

		\begin{proof}
			For  $|\ah|\le m-3$, using the equation  \eqref{ieq-nablavro} of $\p_3 \vro$, we have
			\begin{align*}
				&\; \|\p^{\ah} \nabla_h \p_3 \vro\|_{L^2}^2  \\
				= &\; -\i \p_t \p^{\ah} \nabla_h u_3 \cdot \p^{\ah}  \nabla_h \p_3 \vro \ dx + \i \p^{\ah} \nabla_h ((F_2)_3+ \p_3 \du + \p_2 b_3 - \p_3 b_2) \cdot \p^{\ah} \nabla_h  \p_3 \vro \ dx \\
				&\; + \i \p^{\ah} \nabla_h (\p_1^2 u_3 + \var \p_2^2 u_3 + \var \p_3^2 u_3) \cdot \p^{\ah} \nabla_h  \p_3 \vro \ dx \\
				= &\; -\p_t \i  \p^{\ah} \nabla_h u_3 \cdot \p^{\ah}  \nabla_h \p_3 \vro \ dx + \i  \p^{\ah} \Delta_h u_3 \cdot \p^{\ah}  \p_3 (\du + \vro \du + u\cdot \nabla \vro) \ dx\\
				&\; + \i \p^{\ah} \nabla_h ((F_2)_3+ \p_3 \du +\p_1^2 u_3+ \p_2 b_3 - \p_3 b_2+ \var \p_2^2 u_3 + \var \p_3^2 u_3) \cdot \p^{\ah} \nabla_h  \p_3 \vro \ dx,
			\end{align*}
			which, together with the equation $\eqref{eqr}_2$ and $\eqref{eqr}_3$, yields that
			\beqq \bal
			&\; \f{d}{dt} \i( \p^{\ah} \nabla_h u_3 \cdot \nabla_h \p^{\ah} \p_3 \vro +  \p_{23} \p^{\ah} b \cdot  \p^{\ah} \p_3 u )\ dx  + \|(\p^{\ah} \p_3 \nabla_h  \vro, \p^{\ah} \p_{23} u)\|_{L^2}^2 \\
			= &\;  \i  \p^{\ah} \Delta_h u_3 \cdot \p^{\ah}\p_3 (\du + u \cdot \nabla \vro + \vro \, \du)\ dx  \\
			&\; + \i \p^{\ah} \nabla_h (\p_1^2 u_3 +\p_3 \du + \p_2 b_3 - \p_3 b_2) \cdot \nabla_h \p_3 \p^{\ah} \vro \ dx\\
			&\; + \i  \p^{\ah}\p_3 (e_2  \du - \Delta_h b + u \cdot \nabla b + b\,\du- b \cdot \nabla u) \cdot \p^{\ah} \p_{23} u  \ dx \\
			&\; +\i  \p^{\ah} \p_3 ( \p_1^2 u + \nabla \du + \p_2 b - \nabla b_2 -\nabla \vro)  \cdot  \p^{\ah} \p_{23} b\ dx + \i \p^{\ah} \nabla_h (F_2)_3  \cdot \nabla_h  \p_3 \p^{\ah}\vro\ dx \\
			&\; + \i \p^{\ah} \p_3 F_2  \cdot 
			\p^{\ah} \p_{23} b \ dx + \var \i \p^{\ah} \nabla_h (\p_2^2 u_3 + \p_3^2 u_3) \cdot \nabla_h \p^{\ah} \p_3 \vro \ dx\\
			&\; -\var \i \p^{\ah} \p_3^3  b \cdot \p^{\ah} \p_{23} u \ dx  + \var \i   \p^{\ah}\p_3 (\p_2^2 u + \p_3^2 u) \cdot \p^{\ah} \p_{23} b  \ dx\\
			:= &\;  \sum_{i=1}^{9} VI_{i}.
			\dal \deqq
			Using the H\"older inequality and the condition ${\rm div} b=0$, we can directly obtain that
			\beqq \bal
			VI_2 \le &\;  \nu  \| \nabla_h \p^{\ah} \p_3   \vro\|_{L^2}^2 + C_{\nu} \|\p^{\ah} \nabla_h( \p_1^2 u, \p_3 \du, \p_2 b, \p_3 b)\|_{L^2}^2,\\
			VI_4 = &\; \i  \p^{\ah} \p_3 \nabla ( \du  -  b_2 - \vro)  \cdot \p^{\ah} \p_{23} b\ dx + \i  \p^{\ah}\p_3 ( \p_1^2 u +\p_2 b)  \cdot  \p^{\ah} \p_{23} b\ dx \lesssim \| \p^{\ah} \p_3 (\p_1^2  u,\p_2 b)\|_{L^2}^2,
			\dal \deqq
			and 
			\beqq \bal
			VI_7 \lesssim &\;  \nu  \| \nabla_h \p^{\ah} \p_3   \vro\|_{L^2}^2 + \var\| \p^{\ah}\nabla_h (\p_2^2 u, \p_3^2 u)\|_{L^2}^2,\\
			VI_8 + VI_9 \lesssim &\; \var \|  \p^{\ah}  (\p_2^2 \p_3 u, \p_2 \p_3^2 u) \|_{L^2}^2 + \var \|  \p^{\ah}  ( \p_{23} b,\p_{3}^2 b) \|_{L^2}^2.
			\dal \deqq
			For the nonlinear term $VI_5$ and $VI_6$, we decompose them as follows:
			\beqq \bal
			VI_5 = &\; - \i \p^{\ah} \nabla_h (u \cdot \nabla u_3 +\vro \p_3 \vro ) \cdot \nabla_h \p^{\ah} \p_3 \vro \ dx \\
			&\; - \var \i \p^{\ah} \nabla_h (\f{\vro}{1+\vro}(\p_2^2 u_3+\p_3^2 u_3)) \cdot \nabla_h \p^{\ah} \p_3 \vro\ dx \\
			&\;  - \i \p^{\ah} \nabla_h (\f{\vro}{1+\vro}(\p_1^2 u_3+\p_3 \du-\p_3 b_2 +\p_2 b_3))  \cdot\nabla_h \p^{\ah} \p_3 \vro \ dx\\
			&\; + \i \p^{\ah} \nabla_h \left( \f{1}{1+\vro}(b\cdot \nabla b_3 - b\, \p_3 b)\right)  \cdot \nabla_h \p^{\ah} \p_3 \vro\ dx  \\
			:= &\; \sum_{i=1}^{4} VI_{5,i},
			\dal \deqq
			and
			\begin{align*}
				VI_6 = &\; - \i \p^{\ah} \p_3 (u \cdot \nabla u + \nabla (\f12 |\vro|^2 )) \cdot \p^{\ah} \p_{23} b\ dx   - \var \i \p^{\ah} \p_3 (\f{\vro}{1+\vro}(\p_2^2 u+\p_3^2 u)) \cdot \p^{\ah} \p_{23} b\ dx \\
				&\;  - \i \p^{\ah} \p_3 (\f{\vro}{1+\vro}(\p_1^2 u+ \nabla \du-\nabla b_2 +\p_2 b))  \cdot \p^{\ah} \p_{23} b \ dx\\
				&\; + \i \p^{\ah} \p_3 \left( \f{1}{1+\vro}(b\cdot \nabla b - \f12 \nabla( |b|^2))\right)  \cdot \p^{\ah} \p_{23} b\ dx  \\
				:= &\; \sum_{i=1}^{4} VI_{6,i}.
			\end{align*}
			When $|\ah|= 0$, we first estimate the term $VI_5$.
			\begin{align*}
				VI_{5,1}= &\; - \i  (\nabla_h u \cdot \nabla u_3 + u \cdot \nabla \nabla_h  u_3 + \p_3 \vro \nabla_h \vro + \vro \nabla_h \p_3 \vro  ) \cdot \p_3 \nabla_h   \vro \ dx \\
				\lesssim &\;  \|\p_3 \nabla_h   \vro \|_{L^2}(\| \nabla u_3 \|_{L^2}^{\f14} \| \p_{1} \nabla u_3 \|_{L^2}^{\f14} \| \p_{2} \nabla u_3\|_{L^2}^{\f14} \| \p_{12} \nabla u_3\|_{L^2}^{\f14} \| \nabla_h u\|_{L^2}^{\f12} \| \p_3 \nabla_h u\|_{L^2}^{\f12} \\
				&\; 
				+ \| u \|_{L^{\infty}} \| \nabla \nabla_h u\|_{L^2}  + \|  \vro \|_{L^{\infty}} \| \p_3 \nabla_h \vro\|_{L^2}
				+ \| \p_3 \vro \|_{L^2}^{\f12} \| \p_{23} \vro\|_{L^2}^{\f12}  \| \nabla_h \vro \|_{H_{tan}^1}^{\f12}\| \p_3 \nabla_h \vro \|_{H_{tan}^1}^{\f12})\\
				\lesssim &\; \sqrt{\me^m(t)}\md_{tan}^{m-1}(t).
			\end{align*}
			As for the term $VI_{5,4}$, we have
			\beqq \bal
			VI_{5,4} = &\; \i  \left( \nabla_h (\f{1}{1+\vro}) (b\cdot \nabla b_3 - b \,\p_3 b) + \f{1}{1+\vro} (\nabla_h b \cdot \nabla b_3  +b\cdot \nabla_h \nabla  b_h - \nabla_h (b\, \p_3 b) ) \right)  \cdot \nabla_h \p_3 \vro \ dx \\
			\lesssim &\; \|\p_3 \nabla_h   \vro\|_{L^2} ( \| b\|_{L^{\infty}} \| \nabla b \|_{L^2}^{\f14} \| \p_{1} \nabla b \|_{L^2}^{\f14} \| \p_2 \nabla b \|_{L^2}^{\f14} \| \p_{12} \nabla b\|_{L^2}^{\f14} \| \nabla_h (\f{1}{1+\vro})\|_{L^2}^{\f12} \| \p_{3} \nabla_h (\f{1}{1+\vro})\|_{L^2}^{\f12} \\
			&\; +  \| \nabla_h b \|_{L^2}^{\f12} \| \p_{3} \nabla_h b  \|_{L^2}^{\f12}\| \nabla b \|_{L^2}^{\f14} \|  \p_{1} \nabla b\|_{L^2}^{\f14} \| \p_2 \nabla b \|_{L^2}^{\f14} 
			\|  \p_{12}  \nabla b\|_{L^2}^{\f14} + \| b\|_{L^{\infty}}  \|\nabla \nabla_h  b \|_{L^2})\\
			\lesssim &\; \sqrt{\me^m(t)}\md_{tan}^{m-1}(t).
			\dal \deqq
			Similarly, we have
			\beqq
			VI_{5,2} +VI_{5,3} \lesssim \sqrt{\me^m(t)}\md_{tan}^{m-1}(t).
			\deqq
			Thus we can obtain that
			\beqq
			VI_{5} \lesssim \sqrt{\me^m(t)}\md_{tan}^{m-1}(t).
			\deqq
			Now we estimate the term $VI_6$. 
			Using the condition  ${\rm div}b=0$, we have
			\beqq \bal
			VI_{6,1}= &\; - \i  (\p_3 u \cdot \nabla u + u \cdot \nabla \p_3  u ) \cdot  \p_{23} b \ dx + \f12 \i \p_3(|\vro|^2) \, \p_{23} {\rm div} b \ dx\\
			\lesssim &\;  \|\p_{23} b \|_{L^2} (\| \p_3 u_h \|_{L^2}^{\f12} \| \p_{13} u_h \|_{L^2}^{\f12} \| \nabla_h u \|_{L^2}^{\f14} \| \p_{3} \nabla_h u \|_{L^2}^{\f14} \| \p_2 \nabla_h u\|_{L^2}^{\f14} \| \p_{23} \nabla_h u\|_{L^2}^{\f14} \\
			&\; + \| \p_3 u_3 \|_{L^2}^{\f12} \| \p_{13} u_3 \|_{L^2}^{\f12} \| \p_3 u \|_{L^2}^{\f14} \| \p_{13} u \|_{L^2}^{\f14}  \| \p_{23} u \|_{L^2}^{\f14}  \| \p_{123} u\|_{L^2}^{\f12} 
			+ \| u_h \|_{L^{\infty}} \| \p_3 \nabla_h u\|_{L^2} \\
			&\; + \| u_3 \|_{L^2}^{\f14} \| \p_2 u_3 \|_{L^2}^{\f14}\| \p_3 u_3 \|_{L^2}^{\f14} \| \p_{23} u_3 \|_{L^2}^{\f14}
			\| \p_3^2  u\|_{L^2}^{\f12} \| \p_1 \p_3^2  u\|_{L^2}^{\f12}) \\
			\lesssim &\; \sqrt{\me^m(t)}\md_{tan}^{m-1}(t)
			+ \|\p_{23} b \|_{L^2}\| u_3 \|_{L^2}^{\f14} \| \p_2 u_3 \|_{L^2}^{\f14}\| \p_3 u_3 \|_{L^2}^{\f14} \| \p_{23} u_3 \|_{L^2}^{\f14}
			\| \p_3^2  u\|_{L^2}^{\f12} \| \p_1 \p_3^2  u\|_{L^2}^{\f12}.
			\dal \deqq
			Using the interpolation inequality and $m \ge 4$,  we have
			\beqq
			\| \p_1 \p_3^2  u\|_{L^2}^{\f12} \lesssim \| \p_1 \p_3 u\|_{L^2}^{\f14} \| \p_1 \p_3^3 u\|_{L^2}^{\f14},
			\deqq
			which yields that
			\beqq \bal
			&\; \|\p_{23} b\|_{L^2}\| u_3 \|_{L^2}^{\f14} \| \p_2 u_3 \|_{L^2}^{\f14}\| \p_3 u_3 \|_{L^2}^{\f14} \| \p_{23} u_3 \|_{L^2}^{\f14}
			\| \p_3^2  u\|_{L^2}^{\f12} \| \p_1 \p_3^2  u\|_{L^2}^{\f12} \\
			\lesssim &\; \|\p_{23} b \|_{L^2}\| u_3 \|_{L^2}^{\f14} \| \p_2 u_3 \|_{L^2}^{\f14}\| \p_3 u_3 \|_{L^2}^{\f14} \| \p_{23} u_3 \|_{L^2}^{\f14}
			\| \p_3^2  u\|_{L^2}^{\f12} \| \p_1 \p_3 u\|_{L^2}^{\f14} \| \p_1 \p_3^3 u\|_{L^2}^{\f14} \\
			\lesssim &\; \sqrt{\me^m(t)}\md_{tan}^{m-1}(t) .
			\dal \deqq
			Thus we have
			\beqq
			VI_{6,1} \lesssim  \sqrt{\me^m(t)}\md_{tan}^{m-1}(t).
			\deqq
			As for the term $VI_{6,4}$, we have
			\beqq \bal
			VI_{6,4} = &\;   - \i  \p_3(\f{1}{1+\vro} b \cdot \nabla b) \cdot \p_{23 }b \ dx - \f12 \i \p_3 \nabla(\f{1}{1+\vro} |b|^2 )  \cdot \p_{23 }b \ dx \\
			&\; + \f12 \i \p_3 (\nabla(\f{1}{1+\vro}) |b|^2 )  \cdot \p_{23 }b \ dx \\
			= &\;   - \i  \p_3( \f{1}{1+\vro}) b \cdot \nabla b \cdot \p_{23 }b \ dx - \i \f{1}{1+\vro}  \p_3(b \cdot \nabla b) \cdot \p_{23 }b \ dx \\
			&\; + \f12 \i |b|^2 \p_{23 }b  \cdot \p_3 \nabla(\f{1}{1+\vro})   \ dx  + \i b\, \p_3 b\, \p_{23 }b  \cdot \nabla (\f{1}{1+\vro})   \ dx.  \\
			\dal \deqq
			We can check that
			\beqq  \bal
			&\; VI_{6,4}  \\
			\lesssim &\; \|\p_{23 }b \|_{L^2} ( \| b \|_{L^{\infty}} \| \p_3 b \|_{L^2}^{\f14} \| \p_{13} b \|_{L^2}^{\f14} \| \p_{3}^2 b \|_{L^2}^{\f14} \| \p_{1} \p_3^2 b \|_{L^2}^{\f14} \| \p_3 (\f{1}{1+\vro})\|_{L^2}^{\f12} \| \p_{23} (\f{1}{1+\vro})\|_{L^2}^{\f12} \\
			&\; + \| b_h \|_{L^{\infty}} \| \nabla_h b\|_{L^2}^{\f14} \| \p_{3} \nabla_h b \|_{L^2}^{\f14} \| \p_{1} \nabla_h b \|_{L^2}^{\f14} \| \p_{13}  \nabla_h b \|_{L^2}^{\f14} \| \p_3 (\f{1}{1+\vro})\|_{L^2}^{\f12} \| \p_{23} (\f{1}{1+\vro})\|_{L^2}^{\f12}
			\\
			&\; + \| b\|_{L^{\infty}}  \|\p_3 \nabla_h  b \|_{L^2}
			+ \| b_3 \|_{L^2}^{\f14} \| \p_{3} b_3\|_{L^2}^{\f14} \| \p_{2} b_3\|_{L^2}^{\f14} \| \p_{23} b_3\|_{L^2}^{\f14} \| \p_3^2 b \|_{L^2}^{\f12} \| \p_1\p_{3}^2 b\|_{L^2}^{\f12}\\
			&\; + \| \p_3 b\|_{L^{\infty}} \| \nabla_h b_3\|_{L^2} 
			+  \| \p_{3} b\|_{H^2}\| \p_3 b_3 \|_{L^2}^{\f12} \| \p_{3}^2 b_3\|_{L^2}^{\f12} + \| b\|_{L^{\infty}}^2 \|\p_3 \nabla_h  (\f{1}{1+\vro})\|_{L^2} \\
			&\;+  \| b\|_{L^{\infty}} \| \p_3 b\|_{L^{\infty}} \| \nabla_h  (\f{1}{1+\vro})\|_{L^2})
			+ \| \p_{23} b_3 \|_{L^2}^{\f12} \| \p_2 \p_3^2 b_3 \|_{L^2}^{\f12} \| b\|_{L^{\infty}} \| \p_3^2 (\f{1}{1+\vro}) \|_{L^2}^{\f12} \| \p_1 \p_3^2 (\f{1}{1+\vro}) \|_{L^2}^{\f12} \|b\|_{L^2}^{\f12} \|\p_2 b\|_{L^2}^{\f12}\\
			\lesssim &\; \sqrt{\me^m(t)}\md_{tan}^{m-1}(t)  +  \|b\|_{L^{\infty}} (\md_{tan}^{m-1}(t)^{\f34}\me^m(t)^{\f34} + \md_{tan}^{m-1}(t)^{\f78} \me^m(t)^{\f58})+ \|\p_1 \p_3^2 b\|_{L^2}^{\f12} \md_{tan}^{m-1}(t)^{\f78}  \me^m(t)^{\f38} . \\
			\dal \deqq
			Using the inequality \eqref{ie:Sobolev} and the interpolation inequality, we can obtain that
			\beqq \bal
			\|b\|_{L^{\infty}} \lesssim &\; \| (b, \p_3 b) \|_{L^2}^{\f14}  \|\nabla_h(b, \p_3 b)\|_{H_{tan}^1}^{\f34} \lesssim  \md_{tan}^{m-1}(t)^{\f38} \me^m(t)^{\f18},\\
			\|\p_1 \p_3^2 b\|_{L^2}^{\f12} \lesssim &\;  \|\p_1 \p_3 b\|_{L^2}^{\f14}  \|\p_1 \p_3^3 b\|_{L^2}^{\f14} \lesssim  \md_{tan}^{m-1}(t)^{\f18} \me^m(t)^{\f18},
			\dal \deqq 
			which yields that
			\beqq
			VI_{6,4} \lesssim  \sqrt{\me^m(t)}\md_{tan}^{m-1}(t).
			\deqq
			Similarly, we have
			\beqq \bal
			VI_{6,2} =&\; \var \i ( \p_3 (\f{1}{1+\vro}) \, \p_2^2u+ \f{1}{1+\vro} \p_2^2 \p_3 u )\cdot \p_{23 }b  - \var \i \f{1}{1+\vro}  \p_3^2 u \cdot \p_{2} \p_3^2 b \ dx \\
			\lesssim &\; \sqrt{\me^m(t)}\md_{tan}^{m-1}(t),\\
			VI_{6,3} \lesssim &\; \sqrt{\me^m(t)}\md_{tan}^{m-1}(t).
			\dal \deqq
			Thus we can obtain that
			\beqq
			VI_{6} \lesssim \sqrt{\me^m(t)}\md_{tan}^{m-1}(t).
			\deqq
			Finally, we estimate the term $VI_1$.
			\begin{align*}
				VI_1= &\; - \i \Delta_h u_3 \cdot (\p_3 \du + \p_3 u \cdot \nabla \vro + u \cdot \nabla  \p_3 \vro +  \p_3  \vro \, \du +   \vro \, \p_3 \du)\ dx \\
				\lesssim &\; \|(\Delta_h u_3,\p_3 \du)  \|_{L^2}^2 + \|\Delta_h u_3 \|_{L^2}^{\f12}  \|\p_{1} \Delta_h u_3 \|_{L^2}^{\f12}  (\| \p_3 u_h \|_{L^2}^{\f12} \| \p_{23} u_h \|_{L^2}^{\f12} \| \nabla_h \vro\|_{L^2}^{\f12} \| \p_3 \nabla_h \vro\|_{L^2}^{\f12} \\
				&\; + \| \p_3 u_3 \|_{L^2}^{\f12} \| \p_3^2 u_3 \|_{L^2}^{\f12} \| \p_3 \vro\|_{L^2}^{\f12} \| \p_{23} \vro\|_{L^2}^{\f12} 
				+ \| u_h \|_{H_{tan}^1}^{\f12} \| \p_3 u_h \|_{H_{tan}^1}^{\f12} \| \p_3 \nabla_h \vro\|_{L^2} \\
				&\; + \| u_3 \|_{L^2}^{\f14} \| \p_2 u_3 \|_{L^2}^{\f14}\| \p_3 u_3 \|_{L^2}^{\f14} \| \p_{23} u_3 \|_{L^2}^{\f14}
				\| \p_3^2  \vro\|_{L^2} 
				+ \| \p_3 \vro \|_{L^2}^{\f12} \| \p_{23} \vro\|_{L^2}^{\f12} \| \du\|_{L^2}^{\f12} \| \p_{3} \du\|_{L^2}^{\f12} \\
				&\; + \|  \vro \|_{H_{tan}^1}^{\f12} \| \p_{3} \vro\|_{H_{tan}^1}^{\f12} \| \p_3 \du\|_{L^2})\\
				\lesssim &\;  \|(\Delta_h u_3,\p_3 \du)  \|_{L^2}^2 + \sqrt{\me^m(t)}\md_{tan}^{m-1}(t),
			\end{align*} 
			where we have used the following estimate 
			\beqq \bal
			\| \p_{3}^2 \vro\|_{L^2} \lesssim \| \p_3 \vro\|_{L^2}^{\f12}  \| \p_3^3 \vro\|_{L^2}^{\f12} \lesssim \|\hs  \p_3 \vro\|_{L^2}^{\frac{1}{2(1+s)}}
			\|\nabla_h  \p_3 \vro\|_{L^2}^{\frac{s}{2(1+s)}} \| \p_3^3 \vro\|_{L^2}^{\f12},
			\dal \deqq
			and $s > \f{9}{10}$.
			Similarly, we can estimate the term $VI_3$ as follows.
			\beqq
			VI_{3} \le \nu \| \p_{23} u \|_{L^2}^2 + C_{\nu}\| (\p_3 \du, \p_3 \Delta_h b) \|_{L^2}^2 +  \sqrt{\me^m(t)}\md_{tan}^{m-1}(t).
			\deqq
			Combining the estimates from $VI_1$ to $VI_6$, and using the smallness of $\nu$, for $|\ah|=0$, we have
			\beqq \bal
			&\; \f{d}{dt} \i( \nabla_h u_3 \cdot \nabla_h \p_3 \vro +  \p_{23} b \cdot   \p_3 u )\ dx  + \|( \p_3 \nabla_h  \vro,  \p_{23} u)\|_{L^2}^2 \\
			\lesssim &\;  \var\| \nabla_h (\p_2^2 u, \p_3^2 u, \p_{23} u)\|_{L^2}^2  + \var \|\p_{3}^2 b \|_{L^2}^2  +   \|(\Delta_h u,\nabla \p_1^2 u)  \|_{L^2}^2 + \|(\p_3 \du, \p_3 \nabla_h \du)\|_{L^2}^2\\
			&\; +  \| (\p_3 \nabla_h b ,\nabla_h^2 b ,\p_3 \Delta_h b) \|_{L^2}^2+\sqrt{\me^m(t)}\md_{tan}^{m-1}(t).
			\dal \deqq
			\textbf{Now, we deal with the case $|\alpha_h| =m-3$}. 
			First, we deal with the term $VI_1$. 
			\beqq \bal
			VI_1= &\; - \i \p^{\ah} \Delta_h  u_3 \cdot \p^{\ah} \p_3 \du \ dx  - \i \p^{\ah} \Delta_h  u_3 \cdot \p^{\ah} ( \p_3 u \cdot \nabla \vro )\ dx  \\
			&\; - \i \p^{\ah} \Delta_h  u_3 \cdot \p^{\ah}  (u \cdot \nabla  \p_3 \vro )\ dx
			- \i \p^{\ah} \Delta_h  u_3 \cdot \p^{\ah} (  \p_3  \vro \, \du +   \vro \, \p_3 \du)\ dx \\
			: = &\; \sum_{i=1}^{4} VI_{1,i}.
			\dal \deqq
			It is easy to check that
			\beqq
			VI_{1,1} \lesssim \|(\p^{\ah} \Delta_h  u_3, \p^{\ah} \p_3 \du)\|_{L^2}^2,
			\deqq
			and 
			\beqq \bal
			VI_{1,2} \lesssim &\; \| \p^{\ah} \Delta_h  u_3\|_{L^2}^{\f12} \| \p_1 \p^{\ah} \Delta_h  u_3\|_{L^2}^{\f12} \sum_{0 \le \beta_h \le \alpha_h} \|\p^{\beta_h} \p_3 u \|_{L^2}^{\f12} \|\p_{23} \p^{\beta_h} u \|_{L^2}^{\f12}
			\|\p^{\al_h-\beta_h}  \nabla \vro \|_{L^2}^{\f12} \|\nabla \p^{\al_h-\beta_h} \p_3  \vro \|_{L^2}^{\f12} .
			\dal \deqq
			As for the term $VI_{1,3}$, using the  anisotropic type inequality \eqref{ie:Sobolev}, we can arrive 
			\beqq  \bal
			VI_{1,3}= &\; - \i \p^{\ah} \Delta_h  u_3 \cdot  (u_h \cdot  \nabla_h \p^{\ah}   \p_3\vro + u_3 \, \p^{\ah}  \p_3^2 \vro )\ dx  - \sum_{0 < \beta_h \le \alpha_h } C_{\ah}^{\beta_h} \i  \p^{\beta_h}  u \cdot \nabla  \p^{\ah-\beta_h}  \p_3 \vro \,  \p^{\ah} \Delta_h  u_3\ dx  \\ 
			\lesssim &\;  \| \p^{\ah} \Delta_h  u_3\|_{L^2}^{\f12} \|  \p_1 \p^{\ah} \Delta_h  u_3\|_{L^2}^{\f12} \Big(\| u_h\|_{H_{tan}^1}^{\f12}  \| \p_3 u_h\|_{H_{tan}^1}^{\f12} \| \p^{\ah} \nabla_h  \p_3\vro\|_{L^2} \\
			&\; + \| u_3\|_{L^2}^{\f14} \| \p_3 u_3\|_{L^2}^{\f14}\| \p_2 u_3\|_{L^2}^{\f14} \| \p_{23} u_3\|_{L^2}^{\f14}  \| \p^{\ah}  \p_3^2 \vro\|_{L^2}   \\
			&\; +
			\sum_{0 < \beta_h \le \ah } \|\p^{\beta_h}  u\|_{L^2}^{\f12} \|\p_3  \p^{\beta_h} u\|_{L^2}^{\f12}
			\|\p^{\al_h-\beta_h}  \nabla \p_3 \vro \|_{L^2}^{\f12} \|\p^{\al_h-\beta_h}  \nabla \p_3^2 \vro \|_{L^2}^{\f12}\Big) \\
			\lesssim &\; \sqrt{\me^m(t)}\md_{tan}^{m-1}(t),
			\dal \deqq
			where we have used the interpolation inequality
			\beqq
			\| \p^{\ah}  \p_3^2 \vro\|_{L^2} \lesssim   \| \p^{\ah}  \p_3 \vro\|_{L^2}^{\f12}  \| \p^{\ah}  \p_3^3 \vro\|_{L^2}^{\f12} \lesssim \me^m(t)^{\f14} \md_{tan}^{m-1}(t)^{\f14}.
			\deqq
			Similarly, we have
			\beqq
			VI_{1,4} \lesssim \sqrt{\me^m(t)}\md_{tan}^{m-1}(t),
			\deqq
			which yields that
			\beqq
			VI_{1} \lesssim \sqrt{\me^m(t)}\md_{tan}^{m-1}(t).
			\deqq
			Next, we deal  with the nonlinear term $VI_6$. Integrating by parts, we have
			\beqq \bal
			VI_{6,1} = &\; - \i \p^{\ah}  (\p_3 u \cdot \nabla u ) \cdot  \p^{\ah} \p_{23 }b \ dx - \i \p^{\ah}  ( u \cdot \nabla \p_3  u ) \cdot \p^{\ah} \p_{23 }b\ dx
			+ \f12 \i \p^{\ah} \p_3 |\vro|^2 \cdot \p^{\ah} \p_{23 } {\rm div} b\ dx\\
			= &\; - \i \p^{\ah}  (\p_3 u \cdot \nabla u ) \cdot  \p^{\ah} \p_{23 }b \ dx - \i \p^{\ah}  ( u \cdot \nabla \p_3  u ) \cdot \p^{\ah} \p_{23 }b\ dx. \\
			\dal \deqq
			Using the  anisotropic type inequality \eqref{ie:Sobolev}, we can arrive 
			\beqq \bal
			&\; \i \p^{\ah}  (\p_3 u \cdot \nabla u ) \cdot \p^{\ah} \p_{23 }b  \ dx \\
			\lesssim &\; \| \p^{\ah} \p_{23 }b\|_{L^2} \Big(\sum_{0 \le \beta_h \le [\f{\alpha_h}{2}] } (\|\p^{\beta_h} \p_3 u_h\|_{H_{tan}^1}^{\f12} \|\p^{\beta_h} \p_3^2 u_h\|_{H_{tan}^1}^{\f12}
			\|\p^{\al_h-\beta_h}  \nabla_h u \|_{L^2}^{\f12} \|\p^{\al_h-\beta_h}  \p_1 \nabla_h u \|_{L^2}^{\f12} \\
			&\; +  \|\p^{\beta_h} \p_3 u_3\|_{H_{tan}^1}^{\f12} \|\p^{\beta_h} \p_3^2 u_3\|_{H_{tan}^1}^{\f12}
			\|\p^{\al_h-\beta_h}  \p_3 u \|_{L^2}^{\f12} \|\p^{\al_h-\beta_h}  \p_{13} u \|_{L^2}^{\f12}) \\
			&\; + \sum_{ [\f{\alpha_h}{2}] < \beta_h \le  \ah}   (\|\p^{\beta_h} \p_3 u_h\|_{L^2}^{\f12} \|\p^{\beta_h} \p_{13} u_h\|_{L^2}^{\f12}
			\|\p^{\al_h-\beta_h}  \nabla_h u \|_{H_{tan}^1}^{\f12} \|\p^{\al_h-\beta_h}  \p_3 \nabla_h u \|_{H_{tan}^1}^{\f12} \\
			&\; + \|\p^{\beta_h} \p_3 u_3\|_{L^2}^{\f12} \|\p^{\beta_h} \p_{3}^2 u_3\|_{L^2}^{\f12}
			\|\p^{\al_h-\beta_h}  \p_3 u \|_{H_{tan}^2} ) \Big)\\
			\lesssim &\; \sqrt{\me^m(t)}\md_{tan}^{m-1}(t),
			\dal \deqq
			and similarly, we have
			\begin{align*}
				&\;  - \i \p^{\ah}  ( u \cdot \nabla \p_3  u ) \cdot \p^{\ah} \p_{23 }b \ dx \\
				= &\;  - \i      u_3 \cdot  \p^{\ah} \p_3^2  u  \cdot \p^{\ah} \p_{23 }b \ dx - \i   u_h \cdot \nabla_h \p^{\ah} \p_3  u \cdot \p^{\ah} \p_{23 }b \ dx \\
				&\; - \sum_{0 \le \beta_h < \ah} C_{\ah}^{\beta_h} \i \p^{\beta_h}   u \cdot \p^{\ah-\beta_h} \nabla \p_3  u \cdot \p^{\ah} \p_{23 }b \ dx \\
				\lesssim &\; \|u_3\|_{L^{\infty}} \|
				\p^{\ah} \p_{23 }b \|_{L^2} \|  \p^{\ah} \p_3^2  u\|_{L^2} + \sqrt{\me^m(t)}\md_{tan}^{m-1}(t).
			\end{align*}
			Using the inequality \eqref{ie:Sobolev} and the interpolation inequality, we can obtain that
			\beq \label{ieq-help} \bal
			\|u_3\|_{L^{\infty}} \lesssim &\; \|\p_2 u_3\|_{L^2}^{\f18}\|\p_{12} u_3\|_{L^2}^{\f18}
			\|\p_3 u_3\|_{L^2}^{\f18}\|\p_{13} u_3\|_{L^2}^{\f18}\|\p_{23} u_3\|_{L^2}^{\f18}\|\p_{123} u_3\|_{L^2}^{\f18} \lesssim  \md_{tan}^{m-1}(t)^{\f14} \me^m(t)^{\f14},\\
			\|\p^{\ah} \p_3^2 u\|_{L^2} \lesssim &\;  \|\p^{\ah} \p_3 u \|_{L^2}^{\f12}  \|\p^{\ah} \p_3^3 u\|_{L^2}^{\f12} \lesssim  \md_{tan}^{m-1}(t)^{\f14} \me^m(t)^{\f14},
			\dal \deq
			which yields that
			\beqq \bal
			&\; - \i \p^{\ah}  ( u \cdot \nabla \p_3  u) \cdot 
			\p^{\ah} \p_{23 }b \ dx 
			\lesssim  \sqrt{\me^m(t)}\md_{tan}^{m-1}(t).
			\dal \deqq
			Thus, we can arrive that 
			\beqq
			VI_{6,1} \lesssim  \sqrt{\me^m(t)}\md_{tan}^{m-1}(t).
			\deqq
			By the same method, we can obtain that
			\begin{align*}
				VI_{6,3} 
				= &\; - \i \p^{\ah}  (\p_3(\f{\vro}{1+\vro}) (\p_1^2 u +\p_2 b) + \f{\vro}{1+\vro} (\p_1^2 \p_3 u +\p_{23} b) )  \cdot \p^{\ah} \p_{23 }b\ dx\\
				&\; - \i  \p^{\ah} \p_{23 }b \cdot\p^{\ah} \p_3 \nabla(\f{\vro}{1+\vro} ( \du-  b_2) )\ dx\\
				&\; + \i  \p^{\ah} \p_{23 }b \cdot\p^{\ah} \p_3 (\nabla(\f{\vro}{1+\vro}) ( \du-  b_2) )\ dx\\
				= &\; - \i \p^{\ah}  (\p_3(\f{\vro}{1+\vro}) (\p_1^2 u +\p_2 b) + \f{\vro}{1+\vro} (\p_1^2 \p_3 u +\p_{23} b) )  \cdot \p^{\ah} \p_{23 }b\ dx\\
				&\; + \i  \p^{\ah} \p_{23 }b \cdot\p^{\ah} \p_3 (\nabla(\f{\vro}{1+\vro}) ( \du-  b_2) )\ dx\\
				\lesssim &\; \sqrt{\me^m(t)}\md_{tan}^{m-1}(t),
			\end{align*}
			and  similar to the estimate \eqref{ieq-help}, we have
			\beqq \bal
			VI_{6,2} =  &\; - \var \i \p^{\ah} \p_3  ( \f{\vro}{1+\vro} \p_2^2 u)  \cdot \p^{\ah} \p_{23 }b\ dx + \var \i \p^{\ah} ( \f{\vro}{1+\vro} \p_2^2 u)  \cdot \p^{\ah} \p_{2} \p_3^2 b\ dx \\
			\lesssim &\; \sqrt{\me^m(t)}\md_{tan}^{m-1}(t),\\
			VI_{6,4} =  &\; \i \p^{\ah} \p_3 ( \f{1}{1+\vro}b\cdot \nabla b)   \cdot   \p^{\ah} \p_{23 }b\ dx  - \f12 \i \p^{\ah} \p_3 \nabla ( \f{1}{1+\vro} |b|^2 ) \cdot   \p^{\ah} \p_{23 }b\ dx  \\
			&\; - \f12 \i \p^{\ah} \p_3  ( \nabla(\f{1}{1+\vro}) |b|^2 ) \cdot   \p^{\ah} \p_{23 }b\ dx \\
			=  &\; \i \p^{\ah} (\p_3 (\f{1}{1+\vro}) b\cdot \nabla b + \f{1}{1+\vro} \p_3 b\cdot \nabla b + b \cdot  \p_3 \nabla b -  b \, \p_3 b \,\nabla( \f{1}{1+\vro}) )  \cdot   \p^{\ah} \p_{23 }b\ dx \\ 
			&\; - \f12 \i \p^{\ah} \p_{23 }b_3 \cdot  \p^{\ah} ( \p_3^2(\f{1}{1+\vro}) |b|^2)  \ dx   -\f12 \i \p^{\ah} \p_{23 }b_h \cdot  \p^{\ah} ( \p_3 \p_h (\f{1}{1+\vro}) |b|^2)  \ dx \\
			\lesssim &\; \sqrt{\me^m(t)}\md_{tan}^{m-1}(t) + \|b\|_{L^{\infty}}^2 \|   \p^{\ah} \p_{23 }b \|_{L^2} \|  \p^{\ah} \p_3^2 (\f{1}{1+\vro})\|_{L^2} \\
			\lesssim &\; \sqrt{\me^m(t)}\md_{tan}^{m-1}(t).
			\dal \deqq
			Combining the estimates from $VI_{6,1}$ to $VI_{6,4}$, we have
			\beqq 
			VI_6 \lesssim \sqrt{\me^m(t)}\md_{tan}^{m-1}(t).
			\deqq
			Similar to the estimates of $VI_1$ and $VI_6$, we can obtain that 
			\beqq \bal
			VI_3  \le &\; \nu \| \p^{\ah} \p_{23} u \|_{L^2}^2 + C_{\nu}\| (\p^{\ah} \p_3 \du, \p^{\ah} \p_3 \Delta_h b) \|_{L^2}^2 +  \sqrt{\me^m(t)}\md_{tan}^{m-1}(t),\\
			VI_5 \lesssim &\; \sqrt{\me^m(t)}\md_{tan}^{m-1}(t).
			\dal \deqq
			Combining the estimates from $VI_1$ to $VI_9$, and using the smallness of $\nu$, for $|\ah| = m-3$, we have
			\beqq \bal
			&\; \f{d}{dt} \i( \p^{\ah} \p_3 u_h \cdot \p^{\ah}  \nabla_h \p_3 \vro +  \p^{\ah}  \p_{23} b \cdot \p^{\ah}   \p_3 u )\ dx  + \|( \p^{\ah}  \p_3 \nabla_h  \vro,  \p^{\ah} \p_{23} u)\|_{L^2}^2 \\
			\lesssim &\;    \var\| \p^{\ah}\nabla_h (\p_2^2 u, \p_3^2 u, \p_{23} u)\|_{L^2}^2  +\var \|  \p^{\ah}  \p_{3}^2 b \|_{L^2}^2 + \|\p^{\ah}(  \Delta_h u, \nabla \p_1^2 u)  \|_{L^2}^2 + \|\p^{\ah}  \p_3( \du, \nabla_h \du)\|_{L^2}^2 \\
			&\; +  \| \p^{\ah}\nabla_h (  \nabla b, \p_3 \nabla_h b) \|_{L^2}^2+\sqrt{\me^m(t)}\md_{tan}^{m-1}(t).
			\dal \deqq
			Thus, together with the estimate for $|\ah|=0$, we finish the proof of this lemma.
		\end{proof}
		\subsection{Negative derivative estimate}
		
		In this subsection, we will establish the estimate in negative
		Sobolev space that will play an important role in establishing
		the decay rate estimate.
		
		\begin{lemm}
			Under the assumption \eqref{assumption},
			the smooth solution $(\vro ,u, b)$ of equation \eqref{eqr} has the estimate
			\beq\label{3701}
			\begin{aligned}
				&\; \|\hs( \vro, u, b)(t)\|_{L^2}^2 +\int_0^t \|\hs(\p_1  u, \du, \nabla_h b)\|_{L^2}^2 d\tau  + \var \int_0^t \|\hs(\p_2  u,\p_3 u,\p_3 b)\|_{L^2}^2 d\tau \\
				\lesssim &\;
				\|\hs( \vro, u, b)(0)\|_{L^2}^2 + \| (\vro,u)(0)\|_{H^m}^2 \|\hs u(0)\|_{L^2} +\delta^{\f32}.
			\end{aligned}
			\deq
		\end{lemm}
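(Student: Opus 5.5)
The plan is to apply $\hs$ to each equation of \eqref{eqr}, take the $L^2$ inner product with $\hs(\vro,u,b)$, sum, and integrate in time. The linear part behaves exactly as in the case $\alpha_h=0$ of Lemma \ref{lemma32}, because $\hs$ is a Fourier multiplier in $x_h$ and commutes with every derivative. The coupling terms cancel:
\[
\i \hs\nabla\vro\cdot\hs u\,dx+\i\hs{\rm div}u\,\hs\vro\,dx=0,
\qquad
\i\hs(\nabla b_2-\p_2 b)\cdot\hs u\,dx+\i\hs(e_2{\rm div}u-\p_2u)\cdot\hs b\,dx=0 .
\]
The dissipative terms then produce exactly the left-hand side of \eqref{3701}. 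Unlike in Lemma \ref{lemma32}, there is no weight $\f1{1+\vro}$ on $b$. So the quadratic terms $b\cdot\nabla b$ and $b\cdot\nabla u$ are handled directly rather than through the cancellation used for $I_4+I_8$.

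For every nonlinear term $\i\hs F\cdot\hs g\,dx$, I would use the Hardy–Littlewood–Sobolev inequality of Lemma \ref{H-L} in the horizontal variables. This gives
\[
\|\hs F\|_{L^2}\lesssim\big\|\|F\|_{L^{p}_{x_h}}\big\|_{L^2_{x_3}},\qquad \f1p=\f12+\f{s}{2},
\]
with $s=1-\zeta$. The anisotropic inequalities \eqref{ie:Sobolev} and horizontal interpolation then bound the resulting mixed norms of products in terms of $\me^m$ and $\md_{tan}^{m-1}$. A typical estimate is
\[
\Big|\i\hs(u\cdot\nabla u)\cdot\hs u\,dx\Big|\lesssim \|\hs u\|_{L^2}\,\|u\|_{L^{\f1s}_{x_h}L^\infty_{x_3}}\|\nabla u\|_{L^2_{x_h}L^2_{x_3}},
\]
and the other products are treated the same way. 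After integration in time, these give bounds of the form
\[
\sup_\tau\|\hs(\cdot)\|_{L^2}\int_0^t \|(\cdot)\|_{L^2}^{1-s}\|\nabla_h(\cdot)\|_{L^2}^{s}\,\|\text{dissipative factor}\|_{L^2}\,d\tau .
\]
The undifferentiated factor decays like $(1+\tau)^{-s/2}$ by the decay part of \eqref{assumption}. Combined with the weighted integral $\int(1+\tau)^\sigma\md_{tan}^{m-1}\,d\tau$ and Hölder in time, this shows each such term is $\lesssim\dl^{3/2}$ whenever $\zeta<\f1{20}$. The key inequality in this step is that the power $(1+\tau)^{-\,\cdot}$ left over after Hölder is integrable.

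I expect the main obstacle to be the terms in which the only available factor has no horizontal derivative and no dissipation. These are precisely the terms singled out in Section \ref{section-approach}, namely
\[
\i\hs\Big(\f{\vro}{1+\vro}\p_3 b_2\Big)\hs u_3\,dx
\]
and the analogous $\vro\,\p_3\vro$ contributions. The factor $\p_3b_2$ has neither vertical dissipation nor a horizontal derivative, so it cannot be absorbed. For these, I would substitute \eqref{202}, i.e.
\[
\p_3b_2=-\p_tu_3-\p_3\vro+\p_1^2u_3+\var\p_2^2u_3+\var\p_3^2u_3+\p_3{\rm div}u+\p_2b_3+(F_2)_3 .
\]
All the terms other than $\p_t u_3$ carry dissipative structure or are of cubic order, and they are estimated as above. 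For the $\p_t u_3$ term I would move the time derivative outside:
\[
-\i\hs\Big(\f{\vro}{1+\vro}\p_tu_3\Big)\hs u_3\,dx
=-\f{d}{dt}\i\hs\Big(\f{\vro}{1+\vro}u_3\Big)\hs u_3\,dx
+\i\hs\Big(\p_t\big(\tfrac{\vro}{1+\vro}\big)u_3\Big)\hs u_3\,dx
+\i\hs\Big(\f{\vro}{1+\vro}u_3\Big)\hs\p_tu_3\,dx .
\]
The remaining time derivatives $\p_t\vro$ and $\p_tu_3$ are then replaced using $\eqref{eqr}_1$ and $\eqref{eqr}_2$. The boundary term at time $t$ is bounded by $\|\vro\|_{H^m}\|u\|_{H^m}\|\hs u\|_{L^2}$ and absorbed into the left-hand side, since $\dl$ is small. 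The boundary term at time $0$ produces exactly the extra initial term
\[
\|(\vro,u)(0)\|_{H^m}^2\,\|\hs u(0)\|_{L^2}
\]
appearing in \eqref{3701}. All remaining contributions are at least cubic and are controlled by $\dl^{3/2}$ as before, which closes the estimate.
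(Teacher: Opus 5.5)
Your overall route is the paper's: the $\hs$-energy identity with the two linear cancellations, Hardy--Littlewood--Sobolev in $x_h$ combined with $\eqref{ie:Sobolev}_4$, the substitution \eqref{202} for $\p_3 b_2$, and the extraction of $\f{d}{dt}\i \hs(\f{\vro}{1+\vro}u_3)\cdot\hs u_3\,dx$, whose value at $t=0$ accounts for the extra initial term in \eqref{3701}.

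\textbf{The gap.} You claim that everything else ``carries dissipative structure or is of cubic order'' and can be ``estimated as above''. This fails for three terms.
\begin{itemize}
\item Substituting \eqref{202} produces $-\i\hs(\f{\vro}{1+\vro}\p_3\vro)\cdot\hs u_3\,dx$.
\item The $-\vro\p_3\vro$ part of $(F_2)_3$ has the same structure. You flag it as an obstacle but never resolve it: \eqref{202} does not touch it, and solving the momentum equation for $\p_3\vro$ just returns $-\p_3 b_2$.
\item So does the $-\f{1}{1+\vro}b_h\cdot\p_3 b_h$ part of the magnetic pressure.
\end{itemize}
In these products no factor is dissipative. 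Uniformly in $\var$, the quantities $\vro$, $\p_3\vro$, $b_h$, $\p_3 b_h$ enter $\md_{tan}^{m-1}$ only through their horizontal derivatives. By contrast, for $u_3\p_3\vro$, $u_3\p_3u$ and $b_3\p_3 b$, the $L^\infty_{x_3}$-interpolation in $\eqref{ie:Sobolev}_4$ brings in $\p_3u_3=\du-\nabla_h\cdot u_h$ or $\p_3b_3=-\nabla_h\cdot b_h$, which are dissipative.

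What remains for the bad terms is horizontal interpolation. The correct inequality is $\|f\|_{L^{2/s}(\mathbb{R}^2)}\lesssim\|f\|_{L^2}^{s}\|\nabla_h f\|_{L^2}^{1-s}$; your exponents are reversed, and the index is $2/s$, not $1/s$. Hence these terms are at best $\|\hs u\|_{L^2}\,\me_{tan}^{m-1}(\tau)^{1-\theta}\md_{tan}^{m-1}(\tau)^{\theta}$ with $\theta\le\zeta/2$. With only the information in \eqref{assumption}, such a quantity is time-integrable only if $\theta>\zeta/(1-\zeta)$. Calling the term ``cubic'' hides the problem: one factor is $\hs u$, which does not decay, while $\me_{tan}^{m-1}$ decays only like $(1+\tau)^{-s}$ with $s<1$.

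\textbf{The missing idea.} Integrate by parts so that the derivative falls on $\hs u$; after Young's inequality the bad quadratic then appears squared.
\begin{itemize}
\item For the density term: $-\i\hs(\vro\nabla\vro)\cdot\hs u\,dx=\f12\i\hs(\vro^2)\,\hs\du\,dx\le\nu\|\hs\du\|_{L^2}^2+C_\nu\me_{tan}^{m-1}(t)^2$.
\item For the magnetic pressure, write $\f{1}{1+\vro}\nabla|b|^2=\nabla(\f{|b|^2}{1+\vro})-|b|^2\nabla(\f{1}{1+\vro})$. The first piece is handled the same way, and the second is cubic in decaying quantities.
\item For the term generated by \eqref{202}, write $\f{\vro\p_3\vro}{1+\vro}=\f12\p_3(\f{\vro^2}{1+\vro})-\f12\vro^2\p_3(\f{1}{1+\vro})$ and move $\p_3$ onto $\hs u_3$. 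Then use $\p_3u_3=\du-\p_1u_1-\p_2u_2$. Integrate by parts once more in $x_2$ on the $\p_2u_2$ piece, because $\hs\p_2 u$ is not part of the dissipation in \eqref{3701}.
\end{itemize}
The $\nu$-terms are absorbed, and $\int_0^t\me_{tan}^{m-1}(\tau)^2\,d\tau\lesssim\dl^2$ since $2s>1$. It is this squaring, not cubic order, that makes these terms integrable.

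The same device is needed after you replace $\p_t u_3$ via $\eqref{eqr}_2$. The piece $\hs\p_3\du$ must have its $\p_3$ moved onto $\hs(\f{\vro}{1+\vro}u_3)$, since $\|\hs\p_3\du\|_{L^2}$ is controlled neither by $\me^m$ nor by the dissipation in \eqref{3701}.
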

		\begin{proof}
			The equation \eqref{eqr} yields directly
			\beq\label{3702}
			\begin{aligned}
				&\;\frac{d}{dt}\frac{1}{2}\i(|\hs \vro|^2+|\hs u|^2+|\hs b|^2)dx
				+\i (|\p_1 \hs u|^2 dx+  | \hs \du|^2 + |\nabla_h \hs b|^2 )dx\\
				&\;  + \var \i (|\p_2 \hs u|^2 +|\p_3 \hs u|^2+ |\p_3 \hs b|^2) dx \\
				=&\; -\i \hs(u \cdot \nabla \vro) \cdot \hs \vro \ dx
				-\i \hs( \vro \, \du) \cdot \hs \vro \ dx \\
				&\; +\i \hs(-u \cdot \nabla b -b \, \du + b \cdot \nabla u ) \cdot \hs b \ dx+\i \hs F_2 \cdot \hs u \ dx\\
				:=&\;VII_1+VII_2+VII_3 +VII_4.
			\end{aligned}
			\deq
			Using the inequalities \eqref{ie:Sobolev}  and \eqref{a3},
			we may deduce that
			\beqq
			\begin{aligned}
				VII_1
				=&-\int_{\mathbb{R}}\int_{\mathbb{R}^2}
				\hs(u_h \cdot \nabla_h \vro+u_3 \p_3 \vro)\cdot \hs \vro \ dx_h dx_3\\
				\lesssim
				&\int_{\mathbb{R}}(\|\hs(u_h \cdot \nabla_h \vro)\|_{L^2(\mathbb{R}^2)}
				+\|\hs(u_3 \p_3 \vro)\|_{L^2(\mathbb{R}^2)})
				\|\hs \vro\|_{L^2(\mathbb{R}^2)}\ dx_3\\
				\lesssim
				&\int_{\mathbb{R}}(\|u_h \cdot \nabla_h \vro\|_{L^{\frac{1}{\frac{1}{2}+\frac{s}{2}}}(\mathbb{R}^2)}
				+\|u_3 \p_3 \vro\|_{L^{\frac{1}{\frac{1}{2}+\frac{s}{2}}}(\mathbb{R}^2)})
				\|\hs \vro\|_{L^2(\mathbb{R}^2)} \ dx_3\\
				\lesssim
				&\int_{\mathbb{R}}(\|u_h\|_{L^{\frac{2}{s}}(\mathbb{R}^2)}
				\|\nabla_h \vro\|_{L^2(\mathbb{R}^2)}
				+\|u_3\|_{L^{\frac{2}{s}}(\mathbb{R}^2)}
				\|\p_3 \vro\|_{L^2(\mathbb{R}^2)})
				\|\hs \vro\|_{L^2(\mathbb{R}^2)}\ dx_3\\
				\lesssim
				&\left.\|\|u_h\|_{L^\infty(\mathbb{R})}\right\|_{L^{\frac{2}{s}}(\mathbb{R}^2)}
				\|\nabla_h \vro\|_{L^2(\mathbb{R}^3)}
				\|\hs \vro\|_{L^2(\mathbb{R}^3)}\\
				&+\left.\|\|u_3\|_{L^\infty(\mathbb{R})}\right\|_{L^{\frac{2}{s}}(\mathbb{R}^2)}
				\|\p_3 \vro\|_{L^2(\mathbb{R}^3)}
				\|\hs \vro\|_{L^2(\mathbb{R}^3)}\\
				\lesssim
				&(\|u_h\|_{L^2}\|\p_2 u_h\|_{L^2}
				+\|\p_1 u_h\|_{L^2}\|\p_{12} u_h\|_{L^2})^{\frac{1-s}{2}}
				\|u_h\|_{L^2}^{\frac{2s-1}{2}}\|\p_3 u_h\|_{L^2}^{\frac{1}{2}}
				\|\nabla_h \vro\|_{L^2}\|\hs \vro\|_{L^2}\\
				&+(\|u_3\|_{L^2}\|\p_2 u_3\|_{L^2}
				+\|\p_1 u_3\|_{L^2}\|\p_{12} u_3\|_{L^2})^{\frac{1-s}{2}}
				\|u_3\|_{L^2}^{\frac{2s-1}{2}}\|\p_3 u_3\|_{L^2}^{\frac{1}{2}}
				\|\p_3 \vro\|_{L^2}\|\hs \vro\|_{L^2}\\
				\lesssim
				&\sqrt{\mathcal{E}_{tan}^{m-1}(t)}\sqrt{\md_{tan}^{m-1}(t)}
				\|\hs \vro\|_{L^2}
				+\mathcal{E}_{tan}^{m-1}(t)^{\frac34}
				\md_{tan}^{m-1}(t)^{\frac14}
				\|\hs \vro\|_{L^2}.
			\end{aligned}
			\deqq
			Similarly, we can obtain the estimate
			\beqq
			\begin{aligned}
				VII_2
				\lesssim
				&\sqrt{\mathcal{E}_{tan}^{m-1}(t)}\sqrt{\md_{tan}^{m-1}(t)}
				\|\hs \vro\|_{L^2},\\
				VII_3
				\lesssim
				&\sqrt{\mathcal{E}_{tan}^{m-1}(t)}\sqrt{\md_{tan}^{m-1}(t)}
				\|\hs b\|_{L^2}
				+\mathcal{E}_{tan}^{m-1}(t)^{\frac34}
				\md_{tan}^{m-1}(t)^{\frac14}
				\|\hs b\|_{L^2}.
			\end{aligned}
			\deqq
			As for the term $VII_4$, we split it as follows:
			\beqq \bal
			VII_4   = &\;\i \hs (-u \cdot \nabla u  - \f{\vro}{1+\vro}(\p_1^2 u+\nabla \du+\p_2 b) + \f{1}{1+\vro}(b\cdot \nabla b )) \cdot \hs u \ dx  \\
			&\;- \i \hs (\vro \nabla \vro) \cdot \hs u \ dx - \i \hs ( \f{1}{2(1+\vro)} \nabla |b|^2) \cdot \hs u \ dx  \\
			&\;+ \i \hs ( \f{\vro}{1+\vro} \nabla_h b_2) \cdot \hs u_h \ dx + \i \hs ( \f{\vro}{1+\vro} \p_3 b_2) \cdot \hs u_3 \ dx\\
			&\; - \var \i \hs ( \f{\vro}{1+\vro}(\p_2^2 u + \p_3^2 u)) \cdot \hs u \ dx  \\
			:= &\; \sum_{i=1}^{6} VII_{4,i}.
			\dal \deqq
			First, we can obtain the estimate for the following  terms similar to the terms from $VII_1$ 	to $VII_3$.
			\beqq
			\begin{aligned}
				VII_{4,1}
				\lesssim
				&\sqrt{\mathcal{E}_{tan}^{m-1}(t)}\sqrt{\md_{tan}^{m-1}(t)}
				\|\hs u\|_{L^2}
				+\mathcal{E}_{tan}^{m-1}(t)^{\frac34}
				\md_{tan}^{m-1}(t)^{\frac14}
				\|\hs u\|_{L^2},\\
				VII_{4,4}
				\lesssim
				& \sqrt{\mathcal{E}_{tan}^{m-1}(t)}\sqrt{\md_{tan}^{m-1}(t)}
				\|\hs u\|_{L^2},\\
				VII_{4,6}
				\lesssim
				&\sqrt{\mathcal{E}_{tan}^{m-1}(t)}\sqrt{\md_{tan}^{m-1}(t)}
				\|\hs u\|_{L^2}.\\
			\end{aligned}
			\deqq
			Integrating by parts, we have
			\beqq \bal
			VII_{4,2} = &  \f12 \i \hs (\vro^2) \cdot \hs \du \ dx \\
			\lesssim & (\|\vro\|_{L^2}\|\p_2 \vro\|_{L^2}
			+\|\p_1 \vro\|_{L^2}\|\p_{12} \vro\|_{L^2})^{\frac{1-s}{2}}
			\|\vro\|_{L^2}^{\frac{2s-1}{2}}\|\p_3 \vro\|_{L^2}^{\frac{1}{2}}
			\| \vro\|_{L^2} \|\hs \du\|_{L^2} \\
			\lesssim & \nu  \|\hs \du\|_{L^2}^2 + C_{\nu} \mathcal{E}_{tan}^{m-1}(t)^2.
			\dal \deqq
			As for the term $VII_{4,3}$, integrating by parts, we can obtain
			\beqq \bal
			VII_{4,3} = &\;  - \f12 \i \hs \nabla (\f{|b|^2}{1+\vro}) \cdot \hs u \ dx + \f12 \i \hs  (|b|^2\nabla(\f{1}{1+\vro})) \cdot \hs u \ dx \\
			= &\;  \f12 \i \hs (\f{|b|^2}{1+\vro}) \cdot \hs \du \ dx +\f12 \i \hs (|b|^2\nabla(\f{1}{1+\vro})) \cdot \hs u \ dx \\
			\lesssim &\; (\|b\|_{L^2}\|\p_2 b\|_{L^2}
			+\|\p_1 b\|_{L^2}\|\p_{12} b\|_{L^2})^{\frac{1-s}{2}}
			\|b\|_{L^2}^{\frac{2s-1}{2}}\|\p_3 b\|_{L^2}^{\frac{1}{2}}
			\| \f{b}{1+\vro} \|_{L^2} \|\hs \du\|_{L^2}  \\
			&\; +(\|b\|_{L^2}\|\p_2 b\|_{L^2}
			+\|\p_1 b\|_{L^2}\|\p_{12} b\|_{L^2})^{\frac{1-s}{2}}
			\|b\|_{L^2}^{\frac{2s-1}{2}}\|\p_3 b\|_{L^2}^{\frac{1}{2}}
			\| b\|_{L^{\infty}} \|\nabla(\f{1}{1+\vro})\|_{L^2} \|\hs u\|_{L^2}\\
			\le &\; \nu \|\hs \du\|_{L^2}^2 + C_{\nu} \mathcal{E}_{tan}^{m-1}(t)^2 + \|\hs u\|_{L^2} \mathcal{E}_{tan}^{m-1}(t)^{\f{3}{2}},
			\dal \deqq 
			where we have used the following estimate  
			$$\|b\|_{L^{\infty}} \lesssim \|b\|_{H_{tan}^{2}}^{\f12} \| \p_3 b\|_{H_{tan}^{2}}^{\f12} \lesssim \mathcal{E}_{tan}^{m-1}(t)^{\f12}$$
			by the inequality \eqref{ie:Sobolev}. 
			Finally, we deal with the most difficult term $VII_{4,5}$. Using the velocity equation $\eqref{eqr}_2$, we split the term as follows:
			\begin{align*}
				VII_{4,5}
				= &\; - \i \hs ( \f{\vro}{1+\vro} \p_t u_3 )\cdot \hs u_3 \ dx - \i \hs ( \f{\vro }{1+\vro}\p_3 \vro)  \cdot \hs u_3 \ dx \\
				&\; +\i \hs ( \f{\vro}{1+\vro} (\p_1^2 u_3 + \var \p_2^2 u_3 + \var \p_3^2 u_3 +\p_3 \du  +\p_2 b_3 )) \cdot \hs u_3 \ dx   \\
				&\; + \i \hs  (\f{\vro}{1+\vro}  (F_2)_3 )\cdot \hs u_3 \ dx\\
				= &\; - \p_t \i \hs ( \f{\vro}{1+\vro} u_3 )\cdot \hs u_3 \ dx  +   \i \hs ( \f{\vro}{1+\vro} u_3 )\cdot \hs \p_t u_3 \ dx \\
				&\; -  \i \hs ( \p_t \vro \cdot \f{ u_3}{(1+\vro)^2} )\cdot \hs u_3 \ dx   - \i \hs ( \f{\vro }{1+\vro}\p_3 \vro)  \cdot \hs u_3 \ dx   \\
				&\; +\i \hs ( \f{\vro}{1+\vro} (\p_1^2 u_3 + \var \p_2^2 u_3 + \var \p_3^2 u_3 +\p_3 \du  +\p_2 b_3 )) \cdot \hs u_3 \ dx   \\
				&\; + \i \hs  (\f{\vro}{1+\vro}  (F_2)_3 )\cdot \hs u_3 \ dx\\
				:=&\; - \p_t \i \hs ( \f{\vro}{1+\vro} u_3 )\cdot \hs u_3 \ dx  +\sum_{i=1}^{5} VII_{4,5,i}. 
			\end{align*}
			Substituting the density equation $\eqref{eqr}_1$ and velocity equation $\eqref{eqr}_2$ into the term $VII_{4,5,1}$, integrating by parts, we have
			\begin{align*}
				VII_{4,5,1}  
				= &\;  \i \hs ( \f{\vro}{1+\vro} u_3 )\cdot \hs ((F_2)_3 + \p_1^2 u_3 +\var \p_2^2 u_3 + \var\p_3^2 u_3+ \p_3 \du -\p_3 \vro +\p_2 b_3 -\p_3 b_2  ) \ dx \\
				= &\;  \i \hs ( \f{\vro}{1+\vro} u_3 )\cdot \hs ((F_2)_3 +\p_2 b_3) \ dx  -  \i \hs \p_1 ( \f{\vro}{1+\vro} u_3 )\cdot  \hs \p_1 u_3 \ dx    \\
				&\; - \var \i \hs \p_2 ( \f{\vro}{1+\vro} u_3 )\cdot  \hs \p_2 u_3 \ dx  - \var \i \hs \p_3 ( \f{\vro}{1+\vro} u_3 )\cdot  \hs \p_3 u_3 \ dx \\
				&\; -\i \hs (\p_3 ( \f{\vro}{1+\vro} ) u_3 )\cdot \hs (\du -\vro - b_2) \ dx   \\ 
				&\;- \i \hs ( \f{\vro}{1+\vro} (\du -\p_1 u_1 -\p_2 u_2 )) \cdot \hs  (\du -\vro - b_2) \ dx. 
			\end{align*}
			Thus, we have
			\beqq \bal 
			VII_{4,5,1} \lesssim &\; (\| u_3\|_{L^2}\|\p_2  u_3\|_{L^2}
			+\|\p_1 u_3\|_{L^2}\|\p_{12}  u_3\|_{L^2})^{\frac{1-s}{2}}
			\| u_3\|_{L^2}^{\frac{2s-1}{2}}\|\p_3  u_3\|_{L^2}^{\frac{1}{2}} \|\p_3 ( \f{\vro}{1+\vro}) \|_{L^2}\\
			&\; \times (\|\hs (F_2)_3 \|_{L^2} + \|\hs \p_2 b_3 \|_{L^2}) + (\|\hs (\du ,\p_1 u)\|_{L^2} + \var \|\hs (\p_2 u, \p_3 u)\|_{L^2} )\mathcal{E}_{tan}^{m-1}(t) \\
			&\; + \|\hs( \vro, u, b)\|_{L^2} (\mathcal{E}_{tan}^{m-1}(t)^{\f{3}{2}}+\sqrt{\mathcal{E}_{tan}^{m-1}(t)}\sqrt{\md_{tan}^{m-1}(t)}) \\
			\le &\; \nu (\|\hs (\du ,\p_1 u,\p_2 b)\|_{L^2}^2 + \var \|\hs (\p_2 u, \p_3 u)\|_{L^2}^2  )+ C_{\nu} \mathcal{E}_{tan}^{m-1}(t)^2+ \|\hs( \vro, u, b)\|_{L^2} \mathcal{E}_{tan}^{m-1}(t)^{\f{3}{2}} \\
			&\;  + \|\hs( \vro, u, b)\|_{L^2} \sqrt{\mathcal{E}_{tan}^{m-1}(t)}\sqrt{\md_{tan}^{m-1}(t)} +  \mathcal{E}_{tan}^{m-1}(t)^{\f32} ( \md_{tan}^{m-1}(t)^{\f12}+\mathcal{E}_{tan}^{m-1}(t)^{\f12} ),
			\dal \deqq
			and
			\beqq \bal
			VII_{4,5,2} = &\; \i \hs ( (u \cdot \nabla \vro + \vro \, \du + \du)\cdot \f{ u_3}{(1+\vro)^2} )\cdot \hs u_3 \ dx \\
			\lesssim &\; \|\hs u \|_{L^2} (\sqrt{\mathcal{E}_{tan}^{m-1}(t)}\sqrt{\md_{tan}^{m-1}(t)}+ \mathcal{E}_{tan}^{m-1}(t)^{\f{5}{4}} \md_{tan}^{m-1}(t)^{\f14} ).
			\dal \deqq
			Integrating by parts and using the equation $\p_3 u_3 = \du -\p_1 u_1 - \p_2 u_2$, we have 
			\begin{align*}
				VII_{4,5,3} =&\; -\f12 \i \hs \p_3( \f{\vro^2 }{1+\vro})  \cdot \hs u_3 \ dx + \f12 \i \hs ( \p_3 (\f{1}{1+\vro}) \vro^2)  \cdot \hs u_3 \ dx\\
				= &\;\f12 \i \hs ( \f{\vro^2 }{1+\vro})  \cdot \hs (\du -\p_1 u_1 - \p_2 u_2) \ dx  - \f12 \i \hs ( \p_3 \vro \cdot \f{\vro^2}{(1+\vro)^2})  \cdot \hs u_3 \ dx\\
				=  &\;\f12 \i \hs ( \f{\vro^2 }{1+\vro})  \cdot \hs (\du -\p_1 u_1 ) \ dx + \f12 \i \hs \p_2( \f{\vro^2 }{1+\vro})  \cdot \hs  u_2 \ dx  \\
				&\; - \f12 \i \hs ( \p_3 \vro \cdot \f{\vro^2}{(1+\vro)^2})  \cdot \hs u_3 \ dx\\
				\le &\; \nu \|\hs (\du, \p_1 u_1)\|_{L^2}^2 + C_{\nu} \mathcal{E}_{tan}^{m-1}(t)^2 + \|\hs u\|_{L^2} (\sqrt{\mathcal{E}_{tan}^{m-1}(t)}\sqrt{\md_{tan}^{m-1}(t)} +\mathcal{E}_{tan}^{m-1}(t)^{\f{3}{2}} ).
			\end{align*}
			Similarly, we have
			\beqq \bal
			VII_{4,5,4}+VII_{4,5,5} \le &\; \nu \|\hs (\du, \p_1 u_1)\|_{L^2}^2 + C_{\nu} \mathcal{E}_{tan}^{m-1}(t)^2  + \|\hs u\|_{L^2} (\sqrt{\mathcal{E}_{tan}^{m-1}(t)}\sqrt{\md_{tan}^{m-1}(t)}+\mathcal{E}_{tan}^{m-1}(t)^{\f{3}{2}}).
			\dal \deqq
			Thus we obtain the estimate
			\beqq \bal
			VII_{4,5} \le &\;  - \p_t \i \hs ( \f{\vro}{1+\vro} u_3 )\cdot \hs u_3 \ dx  + \nu (\|\hs (\du ,\p_1 u,\p_2 b)\|_{L^2}^2 + \var \|\hs (\p_2 u, \p_3 u)\|_{L^2}^2  )  \\
			&\;+ C_{\nu} \mathcal{E}_{tan}^{m-1}(t)^2  + \|\hs( \vro, u, b)\|_{L^2} (\sqrt{\mathcal{E}_{tan}^{m-1}(t)}\sqrt{\md_{tan}^{m-1}(t)}  +\mathcal{E}_{tan}^{m-1}(t)^{\f{5}{4}} \md_{tan}^{m-1}(t)^{\f14}+ \mathcal{E}_{tan}^{m-1}(t)^{\f{3}{2}})\\
			&\; +   \mathcal{E}_{tan}^{m-1}(t)^{\f32} ( \md_{tan}^{m-1}(t)^{\f12}+\mathcal{E}_{tan}^{m-1}(t)^{\f12} ),
			\dal \deqq
			which, combining the estimates of $VII_{4,1}$ to $VII_{4,4}$, yields that
			\beqq \bal
			VII_{4} \le &\;  - \p_t \i \hs ( \f{\vro}{1+\vro} u_3 )\cdot \hs u_3 \ dx  + \nu 
			(\|\hs (\du, \p_1 u,\p_2 b)\|_{L^2}^2  + \var \|\hs (\p_2 u, \p_3 u)\|_{L^2}^2)  \\
			&\; + \|\hs( \vro, u, b)\|_{L^2} (\sqrt{\mathcal{E}_{tan}^{m-1}(t)}\sqrt{\md_{tan}^{m-1}(t)}  +\mathcal{E}_{tan}^{m-1}(t)^{\frac34}
			\md_{tan}^{m-1}(t)^{\frac14} +\mathcal{E}_{tan}^{m-1}(t)^{\f{3}{2}})\\
			&\; + \mathcal{E}_{tan}^{m-1}(t)^{\f32} ( \md_{tan}^{m-1}(t)^{\f12}+\mathcal{E}_{tan}^{m-1}(t)^{\f12} ).
			\dal \deqq
			Substituting the estimates of terms for $VII_1$  through $VII_4$
			into \eqref{3702}, using the smallness of $\nu$ and integrating over $[0, t]$, we obtain
			\beq \label{3703}
			\begin{aligned}
				&\;\frac{1}{2}\|\hs (\vro, u, b)(t)\|_{L^2}^2 +\i \hs ( \f{\vro}{1+\vro} u_3 )(t) \cdot \hs u_3(t) \ dx
				+\int_0^t \|\hs(\p_1  u, \du, \nabla_h b)\|_{L^2}^2 d\tau  \\
				&\;+ \var \int_0^t \|\hs(\p_2  u,\p_3 u,\p_3 b)\|_{L^2}^2 d\tau\\
				\lesssim
				&\;\frac{1}{2}\|\hs (\vro, u, b)(0)\|_{L^2}^2
				+\i \hs ( \f{\vro}{1+\vro} u_3 )(0) \cdot \hs u_3(0) \ dx\\
				&\; +\underset{0\le \tau \le t}{\sup}\|\hs (\vro, u, b)(\tau)\|_{L^2}
				\int_0^t (\sqrt{\mathcal{E}_{tan}^{m-1}(\tau)}\sqrt{\md_{tan}^{m-1}(\tau)} + \mathcal{E}_{tan}^{m-1}(\tau)^{\frac34} \md_{tan}^{m-1}(\tau)^{\frac14}+\mathcal{E}_{tan}^{m-1}(\tau)^{\f32})d\tau\\
				&\; + \int_0^t (  \mathcal{E}_{tan}^{m-1}(\tau)^{\f32} \md_{tan}^{m-1}(\tau)^{\frac12} +\mathcal{E}_{tan}^{m-1}(\tau)^{2} ) d\tau.
			\end{aligned}
			\deq
			Using the assumption \eqref{assumption} and H\"{o}lder inequality, we have
			\beq\label{3704}
			\begin{aligned}
				\int_0^t  \sqrt{\mathcal{E}_{tan}^{m-1}(\tau)}\sqrt{\md_{tan}^{m-1}(\tau)} d\tau
				&\le
				\left\{\int_0^t   \mathcal{E}_{tan}^{m-1}(\tau)
				(1+\tau)^{-\sigma} d\tau\right\}^{\frac{1}{2}}
				\left\{\int_0^t  \md_{tan}^{m-1}(\tau) (1+\tau)^{\sigma}
				d\tau\right\}^{\frac{1}{2}}\\
				&\lesssim
				\delta \left\{\int_0^t (1+\tau)^{-(s+\sigma)} d\tau\right\}^{\frac{1}{2}}
				\lesssim \delta, \\
				\int_0^t  \mathcal{E}_{tan}^{m-1}(\tau)^{\frac34} \md_{tan}^{m-1}(\tau)^{\frac14} d\tau
				\lesssim
				&\left\{\int_0^t  \mathcal{E}_{tan}^{m-1}(\tau)
				(1+\tau)^{-\frac13\sigma} d\tau\right\}^{\frac{3}{4}}
				\left\{\int_0^t  \md_{tan}^{m-1}(\tau) (1+\tau)^{\sigma}
				d\tau\right\}^{\frac{1}{4}}\\
				\lesssim&
				\delta \left\{\int_0^t (1+\tau)^{-(s+\frac13\sigma)} d\tau\right\}^{\frac{3}{4}}
				\lesssim \delta,
			\end{aligned}
			\deq
			and
			\beq\label{3705}
			\begin{aligned}
				\int_0^t \mathcal{E}_{tan}^{m-1}(\tau)^{\f32} d\tau \lesssim \delta^{\f32}, ~~~~\int_0^t (  \mathcal{E}_{tan}^{m-1}(\tau)^{\f32} \md_{tan}^{m-1}(\tau)^{\frac12} +&\mathcal{E}_{tan}^{m-1}(\tau)^{2} ) d\tau
				\lesssim \delta^2,
			\end{aligned}
			\deq
			where we use $\frac{9}{10}<\sigma<s<1$ in the above estimate.
			It is easy to check that
			\beqq
			\i \hs ( \f{\vro}{1+\vro} u_3 )(t) \cdot \hs u_3(t) \ dx \lesssim \| (\vro, u)\|_{H^m}^2 \|\hs u_3\|_{L^2} \le  \mathcal{E}^m(t)^{\f32}.
			\deqq
			Substituting above inequality and the estimates from \eqref{3704} to  \eqref{3705} into \eqref{3703}, under the assumption \eqref{assumption}, we can obtain  that
			\beqq \bal
			&\; \|\hs( \vro, u, b)(t)\|_{L^2}^2 +\int_0^t \|\hs(\p_1  u, \du, \nabla_h b)\|_{L^2}^2 d\tau  + \var \int_0^t \|\hs(\p_2  u,\p_3 u,\p_3 b)\|_{L^2}^2 d\tau \\
			\lesssim &\;
			\|\hs( \vro, u, b)(0)\|_{L^2}^2 + \|(\vro,u)(0)\|_{H^m}^2 \|\hs u(0)\|_{L^2}+\delta^{\f32}.
			\dal  \deqq
			Therefore, we complete the proof of this lemma.
		\end{proof}
		
		Next, we establish estimate of vertical derivative
		of solution in negative Sobolev space.
		\begin{lemm}
			Under the assumption \eqref{assumption},
			the smooth solution $(\vro ,u, b)$ of equation \eqref{eqr} has the estimate
			\beq\label{3801}
			\begin{aligned}
				&\; \|\hs  \p_3 (\vro, u, b)(t)\|_{L^2}^2
				+\int_0^t \| \hs \p_3 (\p_1 u, \du, \nabla_h  b)\|_{L^2}^2 d\tau + \var \int_0^t \|\hs \p_3 (\p_2 u,\p_3u,\p_3 b)\|_{L^2}^2 d\tau \\
				\lesssim  &\; 
				\|\hs  \p_3 (\vro, u, b)(0)\|_{L^2}^2+\delta^2.
			\end{aligned}
			\deq
		\end{lemm}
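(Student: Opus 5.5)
The plan is to rerun the argument of the preceding lemma (the estimate \eqref{3701}) one vertical derivative higher. The $\delta^{3/2}$ losses that appeared there will be replaced by $\delta^2$ by absorbing every product against $\sup_\tau\|\hs\p_3(\vro,u,b)\|_{L^2}$ into the left-hand side, instead of bounding that supremum crudely by $\sqrt\delta$.

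\textbf{Energy identity.} Apply $\hs\p_3$ to \eqref{eqr} and take the $L^2$ inner product with $\hs\p_3(\vro,u,b)$. The linear couplings cancel exactly as in \eqref{3702}, because $\hs\p_3$ commutes with all constant-coefficient operators. This gives
\begin{equation*}
\frac{d}{dt}\frac12\|\hs\p_3(\vro,u,b)\|_{L^2}^2+\|\hs\p_3(\p_1u,\du,\nabla_hb)\|_{L^2}^2+\var\|\hs\p_3(\p_2u,\p_3u,\p_3b)\|_{L^2}^2=\sum_i \i \hs\p_3 N_i\cdot\hs\p_3 f_i\,dx,
\end{equation*}
where $N_i$ are the components of $F_1,F_2,F_3$ and $f_i$ the corresponding unknowns.

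\textbf{Nonlinear terms.} For each nonlinear term, first distribute $\p_3$ by the Leibniz rule. Then use the Hardy--Littlewood--Sobolev bound of Lemma \ref{H-L} in the horizontal variables, $\|\hs(gh)\|_{L^2(\mathbb R^2)}\lesssim\|gh\|_{L^{2/(1+s)}(\mathbb R^2)}$, followed by the same anisotropic $L^\infty_{x_3}L^{2/s}_h$ bound as in $VII_1$. For terms with a derivative structure, such as $\p_3(\vro^2)$, $\f{|b|^2}{1+\vro}$ and $\f{\vro}{1+\vro}\nabla\du$, first move one horizontal derivative or the divergence onto the test function; the resulting piece is then controlled by $\nu\|\hs\p_3\du\|_{L^2}^2+C_\nu\me_{tan}^{m-1}(t)^2$. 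This should give
\begin{equation*}
\Big|\sum_i\cdots\Big|\le \nu\,(\text{dissipation})+\|\hs\p_3(\vro,u,b)\|_{L^2}\,G(t)+C\big(\me_{tan}^{m-1}(t)^2+\me_{tan}^{m-1}(t)^{\f32}\md_{tan}^{m-1}(t)^{\f12}\big),
\end{equation*}
where $G(t)=\sqrt{\me_{tan}^{m-1}\md_{tan}^{m-1}}+(\me_{tan}^{m-1})^{\f34}(\md_{tan}^{m-1})^{\f14}+(\me_{tan}^{m-1})^{\f32}$. Only $\me_{tan}^{m-1}$ and $\md_{tan}^{m-1}$ appear here, because one vertical derivative plus the $H^2_{tan}$ factors needed for $L^\infty$ bounds stay within $\me_{tan}^{m-1}$ when $m\ge4$. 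By \eqref{3704}--\eqref{3705}, $\int_0^tG\,d\tau\lesssim\delta$, and the remaining time integrals are $\lesssim\delta^2$.

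\textbf{Main obstacle.} The term $\i\hs\p_3(\f{\vro}{1+\vro}\p_3b_2)\cdot\hs\p_3u_3\,dx$ is the hardest, since $\p_3^2b_2$ carries no dissipation. Handle it as in $VII_{4,5}$: substitute $\p_3b_2$ from the third component of \eqref{eqr}$_2$, pull out the time derivative, and use \eqref{eqr}$_1$--\eqref{eqr}$_2$ again on the resulting $\p_t$ terms. The remainder has the same structure as above, with one extra $\p_3$; integrate by parts in $x_3$ wherever this yields $\p_3^2$ of $\vro$ or $u$, and use $\p_3u_3=\du-\p_1u_1-\p_2u_2$. This produces a boundary term $\i\hs\p_3(\f{\vro}{1+\vro}u_3)\cdot\hs\p_3u_3\,dx$. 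By HLS it is bounded by
\begin{equation*}
\|\hs\p_3u_3\|_{L^2}\,\me^m(t)\le\tfrac14\|\hs\p_3u_3\|_{L^2}^2+C\me^m(t)^2\le\tfrac14\|\hs\p_3u_3\|_{L^2}^2+C\delta^2 .
\end{equation*}
At $t=0$ the same term is cubic in the initial data and hence $\lesssim\delta^2$.

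\textbf{Conclusion.} Integrate in time and write $Y(t)=\sup_{0\le\tau\le t}\|\hs\p_3(\vro,u,b)(\tau)\|_{L^2}$. Every term of the form $\|\hs\p_3(\vro,u,b)\|_{L^2}\,G$ is bounded by
\begin{equation*}
Y(t)\int_0^tG\,d\tau\le C\delta\,Y(t)\le\tfrac14Y(t)^2+C\delta^2 .
\end{equation*}
Take the supremum over $[0,t]$ on the left, absorb the $\tfrac14Y^2$ terms and the $\nu$-dissipation terms, and \eqref{3801} follows with $\delta^2$ on the right-hand side.
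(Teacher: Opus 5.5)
\textbf{The gap.} You claim that, for $m\ge4$, every nonlinear term can be bounded using only $\me_{tan}^{m-1}$ and $\md_{tan}^{m-1}$. That is false. Distributing $\p_3$ over the transport and stretching terms produces $u_3\p_3^2\vro$, $u_3\p_3^2u$, $u_3\p_3^2b$, $b_3\p_3^2u$ and $b_3\p_3^2b$. In addition, $\p_3\big(\f{\vro}{1+\vro}\nabla\du\big)$ contains $\f{\vro}{1+\vro}\p_3^2\du$. These are second vertical derivatives with no horizontal derivative. Neither $\me_{tan}^{m-1}$ (which contains only one $\p_3$) nor $\md_{tan}^{m-1}$ (uniformly in $\var$) controls them.

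In the $H^m$ estimates such terms are harmless, because $\i u\cdot\nabla\p_3\vro\,\p_3\vro\,dx=-\f12\i\du\,|\p_3\vro|^2\,dx$. Here the nonlocal $\hs$ destroys this cancellation. Integrating by parts in $x_3$ only moves the derivative onto $\hs\p_3^2\vro$ (resp.\ $\hs\p_3^2b$, $\hs\p_3^2u_h$), which is not controlled either.

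This is exactly the term the paper singles out as difficult, and it is treated as follows.
\begin{itemize}
\item The paper interpolates $\|\p_3^2\vro\|_{L^2}\lesssim\|\p_3\vro\|_{L^2}^{2/3}\|\p_3^4\vro\|_{L^2}^{1/3}$. This uses $\sup_\tau\me^m$ and gives contributions $\|\p_3^4(\vro,u,b)\|_{L^2}^{1/3}\me_{tan}^{m-1}(t)^{7/12}\md_{tan}^{m-1}(t)^{1/4}\|\hs\p_3(\vro,u,b)\|_{L^2}$.
\item For the $\p_3^2\du$ term it uses $\|\p_3\nabla\du\|_{L^2}\lesssim\|\nabla\du\|_{L^2}^{2/3}\|\p_3^3\nabla\du\|_{L^2}^{1/3}$, which brings in $\md^m$.
\item The resulting time integrals \eqref{3807}--\eqref{3808} are not among \eqref{3704}--\eqref{3705}. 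In particular, \eqref{3807} needs $\frac{7}{9}s+\frac{1}{3}\sigma>1$, and this is where the smallness of $\zeta$ enters.
\end{itemize}
The interpolation is needed to extract decay from $\|\p_3\vro\|_{L^2}$; bounding $\|\p_3^2\vro\|_{L^2}$ by $\sqrt{\me^m}$ alone leaves a factor that is not integrable in time. Once these pieces are added to your $G$, your absorption $Y(t)\int_0^tG\,d\tau\le\frac14Y(t)^2+C\delta^2$ does close, since their time integrals are still $\lesssim\delta$.

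\textbf{The $\p_3 b_2$ term is not the obstacle here.} At this level it needs no time-derivative substitution. In the term $VIII_7$, the paper writes $\f{\vro}{1+\vro}\nabla b_2=\nabla\big(\f{\vro}{1+\vro}b_2\big)-\nabla\big(\f{\vro}{1+\vro}\big)b_2$ and moves the gradient onto $\hs\p_3u$. It then uses the extra vertical derivative to integrate by parts in $x_3$. The test function becomes $\hs\p_3^2u_3=\hs\p_3(\du-\p_1u_1-\p_2u_2)$, which is dissipative after one integration by parts in $x_2$. This step is unavailable for \eqref{3701}, where the test function $\hs u_3$ carries no $\p_3$; that is why $VII_{4,5}$ needed the substitution.

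Your substitution route is not wrong in itself, but it does not avoid the real difficulty. To avoid $\hs\p_3^2\vro$ you must integrate by parts in $x_3$. That produces $\p_3^2\big(\f{\vro}{1+\vro}\big)u_3$ tested against $\hs\p_3(\vro,b_2)$, which is the same $\p_3^2\vro$ problem as above.
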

		\begin{proof}
			Using the equation \eqref{eqr} and
			integrating by parts, it is easy to check that
			\begin{equation}\label{3802}
				\begin{aligned}
					&\frac{d}{dt}\frac{1}{2} \|\hs  \p_3 (\vro, u, b)(t)\|_{L^2}^2 +\| \hs \p_3 (\p_1 u, \du, \nabla_h  b)\|_{L^2}^2+ \var \|\hs \p_3 (\p_2 u,\p_3u,\p_3 b)\|_{L^2}^2\\
					=&-\i \hs \p_3 (u \cdot \nabla \vro) \cdot \hs \p_3  \vro \ dx
					-\i \hs \p_3 ( \vro \, \du) \cdot \hs \p_3  \vro \ dx\\
					& +\i \hs \p_3 (-u \cdot \nabla b -b \, \du+ b \cdot \nabla u ) \cdot \hs \p_3  b \ dx - \i \hs \p_3 ( u \cdot \nabla u) \cdot \hs \p_3 u \ dx 
					\\
					&\;- \i \hs \p_3 ( \vro \nabla \vro ) \cdot \hs \p_3 u \ dx
					-  \i \hs \p_3 (\f{\vro}{1+\vro}(\p_1^2 u+\nabla \du +\p_2 b) ) \cdot \hs \p_3 u \ dx\\
					&\;+  \i \hs \p_3 (\f{\vro}{1+\vro} \nabla b_2) \cdot \hs \p_3 u \ dx   + \i \hs \p_3 (\f{1}{1+\vro} b\cdot \nabla b) \cdot \hs \p_3 u \ dx \\
					&\; -  \f12\i \hs \p_3 (\f{1}{1+\vro}( \nabla |b|^2 )) \cdot \hs \p_3 u \ dx - \var \i \hs \p_3 (\f{\vro}{1+\vro}(\p_2^2 u+\p_3^2 u )) \cdot \hs \p_3 u \ dx \\
					:=&\;\sum_{i=1}^{10}VIII_i.
				\end{aligned}
			\end{equation}
			The H\"{o}lder's inequality and  inequality \eqref{a3} in Lemma \ref{H-L} yield directly
			\begin{equation}\label{3803}
				\begin{aligned}
					VIII_1 = &\int_{\mathbb{R}^3} \hs( \p_3 u \cdot \nabla  \vro+ u\cdot \nabla \p_3 \vro ) \cdot \hs \p_3 \vro \ dx \\
					\lesssim & \int_{\mathbb{R}}
					(\|\p_3 u_h \cdot \nabla_h \vro \|_{L^{\frac{1}{\frac12+\frac{s}{2}}}(\mathbb{R}^2)} +\|\p_3 u_3 \partial_3 \vro \|_{L^{\frac{1}{\frac12+\frac{s}{2}}}(\mathbb{R}^2)} 
					+ \|u_h \cdot \nabla_h \p_3 \vro \|_{L^{\frac{1}{\frac12+\frac{s}{2}}}(\mathbb{R}^2)}
					+\|u_3 \partial_3^2 \vro \|_{L^{\frac{1}{\frac12+\frac{s}{2}}}(\mathbb{R}^2)})\\
					&\; \times
					\|\hs \p_3 \vro \|_{L^2(\mathbb{R}^2)}dx_3.
				\end{aligned}
			\end{equation}
			Similar to the estimate of term $VII_1$, it is easy to check that
			\begin{align}\label{3804}
				&\;\int_{\mathbb{R}}
				(\|\p_3 u_h \cdot \nabla_h \vro \|_{L^{\frac{1}{\frac12+\frac{s}{2}}}(\mathbb{R}^2)} +\|\p_3 u_3 \partial_3 \vro \|_{L^{\frac{1}{\frac12+\frac{s}{2}}}(\mathbb{R}^2)} 
				+ \|u_h \cdot \nabla_h \p_3 \vro \|_{L^{\frac{1}{\frac12+\frac{s}{2}}}(\mathbb{R}^2)})
				\|\hs \p_3 \vro\|_{L^2(\mathbb{R}^2)}\ dx_3\notag\\
				\lesssim&\;
				\Big
				(
				\left(\|\nabla_h \vro\|_{L^2}\|\partial_{1} \p_{h}  \vro\|_{L^2}
				+\|\p_{2} \partial_{h}  \vro\|_{L^2}\| \p_{12} \partial_{h} \vro\|_{L^2}\right)^{\frac{1-s}{2}}
				\|\nabla_h \vro\|_{L^2}^{\frac{2s-1}{2}}
				\| \partial_3 \nabla_h  \vro\|_{L^2}^{\frac12}
				\| \p_3 u_h\|_{L^2} \notag\\
				&\; +\left(\|\partial_3 u_3\|_{L^2}\|\partial_{13} u_3\|_{L^2}
				+\|\p_{23} u_3\|_{L^2}\| \p_{123} u_3\|_{L^2}\right)^{\frac{1-s}{2}}
				\|\partial_3 u_3\|_{L^2}^{\frac{2s-1}{2}}
				\| \partial_3^2 u_3\|_{L^2}^{\frac12}
				\| \p_3 \vro\|_{L^2}  \\
				&\; + \left(\|u_h\|_{L^2}\|\partial_{1} u_h\|_{L^2}
				+\|\p_{2} u_h\|_{L^2}\| \p_{12} u_h\|_{L^2}\right)^{\frac{1-s}{2}}
				\|u_h\|_{L^2}^{\frac{2s-1}{2}}
				\| \partial_3 u_h\|_{L^2}^{\frac12}
				\| \p_h \p_3 \vro\|_{L^2} 
				\Big)
				\|\hs \p_3 \vro\|_{L^2}\notag\\
				\lesssim&\; \sqrt{\mathcal{E}_{tan}^{m-1}(t)} \sqrt{\md_{tan}^{m-1}(t)}
				\|\hs \p_3 \vro\|_{L^2}.\notag
			\end{align}
			Now, let us deal with the difficult term $\int_{\mathbb{R}}
			\|u_3 \partial_3^2 \vro \|_{L^{\frac{1}{\frac12+\frac{s}{2}}}(\mathbb{R}^2)}
			\|\hs \p_3 \vro \|_{L^2(\mathbb{R}^2)}dx_3$.
			Indeed, we may check that
			\begin{equation}\label{3805}
				\begin{aligned}
					&~~~~\int_{\mathbb{R}}
					\|u_3 \partial_3^2 \vro\|_{L^{\frac{1}{\frac12+\frac{s}{2}}}(\mathbb{R}^2)}
					\|\hs \p_3 \vro \|_{L^2(\mathbb{R}^2)}\ dx_3\\
					&\lesssim\int_{\mathbb{R}}
					\left( \left\|\|u_3\|_{L^\infty(\mathbb{R})}\right\|_{L^{\frac2s}(\mathbb{R}^2)}\right)
					\|\p_3^2 \vro\|_{L^2(\mathbb{R}^2)}
					\|\hs \p_3 \vro\|_{L^2(\mathbb{R}^2)}\ dx_3\\
					&\lesssim (\|u_3\|_{L^2}\|\partial_2 u_3\|_{L^2}
					+\|\partial_1 u_3\|_{L^2}\|\partial_{12}u_3\|_{L^2})^{\frac{1-s}{2}}
					\|u_3\|_{L^2}^{\frac{2s-1}{2}}
					\|\partial_3 u_3\|_{L^2}^{\frac12} 
					\|\p_3 \vro\|_{L^2}^{\f23} \|\p_3^4 \vro\|_{L^2}^{\f13} \|\hs \p_3 \vro\|_{L^2}\\
					&\lesssim \|\p_3^4 \vro\|_{L^2}^{\frac13}
					\mathcal{E}_{tan}^{m-1}(t)^{\frac{7}{12}}
					\md_{tan}^{m-1}(t)^{\frac14}
					\|\hs \p_3 \vro\|_{L^2}.
				\end{aligned}
			\end{equation}
			Substituting estimates \eqref{3804} and \eqref{3805}  into \eqref{3803}, we have
			\beqq
			\begin{aligned}
				VIII_1
				\lesssim&\;
				\sqrt{\mathcal{E}_{tan}^{m-1}(t)} \sqrt{\md_{tan}^{m-1}(t)} 
				\|\hs \p_3 \vro\|_{L^2} +\|\p_3^4 \vro\|_{L^2}^{\frac13}
				\mathcal{E}_{tan}^{m-1}(t)^{\frac{7}{12}}
				\md_{tan}^{m-1}(t)^{\frac14}
				\|\hs  \p_3 \vro\|_{L^2}.
			\end{aligned}
			\deqq
			Similarly, it is easy to check that
			\beqq
			\begin{aligned}
				VIII_2
				\lesssim
				&\sqrt{\mathcal{E}_{tan}^{m-1}(t)} \sqrt{\md_{tan}^{m-1}(t)}
				\|\hs \p_3 \vro\|_{L^2},\\
				VIII_3 
				\lesssim
				&\sqrt{\mathcal{E}_{tan}^{m-1}(t)} \sqrt{\md_{tan}^{m-1}(t)}
				\|\hs \p_3 b\|_{L^2}
				+\|(\p_3^4 u, \p_3^4 b)\|_{L^2}^{\frac13}
				\mathcal{E}_{tan}^{m-1}(t)^{\frac{7}{12}}
				\md_{tan}^{m-1}(t)^{\frac14}
				\|\hs \p_3 b\|_{L^2},\\
				VIII_4 
				\lesssim
				&\sqrt{\mathcal{E}_{tan}^{m-1}(t)} \sqrt{\md_{tan}^{m-1}(t)}
				\|\hs \p_3 u\|_{L^2} +\|\p_3^4 u\|_{L^2}^{\frac13}
				\mathcal{E}_{tan}^{m-1}(t)^{\frac{7}{12}}
				\md_{tan}^{m-1}(t)^{\frac14}
				\|\hs \p_3 u\|_{L^2},\\
				VIII_8 
				\lesssim
				&(1+ \sqrt{\mathcal{E}^{m}(t)})(\sqrt{\mathcal{E}_{tan}^{m-1}(t)} \sqrt{\md_{tan}^{m-1}(t)} + \mathcal{E}_{tan}^{m-1}(t)^{\frac34} \md_{tan}^{m-1}(t)^{\frac14})
				\|\hs \p_3 b\|_{L^2} \\
				&\; +\|\p_3^4 b\|_{L^2}^{\frac13}
				\mathcal{E}_{tan}^{m-1}(t)^{\frac{7}{12}}
				\md_{tan}^{m-1}(t)^{\frac14}
				\|\hs \p_3 b\|_{L^2}.
			\end{aligned}
			\deqq
			Using the H\"{o}lder's inequality and inequality \eqref{a3} in Lemma \ref{H-L} yield directly 
			\beqq
			\begin{aligned}
				VIII_6 &\;=-   \i \hs (\p_3 (\f{\vro}{1+\vro}) \nabla \du) ) \cdot \hs \p_3 u \ dx  -   \i \hs (\f{\vro}{1+\vro} \p_3 \nabla \du) ) \cdot \hs \p_3 u \ dx\\
				&\;-  \i \hs \p_3 (\f{\vro}{1+\vro}(\p_1^2 u +\p_2 b) ) \cdot \hs \p_3 u \ dx  \\
				\lesssim &\;
				\Big( (\| \nabla \du \|_{L^2}\|\partial_2  \nabla \du\|_{L^2}
				+\|\partial_1  \nabla \du\|_{L^2}
				\|\partial_{12} \nabla \du \|_{L^2})^{\frac{1-s}{2}}
				\| \nabla \du\|_{L^2}^{\frac{2s-1}{2}}
				\|\partial_3  \nabla \du\|_{L^2}^{\frac12}
				\|\p_3 (\f{\vro}{1+ \vro})\|_{L^2} \\  
				&\; +(\|\vro \|_{L^2}\|\partial_2  \vro\|_{L^2}
				+\|\partial_1  \vro\|_{L^2}
				\|\partial_{12}  \vro\|_{L^2})^{\frac{1-s}{2}}
				\| \vro\|_{L^2}^{\frac{2s-1}{2}}
				\|\partial_3 \vro\|_{L^2}^{\frac12} 
				\|\p_3 \nabla \du \|_{L^2} \Big) \|\hs \p_3 u  \|_{L^2} \\
				&\; +\sqrt{\mathcal{E}_{tan}^{m-1}(t)} \sqrt{\md_{tan}^{m-1}(t)}
				\|\hs \p_3 u\|_{L^2}  \\
				\lesssim &\; \sqrt{\mathcal{E}_{tan}^{m-1}(t)} \sqrt{\md_{tan}^{m-1}(t)}
				\|\hs \p_3 u\|_{L^2} +	\|\hs \p_3 u\|_{L^2}\|\p_3^3 \nabla \du\|_{L^2}^{\frac13}
				\mathcal{E}_{tan}^{m-1}(t)^{\frac{1}{2}}
				\md_{tan}^{m-1}(t)^{\frac13} \\
				&\;+\|\hs \p_3 u\|_{L^2} \|\p_3^3 \nabla \du\|_{L^2}^{\frac16}
				\mathcal{E}_{tan}^{m-1}(t)^{\frac{1}{2}}
				\md_{tan}^{m-1}(t)^{\frac{5}{12}},
			\end{aligned}
			\deqq
			where we have used the estimate
			\beqq
			\|\p_3 \nabla \du \|_{L^2} \lesssim \| \nabla \du \|_{L^2}^{\f23} \|\p_3^3 \nabla \du \|_{L^2}^{\f13}.
			\deqq
			Integrating by parts, we have
			\begin{align*}
				VIII_9 = &\; -\f12 \i \hs \p_3 \nabla ( \f{\vro}{1+\vro} |b|^2) \cdot \hs \p_3 u \ dx + \f12 \i \hs \p_3 (\nabla (\f{\vro}{1+\vro})  |b|^2) \cdot \hs \p_3 u \ dx \\
				= &\; \f12 \i \hs ( \f{\vro}{1+\vro} |b|^2) \cdot \hs \p_3 \du \ dx + \f12 \i \hs \p_3 (\nabla_h (\f{\vro}{1+\vro})  |b|^2) \cdot \hs \p_3 u_h \ dx \\
				&\;+ \f12 \i \hs \p_3 (\p_3 (\f{\vro}{1+\vro})  |b|^2) \cdot \hs (\du-\p_1 u_1 -\p_2 u_2) \ dx \\
				= &\; \f12 \i \hs ( \f{\vro}{1+\vro} |b|^2) \cdot \hs \p_3 \du \ dx + \f12 \i \hs \p_3 (\nabla_h (\f{\vro}{1+\vro})  |b|^2) \cdot \hs \p_3 u_h \ dx \\
				&\; - \f12 \i \hs (\p_3 (\f{\vro}{1+\vro})  |b|^2) \cdot \hs \p_3 (\du- \p_1 u_1) \ dx +  \f12 \i \hs \p_2 (\p_3 (\f{\vro}{1+\vro})  |b|^2) \cdot \hs \p_3 u_2 \ dx \\
				\le &\; \nu \|(\hs \p_3 \du ,\hs \p_{13} u)\|_{L^2}^2 
				+C_{\nu} \mathcal{E}_{tan}^{m-1}(t)^{3} +
				\|\hs \p_3 u\|_{L^2} \mathcal{E}_{tan}^{m-1}(t)
				\md_{tan}^{m-1}(t)^{\frac12}.
			\end{align*}
			Similarly, it is easy to check that
			\begin{align*}
				VIII_5
				= &\;\f12 \int_{\mathbb{R}^3} \hs \nabla \p_3 (|\vro|^2)\cdot \hs \p_3 u \ dx = - \f12 \int_{\mathbb{R}^3} \hs \p_3 (|\vro|^2)\cdot \hs \p_3 \du \ dx\\
				\le &\; \nu
				\|\hs \p_3 \du  \|_{L^2}^2 +  C_{\nu} \mathcal{E}_{tan}^{m-1}(t)^2,\\
				VIII_7  = &\; \f12 \i \hs \p_3 \nabla ( \f{\vro}{1+\vro} b_2) \cdot \hs \p_3 u \ dx - \f12 \i \hs \p_3 (\nabla_h (\f{\vro}{1+\vro}) b_2) \cdot \hs \p_3 u_h \ dx \\
				&\;+ \f12 \i \hs  (\p_3 (\f{\vro}{1+\vro}) b_2) \cdot \hs \p_3^2 u_3 \ dx \\
				\le &\; \nu \|(\hs \p_3 \du ,\hs \p_{13} u)\|_{L^2}^2 
				+ C_{\nu} \mathcal{E}_{tan}^{m-1}(t)^2 +
				\|\hs \p_3 u\|_{L^2} \mathcal{E}_{tan}^{m-1}(t)^{\f12}
				\md_{tan}^{m-1}(t)^{\frac12},\\
				VIII_{10} = &\;  \var \i \hs  (\f{\vro}{1+\vro}(\p_2^2 u+\p_3^2 u )) \cdot \hs \p_3^2 u \ dx \\
				\le &\;\nu \var \| \hs \p_3^2 u\|_{L^2}^2 + C_{\nu} \mathcal{E}_{tan}^{m-1}(t)  \mathcal{D}_{tan}^{m-1}(t). 
			\end{align*}
			Substituting the estimates of terms for  $VIII_1$
			through $VIII_{10}$ into \eqref{3802} and using the smallness of $\nu$, we have
			\beq\label{3806}
			\begin{aligned}
				&\frac{1}{2}\|\hs  \p_3 (\vro, u, b)(t)\|_{L^2}^2
				+\int_0^t \| \hs \p_3 (\p_1 u, \du, \nabla_h  b)\|_{L^2}^2 d\tau + \var \int_0^t \|\hs \p_3 (\p_2 u,\p_3u,\p_3 b)\|_{L^2}^2 d\tau\\
				\lesssim
				&\frac{1}{2}\|\hs  \p_3 (\vro, u, b)(0)\|_{L^2}^2
				+\underset{0\le \tau \le t}{\sup}\|\hs  \p_3 (\vro, u, b)(\tau)\|_{L^2}
				\int_0^t \mathcal{E}_{tan}^{m-1}(\tau)^{\f12}
				\md_{tan}^{m-1} (\tau)^{\f12}
				d\tau\\
				&+\underset{0\le \tau \le t}{\sup}\|\hs  \p_3 (\vro, u, b)(\tau)\|_{L^2}
				\int_0^t
				(\mathcal{E}_{tan}^{m-1}(\tau)^{\f34}
				\md_{tan}^{m-1}(\tau)^{\f14} + \md^{m}(\tau) ^{\f1{12}} \mathcal{E}_{tan}^{m-1}(\tau) ^{\frac{1}{2}}
				\md_{tan}^{m-1}(\tau) ^{\frac{5}{12}} )d\tau\\
				&+\underset{0\le \tau \le t}{\sup}
				\|\hs  \p_3 (\vro, u, b)(\tau)\|_{L^2}
				\underset{0\le \tau \le t}{\sup}
				\|(\p_3^4 u, \p_3^4 b,\p_3^4 \vro)(\tau)\|_{L^2}^{\frac13}
				\int_0^t  \mathcal{E}_{tan}^{m-1}(\tau)^{\frac{7}{12}}
				\md_{tan}^{m-1}(\tau)^{\frac14} d \tau\\
				& 
				+ \int_0^t (\mathcal{E}_{tan}^{m-1}(\tau) \mathcal{D}_{tan}^{m-1}(\tau) +\mathcal{E}_{tan}^{m-1}(\tau)^{2} ) d\tau.
			\end{aligned}
			\deq
			Using the assumption \eqref{assumption}, it is easy to check that
			\begin{equation}\label{3807}
				\begin{aligned}
					\int_0^t \mathcal{E}_{tan}^{m-1}(\tau)^{\frac{7}{12}}
					\md_{tan}^{m-1}(\tau)^{\frac14} d\tau
					&\lesssim
					\bigg\{\int_0^t \md_{tan}^{m-1}(\tau)
					(1+\tau)^{\sigma}d\tau\bigg\}^{\frac{1}{4}}
					\bigg\{\int_0^t \mathcal{E}_{tan}^{m-1}(\tau)^{\frac{7}{9}}(1+\tau)^{-\frac13 \sigma}
					d\tau\bigg\}^{\frac{3}{4}}\\
					&\lesssim
					\delta^{\frac56}\bigg\{\int_0^t (1+\tau)^{-(\frac{7}{9}s+\frac13\sigma)}d\tau
					\bigg\}^{\frac{1}{2}}
					\lesssim \delta^{\frac56},
				\end{aligned}
			\end{equation}
			and 
			\begin{equation}\label{3808} \begin{aligned}
					&\;\int_0^t \md^{m}(\tau)^{\f1{12}}
					\mathcal{E}_{tan}^{m-1}(\tau)^{\frac{1}{2}}
					\md_{tan}^{m-1}(\tau)^{\frac{5}{12}} d\tau  \lesssim  \dl, \\
					&\;  \int_0^t (\mathcal{E}_{tan}^{m-1}(\tau) \mathcal{D}_{tan}^{m-1}(\tau) +\mathcal{E}_{tan}^{m-1}(\tau)^{2}) d\tau \lesssim  \dl^2,
			\end{aligned} \end{equation}
			where we have used the condition $\frac{9}{10}<\sigma<s<1$.
			Substituting \eqref{3807}, \eqref{3808} and \eqref{3704} into \eqref{3806}, then we have
			\beqq
			\begin{aligned}
				&\; \|\hs \p_3 (\vro, u, b)(t)\|_{L^2}^2
				+\int_0^t \| \hs \p_3 (\p_1 u, \du, \nabla_h  b)\|_{L^2}^2 d\tau + \var \int_0^t \|\hs \p_3 (\p_2 u,\p_3u,\p_3 b)\|_{L^2}^2 d\tau \\
				\lesssim &\; 
				\|\hs \p_3 (\vro, u, b)(0)\|_{L^2}^2+\delta^2.
			\end{aligned}
			\deqq
			Therefore, we complete the proof of this lemma.
		\end{proof}

		\subsection{Decay in time estimate}
		In this subsection, we will establish the decay rate estimates
		for density, velocity and magnetic field. This decay in time estimate
		will help us to close the energy estimate.

		\begin{lemm}
			Under the assumption \eqref{assumption},
			the smooth solution $(\vro ,u, b)$ of equation \eqref{eqr} has the estimate
			\beq\label{3901}
			(1+t)^s \mathcal{E}^{m-1}_{tan}(t) \lesssim C_0,
			\deq
			where the constant $C_0$ is defined in \eqref{co}.
		\end{lemm}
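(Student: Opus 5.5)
Combining the tangential-level estimates gives a closed differential inequality for a functional equivalent to $\mathcal{E}^{m-1}_{tan}$. Interpolating against the negative horizontal Sobolev norms converts it into a superlinear ODE, which yields the decay.

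\textbf{Step 1: the tangential Lyapunov functional.} Apply Lemma~\ref{lemma32} with $k=m-1$ and \eqref{3301}. Add a small multiple $\kappa_1$ of \eqref{3401} with $k=m-2$ and a smaller multiple $\kappa_2$ of \eqref{3601}. The right-hand sides of the last two contain $\|(\p_1 u,\du,\nabla_h b)\|_{H^{m-1}_{tan}}^2$, its $\p_3$ analogue at order $m-2$, and $\var$-terms, all of which are absorbed by the dissipation from \eqref{3101} and \eqref{3301}. Define
\[
\widehat{\mathcal{E}}^{m-1}_{tan}(t):=\|(\vro,u,\tfrac{b}{\sqrt{1+\vro}})\|^2_{H^{m-1}_{tan}}+\|\p_3(\vro,u,\tfrac{b}{\sqrt{1+\vro}})\|^2_{H^{m-2}_{tan}}+\kappa_1(\cdots)+\kappa_2(\cdots).
\]
Since $\vro$ is small and $\kappa_i$ are small, this functional is equivalent to $\mathcal{E}^{m-1}_{tan}$. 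Using $\sqrt{\mathcal{E}^m}\lesssim\sqrt\delta$ to absorb the nonlinear terms, we get \eqref{201}:
\[
\frac{d}{dt}\widehat{\mathcal{E}}^{m-1}_{tan}+\kappa\,\mathcal{D}^{m-1}_{tan}\le 0 .
\]

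\textbf{Step 2: lower bound of dissipation by energy.} The dissipation $\mathcal{D}^{m-1}_{tan}$ controls $\|\nabla_h(\vro,u,b)\|_{H^{m-2}_{tan}}^2$ and $\|\nabla_h\p_3(\vro,u,b)\|_{H^{m-3}_{tan}}^2$. For $\nabla_h u$ this uses $\p_2 u$ together with $\p_1 u$, and for $\nabla_h\vro$ it uses the enhanced dissipation. The top-order horizontal parts $\|\p^{\alpha_h}(\vro,u,b)\|$ with $|\alpha_h|\ge1$ are directly controlled by $\mathcal{D}^{m-1}_{tan}$. For the zero-order parts $f\in\{(\vro,u,b),\p_3(\vro,u,b)\}$ we interpolate:
\[
\|f\|_{L^2}\lesssim \|\hs f\|_{L^2}^{\frac1{1+s}}\|\nabla_h f\|_{L^2}^{\frac{s}{1+s}} .
\]
Lemmas \eqref{3701} and \eqref{3801} together with \eqref{assumption} give $\|\hs f\|_{L^2}^2\le C_0$, where $C_0$ collects the initial quantities plus $\delta^{3/2}$. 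Consequently
\[
\mathcal{D}^{m-1}_{tan}\gtrsim C_0^{-1/s}\,\big(\widehat{\mathcal{E}}^{m-1}_{tan}\big)^{1+\frac1s}.
\]

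\textbf{Step 3: ODE argument.} Substituting Step 2 into Step 1 gives
\[
\frac{d}{dt}\widehat{\mathcal{E}}+\kappa C_0^{-1/s}\,\widehat{\mathcal{E}}^{1+1/s}\le0 .
\]
Integrating, $\widehat{\mathcal{E}}(t)\le\big(\widehat{\mathcal{E}}(0)^{-1/s}+c\,C_0^{-1/s}t\big)^{-s}\lesssim C_0(1+t)^{-s}$, using $\widehat{\mathcal{E}}(0)\lesssim C_0$. Equivalence of the functionals then yields \eqref{3901}.

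\textbf{Main obstacle.} The hardest part is Step 1, namely checking that every linear cross term in \eqref{3401} and \eqref{3601} is indeed dominated. In particular, \eqref{3601} contains $\|\nabla_h u\|_{H^{m-2}_{tan}}^2$ and $\|\p_1\nabla u\|^2_{H^{m-2}_{tan}}$. These are covered by $\p_1u$, $\p_2 u$ and $\du$ at order $m-1$ only after first adding \eqref{3401}, so the weights must be ordered $1\gg\kappa_1\gg\kappa_2$. A second delicate point is that the nonlinear terms involve $\mathcal{D}^{k+1}_{tan}$ with $k=m-2$, which is $\mathcal{D}^{m-1}_{tan}$; this is exactly why the decay is stated only at level $m-1$ and not $m$.
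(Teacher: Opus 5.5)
Your proposal is correct and is essentially the paper's own proof. Step~1 is the combination of \eqref{3902}--\eqref{3905} into \eqref{3906}, and your ordering $1\gg\kappa_1\gg\kappa_2$ is in fact more careful than the paper's single $\kappa$: the $\|\p_2 u\|_{H^{m-2}_{tan}}^2$ in the right-hand side of \eqref{3601} has to be absorbed by the left-hand side of \eqref{3401} with $k=m-2$. Step~2 is the interpolation \eqref{3907}, and Step~3 is the ODE argument giving \eqref{3908}.

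The one point you leave implicit is the ``consequently'' in Step~2. For it to hold, the non-interpolated part $\|\nabla_h(\vro,u,b)\|_{H^{m-2}_{tan}}^2+\|\nabla_h\p_3(\vro,u,b)\|_{H^{m-3}_{tan}}^2$ must also be $\lesssim C_0$. The paper gets this from the uniform bound $\mathcal{E}^m(t)\le CC_0$ in \eqref{39-em}. In your setup it follows directly from the monotonicity $\widehat{\mathcal{E}}^{m-1}_{tan}(t)\le\widehat{\mathcal{E}}^{m-1}_{tan}(0)\lesssim C_0$ given by Step~1.
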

		\begin{proof}
			Under the assumption \eqref{assumption} and using the fact that the norm $ \|f\|_{H^m}$ is equivalent to the norms $\|f\|_{H_{tan}^m}$ and $\|  \p_3^m f\|_{L^2}$,  the combination of estimates \eqref{3101},
			\eqref{3201}, \eqref{3401} and \eqref{3501} yields directly
			\beq\label{39-vroub-m}
			\|(\vro, u,  b)(t)\|_{H^m}^2+\int_0^t \md^{m}(\tau)  d\tau
			\le C\|(\vro, u,  b)(0)\|_{H^m}^2+C\delta^{\frac32}.
			\deq
			The combination of estimates \eqref{3701} and \eqref{3801} yields directly
			\beqq \bal
			&\; \|(\hs( \vro, u, b), \hs \p_3 ( \vro, u, b)) (t)\|_{L^2}^2 \\
			\lesssim  &\; 
			\|(\hs( \vro, u, b), \hs \p_3 ( \vro, u, b))(0)\|_{L^2}^2 + \| (\vro,u)(0)\|_{H^m}^2 \|\hs u(0)\|_{L^2} +\delta^{\f32},
			\dal  \deqq
			which, together with estimate \eqref{39-vroub-m}, yields directly
			\beq\label{39-em}
			\mathcal{E}^m(t)
			\le CC_0,
			\deq
			where the constant $C_0$ is defined by
			\beq\label{co}
			C_0:=\|(\vro, u,  b)(0)\|_{H^m}^2
			+\|(\hs( \vro, u, b), \hs \p_3 ( \vro, u, b))(0)\|_{L^2}^2+ \|\hs u(0)\|_{L^2}\|(\vro, u)(0)\|_{H^m}^2+\delta^{\f32}.
			\deq
			From estimates \eqref{3101} and \eqref{3301}, we have
			\beq\label{3902} \bal
			&\; \frac{d}{dt}\Big(\| (\vro, u, \f{1}{\sqrt{1+\vro}} b)\|_{L^2}^2 + \sum\limits_{|\alpha_h| = m-1}\| (\p^{\ah} u, \p^{\ah}\vro ,\f{1}{\sqrt{1+\vro}}\p^{\al_h} b)\|_{L^2}^2 \Big)  \\
			&\; +  \|(\p_1 u, \du,\nabla_h b)\|_{H_{tan}^{m-1}}^2 
			+ \var \|(\p_2 u, \p_3 u,\p_3 b)\|_{H_{tan}^{m-1}}^2 \\
			\lesssim &\; \sqrt{\me^m(t)}\md_{tan}^{m-1}(t),
			\dal \deq
			and
			\beq\label{3903} \bal
			&\; \frac{d}{dt}\Big(\|(\p_3 \vro, \p_3 u,\f{1}{\sqrt{1+\vro}} \p_3 b )(t)\|_{L^2}^2 + \sum\limits_{ |\alpha_h|=m-2}\|( \p_3 \p^{\al_h} \vro, \p_3 \p^{\al_h} u, \f{1}{\sqrt{1+\vro}}\p^{\al_h} \p_3 b)\|_{L^2}^2 \Big)\\
			&\; 
			+\|\p_3 (\p_{1} u,  \du , \nabla_h b)\|_{H_{tan}^{m-2}}^2  
			+\var \|\p_3 (\p_{2} u, \p_3 u , \p_3 b )\|_{H_{tan}^{m-2}}^2  \\
			\lesssim &\; \sqrt{\me^m(t)}\md_{tan}^{m-1}(t).
			\dal \deq
			From the estimates \eqref{3401} and \eqref{3601}, we have
			\beq\label{3904}
			\begin{aligned}
				&\; \f{d}{dt} \i \Big(u_h \cdot \nabla_h  \vro +  \p_{2} b \cdot u +  \sum_{|\ah|=m-2}( \p^{\ah}  u_h \cdot \nabla_h \p^{\ah} \vro +  \p_{2} \p^{\ah} b \cdot  \p^{\ah} u ) \Big)\ dx + \|(\nabla_h  \vro, \p_{2} u)\|_{H_{tan}^{m-2}}^2 \\
				\lesssim &\;\var \| (\p_2^2 u, \p_3^2 u)\|_{H_{tan}^{m-2}}^2 + 
				\|(\p_1 u, \du, \nabla_h b)\|_{H^{m-1}_{tan}}^2
				+\sqrt{\me^m(t)}\md_{tan}^{m-1}(t),
			\end{aligned}
			\deq
			and
			\beq\label{3905}
			\begin{aligned}
				&\;\f{d}{dt} \i  \Big( \nabla_h u_3 \cdot \nabla_h  \p_3 \vro +\p_{23} b \cdot  \p_3 u + \sum_{|\ah|=m-3} (\p^{\ah} \nabla_h u_3 \cdot \nabla_h \p^{\ah} \p_3 \vro + \p^{\ah}\p_{23} b \cdot  \p^{\ah}\p_3 u) \Big) \ dx \\
				&\; +\|(\p_3  \nabla_h \vro, \p_{23}u)\|_{H_{tan}^{m-3}}^2\\
				\lesssim &\;  \var\| (\p_2^2 u, \p_3^2 u, \p_{23} u)\|_{H^{m-2}_{tan}}^2 +  \var\| \p_3^2 b\|_{H^{m-3}_{tan}}^2  +\|  (\nabla_h u, \p_3 \du, \p_1 \nabla u,\nabla_h \nabla b)\|_{H^{m-2}_{tan}}^2 
				+\sqrt{\me^m(t)}\md_{tan}^{m-1}(t).
			\end{aligned}
			\deq
			Then, for some small positive suitable constant $\kappa>0$,
			we can deduce from estimates \eqref{3902}-\eqref{3905} that
			\beqq
			\begin{aligned}
				&\frac{d}{dt}\widetilde{\mathcal{E}}^{m-1}_{tan}(t)
				+2\kappa \md_{tan}^{m-1}(t)
				\lesssim \sqrt{\me^m(t)}\md_{tan}^{m-1}(t),
			\end{aligned}
			\deqq
			where the quantity $\widetilde{\mathcal{E}}^{m-1}_{tan}(t)$ is defined as
			\begin{align*}
				\widetilde{\mathcal{E}}^{m-1}_{tan}(t)
				:=&\| (\vro, u, \f{1}{\sqrt{1+\vro}} b)\|_{L^2}^2 +\|(\p_3 \vro, \p_3 u,\f{1}{\sqrt{1+\vro}} \p_3 b )\|_{L^2}^2 + \sum\limits_{|\alpha_h| = m-1}\| (\p^{\ah} \vro, \p^{\ah} u ,\f{1}{\sqrt{1+\vro}}\p^{\al_h} b)\|_{L^2}^2 \\
				&\; + \sum\limits_{ |\alpha_h|=m-2}\|( \p_3 \p^{\al_h} \vro, \p_3 \p^{\al_h} u, \f{1}{\sqrt{1+\vro}}\p_3 \p^{\al_h} b)\|_{L^2}^2 + 2\kappa \i ( u_h \cdot \nabla_h  \vro +  \p_{2} b \cdot u )  \ dx\\
				&\; + 2\kappa   \sum_{|\ah|=m-2} \i ( \p^{\ah}  u_h \cdot \nabla_h \p^{\ah} \vro +  \p_{2} \p^{\ah} b \cdot  \p^{\ah} u ) \ dx + 2\kappa \i (\nabla_h u_3 \cdot \nabla_h  \p_3 \vro +\p_{23} b \cdot  \p_3 u ) \ dx \\
				&\; + 2\kappa \sum_{|\ah|=m-3} \i (\p^{\ah} \nabla_h u_3 \cdot \nabla_h \p^{\ah} \p_3 \vro + \p^{\ah}\p_{23} b \cdot  \p^{\ah}\p_3 u) \ dx. 
			\end{align*}
			Due to the smallness of $\kappa$, it is easy to check that
			$\widetilde{\mathcal{E}}^{m-1}_{tan}(t)$ is equivalent to $\mathcal{E}^{m-1}_{tan}(t)$.
			Thus, due to the a priori assumption \eqref{assumption}, we have
			\beq\label{3906}
			\frac{d}{dt}\widetilde{\mathcal{E}}^{m-1}_{tan}(t)
			+\kappa \md_{tan}^{m-1}(t)
			\le 0.
			\deq
			On the other hand, due to the inequality
			\beqq
			\|(\vro, u, b, \wr, \wu, \wb)\|_{L^2}
			\lesssim \|\hs(\vro, u, b, \wr, \wu, \wb)\|_{L^2}^{\frac{1}{1+s}}
			\|\nabla_h(\vro, u, b, \wr, \wu, \wb)\|_{L^2}^{\frac{s}{1+s}},
			\deqq
			it is easy to check that
			\beq\label{3907}
			\begin{aligned}
				\widetilde{\mathcal{E}}^{m-1}_{tan}(t)
				\lesssim
				\me_{tan}^{m-1}(t)
				\lesssim
				&(\|\hs(\vro, u, b, \wr, \wu, \wb)\|_{L^2}^2
				+\|\nabla_h(\vro ,u, b)\|_{H^{m-2}_{tan}}^2
				+\|\nabla_h( \wr, \wu, \wb)\|_{H^{m-3}_{tan}}^2)^{\frac{1}{1+s}}\\
				& \times(\|\nabla_h(\vro, u, b)\|_{H^{m-2}_{tan}}^2
				+\|\nabla_h(\wr, \wu, \wb)\|_{H^{m-3}_{tan}}^2)^{\frac{s}{1+s}}\\
				\lesssim
				&C_0^{\frac{1}{1+s}} \md_{tan}^{m-1}(t)^{\frac{s}{1+s}},
			\end{aligned}
			\deq
			where we have used the estimate \eqref{39-em} in the last inequality.
			The combination of \eqref{3906} and \eqref{3907} yields
			\beqq
			\frac{d}{dt}\widetilde{\mathcal{E}}^{m-1}_{tan}(t)
			+\kappa C_0^{-\frac{1}{s}}
			\widetilde{\mathcal{E}}^{m-1}_{tan}(t)^{1+\frac{1}{s}}\le 0,
			\deqq
			which yields directly the decay estimate
			\beq\label{3908}
			\mathcal{E}^{m-1}_{tan}(t)
			\lesssim
			\widetilde{\mathcal{E}}^{m-1}_{tan}(t)
			\lesssim C_0(1+t)^{-s}.
			\deq
			Therefore, we complete the proof of this lemma.
		\end{proof}
		
		Finally, we will establish the time integration
		of  $\md_{tan}^{m-1}(t)$ with the suitable weight $(1+\tau)^{\sigma}$.
		\begin{lemm}
			Under the assumption \eqref{assumption},
			the smooth solution $(\vro ,u, b)$ of equation \eqref{eqr} has the estimate
			\beq\label{31001}
			\begin{aligned}
				&(1+t)^{\sigma}{\mathcal{E}}^{m-1}_{tan}(t)
				+\kappa \int_0^t (1+\tau)^{\sigma} \md_{tan}^{m-1}(\tau)d\tau
				\lesssim C_0,
			\end{aligned}
			\deq
			where the constant $C_0$ is defined in \eqref{co}.
		\end{lemm}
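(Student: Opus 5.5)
The plan is to start from the differential inequality \eqref{3906}, $\frac{d}{dt}\widetilde{\mathcal{E}}^{m-1}_{tan}(t)+\kappa \md_{tan}^{m-1}(t)\le 0$, and multiply it by the weight $(1+t)^{\sigma}$. Since $\frac{d}{dt}(1+t)^\sigma=\sigma(1+t)^{\sigma-1}$, this yields
\begin{equation*}
\frac{d}{dt}\Big[(1+t)^{\sigma}\widetilde{\mathcal{E}}^{m-1}_{tan}(t)\Big]+\kappa (1+t)^{\sigma}\md_{tan}^{m-1}(t)\le \sigma(1+t)^{\sigma-1}\widetilde{\mathcal{E}}^{m-1}_{tan}(t).
\end{equation*}
Integrating over $[0,t]$ and using $\widetilde{\mathcal{E}}^{m-1}_{tan}(0)\lesssim \mathcal{E}^m(0)\lesssim C_0$ gives
\begin{equation*}
(1+t)^{\sigma}\widetilde{\mathcal{E}}^{m-1}_{tan}(t)+\kappa\int_0^t(1+\tau)^{\sigma}\md_{tan}^{m-1}(\tau)\,d\tau\lesssim C_0+\sigma\int_0^t(1+\tau)^{\sigma-1}\widetilde{\mathcal{E}}^{m-1}_{tan}(\tau)\,d\tau .
\end{equation*}

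The remaining integral is controlled by the decay estimate \eqref{3908}, $\widetilde{\mathcal{E}}^{m-1}_{tan}(\tau)\lesssim C_0(1+\tau)^{-s}$. Hence the right-hand integral is bounded by $C_0\int_0^t(1+\tau)^{\sigma-1-s}\,d\tau$. Since $\sigma=1-2\zeta$ and $s=1-\zeta$, the exponent is $\sigma-1-s=-1-\zeta<-1$, so this integral is bounded uniformly in $t$ by $C_0/\zeta$. This is exactly where the requirement flagged in the remark enters: the decay rate $s$ of $\mathcal{E}^{m-1}_{tan}$ must be strictly faster than the weight $\sigma$ on $\md^{m-1}_{tan}$. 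Finally, the equivalence of $\widetilde{\mathcal{E}}^{m-1}_{tan}$ and $\mathcal{E}^{m-1}_{tan}$ (valid by smallness of $\kappa$ and of $\vro$) converts the left side to \eqref{31001}.

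There is no real obstacle, provided \eqref{3906} is taken as given. That inequality already absorbs $\sqrt{\me^m}\md_{tan}^{m-1}$ using the smallness of $\delta$ in \eqref{assumption}. The only point to check is that the time weight introduces no new nonlinear term. This holds because the multiplication is done after the absorption, so the weighted inequality inherits the clean form of \eqref{3906}. One should also confirm that $\widetilde{\mathcal{E}}^{m-1}_{tan}\ge0$ (again by smallness of $\kappa$), so that dropping nothing is needed.
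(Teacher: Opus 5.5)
Your proposal is correct and follows the paper's proof: you multiply \eqref{3906} by $(1+t)^{\sigma}$, integrate over $[0,t]$, and bound the extra term $\sigma\int_0^t(1+\tau)^{\sigma-1}\widetilde{\mathcal{E}}^{m-1}_{tan}(\tau)\,d\tau$ using the decay estimate \eqref{3908}. The integral converges because $\sigma-1-s=-1-\zeta<-1$.
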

		\begin{proof}
			For $0<\sigma<s$, multiplying \eqref{3906} by $(1+t)^{\sigma}$, we have
			\beqq
			\begin{aligned}
				&\frac{d}{dt}[(1+t)^{\sigma}\widetilde{\mathcal{E}}^{m-1}_{tan}(t)]
				+\kappa (1+t)^{\sigma} \md_{tan}^{m-1}(t)
				\le \sigma(1+t)^{\sigma-1}\widetilde{\mathcal{E}}^{m-1}_{tan}(t).
			\end{aligned}
			\deqq
			Integrating the above inequality over $[0, t]$
			and using the uniform estimate \eqref{3908}, we have
			\beqq
			\begin{aligned}
				&\; (1+t)^{\sigma}\widetilde{\mathcal{E}}^{m-1}_{tan}(t)
				+\kappa \int_0^t  (1+\tau)^{\sigma} \md_{tan}^{m-1}(\tau)d\tau\\
				\le
				&\; \widetilde{\mathcal{E}}^{m-1}_{tan}(0)
				+\sigma\int_0^t (1+\tau)^{\sigma-1}
				\widetilde{\mathcal{E}}^{m-1}_{tan}(\tau)d\tau\\
				\le
				&\; \widetilde{\mathcal{E}}^{m-1}_{tan}(0)
				+\sigma\underset{0\le \tau \le t}{\sup}
				\left[\widetilde{\mathcal{E}}^{m-1}_{tan}(\tau)(1+\tau)^s\right]
				\int_0^t (1+\tau)^{\sigma-1-s}d\tau\\
				\lesssim
				&\; C_0.
			\end{aligned}
			\deqq
			Therefore, we complete the proof of this lemma.
		\end{proof}
		
		\subsection{Global in time uniform regularity}
		
		In this subsection, we will give the proof of Proposition \ref{main_pro}.
		Indeed,  the combination of estimates \eqref{3901},\eqref{39-vroub-m} and \eqref{31001}
		yields directly
		\beqq \bal
		&\; \underset{0\le \tau \le t}{\sup}[(1+\tau)^{s}{\mathcal{E}}^{m-1}_{tan}(\tau)]
		+\int_0^t (1+\tau)^{\sigma} \md_{tan}^{m-1}(\tau)d\tau \\
		\le &\;  C(\|(\vro, u,  b)(0)\|_{H^m}^2
		+\|(\hs( \vro, u, b), \hs \p_3 ( \vro, u, b))(0)\|_{L^2}^2+ \|\hs u(0)\|_{L^2}\|(\vro, u)(0)\|_{H^m}^2+\delta^{\f32}),
		\dal \deqq
		and
		\beqq
		\underset{0\le \tau \le t}{\sup}\mathcal{E}^m(\tau)
        +\int_0^t \md^{m}(\tau)  d\tau
		\le C\mathcal{E}^m(0)+C\delta^{\frac32},
		\deqq
		where the constants $(s, \sigma)$ satisfy $\frac{9}{10}<\sigma<s<1$.
		Then, we can obtain the estimate
		\beqq
		\begin{aligned}
			&\;	\underset{0\le \tau \le t}{\sup}\mathcal{E}^m(\tau)
			+\underset{0\le \tau \le t}{\sup}[(1+\tau)^s\mathcal{E}_{tan}^{m-1}(\tau)]
			+\int_0^t (1+\tau)^{\sigma} \md_{tan}^{m-1}(\tau)d\tau
			+\int_0^t \md^{m}(\tau)  d\tau\\
			\le
			&\; 2C(\|(\vro, u,  b)(0)\|_{H^m}^2
			+\|(\hs( \vro, u, b), \hs \p_3 ( \vro, u, b))(0)\|_{L^2}^2+ \|\hs u(0)\|_{L^2}\|(\vro, u)(0)\|_{H^m}^2)
			+2C\delta^{\frac32}.
		\end{aligned}
		\deqq
		Now choose the small constant
		\beqq\label{choose_delta} \bal
		\delta:=&\;8C(\|(\vro, u,  b)(0)\|_{H^m}^2
		+\|(\hs( \vro, u, b), \hs \p_3 ( \vro, u, b))(0)\|_{L^2}^2+ \|\hs u(0)\|_{L^2}\|(\vro, u)(0)\|_{H^m}^2)\\
		\le &\; \min\{1,\frac{1}{64C^2}\},
		\dal \deqq
		then we have
		\beqq
		\begin{aligned}
			&\underset{0\le \tau \le t}{\sup}\mathcal{E}^m(\tau)
			+\underset{0\le \tau \le t}{\sup}[(1+\tau)^s\mathcal{E}_{tan}^{m-1}(\tau)]
			+\int_0^t (1+\tau)^{\sigma} \md_{tan}^{m-1}(\tau)d\tau
			+\int_0^t \md^{m}(\tau)  d\tau\\
			\le
			&2C(\|(\vro, u,  b)(0)\|_{H^m}^2
			+\|(\hs( \vro, u, b), \hs \p_3 ( \vro, u, b))(0)\|_{L^2}^2+ \|\hs u(0)\|_{L^2}\|(\vro, u)(0)\|_{H^m}^2)
			+2C\delta^{\frac32}\\
			\le
			&\frac{\delta}{4}+\frac{\delta}{4}=\frac{\delta}{2},
		\end{aligned}
		\deqq
		which implies the estimate \eqref{close_assumption}.
		Therefore, we complete the proof of Proposition \ref{main_pro}.

		\begin{proof}[\textbf{Proof of Theorem \ref{main_result_one}}]
			Suppose the assumptions in Theorem \ref{main_result_one} hold,
			one can establish the uniform local-in-time well-posedness for the equation \eqref{eqr}.
			Next, we use the standard continuity argument to show the global well-posedness.
			From the local existence result
			and smallness assumption of initial condition,
			it holds
			\beqq
			\underset{0\le \tau \le t}{\sup}\mathcal{E}^m(\tau)
			+\underset{0\le \tau \le t}{\sup}[(1+\tau)^s\mathcal{E}_{tan}^{m-1}(\tau)]
			+\int_0^t (1+\tau)^{\sigma} \md_{tan}^{m-1}(\tau)d\tau
			+\int_0^t \md^{m}(\tau)d\tau \le \delta,
			\deqq
			for all $t\in [0, T_0)$ and $$\delta=8C(\|(\vro, u,  b)(0)\|_{H^m}^2
			+\|(\hs( \vro, u, b), \hs \p_3 ( \vro, u, b))(0)\|_{L^2}^2+ \|\hs u(0)\|_{L^2}\|(\vro, u)(0)\|_{H^m}^2)$$
			and $C$ is a positive constant independent of time $t$ and
			parameter $\var$.
			Set
			\beq\label{criterion}
            \begin{aligned}
			T_1:=\underset{T_0}{\sup}
			&\left\{T_0~|
			\underset{0\le \tau \le t}{\sup}\mathcal{E}^m(\tau)
			+\underset{0\le \tau \le t}{\sup}[(1+\tau)^s\mathcal{E}_{tan}^{m-1}(\tau)]\right.\\
			&\quad\quad\quad\left.+\!\int_0^t \!(1+\tau)^{\sigma} \md_{tan}^{m-1}(\tau)d\tau
			+\!\int_0^t \! \md^{m}(\tau)d\tau \le \delta,
			\quad \forall ~ t\in [0, T_0)\right\},
            \end{aligned}
			\deq
			we claim that $T_1=+\infty$. Otherwise, applying the estimate
			\eqref{close_assumption}
			and the local-in-time existence result,
			there exists a positive constant $T_2$ such that $T_2>T_1$,
			it holds that for any $T\in [T_1, T_2)$,
			\beqq
			\underset{0\le \tau \le t}{\sup}\mathcal{E}^m(\tau)
			+\underset{0\le \tau \le t}{\sup}[(1+\tau)^s\mathcal{E}_{tan}^{m-1}(\tau)]
			+\int_0^t (1+\tau)^{\sigma} \md_{tan}^{m-1}(\tau)d\tau
			+\int_0^t \md^{m}(\tau)d\tau \le \delta,
			\quad \forall ~ t\in [0, T).
			\deqq
			This contradicts the definition of $T_1$ in \eqref{criterion}.
			Therefore, we can deduce that $T_1=+\infty$.
			Therefore, we complete the proof of Theorem \ref{main_result_one}.			
		\end{proof}
		
		\section{Convergence rate of solution}\label{asymptotic-behavior}
		In this section, we will establish the convergence rate of the solutions between equations \eqref{eqr}  and \eqref{eqr0}.
		Let the regularity index $m \geq 9$, Theorems \ref{main_result_one} and \ref{main_result_two} make sure that the solution $( \vro^{\var},u^{\var}, b^{\var})$ of equation \eqref{eqr} and  $( \vro^{0},u^{0}, b^{0})$ of equation \eqref{eqr0} exist
		globally in time. This will help us establish the convergence rate independent of time.
		Let us define $(\br,\bu, \bb):=(\vro^\var-\vro^0 , u^\var-u^0, b^\var-b^0)$
		and apply the equations \eqref{eqr} and \eqref{eqr0},
		then we can obtain
		\begin{equation}\label{401}
			\left\{
			\begin{array}{*{4}{ll}}
				\p_t \br + {\rm{div}} \bu = f_{\vro},\\
				\p_t \bu-\p_{1}^2 \bu- \nabla {\rm{div}} \bu + \nabla \br + \nabla \bb_2 -\p_2 \bb
				=\var \f{1}{1+\vro^{\var}} (\p_2^2 u^{\var} +\p_3^2 u^{\var} )+f_{u},\\
				\p_t \bb-\Delta_h \bb+e_2 {\rm{div}} \bu-\p_2 \bu
				=\var \p_3^2 b^{\var} + f_{b},
			\end{array}
			\right.
		\end{equation}
		where the source terms $f_{\vro}$, $f_{u}$ and $f_{b}$ are defined by
		\beqq
		\begin{aligned}
			f_{\vro}:=& -u^\var \cdot \nabla \br-\bu \cdot \nabla \vro^0-\vro^\var \, {\rm{div}} \bu-\br \, {\rm{div}} u^0,\\
			f_{u}:=&-u^\var \cdot \nabla \bu-\bu \cdot \nabla u^0-\vro^\var \, \nabla \br-\br \, \nabla \vro^0
			-\f{\vro^{\var}}{1+\vro^{\var}}(\p_1^2 \bu+\nabla {\rm{div}} \bu + \nabla \bb_2 -\p_2 \bb)\\
			&-\f{\br}{(1+\vro^{\var})(1+\vro^{0})}(\p_1^2 u^0+\nabla {\rm{div}} u^0 + \nabla b^0_2 -\p_2 b^0) + \f{1}{1+\vro^{\var}}(b^{\var} \cdot \nabla \bb -b^{\var} \, \nabla \bb) \\
			&+ \f{1}{1+\vro^{\var}}(\bb \cdot \nabla b^0 -\bb \, \nabla b^0) - \f{\br}{(1+\vro^{\var})(1+\vro^{0})}(b^0 \cdot \nabla b^0 -b^0 \, \nabla b^0) ,\\
			f_{b}:=&-u^\var \cdot \nabla \bb-\bu \cdot \nabla b^0-b^\var \, {\rm{div}} \bu-\bb \, {\rm{div}} u^0+b^\var \cdot \nabla \bu+\bb\cdot \nabla u^0.
		\end{aligned}
		\deqq
		Let us define the notations
		\beqq
		\begin{aligned}
			\mathcal{\overline{E}}(t)
			:=\|(\br,\bu, \bb)(t)\|_{H^1}^2,~~~~~
			\mathcal{\overline{D}}(t)
			:=\|(\p_1 \bu, {\rm{div}}\bu, \nabla_h \bb)(t)\|_{H^1}^2
			+\|(\p_2 \bu, \nabla_h \br)(t)\|_{L^2}^2,
		\end{aligned}
		\deqq
		and denote that $f(t):=(\vro^0, u^0, b^0, \vro^{\var},u^\var, b^\var)(t)$ and $g(t):=(\du^0,\du^{\var})(t)$, we define
		\beqq \bal
		\mathcal{{A}}(t)
		:= &\; \|f(t)\|_{H^1} \|\nabla_h f(t)\|_{H^1} + \|\nabla_h f(t)\|_{H^2} \|\nabla_h f(t)\|_{H^3}  + \|f(t)\|_{L^2} \|g(t)\|_{L^2} \\
		&\; + \|\p_3 f(t)\|_{L^2}^{\f14} \|\p_3^2 f(t)\|_{L^2}^{\f14} \|\p_3 \nabla_h f(t)\|_{H_{tan}^1}^{\f34}  \|\p_3^2 \nabla_h f(t)\|_{H_{tan}^1}^{\f34} + \|g(t)\|_{H^3}^2, \\
		\mathcal{{B}}(t)
		:= &\; \|\p_3^2 f(t)\|_{L^2}^{\f45}  \|\nabla_h \p_3^2 f(t)\|_{L^2}^{\f45} +  \|\nabla_h(f, \p_3 f)(t)\|_{H_{tan}^3}^{\f43} + \|\nabla_h(g, \p_3 g)(t)\|_{H_{tan}^2}^{\f43} \\
		&\; + \| \p_3 g(t)\|_{L^2}^{\f13} \| \p_3^2 g(t)\|_{L^2}^{\f13}\| \nabla_h \p_3 g(t)\|_{L^2}^{\f13}\| \nabla_h \p_3^2 g(t)\|_{L^2}^{\f13}.
		\dal \deqq
		First of all, let us establish the estimate for $(\br,\bu, \bb)$ in $L^2$ norm.
		\begin{lemm}\label{lemma41}
			Under the conditions of Theorem \ref{main_result_three},
			then it holds
			\beq\label{4201}
			\begin{aligned}
				\frac{d}{dt}\|(\br, \bu,\f{1}{1+\vro^{\var} } \bb)\|_{L^2}^2 +\|(\p_1 \bu, {\rm{div}} \bu, \nabla_h \bb)\|_{L^2}^2
				\lesssim
				\nu \mathcal{\overline{D}}(t)
				+ \mathcal{{A}}(t)
				\mathcal{\overline{E}}(t)
				+\var \|(\p_2^2u^\var, \p_3^2 u^\var, \p_3^2b^\var)\|_{L^2}
				\mathcal{\overline{E}}(t)^{\frac12},
			\end{aligned}
			\deq
			where $\nu$ is a small positive constant.
		\end{lemm}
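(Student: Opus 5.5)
The estimate is a weighted $L^2$ energy identity for the difference system \eqref{401}, built the same way as the case $\alpha_h=0$ in Lemma \ref{lemma32}. We take the $L^2$ inner product of $\eqref{401}_1$ with $\br$ and of $\eqref{401}_2$ with $\bu$. For the magnetic equation, we first multiply $\eqref{401}_3$ by $\f{1}{1+\vro^\var}$ and then test with $\bb$; the weight is needed so that the magnetic coupling terms cancel. The linear skew terms $\nabla\br\cdot\bu+{\rm div}\bu\,\br$ and $(\nabla\bb_2-\p_2\bb)\cdot\bu+(e_2{\rm div}\bu-\p_2\bu)\cdot\bb$ integrate to zero, up to commutators with $\f{1}{1+\vro^\var}$ that we put into $f_b$-type terms. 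The time derivative falling on the weight produces $\p_t(\f{1}{1+\vro^\var})|\bb|^2$. We rewrite it with $\eqref{eqr}_1$ as $\f{1}{(1+\vro^\var)^2}({\rm div}u^\var+u^\var\cdot\nabla\vro^\var+\vro^\var{\rm div}u^\var)|\bb|^2$, which is bounded by $\mathcal A(t)\overline{\me}(t)$ through the factor $\|f\|_{L^2}\|g\|_{L^2}$ and the anisotropic inequality \eqref{ie:Sobolev}. The dissipation $\|(\p_1\bu,{\rm div}\bu,\nabla_h\bb)\|_{L^2}^2$ appears on the left directly. The $\var$-forcings are handled by Cauchy--Schwarz: $\var\i\f{1}{1+\vro^\var}(\p_2^2u^\var+\p_3^2u^\var)\cdot\bu\,dx$ and $\var\i\f{1}{1+\vro^\var}\p_3^2b^\var\cdot\bb\,dx$ are each at most $\var\|(\p_2^2u^\var,\p_3^2u^\var,\p_3^2b^\var)\|_{L^2}\overline{\me}(t)^{1/2}$. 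We do not integrate by parts here, since that would require $\var\|\p_3\bu\|$ control, which the difference does not have.

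The nonlinear terms from $f_\vro,f_u,f_b$ fall into two groups. The first group consists of transport terms with the large solution as coefficient: $-u^\var\cdot\nabla\br\,\br$, $-u^\var\cdot\nabla\bu\cdot\bu$, and $-\f{1}{1+\vro^\var}u^\var\cdot\nabla\bb\cdot\bb$. We integrate these by parts to get $\f12{\rm div}u^\var|\cdot|^2$ plus a $\nabla\vro^\var$ commutator. These are controlled by $\|g\|_{H^3}^2\overline{\me}+\nu\overline{\md}$, or via \eqref{ie:Sobolev} by $\|f\|_{L^2}\|g\|_{L^2}$-type factors, placing a horizontal derivative $\nabla_h\br$ or $\p_2\bu$ on the difference whenever anisotropic splitting requires one. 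The magnetic stretching terms $\f{1}{1+\vro^\var}b^\var\cdot\nabla\bu\cdot\bb$ and $\f{1}{1+\vro^\var}b^\var\cdot\nabla\bb\cdot\bu$ (the latter arising from $f_u$) cancel exactly as in $I_4+I_8$ of Lemma \ref{lemma32}. What remains is $-\i b^\var\cdot\nabla(\f{1}{1+\vro^\var})\bu\cdot\bb\,dx$, bounded by $\mathcal A\,\overline{\me}$. The second group consists of terms where the difference is undifferentiated and multiplies derivatives of the background solutions, such as $\bu\cdot\nabla u^0\cdot\bu$, $\br\,{\rm div}u^0\,\br$, and $\f{\br}{(1+\vro^\var)(1+\vro^0)}(\p_1^2u^0+\cdots)\cdot\bu$. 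For these we use the anisotropic inequality in the form $\i|FGH|\lesssim\|F\|^{1/2}\|\p_iF\|^{1/2}\|G\|^{1/2}\|\p_jG\|^{1/2}\|H\|^{1/2}\|\p_3H\|^{1/2}$, putting the $\p_3$ derivative on the background factor. The resulting background factors are the products $\|\p_3f\|^{1/4}\|\p_3^2f\|^{1/4}\|\p_3\nabla_hf\|^{3/4}\cdots$ and $\|\nabla_hf\|_{H^2}\|\nabla_hf\|_{H^3}$ that appear in $\mathcal A$. Young's inequality then gives $\nu\overline{\md}+C_\nu\mathcal A\overline{\me}$.

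The main obstacle is the second-order dissipative terms coming from $f_u$, namely $-\f{\vro^\var}{1+\vro^\var}(\p_1^2\bu+\nabla{\rm div}\bu)\cdot\bu$ and $-\f{\vro^\var}{1+\vro^\var}(\nabla\bb_2-\p_2\bb)\cdot\bu$. Integrating by parts in $x_1$, respectively through the divergence, yields $\|\vro^\var\|_{L^\infty}\|(\p_1\bu,{\rm div}\bu)\|^2$, which is absorbed by smallness, plus commutators carrying $\nabla\vro^\var$. The dangerous piece is the vertical one, $\f{\vro^\var}{1+\vro^\var}\p_3\bb_2\,\bu_3$, because $\p_3\bb$ is not dissipated. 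Following the treatment of $I_3$, we move $\p_3$ off $\bb_2$ onto $\f{\vro^\var}{1+\vro^\var}\bu_3$. This produces $\p_3\bu_3={\rm div}\bu-\nabla_h\cdot\bu_h$ and $\p_3\vro^\var\,\bu_3\bb_2$. The first is dissipative, since $\p_2\bu$ and ${\rm div}\bu$ are in $\overline{\md}$. The second is estimated anisotropically with the weight $\|\p_3^2f\|^{4/5}\|\nabla_h\p_3^2f\|^{4/5}$ from $\mathcal B$, which we allow into the $\mathcal A\overline{\me}$ bound after absorbing a $\nu\overline{\md}$ part. Summing all contributions gives \eqref{4201}.
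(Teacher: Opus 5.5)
Your overall route is the paper's. It is the weighted $L^2$ identity for \eqref{401} with weight $\frac{1}{1+\vro^\var}$ on the $\bb$-equation, together with:
\begin{itemize}
\item the cancellation of the two stretching terms (the paper's $IX_{2,5}+IX_3$);
\item Cauchy--Schwarz on the $\var$-forcing ($IX_8+IX_9$);
\item integration by parts on the second-order terms of $f_u$ ($IX_{2,2}$). The paper moves the whole gradient of $\bb_2$ through ${\rm div}(\frac{\vro^\var}{1+\vro^\var}\bu)$ rather than only its $\p_3$-part, which is equivalent.
\end{itemize}
Your treatment of $\p_t(1+\vro^\var)^{-1}$ via $\eqref{eqr}_1$ is actually the right one; the paper's $IX_4$, $IX_5$ are written with the $\br$-equation.

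\textbf{The step that fails.} Your rule for the second group, ``put the $\p_3$ derivative on the background factor,'' does not work as stated. It is fine when the background factor is a horizontal derivative ($\nabla_h u^0$, $\p_1^2u^0$, \dots). It fails when the background factor is a vertical derivative, as in $\bu_3\,\p_3u^0\cdot\bu$, $\bu_3\,\p_3\vro^0\,\br$, $\br\,\p_3 b_2^0\,\bu_3$, $\bb_3\,\p_3b^0\cdot\bu$, $\bb_3\,\p_3u^0\cdot\bb$ and $\bu_3\,\p_3 b^0\cdot\bb$.
\begin{itemize}
\item After Young, your recipe produces the weight $\|\p_3 f\|_{L^2}\|\p_3^2 f\|_{L^2}$, which contains no horizontal derivative.
\item So neither the weighted integral of $\md_{tan}^{m-1}$ nor $\int\md^m$ applies. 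The decay of $\me_{tan}^{m-1}$ plus interpolation only gives it a rate slower than $(1+t)^{-1}$.
\item It is therefore not integrable, it is not one of the $\mathcal{A}$ factors you list, and the Gronwall step \eqref{403} would not close.
\end{itemize}

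\textbf{The fix.} The paper uses the vertical null structure. In each of these terms the vertical derivative on the background is paired with the third component of the difference. So the $\p_3$ goes on $\bu_3$ or $\bb_3$, using $\p_3\bu_3={\rm div}\bu-\nabla_h\cdot\bu_h$ and $\p_3\bb_3=-\nabla_h\cdot\bb_h$, both controlled by $\overline{\md}$. A horizontal derivative goes on the background, which gives the factor $\|\p_3 f\|_{L^2}\|\nabla_h\p_3 f\|_{L^2}\le\|f\|_{H^1}\|\nabla_h f\|_{H^1}$, and this is in $\mathcal{A}$.

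The same placement handles $\p_3\vro^\var\,\bu_3\,\bb_2$ in your last step: put $\p_3$ on $\bu_3$, $\p_1$ on $\bb_2$, and $\p_2$ on $\p_3\vro^\var$. So $\mathcal{B}$ is not needed. As written, admitting $\mathcal{B}$ proves a weaker inequality than \eqref{4201}.

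\textbf{Omitted terms.} Your enumeration leaves out two terms of $f_u$:
\begin{itemize}
\item $-\vro^\var\nabla\br\cdot\bu$;
\item the magnetic-pressure part $-\frac{1}{1+\vro^\var}(\nabla\bb)^{\top}b^\var\cdot\bu$.
\end{itemize}
Like your ``dangerous'' term, each puts an undissipated $\p_3$ on the difference against a coefficient whose $L^\infty$ norm is only known to be $O((1+t)^{-(1-\zeta)/2})$. A bound of the form $\|\vro^\var\|_{L^\infty}\overline{\mathcal{E}}$ therefore does not close. They need the same treatment: either integration by parts through the divergence, as in the paper's $IX_{2,1}$ and $IX_{2,4}$, or the trilinear bound with $\p_3$ placed on $\bu_3$.
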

		\begin{proof}
			The equation \eqref{401} yields that 
			\begin{align*}
				&\; \frac{d}{dt}\|(\bu, \br, \f{1}{1+\vro^{\var} } \bb)\|_{L^2}^2 +\|(\p_1 \bu, {\rm{div}} \bu, \nabla_h \bb)\|_{L^2}^2 \\
				= &\; \i f_\vro \cdot \br \ dx +  \i f_u \cdot \bu \ dx+ \i \f{1}{1+\vro^{\var}} f_b \cdot \bb \ dx
				+ \i \f{1}{2(1+\vro^{\var})^2} |\bb|^2  {\rm{div}}\bu \ dx \\
				&\;- \i \f{1}{2(1+\vro^{\var})^2} |\bb|^2  f_{\vro} \ dx-\i \f{\vro^{\var}}{1+\vro^{\var}} \Delta_h \bb \cdot \bb \ dx+\i \f{\vro^{\var}}{1+\vro^{\var}} (e_2 {\rm{div}}\bu -\p_2 \bu) \cdot \bb \ dx  \\
				&\;+\var \i \f{1}{1+\vro^{\var}} (\p_2^2 u^{\var} + \p_3^2 u^{\var} ) \cdot \bu \ dx + \var \i \f{1}{1+\vro^{\var}} \p_3^2 b^{\var} \cdot \bb \ dx 
				:=\; \sum_{i=1}^{9} IX_{i}.
			\end{align*}
			Using the anisotropic type inequality $\eqref{ie:Sobolev}_2$, it is easy to check that
			\beqq \bal
			IX_4 \lesssim &\; \| {\rm{div}}\bu \|_{L^2} \| \bb\|_{L^2}^{\f12} \| \p_1 \bb\|_{L^2}^{\f12} \| \bb\|_{L^2}^{\f14} \| \p_3 \bb\|_{L^2}^{\f14} \| \p_2 \bb\|_{L^2}^{\f14} \| \p_{23} \bb\|_{L^2}^{\f14} \\
			\lesssim &\; \nu \|({\rm{div}}\bu , \p_1 \bb)\|_{L^2}^2 + \| \bb\|_{L^2}^2 \| ( b^0, b^{\var}, \p_3 b^0, \p_3 b^{\var} )\|_{L^2} \| \p_2( b^0, b^{\var}, \p_3 b^0, \p_3 b^{\var} )\|_{L^2},\\
			IX_6 = &\;  \i \nabla_h \bb \cdot \left(\nabla_h (\f{\vro^{\var}}{1+\vro^{\var}})  \bb + \f{\vro^{\var}}{1+\vro^{\var}} \nabla_h \bb\right) \ dx\\
			\lesssim &\; \| \nabla_h \bb \|_{L^2} \| \bb\|_{L^2}^{\f12} \| \p_1 \bb\|_{L^2}^{\f12} \| \nabla_h \vro^{\var} \|_{L^2}^{\f14} \| \p_3 \nabla_h \vro^{\var}\|_{L^2}^{\f14} \| \p_2 \nabla_h \vro^{\var}\|_{L^2}^{\f14} \| \p_{23} \nabla_h \vro^{\var} \|_{L^2}^{\f14}  
			+ \| \vro^{\var} \|_{L^{\infty}} \| \nabla_h \bb \|_{L^2}^2 \\
			\lesssim &\; \nu \|\nabla_h \bb\|_{L^2}^2 + \| \bb\|_{L^2}^2 \| \nabla_h \vro^{\var}\|_{H_{tan}^1}^2  \| \p_3 \nabla_h \vro^{\var}\|_{H_{tan}^1}^2,\\
			IX_7 \lesssim &\;  \|( {\rm{div}}\bu, \p_2 \bu)\|_{L^2}  \| \bb\|_{L^2}^{\f12} \| \p_2 \bb\|_{L^2}^{\f12} \|  \vro^{\var} \|_{L^2}^{\f14} \| \p_3  \vro^{\var}\|_{L^2}^{\f14} \| \p_1 \vro^{\var}\|_{L^2}^{\f14} \| \p_{13} \vro^{\var} \|_{L^2}^{\f14}  \\
			\lesssim &\; \nu  \|( {\rm{div}}\bu, \p_2 \bu, \p_2 \bb)\|_{L^2}^2 + \| \bb\|_{L^2}^2 \| ( \vro^{\var}, \p_3  \vro^{\var} )\|_{L^2}^2  \| \p_1( \vro^{\var}, \p_3  \vro^{\var} )\|_{L^2}^2, 
			\dal \deqq
			and
			\beqq \bal
			IX_8 + IX_9 \lesssim &\; \var \| (\p_2^2 u^{\var}, \p_3^2 u^{\var})\|_{L^2} \| \bu\|_{L^2} +  \var \| \p_3^2 b^{\var}\|_{L^2} \| \bb\|_{L^2}.
			\dal \deqq 
			Now we estimate the term $IX_5$.  Using the anisotropic type inequality $\eqref{ie:Sobolev}_3$, we have
			\beqq \bal
			IX_5 = &\; \i \f{1}{2(1+\vro^{\var})^2} |\bb|^2  (u^\var \cdot \nabla \br+\bu \cdot \nabla \vro^0+\vro^\var \, {\rm{div}} \bu+\br \, {\rm{div}} u^0) \ dx \\
			\lesssim &\; \| \bb\|_{L^2} \| \nabla_h \bb\|_{L^2} \big( \| \nabla_h \br\|_{L^2}^{\f12} \| \p_3 \nabla_h \br\|_{L^2}^{\f12} \| u_h^{\var}\|_{L^{\infty}}
			+\| u_3^{\var} \|_{L^2}^{\f12} \| \p_3 u_3^{\var} \|_{L^2}^{\f12} \| \p_3 \br\|_{L^{\infty}} \\
			&\; + \| \bu_3 \|_{L^2}^{\f12} \| \p_3 \bu_3 \|_{L^2}^{\f12} \|\p_3 \vro^0\|_{L^{\infty}} + \| \nabla_h \vro^0 \|_{L^2}^{\f12} \| \p_3 \nabla_h \vro^0 \|_{L^2}^{\f12} \|\bu_h\|_{L^{\infty}} \\
			&\; + \| {\rm{div}} \bu\|_{L^2}^{\f12} \| \p_3 {\rm{div}} \bu \|_{L^2}^{\f12} \|\vro^{\var}\|_{L^{\infty}}  +  \| \du^0 \|_{L^2}^{\f12} \| \p_3 \du^0 \|_{L^2}^{\f12} \|\br\|_{L^{\infty}}
			\big)\\
			\lesssim &\; \nu\| \nabla_h \bb\|_{L^2}^2 + {\mathcal{A}}(t) \|\bb\|_{L^2}^2 . 
			\dal \deqq
			Next we estimate the term $IX_1$. Integrating by parts, we have
			\beqq \bal
			IX_1 = &\;- \i    (u^\var \cdot \nabla \br+\bu \cdot \nabla \vro^0+\vro^\var \, {\rm{div}} \bu+\br \, {\rm{div}} u^0) \cdot \br \ dx \\
			=&\;  \i ( \f12 \du^{\var} - \du^0)  \br^2  \ dx 
			-\i (\bu \cdot \nabla \vro^0+\vro^\var \, {\rm{div}} \bu) \cdot \br \ dx\\
			\lesssim &\; \| \br\|_{L^2} \| \nabla_h \br\|_{L^2} \|(\du^{\var},\du^0 )\|_{L^2}^{\f12} \|\p_3 (\du^{\var},\du^0 )\|_{L^2}^{\f12} \\
			&\; + \| \br\|_{L^2}^{\f12} \| \p_2 \br\|_{L^2}
			\big(\| \bu_h \|_{L^2}^{\f12} \| \p_1 \bu_h \|_{L^2}^{\f12} \| \nabla_h \vro^0 \|_{L^2}^{\f12} \| \p_3 \nabla_h \vro^0 \|_{L^2}^{\f12}+ \| \bu_3 \|_{L^2}^{\f12} \| \p_3 \bu_3 \|_{L^2}^{\f12} \|\p_3 \vro^0\|_{L^2}^{\f12} \|\p_{13} \vro^0\|_{L^2}^{\f12}   \\
			&\; 
			+ \| {\rm{div}} \bu\|_{L^2}^{\f12} \| \p_3 {\rm{div}} \bu \|_{L^2}  \|  \vro^{\var} \|_{L^2}^{\f14} \| \p_3  \vro^{\var}\|_{L^2}^{\f14} \| \p_1 \vro^{\var}\|_{L^2}^{\f14} \| \p_{13} \vro^{\var} \|_{L^2}^{\f14} 
			\big)\\
			\lesssim &\; \nu\| (\nabla_h \br,{\rm{div}} \bu , \nabla_h \bu)\|_{L^2}^2 + {\mathcal{A}}(t) \|(\br, \bu)\|_{L^2}^2. 
			\dal \deqq
			Then we estimate the term $IX_2$.
			\beqq \bal
			IX_2 = &\; \i ( -u^\var \cdot \nabla \bu-\bu \cdot \nabla u^0-\vro^\var \, \nabla \br-\br \, \nabla \vro^0) \cdot \bu \ dx\\
			&
			- \i \f{\vro^{\var}}{1+\vro^{\var}}(\p_1^2 \bu+\nabla {\rm{div}} \bu + \nabla \bb_2 -\p_2 \bb) \cdot \bu \ dx \\
			&- \i \f{\br}{(1+\vro^{\var})(1+\vro^{0})}(\p_1^2 u^0+\nabla {\rm{div}} u^0 + \nabla b^0_2 -\p_2 b^0) \cdot \bu \ dx \\
			&\; + \i \big(-\f{1}{1+\vro^{\var}}b^{\var} \, \nabla \bb + \f{1}{1+\vro^{\var}}(\bb \cdot \nabla b^0- \bb \, \nabla b^0) - \f{\br}{(1+\vro^{\var})(1+\vro^{0})}(b^0 \cdot \nabla b^0 -b^0 \, \nabla b^0)  \big)  \cdot \bu \ dx \\
			&\; +   \i  \f{1}{1+\vro^{\var}}b^{\var} \cdot \nabla \bb  \cdot \bu \ dx
			:= \sum_{i=1}^{5} IX_{2,i}.
			\dal \deqq
			Integrating by parts, we have
			\beqq \bal
			IX_{2,1} = &\;\f12 \i  \bu^2 \du^{\var}   \ dx + \i \br(  {\rm{div}} \bu \, \vro^{\var} + 
			\bu \cdot \nabla \vro^{\var}) \ dx  - \i ( \bu \cdot \nabla u^0+\br \, \nabla \vro^0) \cdot \bu \ dx \\
			\lesssim &\; \| \bu\|_{L^2} \| \nabla_h \bu\|_{L^2} \|\du^{\var} \|_{L^2}^{\f12} \|\p_3 \du^{\var}\|_{L^2}^{\f12} + \| \br\|_{L^2}^{\f12} \| \p_2 \br\|_{L^2}^{\f12} ( \| \bu_h \|_{L^2}^{\f12} \| \p_1 \bu_h \|_{L^2}^{\f12} \| \nabla_h \vro^{\var} \|_{L^2}^{\f12} \| \p_3 \nabla_h \vro^{\var} \|_{L^2}^{\f12}
			\\ &\; + \| \bu_3 \|_{L^2}^{\f12} \| \p_3 \bu_3 \|_{L^2}^{\f12} \|\p_3 \vro^{\var}\|_{L^2}^{\f12} \|\p_{13} \vro^{\var}\|_{L^2}^{\f12}  
			+\| {\rm{div}} \bu \|_{L^2} \|  \vro^{\var} \|_{L^2}^{\f14} \| \p_3  \vro^{\var}\|_{L^2}^{\f14} \| \p_1 \vro^{\var}\|_{L^2}^{\f14} \| \p_{13} \vro^{\var} \|_{L^2}^{\f14}) \\
			&\;+ \| \bu\|_{L^2}^{\f12} \| \p_1 \bu\|_{L^2}^{\f12} (\| \bu_h \|_{L^2}^{\f12} \| \p_2 \bu_h \|_{L^2}^{\f12} \| \nabla_h u^0 \|_{L^2}^{\f12} \| \p_3 \nabla_h u^0 \|_{L^2}^{\f12} + \| \bu_3 \|_{L^2}^{\f12} \| \p_3 \bu_3 \|_{L^2}^{\f12} \|\p_3 u^0\|_{L^2}^{\f12} \|\p_{23} u^0\|_{L^2}^{\f12} )\\
			&\; + \| \br\|_{L^2}^{\f12} \| \p_2 \br\|_{L^2}^{\f12} (\| \bu_h \|_{L^2}^{\f12} \| \p_1 \bu_h \|_{L^2}^{\f12} \| \nabla_h \vro^0 \|_{L^2}^{\f12} \| \p_3 \nabla_h  \vro^0 \|_{L^2}^{\f12} + \| \bu_3 \|_{L^2}^{\f12} \| \p_3 \bu_3 \|_{L^2}^{\f12} \|\p_3  \vro^0\|_{L^2}^{\f12} \|\p_{13}  \vro^0\|_{L^2}^{\f12} ) \\
			\lesssim  &\; \nu \|(\p_2 \br, {\rm{div}} \bu, \nabla_h \bu) \|_{L^2}^2 + {\mathcal{A}}(t) \|(\br, \bu)\|_{L^2}^2,
			\dal \deqq
			and similarly, we have
			\beqq \bal
			IX_{2,2}= &\; \i \p_1 \bu \cdot \p_1 ( \f{\vro^{\var}}{1+\vro^{\var}} \,\bu) \, dx + \i  {\rm{div}} \bu \,  {\rm{div}} ( \f{\vro^{\var}}{1+\vro^{\var}} \,\bu) \, dx\\
            &\; - \i   \bb_2 \, {\rm{div}} ( \f{\vro^{\var}}{1+\vro^{\var}} \,\bu)  \ dx 
			   + \i \f{\vro^{\var}}{1+\vro^{\var}} \p_2 \bb \cdot \bu \ dx \\
			\lesssim &\; \nu \|(\p_2 \bb, {\rm{div}} \bu, \nabla_h \bu) \|_{L^2}^2 + {\mathcal{A}}(t) \|(\bb, \bu)\|_{L^2}^2,\\
			IX_{2,3} \lesssim &\; \nu \|(\p_2 \br, {\rm{div}} \bu, \nabla_h \bu) \|_{L^2}^2 + {\mathcal{A}}(t) \|(\br, \bu)\|_{L^2}^2,\\
			IX_{2,4} = &\; \i \big( \f{1}{1+\vro^{\var}}(\bb \cdot \nabla b^0 - \bb \, \nabla b^0) - \f{\br}{(1+\vro^{\var})(1+\vro^{0})}(b^0 \cdot \nabla b^0 -b^0 \, \nabla b^0)  \big)  \cdot \bu \ dx \\
			&\; +\i   \bb \,{\rm{div}} ( \f{b^{\var}}{1+\vro^{\var}} \,\bu)  \ dx \\
			\lesssim  &\; \nu \|(\nabla_h \bb,\p_2 \br, {\rm{div}} \bu, \nabla_h \bu) \|_{L^2}^2 + {\mathcal{A}}(t) \|(\br, \bu,  \bb )\|_{L^2}^2.
			\dal \deqq
			It remains to deal with the term $IX_{2,5}$, which will be estimated together with $IX_3$ as follows:
			\beqq  \bal
			 IX_{2,5} + IX_3 
			= &\;  \i \f{1}{1+\vro^{\var}} (b^\var \cdot \nabla \bb \cdot \bu + b^\var \cdot \nabla \bu \cdot \bb) \ dx -   \i \f{1}{1+\vro^{\var}} u^\var \cdot \nabla \bb \cdot \bb\ dx\\
			&\; +   \i \f{1}{1+\vro^{\var}} (-\bu \cdot \nabla b^0-b^\var \, {\rm{div}} \bu-\bb \, {\rm{div}} u^0+\bb\cdot \nabla u^0) \cdot \bb \ dx\\
			=  &\; \i  \bb \cdot \bu \,  {\rm{div}} (\f{1}{1+\vro^{\var}} b^\var ) \ dx+ \f12 \i  |\bb|^2  {\rm{div}} (\f{1}{1+\vro^{\var}} u^\var ) \ dx\\
			&\; + \i \f{1}{1+\vro^{\var}} (-\bu \cdot \nabla b^0-b^\var \, {\rm{div}} \bu-\bb \, {\rm{div}} u^0+\bb\cdot \nabla u^0) \cdot \bb \ dx \\
			\lesssim &\; \nu \|(\nabla_h \bb,{\rm{div}} \bu, \nabla_h \bu) \|_{L^2}^2 + {\mathcal{A}}(t) \|(\bu,  \bb)\|_{L^2}^2,
			\dal \deqq
			which yields that
			\beqq
			IX_2 + IX_3 \lesssim \nu \|(\nabla_h \bb, \p_2 \br, {\rm{div}} \bu, \nabla_h \bu) \|_{L^2}^2 + {\mathcal{A}}(t) \|(\br, \bu,\bb)\|_{L^2}^2.
			\deqq
			Combining the estimates from $IX_1$ to $IX_9$, we finish the proof of this lemma.
		\end{proof}
		
		Next, we will establish the dissipation estimate for the quantity $(\p_2 \bu, \nabla_h \br)$ in $L^2-$norm.
		\begin{lemm}
			Under the conditions of Theorem \ref{main_result_three},
			then it holds
			\beq\label{4301}
			\begin{aligned}
				&\;\frac{d}{dt}\i  ( \p_2 \bb \cdot  \bu + \bu_h  \cdot   \nabla_h \br) \ dx+\|(\p_2 \bu,\nabla_h \br)\|_{L^2}^2 \\
				\lesssim
				&\;\|(\p_1 \bu, \p_1^2 \bu, {\rm{div}} \bu, \nabla {\rm{div}} \bu,\nabla_h \bb, \Delta_h \bb)\|_{L^2}^2
				+ \mathcal{{A}}(t)
				\mathcal{\overline{E}}(t)
				+\var \|(\p_2^2u^\var, \p_3^2 u^\var, \p_3^2 b^\var)\|_{L^2}
				\mathcal{\overline{E}}(t)^{\frac12}.
			\end{aligned}
			\deq
		\end{lemm}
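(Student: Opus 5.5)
This lemma is the difference-system analogue of the enhanced-dissipation identity \eqref{equ-phu-vro} with $\alpha_h=0$, so I would repeat that computation on \eqref{401}. We obtain $\|\nabla_h\br\|_{L^2}^2$ by testing the horizontal momentum equation $\eqref{401}_2$ against $\nabla_h\br$. We obtain $\|\p_2\bu\|_{L^2}^2$ by testing the magnetic equation $\eqref{401}_3$ against $\p_2\bu$. The time derivatives are then moved off.

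\textbf{Step 1: the density part.} Write $\nabla_h\br=-\p_t\bu_h+\p_1^2\bu_h+\nabla_h{\rm div}\bu-\nabla_h\bb_2+\p_2\bb_h+\var\frac{1}{1+\vro^\var}(\p_2^2u^\var_h+\p_3^2u^\var_h)+(f_u)_h$. The time term is handled by
\[
-\i\p_t\bu_h\cdot\nabla_h\br\,dx=-\frac{d}{dt}\i\bu_h\cdot\nabla_h\br\,dx+\i\nabla_h\cdot\bu_h\,({\rm div}\bu-f_\vro)\,dx .
\]
Here the density equation $\eqref{401}_1$ is used, and no derivative falls on $\br$ beyond those already present.

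\textbf{Step 2: the velocity part.} Write $\p_2\bu=\p_t\bb-\Delta_h\bb+e_2{\rm div}\bu-\var\p_3^2b^\var-f_b$. The time term becomes
\[
\i\p_t\bb\cdot\p_2\bu\,dx=-\frac{d}{dt}\i\p_2\bb\cdot\bu\,dx+\i\p_2\bb\cdot\p_t\bu\,dx ,
\]
and $\p_t\bu$ is substituted from $\eqref{401}_2$. The linear terms $\nabla\br$, $\nabla\bb_2$ and $\nabla{\rm div}\bu$ that appear paired with $\p_2\bb$ vanish after integration by parts, because ${\rm div}\,\bb=0$. The remaining linear terms, $\p_1^2\bu$ and $\p_2\bb$ (the latter contributing $\|\p_2\bb\|^2$), are bounded by the right-hand side of \eqref{4301}.

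\textbf{Step 3: remaining linear and $\var$ terms.} All other linear cross terms are handled by Young's inequality, absorbing $\nu\|(\p_2\bu,\nabla_h\br)\|_{L^2}^2$ into the left. This leaves exactly $\|(\p_1\bu,\p_1^2\bu,{\rm div}\bu,\nabla{\rm div}\bu,\nabla_h\bb,\Delta_h\bb)\|_{L^2}^2$. Each $\var$-forcing term is paired with $\bu$, $\br$ or $\bb$ after moving one derivative, for example $\var\i\frac{1}{1+\vro^\var}\p_3^2u_h^\var\cdot\nabla_h\br\,dx$ after integrating $\nabla_h$ by parts. Each is then bounded by $\var\|(\p_2^2u^\var,\p_3^2u^\var,\p_3^2b^\var)\|_{L^2}\,\mathcal{\overline{E}}(t)^{1/2}$, using $H^1$ control of $\br$ through $\mathcal{\overline E}$. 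The derivatives landing on $\frac{1}{1+\vro^\var}$ are bounded using the uniform bounds of Theorem~\ref{main_result_one}.

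\textbf{Step 4: the nonlinear terms — the main obstacle.} Every remaining term has the form $\int f_\cdot\times(\text{dissipative quantity})$, namely $f_u\cdot\nabla_h\br$, $f_u\cdot\p_2\bb$, $f_b\cdot\p_2\bu$ and $\nabla_h\cdot\bu_h\, f_\vro$. Each must be bounded by $\nu\overline{\mathcal D}+\mathcal A(t)\overline{\mathcal E}$ using the anisotropic inequality \eqref{ie:Sobolev}, with factors built from $f=(\vro^0,u^0,b^0,\vro^\var,u^\var,b^\var)$ as in the definition of $\mathcal A$. The delicate terms are the transport terms $u^\var\cdot\nabla\br$ and $u^\var\cdot\nabla\bu$ when they carry $\p_3$ of a difference quantity, and $\frac{\vro^\var}{1+\vro^\var}\nabla{\rm div}\bu$. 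Vertical derivatives of the differences are available only in $L^2$, through $\overline{\mathcal E}$, and are not dissipated. For these I plan to put the difference factor in $L^2$ (a norm controlled by $\overline{\mathcal E}$) and to place the $L^\infty$ or anisotropic norms on the background solution. For example,
\[
\i u_3^\var\p_3\br\,\nabla_h\cdot\bu_h\,dx\lesssim\|u_3^\var\|_{L^\infty}\|\p_3\br\|_{L^2}\|\nabla_h\bu\|_{L^2}\le\nu\overline{\mathcal D}+C\|u^\var\|_{L^\infty}^2\overline{\mathcal E},
\]
and $\|u^\var\|_{L^\infty}^2$ is bounded by the $\|\nabla_h f\|_{H^2}\|\nabla_h f\|_{H^3}$-type terms in $\mathcal A$. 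The second-order term $\frac{\vro^\var}{1+\vro^\var}\nabla{\rm div}\bu$ is simply bounded by $\|\vro^\var\|_{L^\infty}\|\nabla{\rm div}\bu\|_{L^2}$, which is already present on the right-hand side. Terms containing $\br$ times second derivatives of the limit solution $(u^0,b^0)$ put $\br$ in $L^2$ or the anisotropic norm $L^2_{x_3}$ and the background factor in $L^\infty$; this is where $\|g\|_{H^3}^2$ and the mixed norms in $\mathcal A$ enter.
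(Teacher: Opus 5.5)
Your proposal is correct and follows the paper's proof essentially step for step: the same test functions ($\nabla_h\br$ against the horizontal momentum equation, $\p_2\bu$ against the magnetic equation), the same transfer of time derivatives through $\eqref{401}_1$ and $\eqref{401}_2$ (yielding the paper's terms $X_1,\dots,X_8$), the cancellation of $\nabla({\rm div}\bu-\bb_2-\br)\cdot\p_2\bb$ via ${\rm div}\,\bb=0$, Young's inequality for the linear cross terms, and anisotropic bounds with the difference in $L^2$ and the background in $L^\infty$ for the nonlinear terms. Two small fixes are needed. First, do not integrate by parts in the $\var$-terms, since that produces $\nabla_h\p_3^2u^\var$, which does not appear on the right side of \eqref{4301}; instead bound these terms directly by Cauchy--Schwarz using $\|(\nabla_h\br,\p_2\bb,\p_2\bu)\|_{L^2}\le\mathcal{\overline{E}}(t)^{1/2}$. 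Second, $\|u^\var\|_{L^\infty}^2$ cannot be controlled by purely horizontal norms such as $\|\nabla_h f\|_{H^2}\|\nabla_h f\|_{H^3}$, so use $\eqref{ie:Sobolev}_1$ in the form $\|(u^\var,\p_3u^\var)\|_{L^2}^{1/2}\|\nabla_h(u^\var,\p_3u^\var)\|_{H^1_{tan}}^{3/2}$, which is time-integrable by the decay estimates and can be absorbed into $\mathcal{A}$ (the paper is equally brief on this point).
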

		\begin{proof}
			Using the equation \eqref{401}, we have
			\begin{align*}
				&\;\frac{d}{dt}\i  ( \p_2 \bb \cdot  \bu + \bu_h  \cdot   \nabla_h \br) \ dx+\i (|\p_2 \bu|^2 +|\nabla_h \br|^2) dx\\
				= &\;  \i   \nabla_h \cdot  \bu_h \cdot ({\rm{div}}\bu-f_{\vro}) \ dx  - \i  f_b \cdot \p_{2} \bu   \ dx  + \i  ((f_u)_h  \cdot \nabla_h  \br +  f_u  \cdot 
				\p_{2} \bb )\ dx \\
				&\; + \i  (\p_1^2 \bu_h +\nabla_h {\rm{div}}\bu + \p_2 \bb_h - \nabla_h \bb_2) \cdot \nabla_h  \br\ dx + \i   \p_{2} \bu \cdot (e_2  {\rm{div}}\bu - \Delta_h \bb)  \ dx \\
				&\;+\i  ( \p_1^2 \bu + \nabla {\rm{div}}\bu + \p_2 \bb - \nabla \bb_2 -\nabla \br)  \cdot  \p_{2} \bb\ dx
				+ \var \i \p_3^2 b^{\var} \cdot \p_2  \bu  \ dx \\
				&\;  +\var \i \f{1}{1+ \vro^{\var}} \Big((\p_2^2 u_h^{\var} + \p_3^2 u_h^{\var}) \nabla_h  \br + (\p_2^2 u^{\var} + \p_3^2 u^{\var}) \p_2 \bb \Big)\ dx\\
				:= &\; \sum_{i=1}^8 X_{i}.
			\end{align*}
			It is easy to check that
			\beqq \bal
			X_4 \lesssim &\; \nu \| \nabla_h \br\|_{L^2}^2 + \| (\p_1^2 \bu_h , \nabla_h {\rm{div}}\bu, \nabla_h \bb)\|_{L^2}^2,\\
			X_5 \lesssim &\; \nu \| \p_2 \bu\|_{L^2}^2 + \| ({\rm{div}}\bu, \Delta_h \bb)\|_{L^2}^2,\\
			X_6 =&\;  \i  ( \p_1^2 \bu + \p_2 \bb  )\cdot  \p_{2} \bb  \ dx+ 
			\i\nabla(  {\rm{div}}\bu  - \bb_2 - \br)  \cdot  \p_{2} \bb\ dx 
			\lesssim  \| (\p_1^2 \bu , \p_2 \bb)\|_{L^2}^2,
			\dal \deqq
			and
			\beqq \bal
			X_7+ X_8 \lesssim \var \| ( \nabla_h \br, \p_2 \bb, \p_2 \bu)\|_{L^2} \|(\p_2^2 u^{\var},\p_3^2 b^{\var}, \p_3^2   u^{\var})\|_{L^2}.
			\dal \deqq
			Using the anisotropic type inequality $\eqref{ie:Sobolev}$ and H\"older inequality, we have\
			\beqq \bal
			X_1 = &\;  \i   \nabla_h \cdot  \bu_h \cdot ({\rm{div}}\bu+u^\var \cdot \nabla \br+\bu \cdot \nabla \vro^0+\vro^\var \, {\rm{div}} \bu+\br \, {\rm{div}} u^0) \ dx\\
			\lesssim &\; (1+ \| \vro^{\var}\|_{L^{\infty}} )\| \nabla_h \cdot \bu_h\|_{L^2} \| {\rm{div}}\bu\|_{L^2} +  \| \nabla_h  \cdot \bu_h\|_{L^2}  \| u^{\var} \|_{L^{\infty}} \| \nabla \br\|_{L^2} + \| \nabla_h  \cdot \bu_h\|_{L^2}  \\&\; \times \big(\| \bu_h \|_{L^2}^{\f12} \| \p_1 \bu_h \|_{L^2}^{\f12} \|\nabla_h \vro^0\|_{H_{tan}^1}^{\f12} \|\p_{3}  \nabla_h \vro^0\|_{H_{tan}^1}^{\f12} + \| \bu_3 \|_{L^2}^{\f12} \| \p_3 \bu_3 \|_{L^2}^{\f12} \|\p_3  \vro^0\|_{H_{tan}^2} \\
			&\; + \| \br\|_{L^2}^{\f12} \|\p_2 \br\|_{L^2}^{\f12} \| \du^0\|_{H_{tan}^1}^{\f12}\| \p_3 \du^0\|_{H_{tan}^1}^{\f12} \big)\\
			\lesssim &\; \nu \| \p_2 \bu\|_{L^2}^2+ \| (\p_1 \bu, {\rm{div}}\bu)\|_{L^2}^2 + {\mathcal{A}(t)} \| ( \br, \nabla \br, \bu )\|_{L^2}^2,
			\dal \deqq
			where we have used the inequality $\eqref{ie:Sobolev}_1$  for the term $\|u^{\var}\|_{L^{\infty}}$.
			Similarly, we can obtain that
			\beqq \bal
			X_2 + X_3 \lesssim &\;  \nu \| (\p_2 \bu, \nabla_h \br)\|_{L^2}^2+ \| (\p_1 \bu, {\rm{div}}\bu, \nabla_h \bb, \p_1^2 \bu, \nabla {\rm{div}}\bu)\|_{L^2}^2 + {\mathcal{A}(t)} \mathcal{\overline{E}}(t)^{\frac12}.
			\dal \deqq
			Combining the estimates from $X_1$ to $X_8$  and using the smallness of $\nu$, we finish the proof of this lemma.
		\end{proof}
		
		Next, we will establish the one-order derivative estimate for the quantity $(\br,\bu, \bb)$ in $L^2-$norm.
		\begin{lemm}
			Under the conditions of Theorem \ref{main_result_three},
			then it holds
			\beq\label{4401}
			\begin{aligned}
				&\; \frac{d}{dt}\Big(\|(\p_i \br,\p_i \bu,  \f{1}{1+\vro^{\var} }\p_i \bb)\|_{L^2}^2 + \i |\p_3 \br|^2 (\vro^0 +\vro^{\var}) \ dx +\i \bu_3 \, \p_3  \br \, \nabla_h \cdot u_h^{\var}\ dx \Big) +\|\p_i(\p_1 \bu, {\rm{div}} \bu,\nabla_h \bb)\|_{L^2}^2 \\
				\lesssim &\; 
				\nu \mathcal{\overline{D}}(t)
				+ (\mathcal{{A}}(t) + \mathcal{{B}}(t))
				\mathcal{\overline{E}}(t)+\var \|(\p_{2}^2 u^\var, \p_3^2 u^\var, \p_3^2b^\var)\|_{H^1}
				\mathcal{\overline{E}}(t)^{\frac12},
			\end{aligned}
			\deq
			where  $i=1,2,3$, and $\nu$ is a small positive constant.
		\end{lemm}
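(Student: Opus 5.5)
Apply $\p_i$ ($i=1,2,3$) to \eqref{401} and take the $L^2$ inner product with $(\p_i\br,\p_i\bu,\f{1}{1+\vro^\var}\p_i\bb)$, exactly as in Lemma \ref{lemma41}. The linear couplings $\nabla\br\cdot\bu$ versus $\du\,\br$, and $(\nabla\bb_2-\p_2\bb)\cdot\bu$ versus $(e_2\du-\p_2\bu)\cdot\bb$, cancel after integration by parts. The dissipation $\|\p_i(\p_1\bu,{\rm div}\bu,\nabla_h\bb)\|_{L^2}^2$ appears on the left. The weight $\f{1}{1+\vro^\var}$ produces a commutator $\p_t(\f{1}{1+\vro^\var})|\p_i\bb|^2$ and a term $\f{\vro^\var}{1+\vro^\var}(\Delta_h\p_i\bb+\ldots)\cdot\p_i\bb$; both are handled as $IX_5$–$IX_7$ and give $\nu\overline{\mathcal D}+\mathcal A\overline{\mathcal E}$. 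The $\var$-forcing terms are bounded by Cauchy–Schwarz, $\var\|(\p_2^2u^\var,\p_3^2u^\var,\p_3^2b^\var)\|_{H^1}\overline{\mathcal E}^{1/2}$, after moving $\p_i$ onto $\f{1}{1+\vro^\var}$ where needed.

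For the nonlinear terms $\p_i f_\vro\cdot\p_i\br$, $\p_i f_u\cdot\p_i\bu$ and $\p_i f_b\cdot\p_i\bb$, expand by Leibniz. Every term in which a derivative falls on the background $(\vro^0,u^0,b^0,\vro^\var,u^\var,b^\var)$ is handled with the anisotropic inequality \eqref{ie:Sobolev}. We always put an $L^2\to L^2$ factor with a horizontal derivative on the difference (absorbed into $\nu\overline{\mathcal D}$, since $\nabla_h\br,\p_2\bu,\p_1\bu,\nabla_h\bb$ are dissipated) and the remaining background factors into $\mathcal A$ or $\mathcal B$. The exponents in $\mathcal B$ ($\f45,\f43,\f13$) are chosen so that Young's inequality against $\nu\overline{\mathcal D}$ closes. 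The top-order transport terms $u^\var\cdot\nabla\p_i\bu\cdot\p_i\bu$ and $u^\var\cdot\nabla\p_i\bb\cdot\f{\p_i\bb}{1+\vro^\var}$ are integrated by parts into $\du^\var|\p_i\bu|^2$-type terms. The pair $b^\var\cdot\nabla\p_i\bb\cdot\p_i\bu+b^\var\cdot\nabla\p_i\bu\cdot\p_i\bb$, with the weight $\f{1}{1+\vro^\var}$, cancels up to ${\rm div}(\f{b^\var}{1+\vro^\var})$, as in Lemma \ref{lemma32}. The viscous top-order pieces $\f{\vro^\var}{1+\vro^\var}\p_i(\p_1^2\bu+\nabla{\rm div}\bu)$ are integrated by parts to give $\|\vro^\var\|_{L^\infty}\overline{\mathcal D}$.

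The main obstacle is the density transport at top order. The term $u^\var\cdot\nabla\p_i\br\cdot\p_i\br$ integrates to $\f12\du^\var|\p_i\br|^2$. The term $\vro^\var\p_i{\rm div}\bu\,\p_i\br$ is fine for $i=1,2$, since $\p_i\br$ is dissipated. For $i=3$, however, $\|\p_3\br\|$ is not dissipated, and the terms $\int|\p_3\br|^2\du^{\var}$, $\int|\p_3\br|^2\du^0$ and $\int\vro^\var\p_3{\rm div}\bu\,\p_3\br$ are problematic. I would mimic $II_{1,1}$ and $II_{1,2}$.

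First, substitute $\du^{\var}=-\p_t\vro^\var-u^\var\cdot\nabla\vro^\var-\vro^\var\du^\var$ (and the analogue for $\vro^0$). This produces $\f{d}{dt}\int|\p_3\br|^2(\vro^0+\vro^\var)$ plus a term $2\int\vro\,\p_3\br\,\p_t\p_3\br$. Replace $\p_t\p_3\br$ via \eqref{401}$_1$; this regenerates $\vro\,\p_3\br\,\p_3{\rm div}\bu$, which is bounded by $\nu\|\p_3{\rm div}\bu\|^2$ plus a product of background norms in $\mathcal A$.

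Second, the part $\nabla_h\cdot u^\var_h|\p_3\br|^2$ left after splitting $\du=\p_3u_3+\nabla_h\cdot u_h$ is not small in $\overline{\mathcal D}$. There I substitute $\p_3\br$ from \eqref{401}$_2$, namely $\p_3\br=-\p_t\bu_3+\ldots$. This yields $\f{d}{dt}\int\bu_3\p_3\br\,\nabla_h\cdot u^\var_h$ (the corrector on the left) plus terms with $\p_t$ on $\p_3\br$ or on $\nabla_h\cdot u_h^\var$. These are re-substituted from the equations and bounded by $(\mathcal A+\mathcal B)\overline{\mathcal E}+\nu\overline{\mathcal D}$. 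The factor $\nabla_h\p_3^2 f$ appearing there is why $\mathcal B$ contains $\|\p_3^2f\|^{4/5}\|\nabla_h\p_3^2f\|^{4/5}$.
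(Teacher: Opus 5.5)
Your proposal is correct and follows essentially the same route as the paper: the same weighted energy identity for $\p_i(\br,\bu,\bb)$, with the Leibniz terms bounded via \eqref{ie:Sobolev} as in Lemma~\ref{lemma41}. For $i=3$, the paper likewise forms the combination $-\f12\int|\p_3\br|^2\du^\var-\int|\p_3\br|^2\du^0+\int|\p_3\br|^2\,\nabla_h\cdot u_h^\var$ and treats it by substituting $\du^\var,\du^0$ from the density equations and $\p_3\br$ from $\eqref{401}_2$, which produces exactly the two correctors in \eqref{4401}.

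Two small remarks.
\begin{itemize}
\item The term $\int\vro^\var\,\p_3{\rm div}\bu\,\p_3\br$ is not actually problematic. The paper bounds it directly, since $\p_3{\rm div}\bu$ is part of $\overline{\mathcal D}$.
\item Re-substituting $\p_t\p_3\br$ creates $\nabla\p_3\br$ terms, which must be removed by integration by parts. Specifically, $2\int\vro^\var\p_3\br\,u^\var\cdot\nabla\p_3\br$ cancels exactly against $\int|\p_3\br|^2(u^\var\cdot\nabla\vro^\var+\vro^\var\du^\var)$. In $\int\nabla_h\cdot u_h^\var\,\p_3 f_\vro\,\bu_3$, the $\p_3$ is moved onto $\nabla_h\cdot u_h^\var\,\bu_3$.
\end{itemize}
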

		\begin{proof}
			For $i=1,2,3$, the equation \eqref{401} yields that 
			\beqq
			\begin{aligned}
				&\; \frac{d}{dt}\|(\p_i \br,\p_i \bu,  \f{1}{1+\vro^{\var} }\p_i \bb)\|_{L^2}^2
				+\|\p_i(\p_1 \bu, {\rm{div}} \bu,\nabla_h \bb)\|_{L^2}^2  \\
				= &\; \i\p_i  f_\vro \cdot \p_i \br \ dx +  \i \p_if_u \cdot \p_i\bu \ dx+ \i \f{1}{1+\vro^{\var}} \p_i f_b \cdot \p_i \bb \ dx
				+ \i \f{1}{2(1+\vro^{\var})^2} |\p_i \bb|^2  {\rm{div}}\bu \ dx \\
				&\;- \i \f{1}{2(1+\vro^{\var})^2} |\p_i \bb|^2  f_{\vro} \ dx-\i \f{\vro^{\var}}{1+\vro^{\var}} \Delta_h \p_i \bb \cdot \p_i \bb \ dx+\i \f{\vro^{\var}}{1+\vro^{\var}} (e_2 \p_i{\rm{div}}\bu -\p_2 \p_i\bu) \cdot \p_i \bb \ dx  \\
				&\;+\var \i  \left( \p_i(\f{1}{1+\vro^{\var}}) (\p_2^2 u^{\var} + \p_3^2 u^{\var} ) +\f{1}{1+\vro^{\var}} \p_i(\p_2^2 u^{\var} + \p_3^2 u^{\var} ) \right)\cdot \p_i\bu \ dx + \var \i \f{1}{1+\vro^{\var}} \p_i \p_3^2 b^{\var} \cdot \p_i\bb \ dx \\
				:=&\; \sum_{j=1}^{9} XI_{j}.
			\end{aligned}
			\deqq
			Similar to the estimates in Lemma \ref{lemma41}, it is easy to check that
			\beqq \bal
			XI_4 \lesssim&\; \|  \p_i \bb\|_{L^2} \|\nabla_h \p_i \bb\|_{L^2} \| {\rm{div}}\bu\|_{L^2}^{\f12} \| \p_3 {\rm{div}}\bu\|_{L^2}^{\f12}  \lesssim \nu \| ({\rm{div}}\bu, \p_3 {\rm{div}}\bu, \nabla_h \p_i \bb)\|_{L^2}^2,  \\
			XI_5 = &\; \i \f{1}{2(1+\vro^{\var})^2} |\p_i \bb|^2  (u^\var \cdot \nabla \br+\bu \cdot \nabla \vro^0+\vro^\var \, {\rm{div}} \bu+\br \, {\rm{div}} u^0) \ dx \\
			\lesssim &\; \nu \| \nabla_h \p_i \bb\|_{L^2}^2 + \mathcal{{A}}(t) \| \p_i \bb \|_{L^2}^2,\\
			XI_6 = &\; \i \nabla_h \p_i \bb \cdot \left(\nabla_h (\f{\vro^{\var}}{1+\vro^{\var}}) \p_i \bb + \f{\vro^{\var}}{1+\vro^{\var}} \nabla_h \p_i \bb\right) \ dx 
			\lesssim \nu \| \nabla_h \p_i \bb\|_{L^2}^2 + \mathcal{{A}}(t) \| \p_i \bb \|_{L^2}^2,\\
			XI_7 = &\; \i \f{\vro^{\var}}{1+\vro^{\var}} e_2 \p_i{\rm{div}}\bu  \cdot \p_i \bb \ dx + \i \p_i\bu  \cdot \left(\p_2 (\f{\vro^{\var}}{1+\vro^{\var}})  \p_i \bb + \f{\vro^{\var}}{1+\vro^{\var}} \p_2 \p_i \bb \right) \ dx  \\
			\lesssim &\; \nu \| \p_i( {\rm{div}}\bu, \p_1 \bu, \p_2  \bb)\|_{L^2}^2 + \mathcal{{A}}(t) \| (\p_i \bb, \p_i \bu) \|_{L^2}^2,\\
			\dal \deqq 
			and 
			\beqq \bal
			XI_8 + XI_9 \lesssim &\; \var \| (\p_2^2 u^{\var}, \p_3^2 u^{\var})\|_{H^1} \| \p_i \bu\|_{L^2} +  \var \| \p_i \p_3^2 b^{\var}\|_{L^2} \| \p_i \bb\|_{L^2}.
			\dal \deqq 
			Thus, it remains to estimate the terms $XI_1$ to $XI_3$. We first split $XI_1$ as follows:
			\beqq \bal
			XI_1 = &\;- \i \p_i  (u^\var \cdot \nabla \br) \cdot \p_i \br \ dx  - \i \p_i  ( \bu \cdot \nabla \vro^0+\vro^\var \, {\rm{div}} \bu)  \cdot \p_i \br \ dx - \i \p_i  ( \br \, {\rm{div}} u^0) \cdot \p_i \br \ dx \\
			= &\;\i |\p_i \br|^2 (\f12 \du^{\var} - \du^0) \ dx - \i    \p_i u^\var_h \cdot \nabla_h \br  \cdot \p_i \br \ dx -\i \p_i u^\var_3 \cdot \p_3 \br   \cdot \p_i \br \ dx 
			\\
			&\; - \i \p_i  ( \bu \cdot \nabla \vro^0+\vro^\var \, {\rm{div}} \bu)  \cdot \p_i \br \ dx 
			- \i    \br \, \p_i {\rm{div}} u^0 \cdot \p_i \br \ dx \\
			:= &\; \sum_{j=1}^{5} XI_{1,j}.
			\dal \deqq
			When $i=1,2$, it is easy to check that
			\beqq
			XI_1 \lesssim \nu \| (  \nabla_h \br, \nabla_h \bu, {\rm{div}} \bu, \p_1 \nabla_h \bu, \nabla{\rm{div}} \bu)\|_{L^2}^2 + {\mathcal{A}}(t) \| (\br, \p_3 \br)\|_{L^2}^2.
			\deqq
			When $i=3$, we can obtain that 
			\begin{align*}
				XI_{1,2} \lesssim &\; \| \p_3 \br\|_{L^2}  \| \nabla_h \br\|_{L^2} \|\p_3 u^{\var}\|_{L^2}^{\f18} \|\p_3^2 u^{\var}\|_{L^2}^{\f18} \|\p_3 \nabla_h u^{\var}\|_{H_{tan}^1}^{\f38}  \|\p_3^2 \nabla_h u^{\var}\|_{H_{tan}^1}^{\f38} \\
				\lesssim &\;  \nu \|  \nabla_h \br\|_{L^2}^2 + {\mathcal{A}}(t)  \| \p_3 \br\|_{L^2}^2,\\
				XI_{1,4}  \lesssim &\; \| \p_3 \br\|_{L^2} \Big(\| \p_3 \bu \|_{L^2}^{\f12} \| \p_{13} \bu\|_{L^2}^{\f12} \|\nabla \vro^0\|_{L^2}^{\f14}  \| \p_3 \nabla \vro^0\|_{L^2}^{\f14} \|\p_2 \nabla \vro^0\|_{L^2}^{\f14} \|\p_{23} \nabla \vro^0\|_{L^2}^{\f14} \\
				&\; + \| \bu_h \|_{L^2}^{\f14} \| \nabla_h \bu_h\|_{L^2}^{\f12} \|\p_{12} \bu_h\|_{L^2}^{\f14}  \| \p_3 \nabla_h \vro^0\|_{L^2}^{\f12} \|\p_{3}^2 \nabla_h \vro^0\|_{L^2}^{\f12}\\
				&\; + \| \bu_3 \|_{L^2}^{\f14} \| \p_3 \bu_3\|_{L^2}^{\f14}  \| \p_1 \bu_3\|_{L^2}^{\f14}  \|\p_{13} \bu_3\|_{L^2}^{\f14}  \| \p_3^2 \vro^0\|_{L^2}^{\f12} \|\p_2 \p_{3}^2 \vro^0\|_{L^2}^{\f12} \\
				&\; + \| \p_3  {\rm{div}} \bu\|_{L^2} \|\vro^{\var}\|_{L^{\infty}} + \| {\rm{div}} \bu\|_{L^2}^{\f12} \| \p_3 {\rm{div}} \bu\|_{L^2}^{\f12}  \| \p_3 \vro^\var\|_{L^2}^{\f14} \| \nabla_h \p_3 \vro^\var\|_{L^2}^{\f12} \| \nabla_h^2 \p_3 \vro^\var\|_{L^2}^{\f14}\Big) \\
				\lesssim &\; \nu \| ( \nabla_h \bu, {\rm{div}} \bu, \p_1 \nabla \bu, \nabla{\rm{div}} \bu)\|_{L^2}^2 + ({\mathcal{A}}(t) +{\mathcal{B}}(t))  \|(\bu, \p_3 \bu, \p_3 \br)\|_{L^2}^2, 
			\end{align*}
			and
			\beqq \bal
			XI_{1,5} \lesssim  &\; 
			\| \p_3 \br\|_{L^2}  \|\br\|_{L^2}^{\f12} \|\p_2 \br\|_{L^2}^{\f12} \|\p_3 \du^0\|_{L^2}^{\f14} \|\p_3^2 \du^0\|_{L^2}^{\f14} \|\p_3 \nabla_h \du^0\|_{L^2}^{\f14}  \|\p_3^2 \nabla_h \du^0\|_{L^2}^{\f14}\\ 
			\lesssim &\; \nu \|  \p_2 \br\|_{L^2}^2 + {\mathcal{B}}(t)  \| (\br, \p_3 \br)\|_{L^2}^2.
			\dal \deqq
			Now we estimate the terms $XI_{1,1}$ and $XI_{1,3}$. 
			\beqq \bal
			XI_{1,1} + XI_{1,3} 
			= &\;  -  \f12 \i |\p_3 \br|^2 \du^{\var}\ dx   
			-\i |\p_3 \br|^2  \du^0  \ dx  +  \i |\p_3 \br|^2  \nabla_h \cdot u_h^{\var} \ dx.
			\dal \deqq
			First, we estimate the first term on the right hand, and the second term can be estimated similarly. Substituting the equation $\eqref{eqr}_1$ 
			\beqq
			\du^{\var} = - \p_t \vro^{\var} + F_1^{\var} = - \p_t \vro^{\var} -u^\var\cdot \nabla \varrho^\var-\varrho^\var {\rm div}u^\var   
			\deqq
			into the first term and using the density equation $\eqref{401}_1$, we have
			\beqq \bal
			&\; -\i |\p_3 \br|^2 \du^{\var}  \ dx \\
			= &\;  \f{d}{dt} \i |\p_3 \br|^2 \vro^{\var} \ dx - 2 \i \vro^{\var} \p_3 \br \, \p_3 \p_t \br\ dx +\i |\p_3 \br|^2(u^\var\cdot \nabla \varrho^\var+\varrho^\var {\rm div}u^\var) \ dx \\
			= &\; \f{d}{dt} \i |\p_3 \br|^2 \vro^{\var} \ dx + 2 \i \vro^{\var} \p_3 \br \, \p_3 {\rm div} \bu \ dx + 2 \i \vro^{\var} \p_3 \br \, \p_3 (\bu \cdot \nabla \vro^0+\vro^\var \, {\rm{div}} \bu+\br \, {\rm{div}} u^0) \ dx \\
			&\;+ 2 \i \vro^{\var} \p_3 \br \, \p_3 (u^\var \cdot \nabla \br) \ dx  +\i |\p_3 \br|^2(u^\var\cdot \nabla \varrho^\var+\varrho^\var {\rm div}u^\var) \ dx. \\
			\dal \deqq
			Integrating by parts, we have
			\beqq \bal
			&\; 2 \i \vro^{\var} \p_3 \br \, \p_3 (u^\var \cdot \nabla \br) \ dx  +\i |\p_3 \br|^2(u^\var\cdot \nabla \varrho^\var+\varrho^\var {\rm div}u^\var) \ dx\\
			= &\; 2 \i \vro^{\var} \p_3 \br \, \p_3 u^\var_h \cdot \nabla_h \br \ dx  + 2 \i  |\p_3 \br|^2 \vro^{\var} \, \p_3 u^\var_3  \ dx \\
			\lesssim &\; \| \p_3 \br\|_{L^2} \| \nabla_h \br\|_{L^2} \| \vro^{\var}\|_{L^{\infty}} \| \p_3 u_h^{\var}\|_{L^{\infty}} + \| \p_3 \br\|_{L^2}^2  \| \vro^{\var}\|_{L^{\infty}} (\| \du^{\var}\|_{L^{\infty}} + \| \nabla_h u^{\var}\|_{L^{\infty}})\\
			\lesssim &\; \nu \| \nabla_h \br\|_{L^2}^2  + {\mathcal{A}}(t) \| \p_3  \br\|_{L^2}^2.
			\dal \deqq
			Similarly, we can check that
			\beqq  \bal
			&\; \i \vro^{\var} \p_3 \br \, \p_3 {\rm div} \bu \ dx 
			\lesssim \nu \|\p_3 {\rm div} \bu\|_{L^2}^2 + {\mathcal{A}}(t) \| \p_3 \br\|_{L^2}^2 ,\\
			&\; \i \vro^{\var} \p_3 \br \, \p_3 (\bu \cdot \nabla \vro^0+\vro^\var \, {\rm{div}} \bu+\br \, {\rm{div}} u^0) \ dx 
			\lesssim   \nu \| (   \nabla_h \bu, {\rm{div}} \bu, \p_{13} \bu, \p_3 {\rm{div}} \bu)\|_{L^2}^2 + ({\mathcal{A}}(t) + {\mathcal{B}}(t)) \|(\bu, \p_3 \br)\|_{L^2}^2, 
			\dal \deqq
			which, together with the above estimate, yields that
			\beqq
			-\i |\p_3 \br|^2 \du^{\var}  \ dx \lesssim \f{d}{dt} \i |\p_3 \br|^2 \vro^{\var} \ dx + \nu \| (\nabla_h \br, \nabla_h \bu, {\rm{div}} \bu, \p_{13} \bu, \p_3 {\rm{div}} \bu)\|_{L^2}^2 + ({\mathcal{A}}(t) + {\mathcal{B}}(t)) \|(\bu, \p_3 \br)\|_{L^2}^2. 
			\deqq
			Next, we estimate the third term. Using the equation $\eqref{401}_2$
			\beqq
			\p_3 \br =  \var \f{1}{1+\vro^{\var}} (\p_2^2 u_3^{\var} +\p_3^2 u_3^{\var} )+(f_{u})_3 -\p_t \bu_3+\p_{1}^2 \bu_3+\p_3 {\rm{div}} \bu  - \p_3 \bb_2 +\p_2 \bb_3,
			\deqq
			we have
			\beqq \bal
			&\;  \i  \nabla_h \cdot u_h^{\var} |\p_3 \br|^2  \ dx \\
			= &\; - \f{d}{dt} \i  \nabla_h \cdot u_h^{\var} \, \p_3  \br \, \bu_3 \ dx   +  \i  \nabla_h \cdot u_h^{\var} \, \p_3  \p_t \br \, \bu_3 \ dx +\i  \nabla_h \cdot \p_t u_h^{\var} \, \p_3  \br \, \bu_3 \ dx 
			+ \i  \nabla_h \cdot u_h^{\var} \, \p_3  \br \,  (f_{u})_3  \ dx \\
			&\; + \i  \nabla_h \cdot u_h^{\var} \, \p_3  \br \, \left(\p_{1}^2 \bu_3+\p_3 {\rm{div}} \bu  - \p_3 \bb_2 +\p_2 \bb_3 + \var \f{1}{1+\vro^{\var}} (\p_2^2 u_3^{\var} +\p_3^2 u_3^{\var} )\right) \ dx. \\
			\dal \deqq
			Using the density equation $\eqref{401}_1$ and the velocity equation $\eqref{eqr}_2$, we have
			\beqq \bal
			&\;  \i  \nabla_h \cdot u_h^{\var} |\p_3 \br|^2  \ dx \\ 
			= &\;-  \f{d}{dt} \i  \bu_3 \, \p_3  \br \, \nabla_h \cdot u_h^{\var} \ dx +  \i  \nabla_h \cdot u_h^{\var} \, (\p_3 f_{\vro}-\p_3 {\rm{div}} \bu ) \, \bu_3 \ dx  \\
			&\; -\i  \nabla_h \cdot  (-\p_{1}^2 u_h^\var-\nabla_h {\rm div}u^\var
			+\nabla_h \varrho^\var+\nabla_h b_2^\var-\p_2 b^\var_h) \, \p_3  \br \, \bu_3 \ dx    + \i  \nabla_h \cdot (F_2^\var)_h \, \p_3  \br \, \bu_3 \ dx \\
			&\;  + \i  \nabla_h \cdot u_h^{\var} \, \p_3  \br \,  (f_{u})_3  \ dx  + \i  \nabla_h \cdot u_h^{\var} \, \p_3  \br \, \left(\p_{1}^2 \bu_3+\p_3 {\rm{div}} \bu  - \p_3 \bb_2 +\p_2 \bb_3  \right) \ dx  \\
			&\; +  \var \i  \nabla_h \cdot u_h^{\var} \, \p_3  \br \, \f{1}{1+\vro^{\var}} (\p_2^2 u_3^{\var} +\p_3^2 u_3^{\var} ) \ dx \\
			:= &\; - \f{d}{dt} \i \bu_3 \, \p_3  \br \, \nabla_h \cdot u_h^{\var} \ dx  + \sum_{j=1}^{6} J_{j}.
			\dal \deqq
			Integrating by parts  and using the anisotropic type inequality $\eqref{ie:Sobolev}_3$, we have
			\beqq \bal
			J_1 = &\; \i \nabla_h \cdot u_h^{\var} \, \p_3( u^\var \cdot \nabla \br+ \bu \cdot \nabla \vro^0 + (1+\vro^\var) \, {\rm{div}} \bu + \br \, {\rm{div}} u^0 ) \, \bu_3 \ dx \\
			=&\;  \i  (\p_3 \bu_3 \, \nabla_h \cdot u_h^{\var} +   \bu_3 \, \nabla_h \cdot \p_3 u_h^{\var}  )  ( u^\var \cdot \nabla \br+ \bu \cdot \nabla \vro^0 + (1+\vro^\var) \, {\rm{div}} \bu + \br \, {\rm{div}} u^0 ) \, \ dx \\
			\lesssim &\; \nu \|( \nabla_h \bu, {\rm{div}} \bu, \nabla_h \br ,\p_1 \nabla \bu, \p_2 {\rm{div}} \bu)\|_{L^2}^2 +  {\mathcal{A}}(t) \| ( \bu, \br, \p_3 \br)\|_{L^2}^2,
			\dal \deqq
			and 
			\beqq \bal
			J_2 \lesssim &\; \nu \|( \nabla_h \bu, {\rm{div}} \bu,\p_1 \nabla \bu)\|_{L^2}^2 +  {\mathcal{B}}(t)  \| ( \bu, \p_3 \br)\|_{L^2}^2,\\
			J_3 \lesssim &\; \nu \|( \nabla_h \bu, {\rm{div}} \bu, \p_1 \nabla \bu)\|_{L^2}^2+  {\mathcal{A}}(t) \| ( \bu, \p_3 \br)\|_{L^2}^2 + \var \| (\p_2^2 u^{\var}, \p_3^2 u^{\var})\|_{H^1} \| \p_3 \br\|_{L^2} ,  \\
			J_4 \lesssim &\; \nu \|( \nabla_h(\br,\bb,\bu),  \p_3  {\rm{div}} \bu, \p_1^2 \bu,\nabla_h \p_{3} \bb)\|_{L^2}^2+  ({\mathcal{A}}(t) +{\mathcal{B}}(t) )\| (\bu, \br, \bb, \p_3 \bb, \p_3 \br)\|_{L^2}^2 ,\\
			J_5 \lesssim &\; \nu \|( \p_2 \bb, \p_3  {\rm{div}} \bu, \p_1^2\bu, \nabla_h \p_{3} \bb)\|_{L^2}^2+  ({\mathcal{A}}(t) +{\mathcal{B}}(t) )\| ( \p_3 \bb, \p_3 \br)\|_{L^2}^2 ,\\
			J_6\lesssim &\; \var \| (\p_2^2 u^{\var}, \p_3^2 u^{\var})\|_{H^1} \| \p_3 \br\|_{L^2}.
			\dal \deqq
			Combining the estimates from $J_1$ to  $J_6$, we have
			\beqq \bal
			&\;  \i  \nabla_h \cdot u_h^{\var} |\p_3 \br|^2  \ dx \\ 
			\lesssim &\;  -\f{d}{dt} \i  \bu_3 \, \p_3  \br \, \nabla_h \cdot u_h^{\var}\ dx + \nu \mathcal{\overline{D}}(t)
			+ (\mathcal{{A}}(t) + \mathcal{{B}}(t))
			\mathcal{\overline{E}}(t) + 
			\var \| (\p_2^2 u^{\var}, \p_3^2 u^{\var})\|_{H^1} \| \p_3 \br\|_{L^2}.
			\dal \deqq
			Thus, we can obtain that 
			\beqq \bal
			XI_{1,1} +XI_{1,3} \lesssim &\; -\frac{d}{dt} \i |\p_3 \br|^2 (\vro^0 +\vro^{\var}) \ dx- \f{d}{dt} \i  \bu_3 \, \p_3  \br \, \nabla_h \cdot u_h^{\var}\ dx + \nu \mathcal{\overline{D}}(t) \\
			&\; + (\mathcal{{A}}(t) + \mathcal{{B}}(t))
			\mathcal{\overline{E}}(t) + 
			\var \| (\p_2^2 u^{\var}, \p_3^2 u^{\var})\|_{H^1} \| \p_3 \br\|_{L^2},
			\dal \deqq
			which, together with the term $XI_{1,2}$,  $XI_{1,4}$ and $XI_{1,5}$, yields that
			\beqq \bal
			XI_{1} \lesssim &\; -\frac{d}{dt} \i |\p_3 \br|^2 (\vro^0 +\vro^{\var}) \ dx- \f{d}{dt} \i  \bu_3 \, \p_3  \br \, \nabla_h \cdot u_h^{\var}\ dx + \nu \mathcal{\overline{D}}(t) \\
			&\; + (\mathcal{{A}}(t) + \mathcal{{B}}(t))
			\mathcal{\overline{E}}(t) + 
			\var \| (\p_2^2 u^{\var}, \p_3^2 u^{\var})\|_{H^1} \| \p_3 \br\|_{L^2}.
			\dal  \deqq
			Similarly, we can estimate the terms $XI_2$ and $XI_3$ as follows:
			\beqq
			XI_{2} + XI_{3} \lesssim  \nu \mathcal{\overline{D}}(t) + (\mathcal{{A}}(t) + \mathcal{{B}}(t))
			\mathcal{\overline{E}}(t).
			\deqq
			Combining the estimates from $XI_1$ to $XI_9$, we finish the proof of this lemma.
		\end{proof}
		
		\begin{proof}[\textbf{Proof of Theorem \ref{main_result_three}}]
			Let us define the energy norm
			\beqq \bal
			\mathcal{\overline{E}}_q(t):= &\;  \|( \br, \bu,\f{1}{1+\vro^{\var} } \bb)\|_{L^2}^2 + \|
			( \nabla  \br, \nabla \bu,\f{1}{1+\vro^{\var} }\nabla  \bb)\|_{L^2}^2 + \i |\p_3 \br|^2 (\vro^0+\vro^{\var} ) \ dx\\
			&\; +\i \bu_3 \, \p_3  \br \, \nabla_h \cdot u_h^{\var}\ dx +\kappa_1 \i (\bu_h \cdot \nabla_h \br  + \p_2  \bb \cdot \bu )\ dx,
			\dal \deqq
			where $\kappa_1 $ is a small constant.
			Then, it is easy to check that $\mathcal{\overline{E}}_q(t)$
			is equivalent to $\mathcal{\overline{E}}(t)$.
			Due to the smallness of $\nu$, the combination of estimates
			\eqref{4201}, \eqref{4301} and \eqref{4401}
			yields directly
			\beq \label{402}
			\begin{aligned}
				\frac{d}{dt}\mathcal{\overline{E}}_q(t)
				\le C(\mathcal{{A}}(t) + \mathcal{{B}}(t)) \mathcal{\overline{E}}_q(t) 
				+C\var \|(\p_2^2u^\var, \p_3^2 u^\var, \p_3^2b^\var)\|_{H^1}
				\mathcal{\overline{E}}(t)^{\frac12},
			\end{aligned}
			\deq
			which, together with the Gronwall inequality, yields directly
			\beq\label{403}
			\begin{aligned}
				\mathcal{\overline{E}}_q(t)
				\le
				&C \var \int_0^t \|(\p_2^2u^\var, \p_3^2 u^\var, \p_3^2b^\var)(\tau)\|_{H^1}\mathcal{\overline{E}}(\tau)^{\frac12} d\tau \exp\left\{\int_0^t (\mathcal{{A}}(\tau)+\mathcal{{B}}(\tau)
				) d\tau\right\}.
			\end{aligned}
			\deq
			With the help of the decay estimates
			in Theorems \ref{main_result_one} and \ref{main_result_two}, it is easy to check that for $\zeta\in(0,\f{1}{20})$, 
			\beq\label{404}
			\begin{aligned}
				&\int_0^t \mathcal{{A}}(\tau) d\tau\\
				\lesssim &\; 
				\underset{0\le \tau \le t}{\sup}[(1+\tau)^{\frac{1-\zeta}{2}} \| f(\tau)\|_{H^1}]
				\left\{\int_0^t (1+\tau)^{1-2\zeta} (\| \nabla_h f\|_{H^1}^2+\| g\|_{L^2}^2) d\tau\right\}^{\frac12}
				\left\{\int_0^t (1+\tau)^{-(2-3\zeta)} d\tau\right\}^{\frac12} \\
				&\; +\underset{0\le \tau \le t}{\sup}[(1+\tau)^{\frac{1-\zeta}{8}}\|\p_3 f(\tau)\|_{L^2}^{\f14} \|\p_3^2 f(\tau)\|_{L^2}^{\f14}  ]
				\left\{\int_0^t (1+\tau)^{1-2\zeta} \| \nabla_h \p_3 f\|_{H_{tan}^1}^2 d\tau\right\}^{\frac38}
				\left\{\int_0^t \| \nabla_h \p_3^2 f\|_{H_{tan}^1}^2 d\tau\right\}^{\frac38}\\
				&\; \times 
				\left\{\int_0^t (1+\tau)^{-\f12(4-7\zeta)} d\tau\right\}^{\frac14}  + \int_0^t (\|\nabla_h f\|_{H^2} \|\nabla_h f\|_{H^3} +\|g\|_{H^3}^2) d\tau
				\lesssim 1.
			\end{aligned}
			\deq
			Now we estimate the term $\int_0^t \mathcal{{B}}(\tau) d\tau$. First, we have 
			\beqq \bal 
			&\; \int_0^t \|\nabla_h(f, \p_3 f)(\tau)\|_{H_{tan}^3}^{\f43} d\tau
			\lesssim 
			\left\{\int_0^t (1+\tau)^{1-2\zeta} \|\nabla_h(f,\p_3 f)(\tau)\|_{H_{tan}^3}^2 d\tau\right\}^{\frac23}
			\left\{\int_0^t (1+\tau)^{2(1-2\zeta)} d\tau\right\}^{\frac13} 
			\lesssim  1,
			\dal \deqq
			and similarly, we  have
			\beqq
			\int_0^t \|\nabla_h(g, \p_3 g)(\tau)\|_{H_{tan}^2}^{\f43} d\tau
			\lesssim  1.
			\deqq
			Next, for $m \ge 9$, we can obtain
			\begin{align*}
				&\; \int_0^t \|\p_3^2 f(\tau)\|_{L^2}^{\f45}  \|\nabla_h \p_3^2 f(\tau)\|_{L^2}^{\f45}  d\tau \\
				=  &\; \int_0^t \|\p_3 f(\tau)\|_{L^2}^{\f{4(m-2)}{5(m-1)}} \| \p_3^m f(\tau) \|_{L^2}^{\f{4}{5(m-1)}} \|\nabla_h f(\tau)\|_{L^2}^{\f{4(m-3)}{5(m-1)}} \|\nabla_h \p_3^{m-1} f(\tau)\|_{L^2}^{\f{8}{5(m-1)}}  d\tau \\
				\lesssim &\; \underset{0\le \tau \le t}{\sup}[(1+\tau)^{1-\zeta}\|\p_3 f(\tau)\|_{L^2}^2 ]^{\f{2(m-2)}{5(m-1)}}
				\left\{\int_0^t (1+\tau)^{1-2\zeta} \| \nabla_h  f\|_{L^2}^2 d\tau\right\}^{\f{2(m-3)}{5(m-1)}}
				\left\{\int_0^t \| \nabla_h \p_3^{m-1} f\|_{L^2}^2 d\tau\right\}^{\f{4}{5(m-1)}}\\
				&\; \times 
				\left\{\int_0^t (1+\tau)^{-\f{2(m-2)}{3(m-1)}(1-\zeta) -\f{2(m-3)}{3(m-1)}(1-2\zeta)} d\tau\right\}^{\f35}\lesssim 1,
			\end{align*}
			where we have used the following interpolation inequality,
			\beqq \label{equ-interpolation}
			\| \p_3^2 f \|_{L^2} \lesssim  \|\p_3 f\|_{L^2}^{\f{m-2}{m-1}} \| \p_3^m f\|_{L^2}^{\f{1}{m-1}}, \| \nabla_h \p_3^2 f \|_{L^2} \lesssim  \|\nabla_h f\|_{L^2}^{\f{m-3}{m-1}} \|\nabla_h \p_3^{m-1} f\|_{L^2}^{\f{2}{m-1}},
			\deqq
			and similarly, we have
			\beqq \bal 
			&\; \int_0^t \| \p_3 g(\tau)\|_{L^2}^{\f13} \| \p_3^2 g(\tau)\|_{L^2}^{\f13}\| \nabla_h \p_3 g(\tau)\|_{L^2}^{\f13}\| \nabla_h \p_3^2 g(\tau)\|_{L^2}^{\f13}  d\tau \\
			=  &\; \int_0^t \|\p_3 g(\tau)\|_{L^2}^{\f{2m-3}{3(m-1)}} \| \p_3^m g \|_{L^2}^{\f{1}{3(m-1)}} \| \nabla_h \p_3  g)(\tau)\|_{L^2}^{\f13}  \|\nabla_h g(\tau)\|_{L^2}^{\f{m-3}{3(m-1)}} \|\nabla_h \p_3^{m-1} g(\tau)\|_{L^2}^{\f{2}{3(m-1)}}  d\tau \\
			\lesssim &\; 
			\left\{\int_0^t (1+\tau)^{1-2\zeta} \| (\p_3 g, \nabla_h  g,\nabla_h  \p_3 g)\|_{L^2}^2 d\tau\right\}^{\f{4m-7}{6(m-1)}}
			\left\{\int_0^t \| \nabla_h \p_3^{m-1} g\|_{L^2}^2 d\tau\right\}^{\f{1}{2(m-1)}}\\
			&\; \times 
			\left\{\int_0^t (1+\tau)^{ -\f{4m-7}{2(m-1)}(1-2\zeta)} d\tau\right\}^{\f13}\lesssim 1.
			\dal \deqq
			Thus, together with the estimate \eqref{404}, we have
			\beq \label{405}
			\int_0^t (\mathcal{{A}}(\tau)+\mathcal{{B}}(\tau)
			) d\tau \lesssim 1.
			\deq
			Using the interpolation inequality, we have
			\beq\label{406} \bal
			&\; \|(\p_2^2 u^{\var}, \p_3^2 u^{\var}, \p_3^2 b^{\var})\|_{H^1} \\
			\lesssim &\; \| \p_2^2 u^{\var}\|_{H^1} + 
			\|\p_3^2 (u^{\var}, b^{\var})\|_{L^2} + \|\nabla_h \p_3^2 (u^{\var}, b^{\var})\|_{L^2} + \|\p_3^3 (u^{\var}, b^{\var})\|_{L^2}\\
			\lesssim &\; \| \p_2^2 u^{\var}\|_{H^1}+ \|\p_3 (u^{\var}, b^{\var})\|_{L^2}^{\f{m-2}{m-1}}\|\p_3^m (u^{\var}, b^{\var})\|_{L^2}^{\f{1}{m-1}}+ \|\nabla_h (u^{\var}, b^{\var})\|_{L^2}^{\f{m-3}{m-1}}\|\nabla_h \p_3^{m-1} (u^{\var}, b^{\var})\|_{L^2}^{\f{2}{m-1}}\\
			&\; +  \|\p_3(u^{\var}, b^{\var})\|_{L^2}^{\f{m-3}{m-1}}\|\p_3^{m} (u^{\var}, b^{\var})\|_{L^2}^{\f{2}{m-1}}.
			\dal \deq
			Define $T^{*}:=\var^{-\f{m-1}{m(1-\zeta)}}$.  For $0 < t\le T^{*}$, substituting the estimates \eqref{405} and \eqref{406} into \eqref{403}, we have 
			\beqq \bal
			\mathcal{\overline{E}}(t)
			\lesssim \mathcal{\overline{E}}_q(t) 
			\lesssim &\; \var \int_0^t \me_{tan}^{m-1}(\tau)^{\f{m-3}{2(m-1)}} \me^m(\tau)^{\f{1}{m-1} }\mathcal{\overline{E}}(\tau)^{\frac12} d\tau \exp\left\{\int_0^t (\mathcal{{A}}(\tau)+\mathcal{{B}}(\tau)
			) d\tau\right\}\\
			\lesssim &\; \var \underset{0\le \tau \le t}{\sup}[(1+\tau)^{1-\zeta}\me_{tan}^{m-1}(\tau) ]^{\f{m-3}{2(m-1)}} \underset{0\le \tau \le t}{\sup} \me^m(\tau)^{\f{1}{m-1}} \underset{0\le \tau \le t}{\sup}[(1+\tau)^{1-\zeta} \mathcal{\overline{E}}(\tau) ]^{\f12} 
			\int_0^t (1+\tau)^{-\f{m-2}{m-1}(1-\zeta)} d\tau \\
			\lesssim &\; \var (1+t)^{\f{1+(m-2)\zeta}{m-1}} \lesssim \var^{1-\f{1+(m-2)\zeta}{m (1-\zeta)} }.
			\dal \deqq
			Finally we give the estimate for $T^* \le t < + \infty$, which means $(1+ T^*)^{-1} \le \var^{\f{m-1}{m(1-\zeta)}}$. Integrating the inequality \eqref{402} over $[T^*,t]$, we have
			\begin{align*}
				\mathcal{\overline{E}}(t)
				\le
				C\mathcal{\overline{E}}_q(t)  \lesssim
				&\; \left( \mathcal{\overline{E}}_q(T^*) + \var \int_{T^*}^{t} \|(\p_2^2 u^\var, \p_3^2 u^\var, \p_3^2 b^\var )(\tau)\|_{H^1}\mathcal{\overline{E}}(\tau)^{\frac12} d\tau \right) \exp\left\{\int_{T^*}^{t} (\mathcal{{A}}(\tau)+\mathcal{{B}}(\tau)
				) d\tau\right\} \\
				\lesssim
				&\;  \mathcal{\overline{E}}_q(T^*) + \var^{1-\f{m-3}{2(m-2)}} \int_{T^*}^{t} \md_{tan}^{m-1}(\tau)^{\f{m-3}{2(m-2)}} \me^m(\tau)^{\f{1}{2(m-2)}} \mathcal{\overline{E}}(\tau)^{\frac12} d\tau  \\
				\lesssim &\;\var^{1-\f{1+(m-2)\zeta}{m (1-\zeta)} } + \var^{1-\f{m-3}{2(m-2)}} \underset{T^*\le \tau \le t}{\sup}\me^m(\tau)^{\f{1}{2(m-2)}}  \left\{\int_{T^*}^{t} (1+\tau)^{1-2\zeta} \md_{tan}^{m-1}(\tau) d\tau \right\}^{\f{m-3}{2(m-2)}}  \\ 
				&\; \times   \underset{T^*\le \tau \le t}{\sup}[(1+\tau)^{(1-\zeta)} \mathcal{\overline{E}}(t) ]^{\f12}  \left\{\int_{T^*}^{t} (1+\tau)^{-\f{(m-3)(1-2\zeta) + (m-2)(1-\zeta)}{m-1}} d\tau\right\}^{\f{m-1}{2(m-2)}}\\
				\lesssim &\;  \var^{1-\f{1+(m-2)\zeta}{m (1-\zeta)} } + \var^{1-\f{m-3}{2(m-2)}} (1+ T^*)^{ -\f{m-4 -(3m-8)\zeta}{2(m-2)}}  \\
				\lesssim &\;  \var^{1-\f{1+(m-2)\zeta}{m (1-\zeta)} } + \var^{1-\f{m-3}{2(m-2)}+ \f{m-4 -(3m-8)\zeta}{2(m-2)} \cdot \f{m-1}{m(1-\zeta)} }  
				\lesssim   \var^{1-\f{1+(m-2)\zeta}{m (1-\zeta)} },
			\end{align*}
			where we have used the following inequality
			\beqq \bal
			\|(\p_2^2 u^{\var}, \p_3^2 u^{\var}, \p_3^2 b^{\var})\|_{H^1} 
			\lesssim &\;  \|\p_2^2 u^{\var}\|_{L^2} + \|\p_3^2 (u^{\var}, b^{\var})\|_{H_{tan}^1} +  \|\p_3^3 (u^{\var}, b^{\var})\|_{L^2}\\
			\lesssim &\;  \|\p_2^2 u^{\var}\|_{L^2} + \|\p_3^2 (u^{\var}, b^{\var})\|_{H_{tan}^1} +  \|\p_3^2 (u^{\var}, b^{\var})\|_{L^2}^{\f{m-3}{m-2}} \|\p_3^m (u^{\var}, b^{\var})\|_{L^2}^{\f{1}{m-2}}.
			\dal  \deqq
			Therefore, we complete the proof of Theorem \ref{main_result_three}.
		\end{proof}
		
		\section*{Acknowledgments}
		Jincheng Gao was partially supported by the National Key Research and Development Program of China(2021YFA1002100), Guangdong Special Support Project (2023TQ07A961) and Guangzhou Science and Technology Program (2024A04J6410).
		Xianpeng Hu's research was partially supported by the RFS Grant and GRF Grants from the Research Grants Council (Nos. PolyU 11302021, 11310822 and 11302523).
		Lianyun Peng was partially supported by a fellowship award (PolyU RFS2122-1S05).
		Jiahong Wu was partially supported by the
		National Science Foundation of the United States (DMS 2104682, DMS 2309748).

		\begin{appendices}
			\section{Some useful inequalities}\label{usefull-inequality}	
			Now let us state some  anisotropic Sobolev inequalities used frequently in our paper.
			\begin{lemm}\label{lemm:sobolev-ie}
				For any suitable functions $(f(x), g(x), h(x))$ defined on $\mathbb{R}^3$ and different numbers $i,j,k \in \{1, 2, 3\}$,
				the following estimates hold
				\beq \label{ie:Sobolev}
				\bal
				\|f\|_{L^\infty} &\; \lesssim \|f\|_{L^2}^{\f18}\|\p_1 f\|_{L^2}^{\f18}\|\p_2 f\|_{L^2}^{\f18}\|\p_{12} f\|_{L^2}^{\f18}
				\|\p_3 f\|_{L^2}^{\f18}\|\p_{13} f\|_{L^2}^{\f18}\|\p_{23} f\|_{L^2}^{\f18}\|\p_{123} f\|_{L^2}^{\f18},\\
				\int_{\mathbb{R}^3} |f g h |\, dx
				&\; \lesssim \|f\|_{L^2}
				\|g\|_{L^2}^{\frac12}
				\|\p_i g\|_{L^2}^{\frac12}
				\|h\|_{L^2}^{\frac14}
				\|\p_j h\|_{L^2}^{\frac14}
				\|\p_{k} h\|_{L^2}^{\frac14}
				\|\p_{jk} h\|_{L^2}^{\frac14},\\
				\int_{\mathbb{R}^3} |f g h |\, dx
				&\; \lesssim \|f\|_{L^2}^{\frac12}
				\|\p_1 f\|_{L^2}^{\frac12}
				\|g\|_{L^2}^{\frac12}
				\|\p_2 g\|_{L^2}^{\frac12}
				\|h\|_{L^2}^{\frac12}
				\|\p_3 h\|_{L^2}^{\frac12},\\
				\left\|\|f\|_{L^\infty(\mathbb{R})}\right\|_{L^{\frac2s}(\mathbb{R}^2)}
				&\lesssim \left(\|f\|_{L^2}\|\partial_2 f\|_{L^2}
				+\|\partial_1 f\|_{L^2}\|\partial_{12}f\|_{L^2}\right)^{\frac{1-s}{2}}
				\|f\|_{L^2}^{\frac{2s-1}{2}}
				\|\partial_3 f\|_{L^2}^{\frac12}.
				\dal
				\deq
			\end{lemm}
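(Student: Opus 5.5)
The four inequalities in Lemma \ref{lemm:sobolev-ie} all follow from one-dimensional Gagliardo--Nirenberg (Agmon) bounds applied one variable at a time, combined with H\"older and Minkowski inequalities. The basic tool is
\[
\|\phi\|_{L^\infty(\mathbb{R})}\lesssim \|\phi\|_{L^2(\mathbb{R})}^{\f12}\|\phi'\|_{L^2(\mathbb{R})}^{\f12},
\]
together with its consequence $\|\phi\|_{L^p(\mathbb{R})}\lesssim \|\phi\|_{L^2}^{\f12+\f1p}\|\phi'\|_{L^2}^{\f12-\f1p}$. Throughout, I use the freedom to permute the indices $i,j,k$.

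For the $L^\infty$ bound $\eqref{ie:Sobolev}_1$, apply the one-dimensional Agmon inequality in $x_1$ at fixed $(x_2,x_3)$. Then apply it in $x_2$ to the functions $\|f\|_{L^2_{x_1}}$ and $\|\p_1 f\|_{L^2_{x_1}}$, and finally in $x_3$. Each step converts a supremum into the geometric mean of a quantity and its derivative in the new variable. To exchange the order of the sup and the $L^2$ norms, use Minkowski's inequality in the form $\sup_{x_2}\|\cdot\|_{L^2_{x_1}}\le \|\sup_{x_2}\cdot\|_{L^2_{x_1}}$, or equivalently the bound $\|F\|_{L^2_{x_1}L^\infty_{x_2}}^2\le \|F\|_{L^2}\|\p_2F\|_{L^2}$. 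After three steps this gives the eight factors $\p^{\beta}f$, $\beta\subset\{1,2,3\}$, each with exponent $\f18$.

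For the trilinear bounds: $\eqref{ie:Sobolev}_3$ follows by placing $f$ in $L^\infty_{x_1}L^2_{x_2x_3}$, $g$ in $L^\infty_{x_2}L^2$, and $h$ in $L^\infty_{x_3}L^2$. Concretely, write
\[
\int|fgh|\le \int \sup_{x_1}|f|\,\sup_{x_2}|g|\,\sup_{x_3}|h|
\]
in the appropriate mixed sense, apply H\"older in the remaining variables, and then the one-dimensional Agmon bound in each designated direction. For $\eqref{ie:Sobolev}_2$, put $f$ in $L^2$, $g$ in $L^\infty_{x_i}L^2_{x_jx_k}$, and $h$ in $L^2_{x_i}L^\infty_{x_jx_k}$. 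The $L^\infty_{x_jx_k}$ norm of $h$ is controlled by the two-dimensional analogue of step one, which gives the four factors with exponent $\f14$.

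The last inequality, $\eqref{ie:Sobolev}_4$, is the delicate one, because it needs an $L^{2/s}$ norm in the horizontal variables. First set $F(x_h)=\|f(x_h,\cdot)\|_{L^\infty_{x_3}}$. By Agmon in $x_3$ and Cauchy--Schwarz in $x_h$,
\[
\|F\|_{L^2(\mathbb{R}^2)}^2\lesssim \|f\|_{L^2}\|\p_3 f\|_{L^2}.
\]
Next, interpolate between $L^2$ and $L^\infty$ in the horizontal variables: $\|F\|_{L^{2/s}}\le \|F\|_{L^2}^{s}\|F\|_{L^\infty}^{1-s}$. Then bound $\|F\|_{L^\infty(\mathbb{R}^3)}$ by the three-dimensional estimate $\eqref{ie:Sobolev}_1$, or by a coarser version in which the horizontal factors are grouped as $\|f\|\,\|\p_2f\|+\|\p_1f\|\,\|\p_{12}f\|$ raised to the power $\f12$ and the vertical factors are absorbed using $\|f\|_{L^2}$ and $\|\p_3 f\|_{L^2}$.

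Matching the exponents to the stated form $(\cdots)^{\f{1-s}{2}}\|f\|_{L^2}^{\f{2s-1}{2}}\|\p_3f\|_{L^2}^{\f12}$ is the expected obstacle. The stated form carries no $\p_3$-mixed horizontal derivatives, so the vertical direction must be handled entirely by $\|F\|_{L^2}$ and not by the $L^\infty$ factor. If the exponents do not match, the fallback is to apply two-dimensional Gagliardo--Nirenberg directly to $F$, using $\|\nabla_hF\|\le\|\sup_{x_3}|\nabla_h f|\|$, and to accept a slightly different but equivalent distribution of the powers.
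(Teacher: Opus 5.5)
Your arguments for the first three inequalities are the standard iterated one-dimensional Agmon/H\"older ones, and they are fine. The paper does not prove these; it cites Lemma 1.2 of the Wu reference. One caution on the third: each supremum must sit \emph{inside} the $L^2$ norm in the other two variables. That is, you need $\|\sup_{x_1}|f|\|_{L^2_{x_2x_3}}$ and the pointwise bound $|fgh|\le F_1(x_2,x_3)G_2(x_1,x_3)H_3(x_1,x_2)$, followed by Cauchy--Schwarz twice. With $\sup_{x_1}\|f\|_{L^2_{x_2x_3}}$ instead, the intermediate trilinear bound is false; test $f=g=h=\mathbf{1}_{\{|x_1-x_2|<\epsilon\}\cap[-1,1]^3}$.

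The genuine gap is in the fourth inequality. Interpolating $\|F\|_{L^{2/s}}\le\|F\|_{L^2}^s\|F\|_{L^\infty}^{1-s}$ extracts only $\|\partial_3 f\|_{L^2}^{s/2}$ from the $L^2$ factor. The remaining vertical information would then have to come from $\|F\|_{L^\infty(\mathbb{R}^2)}=\|f\|_{L^\infty(\mathbb{R}^3)}$. That quantity cannot be bounded by any product of powers of $\|f\|_{L^2},\|\partial_1 f\|_{L^2},\|\partial_2 f\|_{L^2},\|\partial_{12}f\|_{L^2},\|\partial_3 f\|_{L^2}$. Such a bound must have total degree one, so by weighted AM--GM it would embed the Hilbert space with Fourier weight $w=(1+\xi_1^2)(1+\xi_2^2)+\xi_3^2$ into $L^\infty$. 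That embedding requires $\int w^{-1}d\xi<\infty$, but $\int w^{-1}d\xi=\pi\big(\int_{\mathbb{R}}(1+\xi^2)^{-1/2}d\xi\big)^2=\infty$. So the ``coarser version'' you hope for does not exist. Your fallback, $2$D Gagliardo--Nirenberg for $F$ with $\|\nabla_hF\|_{L^2}\le\|\sup_{x_3}|\nabla_hf|\|_{L^2}$, brings in $\|\partial_{13}f\|_{L^2}$ and $\|\partial_{23}f\|_{L^2}$. It proves a different inequality, not a redistribution of powers in the stated one.

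The missing idea, which is how the paper proceeds, is never to pass through $\|f\|_{L^\infty(\mathbb{R}^3)}$. Use the $x_3$-Agmon bound pointwise in $x_h$, then H\"older in $x_h$ with exponents $2s$ and $\frac{2s}{2s-1}$, so that $\partial_3 f$ is absorbed with its full power at once. With $G(x_h):=\|f(x_h,\cdot)\|_{L^2_{x_3}}$,
\[
\Big\|\|f\|_{L^\infty_{x_3}}\Big\|_{L^{2/s}_{x_h}}^{2/s}\lesssim\int_{\mathbb{R}^2}\big(G\,\|\partial_3 f\|_{L^2_{x_3}}\big)^{1/s}dx_h\le\|\partial_3 f\|_{L^2}^{1/s}\,\|G\|_{L^{2/(2s-1)}(\mathbb{R}^2)}^{1/s}.
\]
Then interpolate $G$, not $F$: $\|G\|_{L^{2/(2s-1)}}\le\|G\|_{L^2}^{2s-1}\|G\|_{L^\infty}^{2-2s}$, where $\|G\|_{L^2}=\|f\|_{L^2}$ and
\[
\|G\|_{L^\infty}^2\le\int_{\mathbb{R}}\sup_{x_h}|f|^2\,dx_3\lesssim\big(\|f\|_{L^2}\|\partial_1 f\|_{L^2}\|\partial_2 f\|_{L^2}\|\partial_{12}f\|_{L^2}\big)^{1/2}.
\]
This last bound follows from the $2$D Agmon inequality on each slice and H\"older in $x_3$, so only horizontal derivatives enter. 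Combining the two steps and applying AM--GM to the four horizontal factors gives exactly $(\cdots)^{\frac{1-s}{2}}\|f\|_{L^2}^{\frac{2s-1}{2}}\|\partial_3f\|_{L^2}^{\frac12}$. The argument needs $\tfrac12<s\le 1$, so that the H\"older exponent is admissible and $\frac{2}{2s-1}\ge 2$. This holds here since $s=1-\zeta$, but your proposal never identifies or uses it.
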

			
			\begin{proof}
				First of all, one can follow the idea in \cite[Lemma 1.2]{Wu2021Advance} to establish the inequalities $\eqref{ie:Sobolev}_1$-$\eqref{ie:Sobolev}_3$.
				Let us give the proof of inequality $\eqref{ie:Sobolev}_4$.
				Indeed, it holds
				\begin{equation}\label{a1}
					\begin{aligned}
						\left\|\|f\|_{L^\infty(\mathbb{R})}\right\|_{L^{\frac2s}(\mathbb{R}^2)}^{\frac2s}
						\lesssim \left\|\|f\|_{L^2(\mathbb{R})}^{\frac12}
						\|\partial_3 f\|_{L^2(\mathbb{R})}^{\frac12}\right\|_{L^{\frac2s}(\mathbb{R}^2)}^{\frac2s}
						\lesssim \|\partial_3 f\|_{L^2}^{\frac1s}
						\left\|\|f\|_{L^2(\mathbb{R})}\right\|_{L^{\frac{2}{2s-1}}
							(\mathbb{R}^2)}^{\frac1s},
					\end{aligned}
				\end{equation}
				and
				\begin{equation}\label{a2}
					\begin{aligned}
						\left\|\|f\|_{L^2(\mathbb{R})}\right\|_{L^{\frac{2}{2s-1}}(\mathbb{R}^2)}
						^{\frac{2}{2s-1}}
						&\lesssim \left\|\|f\|_{L^\infty(\mathbb{R}^2)}\right\|_{L^2(\mathbb{R})}
						^{\frac{4-4s}{2s-1}}\|f\|_{L^2}^2\\
						&\lesssim \left(\|f\|_{L^2}^{\frac12}\|\partial_2 f\|_{L^2}^{\frac12}
						+\|\partial_1 f\|_{L^2}^{\frac12}\|\partial_{12}f\|_{L^2}^{\frac12}\right)
						^{\frac{4-4s}{2s-1}}\|f\|_{L^2}^2.
					\end{aligned}
				\end{equation}
				The combination of estimates \eqref{a1} and \eqref{a2} gives
				\begin{equation*}
					\begin{aligned}
						\left\|\|f\|_{L^\infty(\mathbb{R})}\right\|_{L^{\frac2s}(\mathbb{R}^2)}
						&\lesssim \|\partial_3 f\|_{L^2}^{\frac12}
						\left\|\|f\|_{L^2(\mathbb{R})}\right\|_{L^{\frac{2}{2s-1}}
							(\mathbb{R}^2)}^{\frac12}\\
						&\lesssim \left(\|f\|_{L^2}\|\partial_2 f\|_{L^2}
						+\|\partial_1 f\|_{L^2}\|\partial_{12}f\|_{L^2}\right)^{\frac{1-s}{2}}
						\|f\|_{L^2}^{\frac{2s-1}{2}}
						\|\partial_3 f\|_{L^2}^{\frac12}.
					\end{aligned}
				\end{equation*}
				Therefore, we complete the proof of this lemma.
			\end{proof}
			
			Next, we introduce the Hardy-Littlewood-Sobolev inequality
			in \cite[pp. 119, Theorem 1]{Stein1970} as follow.
			\begin{lemm}\label{H-L}
				Let $0<\alpha<2 , 1<p<q<\infty, \frac{1}{q}+\frac{\alpha}{2}=\frac{1}{p}$, then
				\begin{equation}\label{a3}
					\|\Lambda_h^{-\alpha}f\|_{L^q(\mathbb{R}^2)}\lesssim \|f\|_{L^p(\mathbb{R}^2)}.
				\end{equation}
			\end{lemm}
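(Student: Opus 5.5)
This is the Hardy--Littlewood--Sobolev inequality in $\mathbb{R}^2$, the classical Riesz potential bound. The plan is to write the negative fractional operator as a convolution with a homogeneous kernel. For $0<\alpha<2$, Fourier inversion (using the fact that the Fourier transform of a homogeneous tempered distribution is again homogeneous) gives
\[
\Lambda_h^{-\alpha} f = c_\alpha\, |x_h|^{-(2-\alpha)} * f, \qquad c_\alpha>0 .
\]
It therefore suffices to bound $I_\alpha f := |\cdot|^{\alpha-2}*f$ from $L^p(\mathbb{R}^2)$ to $L^q(\mathbb{R}^2)$, assuming first that $f\ge0$ is a Schwartz function and concluding by density.

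The main step is Hedberg's pointwise splitting. Fix $x$ and a radius $R>0$, and split $I_\alpha f(x)$ into the parts coming from $|y|<R$ and $|y|\ge R$.
\begin{itemize}
\item For the near part, decompose $\{|y|<R\}$ dyadically into annuli. Summing the geometric series bounds this part by $C R^{\alpha} Mf(x)$, where $M$ is the Hardy--Littlewood maximal function.
\item For the far part, apply H\"older's inequality. The kernel $|y|^{\alpha-2}$ lies in $L^{p'}(\{|y|\ge R\})$ exactly when $(2-\alpha)p'>2$, which is equivalent to $p<2/\alpha$, i.e.\ $q<\infty$. This gives the bound $C R^{\alpha-2/p}\|f\|_{L^p}$.
\end{itemize}

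Next, optimize in $R$ by choosing $R=(\|f\|_{L^p}/Mf(x))^{p/2}$. Using the relation $\frac1q=\frac1p-\frac\alpha2$, this yields
\[
I_\alpha f(x)\lesssim Mf(x)^{p/q}\,\|f\|_{L^p}^{1-p/q}.
\]
Taking the $L^q$ norm gives $\|I_\alpha f\|_{L^q}\lesssim \|Mf\|_{L^p}^{p/q}\|f\|_{L^p}^{1-p/q}$. Since $p>1$, the maximal theorem gives $\|Mf\|_{L^p}\lesssim\|f\|_{L^p}$, and the bound $\|I_\alpha f\|_{L^q}\lesssim\|f\|_{L^p}$ follows.

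The only delicate point is the kernel identity together with the endpoint restrictions. The hypothesis $p>1$ is needed for the maximal theorem, and $q<\infty$ (equivalently $p<2/\alpha$) is needed for the far-part integrability. Both are guaranteed by the stated conditions. Since this is exactly \cite[Theorem 1, p.~119]{Stein1970}, one may alternatively just cite it.
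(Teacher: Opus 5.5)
Your proof is correct. The paper gives no argument of its own here; it simply quotes the Hardy--Littlewood--Sobolev theorem from \cite[p.~119, Theorem 1]{Stein1970}, so the useful comparison is with Stein's proof.

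Stein also splits the kernel at a radius $\mu$, but he uses the split level set by level set rather than pointwise. Since $\|\,|\cdot|^{\alpha-2}\chi_{\{|y|<\mu\}}\|_{L^1}\sim\mu^{\alpha}$ and $\|\,|\cdot|^{\alpha-2}\chi_{\{|y|\ge\mu\}}\|_{L^{p'}}\sim\mu^{-2/q}$, he chooses $\mu=\mu(\lambda)$ so that the far part is bounded by $\lambda$ everywhere. He then controls the near part by Young's inequality and Chebyshev, which gives a weak-type $(p,q)$ bound for all $1\le p<2/\alpha$. The strong-type bound for $1<p<2/\alpha$ follows from the off-diagonal Marcinkiewicz interpolation theorem.

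Your Hedberg argument instead chooses the radius pointwise in $x$ and replaces interpolation by the $L^p$-boundedness of the maximal function. Stein's route needs only crude norms of the truncated kernels plus an interpolation theorem. Yours needs the maximal theorem but yields the stronger pointwise bound $|\cdot|^{\alpha-2}*|f|\lesssim (Mf)^{p/q}\|f\|_{L^p}^{1-p/q}$, and it avoids interpolation entirely.

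One small fix to your reduction step. Do not reduce to nonnegative Schwartz functions, since $|f|$ is not Schwartz when $f$ is. Instead take $f$ Schwartz and use $|\Lambda_h^{-\alpha}f|\le c_\alpha\,|\cdot|^{\alpha-2}*|f|$. Then run your pointwise estimate on $|f|\in L^p$, and extend to all of $L^p$ by density.
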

			In this paper, taking $q=2$ in \eqref{a3}, then
			$
			p=\frac{1}{\frac12+\frac{\alpha}{2}}=\frac{2}{1+\alpha}
			$
			should satisfy the condition
			$$
			1<p=\frac{2}{1+\alpha} <2.
			$$
			This implies the index $\alpha \in (0, 1)$.
		\end{appendices}
		
		\section*{Data Availability}
		Data sharing is not applicable to this article as no new data were created or analysed in this study.

		\section*{Conflict of interest}
		The authors declared that they have no Conflict of interest to this work.

		\phantomsection

	\end{sloppypar}
\end{document}